\documentclass[a4paper,11pt]{article}
\usepackage[latin1]{inputenc}
\usepackage[T1]{fontenc}
\usepackage[british,UKenglish,USenglish,american]{babel}
\usepackage{amsmath}
\usepackage{amsfonts}
\usepackage{hyperref}
\usepackage[T1]{fontenc}
\usepackage[latin1]{inputenc} 
\usepackage{graphics}
\usepackage{makeidx}
\usepackage{fancyhdr}
\usepackage{bbm}
\usepackage{bm} 
\usepackage{amsthm} 
\usepackage{amssymb}
\usepackage{color}
\numberwithin{equation}{section}
\setlength{\oddsidemargin}{0.40cm} \setlength{\evensidemargin}{0.40cm}
\textwidth 16cm
\usepackage[mathscr]{eucal}
\newcommand{\be}{\begin{equation}}
\newcommand{\ee}{\end{equation}}
\newcommand{\benn}{\begin{equation*}}
\newcommand{\eenn}{\end{equation*}}
\newcommand{\bea}{\begin{eqnarray}}
\newcommand{\eea}{\end{eqnarray}}
\newcommand{\beann}{\begin{eqnarray*}}
\newcommand{\eeann}{\end{eqnarray*}}

\newtheorem{theorem}{Theorem}[section]

\newtheorem{lemma}[theorem]{Lemma}

\newtheorem{definition}[theorem]{Definition}
% \theorembodyfont{\rm}
\newcommand{\vertiii}[1]{{\left\vert\kern-0.25ex\left\vert\kern-0.25ex\left\vert #1
		\right\vert\kern-0.25ex\right\vert\kern-0.25ex\right\vert}}
\newtheorem{remark}[theorem]{Remark}

\parindent0.5pt

% number systems, expectation and probability
\def\R{\mathbb{R}}

% calligraphic letters

\def\cB{\mathcal{B}}

% text letters in mathmode

% real and imaginary parts and identity

% more abbreviations

%%\parindent1pt
\sloppy
%%\date{}

%%%%%%%%%%%%%%%%%%%%%%%%%%%%%%%%%%%%%%%%%%%%%%%%%%%%%%%%%%%%%%%%%%%%%%%%%%%%%%%%%%%%%%%%%%%%%%%%
\title{Existence of smooth stable manifolds for a class of parabolic SPDEs with fractional noise\thanks{This paper was supported by National Natural Science Foundation of China (12271177, 11871225) and Natural Science Foundation of Guangdong Province (2023A1515010622). }}
\author{Xiaofang Lin\thanks{School of Mathematics, South China University of Technology, Guangzhou 510640, P.R. China. E-Mail: xflinmath@163.com}, Alexandra 
	Neam\c tu\thanks{University of Konstanz, Department of Mathematics and Statistics, Universit\"atsstra\ss{}e 10 78464 Konstanz, Germany. E-Mail: alexandra.neamtu@uni-konstanz.de}, and 
		Caibin Zeng\thanks{School of Mathematics, South China University of Technology, Guangzhou 510640, P.R. China. E-Mail: macbzeng@scut.edu.cn}~~\thanks{Corresponding author.}}

\begin{document}

\maketitle

\begin{abstract}
  Little seems to be known about the invariant manifolds for stochastic partial differential equations (SPDEs) driven by nonlinear multiplicative noise.  Here we contribute to this aspect and analyze the Lu-Schmalfu{\ss} conjecture [Garrido-Atienza, et al., J. Differential Equations, 248(7):1637--1667, 2010] on the existence of stable manifolds for a class of parabolic SPDEs driven by nonlinear mutiplicative fractional noise. We emphasize that stable manifolds for SPDEs are infinite-dimensional objects, and the classical Lyapunov-Perron method cannot be applied, since the 
  Lyapunov-Perron operator does not give any information about the backward orbit.
  %We construct a function space $C^{\beta}_{-\beta}([0,T];\mathcal{B})$ based on the theory of interpolation spaces. 
  However, by means of interpolation theory, we construct a suitable function space in which the  discretized Lyapunov-Perron-type operator %converges
  has a unique fixed point. Based on this we further prove the existence and smoothness of local stable manifolds for such SPDEs.

\end{abstract}

\newcommand\keywords[1]{\textbf{Keywords}: #1}
\newcommand\msc[1]{\textbf{Mathematics Subject Classification (2020)}: #1}
 
\keywords{stable manifold, interpolation space, Lyapunov-Perron method, smoothness.}

\msc{60H15, 60G22, 37H05, 37L55.} % i would not put G0L20 and G0L50 because we don't really use rough paths arguments
%\tableofcontents
\section{Introduction}

Invariant manifolds including center, stable and unstable manifolds are powerful tools in the study of properties and structures for deterministic and random dynamical systems, such as the derivation of normal forms and understanding the nature of  bifurcations \cite{Hirsch1977,Deng1990,chow_li_wang_1994,Wanner1995}.  Recently, a random version of the $C^1$ Hartamn theorem was proved in \cite{LZZ2020} by constructing  a smooth weak-stable invariant manifold which provides a way to overcome difficulties from nonuniformity and measurability. In general, there are two approaches to construct  invariant manifolds: Hadamard's graph transform method \cite{Hadamard1901} and the Lyapunov-Perron method \cite{Perron1928,Liapounof}. The contribution list in this respect is too extensive to be completely written down herein. We refer to \cite{DLS2003,DLS2004,LL2010,mohammed2008,Caraballo2010} and the references specified therein.\\
In this work we investigate stable manifolds for  SPDEs given by
\begin{equation}\label{spde:intro}
    du(t)= [A u(t) + F(u(t))]dt + G(u(t))d\omega(t),
\end{equation}
where $\omega$ is a Hilbert space-valued fractional Brownian  motion (fBm) with Hurst index $H>\frac{1}{2}$. The assumptions on the linear operator $A$ and on the nonlinear terms $F$ and $G$ are specified in Subsection~\ref{assumptions}.
To obtain random invariant manifolds, one needs to  define a random dynamical system (RDS) from the stochastic (partial) differential equations considered. The Kolmogorov continuity theorem guarantees the existence of a finite dimensional RDS, however, there is no analogous result in the infinite dimensional setting \cite[page 246]{Arnold1998}. In the literature,  the conjugacy approach is mostly applied to overcome this difficulty, see \cite{DLS2003,DLS2004,LZ2021} and references among them. It allows one to convert the SPDEs into pathwise deterministic systems. For instance, a coordinate transform based on the stationary Ornstein-Uhlenbeck process is employed. This transformation can be only employed if the noise is additive or linear multiplicative. For nonlinear multiplicative noise, pathwise integrals are extensively used to avoid this problem. In particular,  using pathwise stochastic integrals via fractional calculus \cite{Zahle1998} or rough path theory \cite{FH2020,Gu2004}, one can immediately infer the existence of an RDS.  In this respect,  the pathwise solution and an RDS gererated by  a SPDE driven by fBm with $H>\frac{1}{2}$ has been derived in \cite{GLS2010} and a local unstable invariant manifold for this RDS has been established in \cite{GarridoAtienza2010}. Similar results in this direction have been obtained using rough path theory. More precisely, stable and unstable manifolds for singular delay equations have been obtained in~\cite{GVR21} and center manifolds in~\cite{KN22,KN21}.  Moreover, first results on a probabilistic construction of random dynamical systems for SPDEs are available in~\cite{G22}. Besides,  the framework of mean-square RDS \cite{Kloeden20121422} is also practicable to the case of general multiplicative noise, under which the theory of mean-square unstable manifolds  and stable invariant sets have been  developed in \cite{Wang2021,LI2022} for densely and non-densely defined linear operators.

In contrast to unstable or center manifolds, {stable manifolds} for SPDEs are {\em infinite-dimensional} objects, therefore their construction is more involved. Their existence was still an open problem for SPDEs driven by fBm with $H>\frac{1}{2}$ since the celebrated work  \cite{GarridoAtienza2010}, in which the Lyapunov-Perron method is applied, the associated Lyapunov-Perron operator is related to the backward orbit. The existence of a fixed point for this operator implies  the existence of a backward solution.  It has been proved in \cite{GLS2010} that the forward solution belongs to a subspace of $D((-A)^\alpha)$ for $\alpha>0$ (see Subsection~\ref{assumptions} for a precise statement)
%of a positive fractional power of the linear operator $A$ (i.e.~$D(A^\alpha)$ for $\alpha>0$) 
 %{\color{red} [I mistakenly understood this as the domain of $A^\alpha$. I think it should be the domain of $\pi^+id+\pi^-(-A\pi^-)^\alpha$ for the operator $A$ we consider]}
and it is also H\"older continuous on the state space. %(see Subsection \ref{assumptions} for the meanings of $\pi^+$ and $\pi^-$). 
For stable manifolds, this construction suggests that the backward solution is H\"older regular on the state space and should also belong to this domain %$D(A^\alpha)$ 
for a certain $\alpha>0$. %also indicates a proper space involving the domain and the space of H\"older continuous mappings in state space. 

However, one cannot seek stable manifolds in such a space, because the Lyapunov-Perron operator does not give any information on the backward solution, see Subsection~\ref{assumptions} for more details. Therefore one cannot a-priori make a similar ansatz as for unstable manifolds. 

To overcome this difficulty, inspired by \cite{GH2019,GHN21}, one of the main novel contributions of this work is to introduce the function space $C^{\beta}_{-\beta}([0,T];\mathcal{B})$. This contains continuous mappings with values in the state space, which we denote by $\cB$, and $\beta$-H\"older continuous mappings with values in $\cB_{-\beta}$, where $\beta>\frac{1}{2}$. Here $\{\cB_\gamma\}_{\gamma\in\R}$ is a monotone family of interpolation spaces associated to the linear part of the equation, as discussed in~\cite{GHN21}. A similar approach combined with rough paths tools has been followed in~\cite{KN22} to construct center manifolds for~\eqref{spde:intro}, where the Hurst index of the fBm $H\in(\frac{1}{3},\frac{1}{2}]$. Even though we deal with the case $H>\frac{1}{2}$, the fBm considered in this manuscript is infinite-dimensional, in contrast to~\cite{KN22}. Moreover, we only assume that the covariance operator $Q$ of the fBm is of trace-class. This assumption is less restrictive than the condition imposed in~\cite{GarridoAtienza2010} ($\text{tr}Q^{1/2}<\infty$), which constructs unstable manifolds for~\eqref{spde:intro}. 
% {\color{red} In my understanding, our condition on the covariance operator should be same to \cite{GarridoAtienza2010}: a  bounded and symmetric linear operator which is of trace-class.} \textcolor{blue}{There is a fine difference. We only need trace class but~\cite{GarridoAtienza2010} requires more than that, see the formula after equation (3.7) they need that the covariance operator  $Q$ satisfies $\sum\limits_{i=1}^\infty \sqrt{\mu_i}<\infty$ and trace-class means $\sum\limits_{i=1}^\infty \mu_i<\infty$. So in our case $\text{tr}Q<\infty$ and in their $\text{tr}Q^{1/2}<\infty$ because they define the stochastic integral in a different way. Given this, our result also improves their.}{\color{red}Yes, there does exist a difference due to their integral, thanks a lot!}
In order to establish the existence of stable manifolds, we first show the global existence of a solution  and therefore obtain by common methods an RDS. 
Putting these deliberations together we construct a Lyapunov-Perron type operator~\cite{GarridoAtienza2010,KN22}. Under a certain spectral gap condition, this operator is proved to have a unique fixed point in the function space $C^{\beta}_{-\beta}([0,T];\cB)$. This fixed point is further used to characterize the stable manifold. Due to the nonlinear diffusion coefficient $G$ and in order to control the growth of the noise, a truncation argument is additionally required for the fixed point argument. 
In conclusion our results provide a positive answer to the Lu-Schmalfu{\ss} conjecture \cite{GarridoAtienza2010,Lu2011}. {The pathwise integrals occurring in the Lyapunov-Perron method are constructed using fractional calculus as in~\cite{Chen2014}.~Nevertheless one should obtain similar results using Young's integral, see Remark~\ref{rem:young}.}
{Moreover we believe that these results can be generalized to the rough case, i.e.~$H\in(\frac{1}{3},\frac{1}{2}]$ using rough paths theory. The methods presented in this work combined with the tools developed in~\cite{KN22} are expected to entail the existence of stable manifolds provided that the driving noise is finite-dimensional. For infinite-dimensional trace-class noise, as considered here, similar arguments could be conducted  based on the results obtained in~\cite{HN20}. This case is technically more involved and therefore postponed to another work}. \\
 
This work is structured as follows. In Section \ref{sec2}, we introduce basic concepts on RDS, Hilbert space-valued fBm and a pathwise integral with respect to this process via fractional calculus. Furthermore, we formulate the equation we consider and prove the global well-posedness and the generation of an RDS (Theorem~\ref{th-exits-solu}). Section \ref{se3} contains our main results. In Theorem~\ref{thm:manifold} we prove the existence of a local stable manifold for~\eqref{spde:intro} in $C^{\beta}_{-\beta}([0,T];\mathcal{B})$. Under additional assumptions on the coefficients, the smoothness of this manifold is established in Subsection~\ref{smoothness}. Assertions regarding the smoothness of invariant manifolds have been derived in~\cite{DLS2004, LL2010} for linear multiplicative (Brownian) noise. To our best knowledge, this is the first work that provides a complete proof of the smoothness of random invariant manifolds for SPDEs with nonlinear multiplicative fractional noise.

\section{Preliminaries}
\label{sec2}

In this section, we present some basic concepts on RDS, fBm and the construction of the integral concerning $\beta$-H\"older continuous integrator from \cite{Arnold1998,Castaing1977,MV1968,Zahle1998,GarridoAtienza2010} and references therein. For the sake of completeness,  we also  establish the  well-posedness of SPDEs with fractional noise in a different function space as in \cite{NR2002,Chen2014,GLS2010}. Some  key estimates for the computations of stable manifolds will be provided in this setting. 

\subsection{Random dynamical systems}
 Given a metric dynamical system  $(\Omega,\mathcal{F},\mathbb{P},\{\theta_t\}_{t\in\mathbb{R}})$, a function $X:\Omega\rightarrow \mathbb{R}$ is called a random variable if it is $(\mathcal{F};\mathscr{B}(\mathbb{R}))$-measurable. It is  tempered if 
\begin{equation*}
\lim_{t\to\pm\infty}\frac{\log X(\theta_{t}\omega)}{t}=0,~~\mathbb{P}\text{-almost surely.}
\end{equation*}
In particular, it is tempered from above if 
\begin{equation*}
\lim_{t\to\pm\infty}\frac{\log^+ X(\theta_{t}\omega)}{t}=0,~~\mathbb{P}\text{-almost surely,}
\end{equation*}
and tempered from below if $1/X$ is tempered from above.~{Here we recall that $\log^+ x:=\max\{\log x, 0\}$.}
Let $(\mathcal{B}, |\cdot|, \langle\cdot,\cdot\rangle)$ be a separable Hilbert space. A  set-valued mapping $M:\Omega\rightarrow 2^{\mathcal{B}}\setminus\emptyset$, $\omega\mapsto M(\omega)$ is said to be a random set if $M(\omega)$ is a closed set and $\omega\mapsto \inf_{y\in M(\omega)}|x-y|$ is a random variable for each $x\in \mathcal{B}$.
\begin{definition}
A mapping $\varphi:\mathbb{R}^+\times\Omega\times \mathcal{B}\to \mathcal{B}$ defines a  random dynamical system  if
	\begin{itemize}
		\item [$(\mathrm{i})$] $\varphi$ is $(\mathscr{B}(\mathbb{R}^{+})\otimes\mathcal{F}\otimes\mathscr{B}(\mathcal{B}),\mathscr{B}(\mathcal{B}))$-measurable;
		\item [$(\mathrm{ii})$] the mapping $\varphi(t,\omega):=\varphi(t,\omega,\cdot): \mathcal{B}\to \mathcal{B}$ forms a cocycle over $\{\theta_t\}_{t\in\mathbb{R}}$ meaning that
		\begin{equation*}
		\begin{aligned}
		\varphi(0,\omega)&=\text{\rm{id}}_\mathcal{B}, \quad \text{ for all }\omega\in\Omega,\\
		\varphi(t+s,\omega)&=\varphi(t,\theta_s\omega)\circ\varphi(s,\omega),
		\quad \text{ for all }t, s \in \mathbb{R}^{+} \text{~and all~}  \omega\in\Omega.
		\end{aligned}
		\end{equation*}
	\end{itemize}
	\end{definition}
	\begin{definition}\label{def3.1}
		A random set $M$ is called an invariant set for $\varphi$ if
	\begin{equation*}
	\varphi(t,\omega,M(\omega))\subset M(\theta_t\omega) \text{~for~} t\ge0.
	\end{equation*} It is called a  random Lipschitz manifold  if it can be characterized by the graph of a Lipschitz mapping, meaning that there exists a function $m$ such that
\begin{align}
M(\omega)=\{x\in \mathcal{B} : x=\xi+m(\xi,\omega)\}.
\end{align}
Here $m(\cdot,\omega):\mathcal{B}^-\to \mathcal{B}^+$ is Lipschitz continuous and $\mathcal{B}^-$ and $ \mathcal{B}^+$ are unstable and stable subspaces as defined in Section~\ref{assumptions}.% $\mathcal{B}$, respectively.
	\end{definition}
	\begin{definition}\label{def3.4}
	A random  manifold $M(\omega)$ is called a  local stable manifold at zero for $\varphi$  if $m$ is defined in a  neighborhood $U\subset \mathcal{B}^-$ of zero and there exists a neighborhood $W(\omega)\subset \mathcal{B}$ of zero such that
	\begin{itemize}
		\item [$(1)$] for $x\in M(\omega)\cap W(\omega)$,
	\begin{equation*}
	\lim_{t\to+\infty}\varphi(t,\omega,x)=0 \quad\text{exponentially},
	\end{equation*}
		\item [$(2)$] 
		\begin{equation*}
	\lim_{|x|\to 0}t_0(\omega,x)=\infty, 
	\end{equation*}
where $x\in M(\omega)$ and 
\begin{align*}
    t_0(\omega,x)=\inf\{t\in\mathbb{R}^+:\varphi(t,\omega,x)\notin M(\theta_{t}\omega)\}.
\end{align*}
		\end{itemize}
\end{definition}
\subsection{Hilbert space-valued fractional Brownian motion and stochastic integral}
\label{sec2.2}
%For any $\rho>0$, one can consider the equivalent norm over $C^{\beta}([0,T];V)$ as
%\begin{equation*}
%  \|u\|_{\beta,\rho,T_1,T_2}=\sup_{T_1\leq t \leq T_2} e^{-\rho(t-T_1)}|u(t)|+\sup_{T_1\leq s<t\leq T_2} e^{-\rho(t-T_1)}\frac{|u(t)-u(s)|}{(t-s)^{\beta}}.
%\end{equation*}
%Let also $L^p([0,T],\mathbb{R})$ be the Banach space of measurable functions for which the
%$p$-th power of the absolute value is Lebesgue integrable, namely,
%\begin{equation*}
%  \|u\|_{L^p}=\left(\int_{T_1}^{T_2}|u|^p\right)^{1/p}<\infty,\quad 1\le p<\infty.
%\end{equation*}
The two-sided one-dimensional fBm $(\beta^H(t))_{t\in\mathbb{R}}$, with Hurst parameter $0<H<1$ is a continuous centered Gaussian process with covariance  %\cite{MV1968}
\begin{equation*}
\mathbb{E}\left[\beta^H(t)\beta^H(s)\right]=\frac{1}{2}\left(|s|^{2H}+|t|^{2H}-|t-s|^{2H}\right),\quad\text{for}~s,t\in\mathbb{R}.
\end{equation*}
In order to define a Hilbert space-valued fbm, we describe its covariance operator. We denote by $\{e_i\}_{i\in\mathbb{N}}$  the complete orthonormal basis of the separable Hilbert space $\cB$ and
consider a bounded symmetric, trace-class operator $Q$ on $\cB$. This means that there exists a sequence of non-negative constants $\{\mu_i\}_{i\in\mathbb{N}}$ such that $Qe_i=\mu_i e_i$ and the trace $\text{tr}Q=\sum_{i=1}^{\infty}\mu_i<\infty$. Thus one can define a $\mathcal{B}$-valued fBm with Hurst parameter $H$ as
\begin{equation*}
B^H(t)=\sum_{i=1}^\infty\sqrt{\mu_i}e_i\beta_i^H(t),~~t\in\mathbb{R},
\end{equation*}
where $\{\beta_i^H(t)\}_{i\in\mathbb{N}}$ is a sequence of stochastically independent one-dimensional fBms. We next construct a
canonical probability space associated to $B^H$. Denote by $\Omega=C_{0}(\mathbb{R};\mathcal{B})$ the space of continuous functions $\omega:\mathbb{R}\to \mathcal{B}$ such that $\omega(0)=0$, equipped with the compact open topology, $\mathcal{F}$ represents the associated Borel-$\sigma$-algebra of $\Omega$ and $\mathbb{P}$ is the Gaussian measure generated by the two-sided fractional Brownian motion $B^H$. On the probability space $(\Omega, \mathcal{F}, \mathbb{P})$, one can introduce $\{\theta_t\}_{t\in\mathbb{R}}$ the flow of Wiener shifts, given by $\theta_{t}\omega(\cdot)=\omega(\cdot+t)-\omega(\cdot)$. Then it follows from \cite[Corollary 1]{GS2011} that the quadruple $(\Omega,\mathcal{F},\mathbb{P},\{\theta_t\}_{t\in\mathbb{R}})$ is an ergodic metric dynamical system. 
% Furthermore, it holds that
% \begin{equation*}
%   B^H(\cdot,\omega)=\omega(\cdot), B^H(\cdot,\theta_t\omega)=B^H(\cdot+t,\omega)-B^H(t,\omega)=\omega(\cdot+t,\omega)-\omega(t,\omega).
% \end{equation*}

Since $B^H$ has a $\beta'$-H\"{o}lder continuous version on any compact interval for $\beta'<H$, there exists a subset $\Omega'$ with full measure satisfying that for every interval $[-n,n]$, $n\in\mathbb{N}$, $$\vertiii{\omega}_{\beta',-n,n}=\sup_{-n\leq s<t\leq n}\frac{|\omega(t)-\omega(s)|}{(t-s)^{\beta'}}< \infty.$$

Note that it was proved in \cite[Lemma 15]{Chen2014} that $\Omega'$ is $\{\theta_t\}_{t\in\mathbb{R}}$-invariant. In the following we restrict this ergodic metric dynamical system to $\Omega'$ and denote this restriction using the old symbols $(\Omega,\mathcal{F},\mathbb{P},\{\theta_t\}_{t\in\mathbb{R}})$.% with a slight
%abuse of notation. 

We now introduce a suitable stochastic integral with respect to $\omega$. To do this, we first recall some tools from fractional calculus (consult the monograph \cite{SKM1993} for more details).
%Denote $\{e_i\}_{i\in\mathbb{N}}$ by the complete orthonormal basis of $\mathcal{B}$.
Let $L_2(\mathcal{B};\mathcal{B})$ be the space of Hilbert-Schmidt operators from $\mathcal{B}$ to $\mathcal{B}$ endowed with the norm
\begin{equation*}
  \|L\|_{L_2(\mathcal{B};\mathcal{B})}^2=\sum_{i=1}^\infty |Le_i|^2,
\end{equation*}
for $L\in L_2(\mathcal{B};\mathcal{B})$.  It is well-known that this is a separable Hilbert space.
% Denote by $C([0,T];\mathcal{B})$ the Banach space of continuous functions equipped with the supremum norm
% \begin{equation*}
% \|u\|_{\infty,0,T}=\sup_{0\leq t\leq T}|u(t)|.
% \end{equation*}
% For $0<\beta<1$, let $C^{\beta}([0,T];\mathcal{B})$ be the Banach space of H\"{o}lder continuous functions assigned the norm
% \begin{equation*}
% \|u\|_{\beta,0,T}=\|u\|_{\infty,0,T}+\vertiii{u}_{\beta,0,T},~~~
% \vertiii{u}_{\beta,0,T}=\sup_{0\leq s<t\leq T}\frac{|u(t)-u(s)|}{(t-s)^{\beta}}.
% \end{equation*}
For $0<\alpha<1$, let $\mathcal{H}_1,\mathcal{H}_2$ be two separable Hilbert spaces, $f:[0,T]\rightarrow \mathcal{H}_1$ and $g:[0,T]\rightarrow \mathcal{H}_2$ be
the H\"{o}lder continuous functions with the exponents $\alpha$ and $1-\alpha$, respectively. Then the Weyl right and left hand side fractional derivatives
are defined as follows:
\begin{align*}
D_{0+}^{\alpha}f[t]&=\frac{1}{\Gamma(1-\alpha)}\left(\frac{f(t)}{t^{\alpha}}+\alpha\int_{0}^{t}\frac{f(t)-f(q)}
{(t-q)^{1+\alpha}}dq\right)\in \mathcal{H}_1,\\
D_{T-}^{1-\alpha}g_{T-}[t]&=\frac{(-1)^{1-\alpha}}{\Gamma(\alpha)}\left(\frac{g(t)-g(T-)}{(T-t)^{1-\alpha}}
+(1-\alpha)\int_{t}^{T}\frac{g(t)-g(q)}{(q-t)^{2-\alpha}}dq\right)\in \mathcal{H}_2,
\end{align*}
where $g(T-)$ is the left limit of $g$ at $T$. We now take $\mathcal{H}_1=L_2(\mathcal{B};\mathcal{B})$ and $\mathcal{H}_2=\mathcal{B}$ and assume $1-\beta'<\alpha<\beta$, $Z\in C^\beta([0,T];L_2(\mathcal{B};\mathcal{B}))$ and
$\omega\in C^{\beta'}([0,T];\mathcal{B})$ such that
\begin{equation*}
  {t}\mapsto \left\|D_{0+}^{\alpha}Z[t]\right\|_{L_2(\mathcal{B};\mathcal{B})}\left|D_{T-}^{1-\alpha}\omega_{T-}[t]\right|\in L^1([0,T];\mathbb{R}).
\end{equation*}
Then it was proved in \cite[Lemma 3]{Chen2014} that for $0\le t \le T$ the integral
\begin{equation}\label{eq2.2}
\int_{0}^{T}Z(t)d\omega(t)=(-1)^\alpha\sum_{j\in\mathbb{N}}
\left(\sum_{i\in\mathbb{N}}\int_{0}^{T}D_{0+}^{\alpha}\langle e_j,Z(\cdot)e_i\rangle[t]D_{T-}^{1-\alpha}\langle e_i,\omega(\cdot)\rangle[t]dt\right)e_j
\end{equation}
is well-defined. Also, the estimate of its norm follows
\begin{align}\label{eq2.3}
\left|\int_{0}^{T}Z(t)d\omega(t)\right|_{\cB}\leq\int_{0}^{T}\left\|D_{0+}^{\alpha}Z[t]\right\|_{L_2(\mathcal{B};\mathcal{B})}
\left|D_{T-}^{1-\alpha}\omega_{T-}[t]\right|dt,
\end{align}
and the additivity of such integral yields
\begin{equation}\label{eq2.4}
\int_{0}^{T}Z(t)d\omega(t)=\int_{-\tau}^{T-\tau}Z(t+\tau)d\theta_\tau\omega(t), \qquad \forall~ \tau\in\mathbb{R}.
\end{equation}
\begin{remark}
Note that the trace-classs assumption on the noise is less restrictive than the condition $\text{tr}Q^{1/2}<\infty$ imposed in~\cite{GarridoAtienza2010}. {Alternatively, one can use Young's integral or rough paths theory as in~\cite{HN20} to define~\eqref{eq2.2}. }Here we use for simplicity fractional calculus relying on the definition of the stochastic integral introduced in~\cite{Chen2014}.  
\end{remark}

\subsection{Equation of study}\label{assumptions}
We consider the following  stochastic evolution equation on the separable Hilbert space $\cB$
\begin{equation}\label{eq2.5}
du(t)=[Au(t)+F(u(t))]dt+G(u(t))d\omega(t),
\end{equation}
where $\omega$ denotes the infinite dimensional fBm  with Hurst index $H>\frac{1}{2}$ and the stochastic integral with respect to $\omega$ is interpreted in the sense of (\ref{eq2.2}). We impose the following assumptions:
\begin{itemize}
  \item [$(\bf A_1$)] $A$ is  the generator of an analytic semigroup $S(\cdot)$ on a monotone family of interpolation spaces $\{\mathcal{B}_{\theta}\}_{\theta\in\mathbb{R}}$. {We recall that 
  a family of separable Banach spaces $(\cB_\theta,|\cdot|_\theta)_{\theta\in\mathbb{R}}$ is called a monontone family of interpolation spaces if for $\beta_1\leq \beta_2$, the space $\cB_{\beta_2}\subset \cB_{\beta_1}$ with dense and continuous embedding and the following interpolation inequality holds for $\theta\leq \beta\leq \gamma$ and $x\in  \cB_{\gamma}$:
\begin{align}\label{interpolation:ineq}
    |x|^{\gamma-\theta}_\beta \lesssim |x|^{\gamma-\beta}_\theta |x|^{\beta-\theta}_\gamma.
\end{align}
  We set $\cB_0:=\cB$. } Furthermore the spectrum of $A$ is discrete and, in particular,  does not contain $0$. The eigenvalues of $A$ are real and can be ordered as $\lambda_1>\lambda_2>\cdots>\lambda_n\cdots$, and $\lim_{n\to\infty}\lambda_n=-\infty$.
  \item [$(\bf A_2$)] The mapping $F\in C^1_b(\cB;\cB)$ with $F(0)=DF(0)=0$ and there exists a constant $L_F>0$ such that
%\begin{align*}
%\|F(u)-F(v)\|_{\gamma-\vartheta}\leq L_F|u-v|_{\gamma},~~~\gamma\in\mathbb{R}, \vartheta\in [0,1]
%\end{align*}
\begin{align}
%&|F(u)-F(v)|_{-\beta}\leq L_F|u-v|,\label{F1}\\
&|F(u)-F(v)|\leq L_F|u-v|~\text{ for all } u,v\in\cB\label{F2}.
\end{align}
%for some constant $L_F>0$.% and $u,v,h\in V_{\gamma}$.
  \item [$(\bf A_3$)] The mapping $G\in C^2_b(\mathcal{B};L_2(\mathcal{B};\mathcal{B}))$ with $G(0)=DG(0)=0$,~i.e. is a twice continuously Fr\'{e}chet-differentiable  operator with bounded first derivative $DG$ and second derivative $D^2G$.  There also exists a constant $L_G>0$ such that
%\begin{align*}
%&|G(u)-G(v)|_{\gamma-\vartheta}\leq L_G|u-v|_{\gamma},\\
%&|DG(u)h-DG(v)h|_{\gamma-\vartheta}\leq L_G|u-v|_{\gamma}|h|_{\gamma},
%~~~\gamma\in\mathbb{R}, \vartheta\in [0,1]
%\end{align*}
\begin{align}
&\|G(u)-G(v)\|_{L_2(\mathcal{B};\mathcal{B})}\leq L_G|u-v|_{-\beta}~\text{ for all } u,v\in\cB\label{G0}\\
&\|DG(u)h-DG(v)h\|_{L_2(\mathcal{B};\mathcal{B}_{-\beta})}\leq L_G|u-v|_{-\beta}|h|_{-\beta},~\text{ for all } u,v,h\in\cB.\label{G4}
\end{align}
%for some constant $L_G>0$ and $u,v,h\in \mathcal{B}$. Furthermore, we additionally require that $G(0)=0$ and $DG(0)=0$
  \item [$(\bf A_4$)] $\frac{1}{2}<\beta<\beta'<H<1$ and $1-\beta'<\alpha<\beta$.
\end{itemize}
%For $u_1,v_1,u_2,v_2\in V$, we have 
%\begin{align*}
%&G(u_1)-G(v_1)-G(u_2)+G(v_2)\\
%&\quad=\int_{0}^1DG(\xi u_1+(1-\xi)v_1)d\xi (u_1-v_1)-\int_{0}^1DG(\xi u_2+(1-\xi)v_2)d\xi (u_2-v_2)\\
%&\quad=\int_{0}^1[DG(\xi u_1+(1-\xi)v_1)-DG(\xi u_2+(1-\xi)v_2)]d\xi (u_1-v_1)\\
%&\qquad+\int_{0}^1DG(\xi u_2+(1-\xi)v_2)d\xi (u_1-v_1-u_2+v_2)\\
%&\quad=\int_{0}^1\int_0^1D^2G(h(\xi u_1+(1-\xi)v_1)+(1-h)(\xi u_2+(1-\xi)v_2))dh\\
%&\qquad\times[\xi u_1+(1-\xi)v_1-\xi u_2-(1-\xi)v_2]d\xi (u_1-v_1)\\
%&\qquad+\int_{0}^1DG(\xi u_2+(1-\xi)v_2)d\xi (u_1-v_1-u_2+v_2).
%\end{align*}
Note that due to (\ref{G4}) we have for $u_1,v_1,u_2,v_2\in\mathcal{B}$ that
\begin{align*}
&\|G(u_1)-G(v_1)-G(u_2)+G(v_2)\|_{L_2(\mathcal{B};\mathcal{B}_{-\beta})}\\
&\quad\leq L_G(|u_1-u_2|_{-\beta}+|v_1-v_2|_{-\beta})|u_1-v_1|_{-\beta}
+(|u_2|_{-\beta}+|v_2|_{-\beta})|u_1-v_1-u_2{\color{blue}+}v_2|_{-\beta}.
\end{align*}
\begin{remark}Note that $\mathcal{B}$ is continuously embedded into $\mathcal{B}_{-\beta}$, therefore we can always  find a constant  $C>0$ such that $|u|_{-\beta}\leq C|u|$. Hence, there exists constants, still denoted by $L_F$ and $L_G$ for the  simplicity of notation, such that 
% for $F$ it holds 
$|F(u)-F(v)|_{-\beta}\leq L_F|u-v|.$
% \begin{align*}
% |F(u)-F(v)|_{-\beta}\leq L_F|u-v|.
% \end{align*}
Similarly for $G$ we have \begin{align}
&\|G(u)-G(v)\|_{L_2(\mathcal{B};\mathcal{B})}\leq L_G|u-v|,\label{G1}\\
&\|G(u)-G(v)\|_{L_2(\mathcal{B};\mathcal{B}_{-\beta})}\leq L_G|u-v|,\label{G2}\\
&\|G(u)-G(v)\|_{L_2(\mathcal{B};\mathcal{B}_{-\beta})}\leq L_G|u-v|_{-\beta}.\label{G3}
\end{align}
\end{remark}
\begin{remark}
Under weaker assumptions on $G$, the global existence (and not uniqueness) of solutions for~\eqref{eq2.5} was proved in \cite[Theorem 8, Lemma 10]{GSV2019}.
\end{remark}
For a better comprehension, we discuss some important consequences of the imposed assumptions. First, based on $(\bf A_1$) we can decompose the space $\mathcal{B}$ into a direct sum of closed invariant subspaces:
\begin{equation*}
  \mathcal{B}=\mathcal{B}^+\oplus \mathcal{B}^-,
\end{equation*}
where the finite dimensional linear subspace $\mathcal{B}^+$ is spanned by the eigenvectors with eigenvalues larger than 0, and $\mathcal{B}^-$ is the linear subspace spanned by the eigenvectors with eigenvalues less than 0. Let us denote by $\pi^{\pm}$ the orthogonal projections associated with this splitting, and $S^{\pm}$ the restriction of the semigroup $S$ to $\mathcal{B}^{\pm}$. In other words, $S^+$ is generated by the bounded operator $\pi^+A$ on $\mathcal{B}^+$ and $S^-$ is generated by the strictly negative operator $\pi^-A$ on $\mathcal{B}^-$. The projections commute with the semigroup, namely it holds that $\pi^{\pm}S=S\pi^{\pm}$ and there exist positive constants $\hat{\mu}$ and $c_S$ and a negative constant $\check{\mu}$ such that
\begin{subequations}
  \begin{align}
  \|S^+(t)x\|&\le c_S e^{\hat{\mu}t}\|x\|,\quad t\le0,~x\in \mathcal{B},\label{eq.S-posi}\\
  \|S^-(t)x\|&\le c_S e^{\check{\mu}t}\|x\|,\quad t\ge0,~x\in \mathcal{B}.\label{eq.S-nega}
\end{align}
\end{subequations}
Here $\hat{\mu}$ can be any positive number less than the smallest positive eigenvalue of $A$ and  $\check{\mu}$ can be any negative number larger than the largest negative eigenvalue of $A$. Moreover {it follows for e.g. from~\cite[p.~118]{Triebel}} that for any $\sigma\in\mathbb{R}$, the interpolation space $\mathcal{B}_\sigma$ admits a  decomposition given by $\mathcal{B}_\sigma=\mathcal{B}^+\oplus \mathcal{B}^-_{\sigma}$, where $\mathcal{B}^-_{\sigma}=\mathcal{B}_{\sigma}\cap \mathcal{B}^-$.  {We refrain from giving an additional index to the finite dimensional space $\mathcal{B}^+$.~We recall that $\mathcal{B}_{0}=\mathcal{B}$ and $\mathcal{B}_{\sigma_2}\subset\mathcal{B}_{\sigma_1}$ for $\sigma_1\leq \sigma_2$ with dense and continuous embeddings}. Let $L(\mathcal{B}_{\eta};\mathcal{B}_{\gamma})$ be the space of continuous linear operators from $\mathcal{B}_{\eta}$ to $\mathcal{B}_{\gamma}$. Thanks to the analyticity of the semigroup, it follows from \cite{Pazy1983} that for $0\le\sigma \le1$, $x\in\mathcal{B}_{\gamma+\sigma}$
\begin{equation}\label{eq2.7}
|S(t)x|_{\gamma+\sigma}\leq c_St^{-\sigma}|x|_{\gamma},
\end{equation}
and
\begin{equation}\label{eq2.8}
|(S(t)-\text{id})x|_{\gamma}\leq c_St^{\sigma}|x|_{\gamma+\sigma}.
\end{equation}
It is straightforward from the above inequalities that
\begin{equation}\label{eq2.9}
\|S(t-r)-S(t-\tau)\|_{L(\mathcal{B}_\eta,\mathcal{B}_\gamma)}\leq c_S(r-\tau)^\nu (t-r)^{-\nu-\gamma+\eta},\quad \text{for~} 0\le\eta-\nu\le \gamma,
\end{equation}
and
\begin{equation}\label{eq2.10}
\|S(t-r)-S(s-r)-S(t-\tau)+S(s-\tau)\|_{L(\mathcal{B};\mathcal{B})}\leq c_S(t-s)^{\kappa}(r-\tau)^{\iota}(s-r)^{-(\kappa+\iota)},
\end{equation}
for $\kappa,\iota\ge0$, {$\tau\leq r\leq s\leq t$}. 

We interpret \eqref{eq2.5} in the mild form
\begin{equation}\label{mild sol}
u(t)=S(t)u_0+\int_0^t S(t-\tau)F(u(\tau))d\tau+\int_0^t S(t-\tau)G(u(\tau))d\omega(\tau),
\end{equation}
where the initial condition $u(0)=u_0\in \mathcal{B}$.
Existence of solutions to  \eqref{eq2.5} was first studied in \cite{Chen2014} (see also \cite[Theorem 3.2]{Gao2014}) in $C^{\beta,\sim}([0,T];\mathcal{B})$, the
space of continuous functions $u:[0,T]\to \mathcal{B}$ such that
\begin{equation*}
\|u\|_{\beta,\sim,[0,T]}=\sup_{t\in[0,T]}|u(t)|+\sup_{0\leq s<t\leq T}s^{\beta}\frac{|u(t)-u(s)|}{(t-s)^{\beta}}.
\end{equation*}
If the initial value $u_0\in \mathcal{B}_{\beta}$,  the solution $u\in C^{\beta}([0,T];\mathcal{B})$.
% whose norm is given by 
% \begin{equation*}
% \|u\|_{\beta,[0,T]}=\sup_{t\in[0,T]}|u(t)|+\sup_{0\leq s<t\leq T}\frac{|u(t)-u(s)|}{(t-s)^{\beta}}.
% \end{equation*}
  %However, note that $u$ is not H\"older continuous if $u_0\in \cB$ but $u\in C^{\beta,\sim}([0,T];\cB)$. 
 %loses the H\"older continuity when $u_0\in \mathcal{B}$ but $\mathcal{B}_{\beta}$.

%Under the framework of \cite{Chen2014}, further random dynamics of \eqref{mild sol} are investigated.
In \cite{GarridoAtienza2010}, working with the scale $\cB_\beta:=D(\pi^+id_\cB+\pi^-(-A\pi^-)^\beta)$ (equipped with the graph norm $|x|_{\cB_\beta}:=|\pi^+ x| + |\pi^{-}(-A\pi^{-})^\beta x|$), the unstable invariant manifold is obtained  in $\mathcal{B}_{\delta}$ for $\delta\in[0,1-\xi),~\xi\in [\alpha,1-\alpha)$. As already stated, the operator $A\pi^{-}$ is strictly negative {definite} on $\cB^{-}$ ($A$ itself is not, due to the assumed spectral decomposition). Therefore it makes sense to construct its fractional powers $(-A\pi^{-})^\beta$ for $\beta>0$.
Further, we remark that it is reasonable to have this prediction on the manifold (i.e.~that it should belong to $\cB_\delta)$ before even obtaining it, because if the backward orbit exists, that is for any $u_0\in \mathcal{B}$, $t>0$, there is a $u_{-t}\in \mathcal{B}$ such that $\varphi(t,\theta_{-t}\omega,u_{-t})=u_0$, the fact that $\varphi(t,\theta_{-t}\omega,u_{-t})\in \mathcal{B}_{\delta},\delta\in[0,1-\xi),~\xi\in [\alpha,1-\alpha)$ implies $u_0\in \mathcal{B}_{\delta}$. The condition of $u_0\in \mathcal{B}_{\delta}$ provides a possibility to obtain the fixed point of %avoid %a embarrassing situation % the non-convergence
 the Lyapunov-Perron operator. For stable manifolds,  one cannot  a-priori {prescribe} that the initial conditions are in  $\mathcal{B}_{\delta}$, which leads to the possibility of non-convergence of the Lyapunov-Perron operator. To overcome this problem, we introduce a modified space $C^{\beta}_{-\beta}([0,T];\mathcal{B})$ endowed with the norm 
% \textcolor{blue}{Q: Would it be enough to work only in $C([0,T];\cB_{-\beta})$? For the part containing the initial data we get $\beta$-H\"older continuity, maybe for the rest terms is not enough? $|S(t) u _0 -S(s) u_0|\lesssim \|(S(t-s)-id) u_0\|_{\cL(\cB,\cB_{-\beta})}\lesssim (t-s)^\beta \|u_0\|_{\cB}. $} 
% \textcolor{red}{A: the deduction procedure of $\beta$-H\"older continuity in $\cB_{-\beta}$ for terms with $F$ and $G$  are provided in the proof of Theorem \ref{th-exits-solu} and Lemma \ref{U-V:G} respectively. If we work in $C([0,T];\cB_{-\beta})$, $u$ will even be not  $\beta$-H\"older continuous.
% For the fractional integral term, we inevitably need to compute $\|D_{s+}^{\alpha}S(t-\cdot)G(u(\cdot))[r]\|_{L_2(\mathcal{B};\mathcal{B}_{-\beta})}$, see the first formula of Subsection, in which the term 
% \begin{align*}
%     C_{\alpha}\int_{s}^r\frac{\|S(t-q)\|_{L(\mathcal{B}_{-\beta};\mathcal{B}_{-\beta})}\|G(u(r))-G(u(q))\|_{L_2(\mathcal{B};\mathcal{B}_{-\beta})}}{(r-q)^{1+\alpha}}dq
% \end{align*}
% can not be estimated without the $\beta$-H\"older continuity of $u$.} \textcolor{blue}{Thank you! Yes,  $C^\beta([0,T];\cB_{-\beta})$ is fine. We can say our modified space is $C([0,T];\cB)\cap C^\beta([0,T];\cB_{-\beta})$. This is the same space as in~\cite{GHN21} if one works with the Young integral, i added a remark in this sense.}
\begin{equation*}
\|u\|_{\beta,-\beta,[0,T]}=\sup_{t\in[0,T]}|u(t)|+\sup_{0\leq s<t\leq T}\frac{|u(t)-u(s)|_{-\beta}}{(t-s)^{\beta}},
\end{equation*}
and study the stable manifold in this function space. For the sake of completeness, we will prove the existence of solutions of~\eqref{eq2.5} in $C^{\beta}_{-\beta}([0,T];\mathcal{B})$. To this aim and in order to get global-in-time existence, we introduce an equivalent norm on $C^{\beta}_{-\beta}([0,T];\mathcal{B})$ given by 
\begin{align}\label{norm:c}
\|u\|_{\beta,\rho,-\beta,[T_1,T_2]}=\sup_{t\in[T_1,T_2]}e^{-\rho (t-T_1)}|u(t)|+\sup_{T_1\leq s<t\leq T_2}\frac{e^{-\rho(t-T_1)}|u(t)-u(s)|_{-\beta}}{(t-s)^{\beta}},~~\rho\geq0.
\end{align}
\begin{remark}\label{rem:young}
    Note that our space $C^\beta_{-\beta}([0,T];\cB)=C([0,T];\cB)\cap C^{\beta}([0,T];\cB_{-\beta})$. This is consistent with the approach in~\cite{GHN21} for the Young integral. This can be defined for a process $y\in C([0,T];\cB)\cap C^\beta([0,T];\cB_{-\beta})$ (if $\omega$ is $\beta$-H\"older continuous with $\beta>\frac{1}{2})$) as 
    \begin{align*}
    \int_0^t S(t-r) y(r)d\omega(r) = \lim\limits_{|\mathcal{P}|\to 0}\sum\limits_{[u,v]\in\mathcal{P}} S(t-u) y(u) (\omega(v) - \omega(u))
    \end{align*}
    and estimated by 
    \begin{align*}
    \Big | \int_s^t S(t-r) y(r)d\omega(r) - y(s)(\omega(t)-\omega(s)) \Big|_{-\beta} \lesssim (t-s)^{2\beta} \|y\|_{C^\beta_{-\beta}([0,T];\cB)}\vertiii{\omega}_{\beta,0,T}.
    \end{align*}
\end{remark}
% \textcolor{red}{Thanks very much. It is better to add a remark to state the Young integral case here. About the right of this estimation, why is here $\|\omega\|_{2\beta}$, but not $\vertiii{\omega}_{\beta'}$, and why are here no things of $y$? Could you provide me a detailed derivation? I did not find associated references.}\textcolor{blue}{thanks a lot, i corrected the typo. It's a special case of Thereom 4.5~\cite{GHN21} in the Young case, i also added the reference in the remark} \textcolor{red}{That makes sense, thanks!}
\subsection{Existence and uniqueness of a global solution}

We prove that the SPDE~\eqref{eq2.5} has a unique global-in-time solution in the function space $C^{\beta}_{-\beta}([0,T];\mathcal{B})$, which generates a random dynamical system. For our aims, we begin with some preliminary considerations for the fractional integral and present several estimates of the stochastic convolution required in~\eqref{mild sol}. 
%\textcolor{blue}{formulate the next results as lemmas}
Based on the definition of the fractional derivative, on the assumptions on the nonlinear term $G$ and regarding~\eqref{norm:c} we obtain 
the following preliminary lemmas.
\begin{lemma}
For $\beta>\alpha$ we have 
\begin{align*}
   \|D_{s+}^{\alpha}S(t-\cdot)G(u(\cdot))[r]\|_{L_2(\mathcal{B};\mathcal{B}_{-\beta})}  \leq c_SC_{\alpha,\beta}L_G\|u\|_{\beta,\rho,-\beta}[(r-s)^{-\alpha}+(r-s)^{\beta-\alpha}]e^{\rho r}
\end{align*}
as well as 
\begin{align*}
&\|D_{s+}^{\alpha}S(t-\cdot)G(u(\cdot))[r]\|_{L_2(\mathcal{B};\mathcal{B})} \leq c_SC_{\alpha,\beta}L_G\|u\|_{\beta,\rho,-\beta}[(r-s)^{-\alpha}+(t-r)^{-\beta}(r-s)^{\beta-\alpha}]e^{\rho r}.
\end{align*}
\end{lemma}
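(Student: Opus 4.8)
The plan is to expand the Weyl fractional derivative through its definition and to estimate each resulting piece separately in the two target norms. Writing $f(r):=S(t-r)G(u(r))$ for the operator-valued integrand and applying Minkowski's integral inequality, I obtain, for $\cX\in\{\cB_{-\beta},\cB\}$,
\[
\|D_{s+}^{\alpha}f[r]\|_{L_2(\cB;\cX)}\leq \frac{1}{\Gamma(1-\alpha)}\Big(\frac{\|f(r)\|_{L_2(\cB;\cX)}}{(r-s)^{\alpha}}+\alpha\int_s^r\frac{\|f(r)-f(q)\|_{L_2(\cB;\cX)}}{(r-q)^{1+\alpha}}\,dq\Big).
\]
Thus it suffices to control the boundary term $\|f(r)\|$ and the increment $\|f(r)-f(q)\|$, after which the elementary $q$-integration of powers $(r-q)^{\beta-1-\alpha}$ is performed; this converges precisely because of the standing assumption $\beta>\alpha$, and produces the exponent $\beta-\alpha$. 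For the boundary term I use $G(0)=0$ together with~\eqref{G0} and~\eqref{G3}, which give $\|G(u(r))\|_{L_2(\cB;\cB_{-\beta})}\leq L_G|u(r)|_{-\beta}$ and $\|G(u(r))\|_{L_2(\cB;\cB)}\leq L_G|u(r)|_{-\beta}$; combining with the $\sigma=0$ case of~\eqref{eq2.7} (boundedness of $S(t-r)$ on $\cB_{-\beta}$ resp.\ $\cB$) and the embedding $\cB\hookrightarrow\cB_{-\beta}$ together with the definition of $\|\cdot\|_{\beta,\rho,-\beta}$, this yields the $(r-s)^{-\alpha}e^{\rho r}$ contribution in both bounds.

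For the increment I would split $f(r)-f(q)=S(t-r)[G(u(r))-G(u(q))]+[S(t-r)-S(t-q)]G(u(q))$ into a ``regularity of $G\circ u$'' part and a ``regularity of the semigroup'' part. In the first summand the key input is the H\"older bound $|u(r)-u(q)|_{-\beta}\leq\|u\|_{\beta,\rho,-\beta}\,e^{\rho r}(r-q)^{\beta}$ read off from the weighted norm. For the $\cB_{-\beta}$ estimate I then apply~\eqref{G3} and the boundedness of $S(t-r)$ on $\cB_{-\beta}$; for the $\cB$ estimate I apply~\eqref{G3} followed by the smoothing estimate~\eqref{eq2.7} with $\gamma=-\beta$, $\sigma=\beta$, which is exactly what introduces the loss $(t-r)^{-\beta}$. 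After integration in $q$ these give $(r-s)^{\beta-\alpha}$ and $(t-r)^{-\beta}(r-s)^{\beta-\alpha}$ respectively.

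The second summand is the main point and the step requiring the most care. Using the semigroup law $S(t-q)=S(t-r)S(r-q)$ (valid since $q\leq r\leq t$) I rewrite $S(t-r)-S(t-q)=S(t-r)(\mathrm{id}-S(r-q))$. Estimate~\eqref{eq2.8} with $\gamma=-\beta$, $\sigma=\beta$ then yields, after summing over the Hilbert--Schmidt basis, $\|(\mathrm{id}-S(r-q))G(u(q))\|_{L_2(\cB;\cB_{-\beta})}\leq c_S(r-q)^{\beta}\|G(u(q))\|_{L_2(\cB;\cB)}\leq c_S L_G(r-q)^{\beta}|u(q)|_{-\beta}$, where~\eqref{G0} is used to return to $L_2(\cB;\cB)$. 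For the $\cB_{-\beta}$ estimate I merely apply boundedness of $S(t-r)$ on $\cB_{-\beta}$; for the $\cB$ estimate I apply~\eqref{eq2.7} once more to lift from $\cB_{-\beta}$ to $\cB$, again at the cost of $(t-r)^{-\beta}$. Integrating the resulting $(r-q)^{\beta-1-\alpha}$ over $[s,r]$ reproduces the same two powers as before.

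Collecting the three contributions and absorbing $\Gamma(1-\alpha)^{-1}$, $\alpha/(\beta-\alpha)$, the embedding constant $C$, and the extra factor $c_S$ into a single constant $C_{\alpha,\beta}$ delivers both asserted bounds, with $e^{\rho r}$ arising uniformly because $q\leq r$ and $\rho\geq 0$. I expect the only genuinely delicate point to be the bookkeeping of the smoothing exponents: the difference between the two estimates is entirely attributable to the stronger target space $\cB$, which forces the loss $(t-r)^{-\beta}$ through~\eqref{eq2.7} in precisely those two terms that are naturally controlled only in $\cB_{-\beta}$, while the $q$-integrability hinges throughout on $\beta>\alpha$.
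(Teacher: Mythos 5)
Your proof is correct and follows essentially the same route as the paper: expand the Weyl derivative, bound the boundary term via $G(0)=0$, and split the increment of $S(t-\cdot)G(u(\cdot))$ into a semigroup-difference part and a $G$-increment part, with the loss $(t-r)^{-\beta}$ entering exactly where the target space is $\cB$ rather than $\cB_{-\beta}$. The only (immaterial) difference is the symmetric choice in the product split --- you attach the semigroup difference to $G(u(q))$ and factor it as $S(t-r)(\mathrm{id}-S(r-q))$, while the paper attaches it to $G(u(r))$ and invokes \eqref{eq2.9} directly --- both yield the same powers of $(r-q)$ and $(t-r)$.
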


\begin{proof}
    By immediate computations we derive 
    \begin{align*}
&\|D_{s+}^{\alpha}S(t-\cdot)G(u(\cdot))[r]\|_{L_2(\mathcal{B};\mathcal{B}_{-\beta})}\\
 &\quad\leq \frac{1}{\Gamma(1-\alpha)}\bigg(\frac{\|S(t-r)G(u(r))\|_{L_2(\mathcal{B};\mathcal{B}_{-\beta})}}{(r-s)^{\alpha}}\\
 &\qquad+\alpha\int_{s}^{r}\frac{\|S(t-r)G(u(r))-S(t-q)G(u(q))\|_{L_2(\mathcal{B};\mathcal{B}_{-\beta})}}{(r-q)^{\alpha+1}}dq\bigg)\\
&\quad\leq c_SC_{\alpha}L_G|u(r)|(r-s)^{-\alpha}+C_{\alpha}\int_{s}^r\frac{\|S(t-r)-S(t-q)\|_{L(\mathcal{B};\mathcal{B}_{-\beta})}\|G(u(r))\|_{L_2(\mathcal{B};\mathcal{B})}}{(r-q)^{1+\alpha}}dq\\
&\qquad+C_{\alpha}\int_{s}^r\frac{\|S(t-q)\|_{L(\mathcal{B}_{-\beta};\mathcal{B}_{-\beta})}\|G(u(r))-G(u(q))\|_{L_2(\mathcal{B};\mathcal{B}_{-\beta})}}{(r-q)^{1+\alpha}}dq\\
&\quad\leq c_SC_{\alpha}L_G(r-s)^{-\alpha}e^{\rho r}\|u\|_{\beta,\rho,-\beta}+c_SC_{\alpha}L_G\|u\|_{\beta,\rho,-\beta}\int_{s}^r(r-q)^{\beta-\alpha-1}e^{\rho r}dq\\
&\quad\leq c_SC_{\alpha,\beta}L_G\|u\|_{\beta,\rho,-\beta}[(r-s)^{-\alpha}+(r-s)^{\beta-\alpha}]e^{\rho r},
\end{align*}
which proves the first statement. 
Similarly, taking the $\|\cdot\|_{L_2(\mathcal{B};\mathcal{B})}$-norm and using \eqref{G1}--\eqref{G3}, we obtain the second statement
\begin{align*}
&\|D_{s+}^{\alpha}S(t-\cdot)G(u(\cdot))[r]\|_{L_2(\mathcal{B};\mathcal{B})}\\
&\leq \frac{1}{\Gamma(1-\alpha)}\left(\frac{\|S(t-r)G(u(r))\|_{L_2(\mathcal{B};\mathcal{B})}}{(r-s)^{\alpha}}+\alpha\int_{s}^{r}\frac{\|S(t-r)G(u(r))-S(t-q)G(u(q))\|_{L_2(\mathcal{B};\mathcal{B})}}{(r-q)^{\alpha+1}}dq\right)\\
&\leq c_SC_{\alpha}|u(r)|(r-s)^{-\alpha}+C_{\alpha}\int_{s}^r\frac{\|S(t-r)-S(t-q)\|_{L(\mathcal{B};\mathcal{B})}\|G(u(r))\|_{L_2(\mathcal{B};\mathcal{B})}}{(r-q)^{1+\alpha}}dq\\
&\quad+C_{\alpha}\int_{s}^r\frac{\|S(t-q)\|_{L(\mathcal{B}_{-\beta};\mathcal{B})}\|G(u(r))-G(u(q))\|_{L_2(\mathcal{B};\mathcal{B}_{-\beta})}}{(r-q)^{1+\alpha}}dq\\
&\leq c_SC_{\alpha}L_G(r-s)^{-\alpha}e^{\rho r}\|u\|_{\beta,\rho,-\beta}+c_SC_{\alpha}L_G\|u\|_{\beta,\rho,-\beta}(t-r)^{-\beta}\int_{s}^r(r-q)^{\beta-\alpha-1}e^{\rho r}dq\\
&\quad+c_SC_{\alpha}L_G\|u\|_{\beta,\rho,-\beta}\int_{s}^re^{\rho r}{(t-q)^{-\beta}}(r-q)^{\beta-\alpha-1}dq\\
&\leq c_SC_{\alpha,\beta}L_G\|u\|_{\beta,\rho,-\beta}[(r-s)^{-\alpha}+(t-r)^{-\beta}(r-s)^{\beta-\alpha}]e^{\rho r}.
\end{align*}
\end{proof}

Taking additionally (\ref{G4}) into account, we analogously obtain for the fractional derivative of the convolution of the difference of two terms the following estimates. 
\begin{lemma}
    Let $\beta>\alpha$. The following estimates hold true 
    \begin{align*}
      &  \|D_{s+}^{\alpha}S(t-\cdot)[G(u(\cdot))-G(v(\cdot))][r]\|_{L_2(\mathcal{B};\mathcal{B}_{-\beta})} \\ &\leq c_SC_{\alpha,\beta}L_G(1+\|u\|_{\beta,-\beta}+\|v\|_{\beta,-\beta})\|u-v\|_{\beta,\rho,-\beta}[(r-s)^{-\alpha}+(r-s)^{\beta-\alpha}]e^{\rho r}
    \end{align*}
    and 
\begin{align*}
&\|D_{s+}^{\alpha}S(t-\cdot)[G(u(\cdot))-G(v(\cdot))][r]\|_{L_2(\mathcal{B};\mathcal{B})}\\
&\leq c_SC_{\alpha,\beta}L_G(1+\|u\|_{\beta,-\beta}+\|v\|_{\beta,-\beta})\|u\|_{\beta,\rho,-\beta}
[1+(t-r)^{-\beta}(r-s)^{\beta}](r-s)^{-\alpha}e^{\rho r}.
\end{align*}
\end{lemma}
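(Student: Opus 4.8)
The plan is to imitate the proof of the previous lemma, starting from the Weyl derivative of $f(r):=S(t-\cdot)[G(u(\cdot))-G(v(\cdot))]$ evaluated at $r$, namely
\begin{align*}
D_{s+}^{\alpha}f[r]=\frac{1}{\Gamma(1-\alpha)}\left(\frac{f(r)}{(r-s)^{\alpha}}+\alpha\int_{s}^{r}\frac{f(r)-f(q)}{(r-q)^{1+\alpha}}\,dq\right),
\end{align*}
and to split the integrand through the telescoping identity
\begin{align*}
f(r)-f(q)=[S(t-r)-S(t-q)][G(u(r))-G(v(r))]+S(t-q)\big\{[G(u(r))-G(v(r))]-[G(u(q))-G(v(q))]\big\}.
\end{align*}
The boundary term $f(r)/(r-s)^\alpha$ and the first bracket are handled exactly as in the previous lemma, using \eqref{G1} and \eqref{G3} together with the smoothing bounds \eqref{eq2.7}--\eqref{eq2.9}; the genuinely new ingredient is the second bracket, the discrete second difference of $G$, which is where assumption \eqref{G4} enters.

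The hard part is to estimate this second difference in the correct norm. I would first establish, from \eqref{G4} and $G(0)=DG(0)=0$, the second-order bound recorded after $(\mathbf{A_3})$,
\begin{align*}
&\|G(u_1)-G(v_1)-G(u_2)+G(v_2)\|_{L_2(\mathcal{B};\mathcal{B}_{-\beta})}\\
&\quad\leq L_G(|u_1-u_2|_{-\beta}+|v_1-v_2|_{-\beta})|u_1-v_1|_{-\beta}+L_G(|u_2|_{-\beta}+|v_2|_{-\beta})|u_1-v_1-u_2+v_2|_{-\beta}.
\end{align*}
This follows by writing $G(u_i)-G(v_i)=\int_0^1 DG(v_i+\tau(u_i-v_i))(u_i-v_i)\,d\tau$, subtracting, and splitting into a term where the derivative arguments differ (bounded by \eqref{G4} with increment $u_1-v_1$) and a term where the increments differ (bounded by \eqref{G4} with $v=0$, using $DG(0)=0$, which supplies the missing $L_G$ on the second summand).

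The decisive bookkeeping step is then the allocation of norms so that exactly one factor of $e^{\rho r}$ survives. Setting $u_1=u(r),\,v_1=v(r),\,u_2=u(q),\,v_2=v(q)$, I would bound every \emph{coefficient} factor ($|u(r)-u(q)|_{-\beta}$, $|v(r)-v(q)|_{-\beta}$, $|u(q)|_{-\beta}$, $|v(q)|_{-\beta}$) by the unweighted norm of \eqref{norm:c} with $\rho=0$, e.g.\ $|u(r)-u(q)|_{-\beta}\leq (r-q)^{\beta}\|u\|_{\beta,-\beta}$ and $|u(q)|_{-\beta}\leq C\|u\|_{\beta,-\beta}$, which produces the prefactor $(1+\|u\|_{\beta,-\beta}+\|v\|_{\beta,-\beta})$; whereas the single genuine \emph{difference} factor ($|u(r)-v(r)|_{-\beta}$ or $|u(r)-v(r)-u(q)+v(q)|_{-\beta}$) is bounded by the weighted norm, e.g.\ $|u(r)-v(r)-u(q)+v(q)|_{-\beta}\leq (r-q)^{\beta}e^{\rho r}\|u-v\|_{\beta,\rho,-\beta}$, carrying the lone $e^{\rho r}$. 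In this way the second bracket is $\lesssim L_G(1+\|u\|_{\beta,-\beta}+\|v\|_{\beta,-\beta})\|u-v\|_{\beta,\rho,-\beta}\,(r-q)^{\beta}e^{\rho r}$, and the same weighted norm $\|u-v\|_{\beta,\rho,-\beta}$ governs the boundary and first-bracket terms as well.

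It then remains to push these pointwise bounds through the semigroup factors and integrate. For the $L_2(\mathcal{B};\mathcal{B}_{-\beta})$ estimate I would use that $S$ is uniformly bounded on $\mathcal{B}_{-\beta}$ and $\|S(t-r)-S(t-q)\|_{L(\mathcal{B};\mathcal{B}_{-\beta})}\leq c_S(r-q)^{\beta}$, so both bracket contributions are integrable against $(r-q)^{-1-\alpha}$ and yield $\int_s^r (r-q)^{\beta-\alpha-1}\,dq=(r-s)^{\beta-\alpha}/(\beta-\alpha)$ (finite since $\beta>\alpha$), producing the factor $[(r-s)^{-\alpha}+(r-s)^{\beta-\alpha}]$. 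For the $L_2(\mathcal{B};\mathcal{B})$ estimate the only change is that mapping into $\mathcal{B}$ rather than $\mathcal{B}_{-\beta}$ costs $\beta$ derivatives: I would use $\|S(t-r)-S(t-q)\|_{L(\mathcal{B};\mathcal{B})}\leq c_S(r-q)^{\beta}(t-r)^{-\beta}$ on the first bracket and $\|S(t-q)\|_{L(\mathcal{B}_{-\beta};\mathcal{B})}\leq c_S(t-q)^{-\beta}\leq c_S(t-r)^{-\beta}$ (valid since $q\leq r$) on the second, which is exactly where the singular factor $(t-r)^{-\beta}(r-s)^{\beta}$ in the bracket $[1+(t-r)^{-\beta}(r-s)^{\beta}](r-s)^{-\alpha}$ originates, the leading $1$ coming from the boundary term. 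Collecting the three contributions and absorbing numerical constants into $C_{\alpha,\beta}$ yields both claimed inequalities, with the natural Lipschitz factor $\|u-v\|_{\beta,\rho,-\beta}$ in each (so the $\|u\|_{\beta,\rho,-\beta}$ in the second displayed bound should read $\|u-v\|_{\beta,\rho,-\beta}$). The sole genuine difficulty is the norm allocation above; the remainder is a routine repetition of the previous lemma.
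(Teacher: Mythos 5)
Your proposal is correct and follows essentially the same route as the paper: the identical telescoping split of $S(t-r)[G(u(r))-G(v(r))]-S(t-q)[G(u(q))-G(v(q))]$, the second-difference bound derived from \eqref{G4}, and the same allocation of one weighted norm $\|u-v\|_{\beta,\rho,-\beta}$ (carrying the single $e^{\rho r}$) against unweighted norms for the remaining factors, followed by the same semigroup estimates. You are also right that the $\|u\|_{\beta,\rho,-\beta}$ in the second displayed bound is a typo for $\|u-v\|_{\beta,\rho,-\beta}$, as the paper's own computation confirms.
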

 \begin{proof}
Using \eqref{G4} we compute 
\begin{align*}
&\|D_{s+}^{\alpha}S(t-\cdot)[G(u(\cdot))-G(v(\cdot))][r]\|_{L_2(\mathcal{B};\mathcal{B}_{-\beta})}\\
&\quad\leq \frac{1}{\Gamma(1-\alpha)}\bigg(\frac{\|S(t-r)[G(u(r))-G(v(r))]\|_{L_2(\mathcal{B};\mathcal{B}_{-\beta})}}{(r-s)^{\alpha}}\\
&\qquad+\alpha\int_{s}^{r}\frac{\|S(t-r)[G(u(r))-G(v(r))]-S(t-q)[G(u(q))-G(v(q))]\|_{L_2(\mathcal{B};\mathcal{B}_{-\beta})}}{(r-q)^{\alpha+1}}dq\bigg)\\
&\quad\leq c_SC_{\alpha}|u(r)-v(r)|(r-s)^{-\alpha}\\
&\quad+C_{\alpha}\int_{s}^r\frac{\|S(t-r)-S(t-q)\|_{L(\mathcal{B};\mathcal{B}_{-\beta})}\|G(u(r))-G(v(r))\|_{L_2(\mathcal{B};\mathcal{B})}}{(r-q)^{1+\alpha}}dq\\
&\qquad+C_{\alpha}\int_{s}^r\frac{\|S(t-q)\|_{L(\mathcal{B}_{-\beta};\mathcal{B}_{-\beta})}\|G(u(r))-G(v(r))-G(u(q))+G(v(q))\|_{L_2(\mathcal{B};\mathcal{B}_{-\beta}))}}{(r-q)^{1+\alpha}}dq\\
&\quad\leq c_SC_{\alpha}L_G(r-s)^{-\alpha}e^{\rho r}\|u-v\|_{\beta,\rho,-\beta}+c_SC_{\alpha}L_G\|u-v\|_{\beta,\rho,-\beta}\int_{s}^r(r-q)^{\beta-\alpha-1}e^{\rho r}dq\\
&\qquad+c_SC_{\alpha}L_G(\|u\|_{\beta,-\beta}+\|v\|_{\beta,-\beta})\|u-v\|_{\beta,\rho,-\beta}\int_{s}^re^{\rho r}(r-q)^{\beta-\alpha-1}dq\\
&\quad\leq c_SC_{\alpha,\beta}L_G(1+\|u\|_{\beta,-\beta}+\|v\|_{\beta,-\beta})\|u-v\|_{\beta,\rho,-\beta}[(r-s)^{-\alpha}+(r-s)^{\beta-\alpha}]e^{\rho r},
\end{align*}
entailing the first statement. 
Furthermore, due to \eqref{G1}--\eqref{G4} we have
\begin{align*}
&\|D_{s+}^{\alpha}S(t-\cdot)[G(u(\cdot))-G(v(\cdot))][r]\|_{L_2(\mathcal{B};\mathcal{B})}\\
&\quad\leq \frac{1}{\Gamma(1-\alpha)}\bigg(\frac{\|S(t-r)[G(u(r))-G(v(r))]\|_{L_2(\mathcal{B};\mathcal{B})}}{(r-s)^{\alpha}}\\
&\qquad+\alpha\int_{s}^{r}\frac{\|S(t-r)[G(u(r))-G(v(r))]-S(t-q)[G(u(q))-G(v(q))]\|_{L_2(\mathcal{B};\mathcal{B})}}{(r-q)^{\alpha+1}}dq\bigg)\\
&\leq c_SC_{\alpha}|u(r)-v(r)|(r-s)^{-\alpha}\\
&\quad+C_{\alpha}\int_{s}^r\frac{\|S(t-r)-S(t-q)\|_{L(\mathcal{B};\mathcal{B})}\|G(u(r))-G(v(r))\|_{L_2(\mathcal{B};\mathcal{B})}}{(r-q)^{1+\alpha}}dq\\
&\quad+C_{\alpha}\int_{s}^r\frac{\|S(t-q)\|_{L(\mathcal{B}_{-\beta};\mathcal{B})}\|G(u(r))-G(v(r))-G(u(q))+G(q(r))\|_{L_2(\mathcal{B};\mathcal{B}_{-\beta})}}{(r-q)^{1+\alpha}}dq\\
&\leq c_SC_{\alpha}L_G(r-s)^{-\alpha}e^{\rho r}\|u-v\|_{\beta,\rho,-\beta}+c_SC_{\alpha}L_G\|u-v\|_{\beta,\rho,-\beta}(t-r)^{-\beta}\int_{s}^r(r-q)^{\beta-\alpha-1}e^{\rho r}dq\\
&\quad+c_SC_{\alpha}L_G(\|u\|_{\beta,-\beta}+\|v\|_{\beta,-\beta})\|u-v\|_{\beta,\rho,-\beta}\int_{s}^re^{\rho r}{(t-q)^{-\beta}}(r-q)^{\beta-\alpha-1}dq\\
&\leq c_SC_{\alpha,\beta}L_G(1+\|u\|_{\beta,-\beta}+\|v\|_{\beta,-\beta})\|u\|_{\beta,\rho,-\beta}
[1+(t-r)^{-\beta}(r-s)^{\beta}](r-s)^{-\alpha}e^{\rho r}.
\end{align*}
\end{proof}
\begin{lemma}
  Let $\alpha'\in(0,1)$ such that $\alpha'-\alpha-1>-1$. Then   
  \begin{align*}
      &\|D_{0+}^{\alpha}[S(t-\cdot)-S(s-\cdot)]G(u(\cdot))[r]\|_{L_2(\mathcal{B};\mathcal{B}_{-\beta})}\\
      &\quad\leq C_{\alpha,\beta}c_{S}L_G\|u\|_{\beta,\rho,-\beta}e^{\rho r}(t-s)^{\beta}{[(s-r)^{-\alpha'}r^{\alpha'-\alpha}+(s-r)^{-\beta}r^{\beta-\alpha}]}%r^{-\alpha}
  \end{align*}
  and 
  \begin{align*}
      &\|D_{0+}^{\alpha}[S(t-\cdot)-S(s-\cdot)][G(u(\cdot))-G(v(\cdot))][r]\|_{L_2(\mathcal{B};\mathcal{B}_{-\beta})}\\
      &\leq c_{S}C_{\alpha,\beta}L_G(1+\|u\|_{\beta,-\beta}+\|v\|_{\beta,-\beta})\|u-v\|_{\beta,\rho,-\beta}e^{\rho r}(t-s)^{\beta}{[(s-r)^{-\alpha'}r^{\alpha'-\alpha}+(s-r)^{-\beta}r^{\beta-\alpha}]}.%r^{-\alpha}.
  \end{align*}
\end{lemma}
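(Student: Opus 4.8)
The plan is to insert the definition of $D_{0+}^{\alpha}$ and split the fractional derivative of $f(\cdot):=[S(t-\cdot)-S(s-\cdot)]G(u(\cdot))$ evaluated at $r$ into its boundary part $f(r)/r^{\alpha}$ and its integral part $\alpha\int_0^r (f(r)-f(q))(r-q)^{-1-\alpha}\,dq$, and to bound each in the $L_2(\mathcal{B};\mathcal{B}_{-\beta})$-norm. The factor $(t-s)^{\beta}$ appearing in every term will always be produced from the time increment of the semigroup, the key observation being the factorisation $S(t-a)-S(s-a)=[S(t-s)-\mathrm{id}]\,S(s-a)$, after which \eqref{eq2.8} gives $|[S(t-s)-\mathrm{id}]x|_{-\beta}\le c_S (t-s)^{\beta}|x|_{0}$.

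For the boundary term I would use this factorisation together with \eqref{eq2.7} and \eqref{G1}, namely $\|S(s-r)G(u(r))\|_{L_2(\mathcal{B};\mathcal{B})}\le c_S L_G|u(r)|\le c_S L_G\|u\|_{\beta,\rho,-\beta}e^{\rho r}$, to extract the factor $(t-s)^{\beta}$ and the weight $e^{\rho r}$; this is a delicate contribution because of the accompanying $r^{-\alpha}$, and matching it to the stated singularities requires care.

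The integral part carries the real content and I would treat it by the two-fold splitting $f(r)-f(q)=\{S(t-r)-S(t-q)-S(s-r)+S(s-q)\}G(u(r))+[S(t-q)-S(s-q)][G(u(r))-G(u(q))]$. For the first summand I would rewrite the second-order difference as $[S(t-s)-\mathrm{id}][S(s-r)-S(s-q)]$ (equivalently invoke \eqref{eq2.10}); estimating $\|S(s-r)-S(s-q)\|_{L(\mathcal{B};\mathcal{B})}\le c_S(r-q)^{\alpha'}(s-r)^{-\alpha'}$ via \eqref{eq2.9} with H\"older exponent $\alpha'$, then applying $[S(t-s)-\mathrm{id}]\colon\mathcal{B}\to\mathcal{B}_{-\beta}$ and \eqref{G1}, and finally integrating $\int_0^r (r-q)^{\alpha'-1-\alpha}\,dq\simeq r^{\alpha'-\alpha}$, which converges precisely because $\alpha'-\alpha-1>-1$, produces the term $(t-s)^{\beta}(s-r)^{-\alpha'}r^{\alpha'-\alpha}$. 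For the second summand I would estimate the increment of $G$ in $\mathcal{B}_{-\beta}$ through \eqref{G3} and the H\"older part of the norm \eqref{norm:c}, giving $\|G(u(r))-G(u(q))\|_{L_2(\mathcal{B};\mathcal{B}_{-\beta})}\le L_G\|u\|_{\beta,\rho,-\beta}e^{\rho r}(r-q)^{\beta}$, then bound $[S(t-q)-S(s-q)]\colon\mathcal{B}_{-\beta}\to\mathcal{B}_{-\beta}$ by $c_S(t-s)^{\beta}(s-q)^{-\beta}$ (again from the factorisation and \eqref{eq2.7}--\eqref{eq2.8}); using $(s-q)^{-\beta}\le(s-r)^{-\beta}$ to pull the singularity out of the integral and $\int_0^r(r-q)^{\beta-1-\alpha}\,dq\simeq r^{\beta-\alpha}$ (convergent since $\beta>\alpha$) yields the second term $(t-s)^{\beta}(s-r)^{-\beta}r^{\beta-\alpha}$. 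Collecting the contributions and bounding $|u(r)|$ and its increments by $\|u\|_{\beta,\rho,-\beta}e^{\rho r}$ gives the first asserted inequality.

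The second inequality follows line by line from the same decomposition, the only change being the estimates on $G$: in the boundary term and in the first summand I would replace \eqref{G1} by the Lipschitz bound $\|G(u(r))-G(v(r))\|_{L_2(\mathcal{B};\mathcal{B})}\le L_G\|u-v\|_{\beta,\rho,-\beta}e^{\rho r}$, while in the second summand I would use the second-order difference estimate for $G$ recorded after \eqref{G4}, which bounds $\|G(u(r))-G(v(r))-G(u(q))+G(v(q))\|_{L_2(\mathcal{B};\mathcal{B}_{-\beta})}$ by $L_G(|u(r)-u(q)|_{-\beta}+|v(r)-v(q)|_{-\beta})|u(r)-v(r)|_{-\beta}+L_G(|u(q)|_{-\beta}+|v(q)|_{-\beta})|(u-v)(r)-(u-v)(q)|_{-\beta}$. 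Controlling each factor by the sup- and H\"older-parts of the corresponding norms produces exactly the prefactor $(1+\|u\|_{\beta,-\beta}+\|v\|_{\beta,-\beta})\|u-v\|_{\beta,\rho,-\beta}$, and the remaining integrations are identical. The principal difficulty throughout is the sharp treatment of the double semigroup difference: one must split off $(t-s)^{\beta}$ without over-consuming the smoothing of $S$, so that the residual singularities $(s-r)^{-\alpha'}$ and $(s-r)^{-\beta}$ stay integrable against the $\mathbb{R}$-valued weight $|D_{s-}^{1-\alpha}\omega_{s-}[\cdot]|$ in the subsequent application.
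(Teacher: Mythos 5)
Your proposal is correct and follows essentially the same route as the paper: the same boundary/integral splitting of $D_{0+}^{\alpha}$, the same two-fold decomposition of $f(r)-f(q)$, the factorisation $[S(t-s)-\mathrm{id}]S(s-\cdot)$ to extract $(t-s)^{\beta}$, the $\alpha'$-interpolation for the double semigroup difference, and the second-order difference estimate for $G$ in the Lipschitz version. The only cosmetic difference is that you invoke \eqref{eq2.9} for $S(s-r)-S(s-q)$ where the paper writes the equivalent three-factor bound through $\mathcal{B}_{-\alpha'}$ explicitly.
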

\begin{proof} By similar computations we derive
\begin{align*}
&\|D_{0+}^{\alpha}[S(t-\cdot)-S(s-\cdot)]G(u(\cdot))[r]\|_{L_2(\mathcal{B};\mathcal{B}_{-\beta})}\\
&\quad\leq \frac{C_{\alpha}\|[S(t-r)-S(s-r)]G(u(r))\|_{L_2(\mathcal{B};\mathcal{B}_{-\beta})}}{r^{\alpha}}\\
&\qquad+C_{\alpha}\int_{0}^{r}\frac{\|[S(t-r)-S(s-r)]G(u(r))-[S(t-q)-S(s-q)]G(u(q))\|_{L_2(\mathcal{B};\mathcal{B}_{-\beta})}}{(r-q)^{\alpha+1}}dq\\
&\quad\leq \frac{C_{\alpha}\|S(t-s)-\text{id}\|_{L(\mathcal{B};\mathcal{B}_{-\beta})}\|S(s-r)\|_{L(\mathcal{B};\mathcal{B})}\|G(u(r))\|_{L_2(\mathcal{B};\mathcal{B})}}{r^{\alpha}}\\
&\qquad+C_{\alpha}\int_{0}^{r}\frac{\|[S(t-r)-S(s-r)-S(t-q)+S(s-q)]G(u(r))\|_{L_2(\mathcal{B};\mathcal{B}_{-\beta})}}{(r-q)^{\alpha+1}}dq\\
&\qquad+C_{\alpha}\int_{0}^{r}\frac{\|[S(t-q)-S(s-q)][G(u(r))-G(u(q))]\|_{L_2(\mathcal{B};\mathcal{B}_{-\beta})}}{(r-q)^{\alpha+1}}dq\\
&\quad\leq \frac{C_{\alpha}\|S(t-s)-\text{id}\|_{L(\mathcal{B};\mathcal{B}_{-\beta})}\|S(s-r)\|_{L(\mathcal{B};\mathcal{B})}\|G(u(r))\|_{L_2(\mathcal{B};\mathcal{B})}}{r^{\alpha}}\\
&\qquad+C_{\alpha}\int_{0}^{r}\frac{1}{(r-q)^{\alpha+1}}\|S(t-s)-\text{id}\|_{L(\mathcal{B};\mathcal{B}_{-\beta})}\|S(s-r)\|_{L(\mathcal{B}_{-\alpha'};\mathcal{B})}\\
&\qquad\times\|\text{id}-S(r-q)\|_{L(\mathcal{B};\mathcal{B}_{-\alpha'})}\|G(u(r))\|_{L_2(\mathcal{B};\mathcal{B})}dq\\
&\qquad+C_{\alpha}\int_{0}^{r}\frac{\|S(t-s)-\text{id}\|_{L(\mathcal{B};\mathcal{B}_{-\beta})}\|S(s-q)\|_{L(\mathcal{B}_{-\beta};\mathcal{B})}\|G(u(r))-G(u(q))\|_{L_2(\mathcal{B};\mathcal{B}_{-\beta}))}}{(r-q)^{\alpha+1}}dq\\
&\quad\leq C_{\alpha}c_{S}L_G\|u\|_{\beta,\rho,-\beta}e^{\rho r}(t-s)^{\beta}\bigg[r^{-\alpha}+\int_0^r(s-r)^{-\alpha'}(r-q)^{\alpha'-\alpha-1}dq\\
&\qquad+\int_{0}^r(s-q)^{-\beta}(r-q)^{\beta-\alpha-1}dq\bigg]\\
&\quad\leq C_{\alpha,\beta}c_{S}L_G\|u\|_{\beta,\rho,-\beta}e^{\rho r}(t-s)^{\beta}{[(s-r)^{-\alpha'}r^{\alpha'-\alpha}+(s-r)^{-\beta}r^{\beta-\alpha}]}.
\end{align*}
Finally according to \eqref{eq2.10} and assumption $(A_4)$, it holds
 \begin{align*}
&\|D_{0+}^{\alpha}[S(t-\cdot)-S(s-\cdot)][G(u(\cdot))-G(v(\cdot))][r]\|_{L_2(\mathcal{B};\mathcal{B}_{-\beta})}\\
&\leq \frac{C_{\alpha}\|[S(t-r)-S(s-r)][G(u(r))-G(v(r))]\|_{L_2(\mathcal{B};\mathcal{B}_{-\beta})}}{r^{\alpha}}\\
&\quad+C_{\alpha}\int_{0}^{r}\frac{1}{(r-q)^{\alpha+1}}\big\|[S(t-r)-S(s-r)][G(u(r))-G(v(r))]\\
&\quad-[S(t-q)-S(s-q)][G(u(q))-G(v(q))]\big\|_{L_2(\mathcal{B};\mathcal{B}_{-\beta})}dq\\
&\leq \frac{C_{\alpha}\|S(t-s)-\text{id}\|_{L(\mathcal{B};\mathcal{B}_{-\beta})}\|S(s-r)\|_{L(\mathcal{B};\mathcal{B})}\|G(u(r))-G(v(r))\|_{L_2(\mathcal{B};\mathcal{B})}}{r^{\alpha}}\\
&+C_{\alpha}\int_{0}^{r}\frac{\|[S(t-r)-S(s-r)-S(t-q)+S(s-q)][G(u(r))-G(v(r))]\|_{L_2(\mathcal{B};\mathcal{B}_{-\beta}))}}{(r-q)^{\alpha+1}}dq\\
&+C_{\alpha}\int_{0}^{r}\frac{\|[S(t-q)-S(s-q)][G(u(r))-G(v(r))-G(u(q))+G(v(q))]\|_{L_2(\mathcal{B};\mathcal{B}_{-\beta})}}{(r-q)^{\alpha+1}}dq\\
&\leq \frac{C_{\alpha}\|S(t-s)-\text{id}\|_{L(\mathcal{B};\mathcal{B}_{-\beta})}\|S(s-r)\|_{L(\mathcal{B};\mathcal{B})}\|G(u(r))-G(v(r))\|_{L_2(\mathcal{B};\mathcal{B})}}{r^{\alpha}}\\
&\quad+C_{\alpha}\int_{0}^{r}\frac{1}{(r-q)^{\alpha+1}}\|S(t-s)-\text{id}\|_{L(\mathcal{B};\mathcal{B}_{-\beta})}\|S(s-r)\|_{L(\mathcal{B}_{-\alpha'};\mathcal{B})}\\
&\quad\times\|\text{id}-S(r-q)\|_{L(\mathcal{B};\mathcal{B}_{-\alpha'})}\|G(u(r))-G(v(r))\|_{L_2(\mathcal{B};\mathcal{B})}dq\\
&\quad+C_{\alpha}\int_{0}^{r}\frac{1}{(r-q)^{\alpha+1}}\|S(t-s)-\text{id}\|_{L(\mathcal{B};\mathcal{B}_{-\beta})}\|S(s-q)\|_{L(\mathcal{B}_{-\beta};\mathcal{B})}\\
&\quad\times\|G(u(r))-G(v(r))-G(u(q))+G(v(q))\|_{L_2(\mathcal{B};\mathcal{B}_{-\beta})}dq\\
&\leq c_{S}C_{\alpha,\beta}L_G\|u-v\|_{\beta,\rho,-\beta}e^{\rho r}(t-s)^{\beta}\bigg[r^{-\alpha}+\int_0^r(s-r)^{-\alpha'}(r-q)^{\alpha'-\alpha-1}dq\\
&+(\|u\|_{\beta,-\beta}+\|v\|_{\beta,-\beta})\int_{0}^r(s-q)^{-\beta}(r-q)^{\beta-\alpha-1}dq\bigg]\\
&\leq c_{S}C_{\alpha,\beta}L_G(1+\|u\|_{\beta,-\beta}+\|v\|_{\beta,-\beta})\|u-v\|_{\beta,\rho,-\beta}e^{\rho r}{[(s-r)^{-\alpha'}r^{\alpha'-\alpha}+(s-r)^{-\beta}r^{\beta-\alpha}]}.%(t-s)^{\beta}r^{-\alpha}.
\end{align*}
\end{proof}
In the following computations we will apply the following useful result.  For any $s,t\in\mathbb{R}$, $a,b>-1$,  due to the definition of Euler's Gamma and Beta-functions, it is straightforward that
\begin{align}\label{eq3.12}
  \int_{s}^{t}(t-r)^{a}(r-s)^{b}dr=\frac{\Gamma(a+1)\Gamma(b+1)}{\Gamma(a+b+2)}(t-s)^{a+b+1}.
\end{align}
We further set
\begin{align}\label{eq3.11}
   K_{1}(\rho):=\sup_{0\leq s< t\leq T}\int_{s}^{t}e^{-\rho(t-r)}(r-s)^{a}(t-r)^{b}dr.
\end{align}
Combining the fact that
\begin{equation*}
  \int_{s}^{t}e^{-\rho(t-r)}(r-s)^{a}(t-r)^{b}dr=(t-s)^{a+b+1}\int_{0}^{1}e^{-\rho(t-s)(1-r)}r^a(1-r)^{b}dr
\end{equation*}
and \cite[Lemma 8]{BGHS2017}, we know that $K_{1}(\rho)\rightarrow0$ as $\rho\rightarrow\infty$ if $a,b>-1$ and $a+b+1>0$.
\begin{lemma}\label{estim of G}
Under the assumptions $(\bf A_1$), $(\bf A_3$) and $(\bf A_4$), there exist constants $c_S,C_{\alpha,\beta}$ such that for any $u,v\in C_{-\beta}^{\beta}([0,T];\mathcal{B})$,
\begin{align*}
&\left\|\int_0^\cdot S(\cdot-r)G(u(r))d\omega(r)\right\|_{\beta,\rho,-\beta}\leq c_SC_{\alpha,\beta}K_{1}(\rho)L_G\|u\|_{\beta,\rho,-\beta}\vertiii{\omega}_{\beta',0,T},
\end{align*}
and 
\begin{align*}
&\left\|\int_0^\cdot S(\cdot-r)[G(u(r))-G(v(r))]d\omega(r)\right\|_{\beta,\rho,-\beta}\\
&\quad\leq C_{\alpha,\beta}c_SK_{1}(\rho)L_G(1+\|u\|_{\beta,-\beta}+\|v\|_{\beta,-\beta})\|u-v\|_{\beta,\rho,-\beta}\vertiii{\omega}_{\beta',0,T}.
\end{align*}
\end{lemma}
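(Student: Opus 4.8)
The plan is to estimate separately the two constituents of the norm \eqref{norm:c} (with $T_1=0$, $T_2=T$): the weighted supremum $\sup_t e^{-\rho t}|\cdot(t)|$ measured in $\cB$, and the weighted $\beta$-Hölder seminorm $\sup_{s<t} e^{-\rho t}|\cdot(t)-\cdot(s)|_{-\beta}/(t-s)^\beta$ measured in $\cB_{-\beta}$. In every case I apply the integral bound \eqref{eq2.3} (and its obvious $\cB_{-\beta}$-valued analogue), the three preliminary lemmas above for the fractional derivative $D^\alpha$ of the semigroup-weighted integrand, and the elementary estimate $|D_{T-}^{1-\alpha}\omega_{T-}[r]|\lesssim \vertiii{\omega}_{\beta',0,T}(T-r)^{\alpha+\beta'-1}$ for the driver, which holds because $\alpha+\beta'>1$ by $(\mathbf{A_4})$. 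This reduces each bound to a single scalar integral controlled by $K_1(\rho)$ in \eqref{eq3.11}.

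For the supremum term I write the convolution as $\int_0^t S(t-r)G(u(r))\,d\omega(r)$, apply \eqref{eq2.3} with the $L_2(\cB;\cB)$-estimate from the first preliminary lemma (lower endpoint $0$) together with the bound on $D^{1-\alpha}\omega$, and collect $e^{-\rho t}e^{\rho r}=e^{-\rho(t-r)}$. This produces two integrals of the form $\int_0^t e^{-\rho(t-r)}r^{a}(t-r)^{b}\,dr$ with $(a,b)=(-\alpha,\,\alpha+\beta'-1)$ and $(a,b)=(\beta-\alpha,\,\alpha+\beta'-\beta-1)$; both exponent pairs satisfy $a,b>-1$ and $a+b+1>0$ (using $\tfrac12<\beta<\beta'$ and $1-\beta'<\alpha<\beta$), so each is bounded by $K_1(\rho)$ via the discussion following \eqref{eq3.11}.

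For the Hölder seminorm I split the increment into a near-diagonal term $\int_s^t S(t-r)G(u(r))\,d\omega(r)$ and a semigroup-difference term $\int_0^s[S(t-r)-S(s-r)]G(u(r))\,d\omega(r)$. The first is estimated in $\cB_{-\beta}$ via the first preliminary lemma (lower endpoint $s$, driver derivative $D_{t-}^{1-\alpha}$); here there is no spare $(t-s)^\beta$ to cancel the seminorm weight, so I absorb it by the pointwise bound $(t-r)^{\alpha+\beta'-1}=(t-r)^{\alpha+\beta'-\beta-1}(t-r)^\beta\le (t-s)^\beta(t-r)^{\alpha+\beta'-\beta-1}$, which is precisely where the strict inequality $\beta<\beta'$ of $(\mathbf{A_4})$ is exploited, and then reduce to $K_1(\rho)$-type integrals. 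The second term uses the third preliminary lemma, which already carries an explicit factor $(t-s)^\beta$ cancelling the weight $(t-s)^{-\beta}$; using $e^{-\rho(t-r)}\le e^{-\rho(s-r)}$ and the bound on $D_{s-}^{1-\alpha}\omega$ leaves integrals $\int_0^s e^{-\rho(s-r)}r^{a}(s-r)^{b}\,dr$ with $(a,b)=(\alpha'-\alpha,\,\alpha+\beta'-\alpha'-1)$ and $(a,b)=(\beta-\alpha,\,\alpha+\beta'-\beta-1)$, each again of $K_1(\rho)$-type once one checks $a,b>-1$ (using $\alpha<\alpha'<\alpha+\beta'$, which is admissible since $\alpha<1<\alpha+\beta'$, and $\beta<\beta'$) and $a+b+1>0$.

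Summing the two terms yields the first assertion. The second assertion follows by rerunning the argument with the increment $G(u)-G(v)$ in place of $G(u)$: the ``difference'' versions of the preliminary lemmas (which invoke \eqref{G4}) apply verbatim and produce the extra prefactor $(1+\|u\|_{\beta,-\beta}+\|v\|_{\beta,-\beta})$, with $\|u-v\|_{\beta,\rho,-\beta}$ replacing $\|u\|_{\beta,\rho,-\beta}$. The one genuinely delicate point is the near-diagonal term, where the absence of a compensating $(t-s)^\beta$ forces one to trade the surplus Hölder regularity $\beta'-\beta>0$ of the driver against the seminorm weight; everything else is a bookkeeping reduction to \eqref{eq3.12}/\eqref{eq3.11}, whose decay $K_1(\rho)\to 0$ as $\rho\to\infty$ is exactly what will later turn the Lyapunov--Perron operator into a contraction.
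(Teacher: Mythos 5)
Your proposal is correct and follows essentially the same route as the paper's proof: the same splitting of the norm into the weighted supremum and the weighted H\"older seminorm, the same decomposition of the increment into the near-diagonal integral $\int_s^t$ and the semigroup-difference integral $\int_0^s[S(t-r)-S(s-r)]$, the same use of the three preliminary fractional-derivative lemmas together with the bound $|D^{1-\alpha}_{t-}\omega[r]|\lesssim\vertiii{\omega}_{\beta',0,T}(t-r)^{\alpha+\beta'-1}$, and the same reduction to $K_1(\rho)$-type integrals. Your explicit pointwise absorption $(t-r)^{\alpha+\beta'-1}\le(t-s)^{\beta}(t-r)^{\alpha+\beta'-\beta-1}$ in the near-diagonal H\"older term is just a more transparent way of carrying out the exponent bookkeeping that the paper handles implicitly through the Beta-function identity, exploiting $\beta'>\beta$ in the same way.
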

\begin{proof}First, it is easy to check that
\begin{align*}
|D_{t-}^{1-\alpha}\omega[r]|\leq \vertiii{\omega}_{\beta',0,T}(t-r)^{\alpha+\beta'-1}.
\end{align*} 
Applying the estimates above for each $D^{\alpha}_{s+}$, we have 
\begin{align*}
&\left|\int_{s}^tS(t-r)[G(u(r))-G(v(r))]d\omega(r)\right|_{-\beta}\\&\quad\leq \int_s^t\|D_{s+}^{\alpha}S(t-\cdot)[G(u(\cdot))-G(v(\cdot))][r]\|_{L_2(\mathcal{B};\mathcal{B}_{-\beta}))}|D_{t-}^{1-\alpha}\omega[r]|dr\\
&\quad\leq c_SC_{\alpha,\beta}L_G(1+\|u\|_{\beta,-\beta}+\|v\|_{\beta,-\beta})\|u\|_{\beta,\rho,-\beta}\vertiii{\omega}_{\beta',0,T}\\
&\qquad\times\int_s^t(t-r)^{\alpha+\beta'-1}(r-s)^{-\alpha}[1+(r-s)^{\beta}]e^{\rho r}dr,
\end{align*}
\begin{align*}
&\left|\int_{s}^tS(t-r)G(u(r))d\omega(r)\right|\\&\quad\leq \int_s^t\|D_{s+}^{\alpha}S(t-\cdot)G(u(\cdot))[r]\|_{L_2(\mathcal{B};\mathcal{B})}|D_{t-}^{1-\alpha}\omega[r]|dr\\
&\quad\leq c_SC_{\alpha,\beta}L_G\|u\|_{\beta,\rho,-\beta}\vertiii{\omega}_{\beta',0,T}\int_s^t(t-r)^{\alpha+\beta'-1}(r-s)^{-\alpha}[1+(t-r)^{-\beta}(r-s)^{\beta}]e^{\rho r}dr,
\end{align*}
and 
\begin{equation}\label{U-V:G}
\begin{aligned}
&\left|\int_{s}^tS(t-r)[G(u(r))-G(v(r))]d\omega(r)\right|\\&\quad\leq \int_s^t\|D_{s+}^{\alpha}S(t-\cdot)[G(u(\cdot))-G(v(\cdot))][r]\|_{L_2(\mathcal{B};\mathcal{B})}|D_{t-}^{1-\alpha}\omega[r]|dr\\
&\quad\leq c_SC_{\alpha,\beta}L_G(1+\|u\|_{\beta,-\beta}+\|v\|_{\beta,-\beta})\|u\|_{\beta,\rho,-\beta}\vertiii{\omega}_{\beta',0,T}\\
&\qquad\times\int_s^t(t-r)^{\alpha+\beta'-1}[(r-s)^{-\alpha}+(t-r)^{-\beta}(r-s)^{\beta-\alpha}
]e^{\rho r}dr.
\end{aligned}
\end{equation}

Taking the $\mathcal{B}_{-\beta}$-norm, we have
\begin{align*}
&\left|\int_{s}^tS(t-r)G(u(r))d\omega(r)\right|_{-\beta}
\\&\quad\leq \int_s^t\|D_{s+}^{\alpha}S(t-\cdot)G(u(\cdot))[r]\|_{L_2(\mathcal{B};\mathcal{B}_{-\beta}))}|D_{t-}^{1-\alpha}\omega[r]|dr
\\ &\quad\leq c_SC_{\alpha,\beta}L_G\|u\|_{\beta,\rho,-\beta}\vertiii{\omega}_{\beta',0,T}\int_s^t(t-r)^{\alpha+\beta'-1}[(r-s)^{-\alpha}+(r-s)^{\beta-\alpha}]e^{\rho r}dr.
\end{align*}
\begin{align*}
&\left|\int_0^s [S(t-r)-S(s-r)]G(u(r))d\omega(r)\right|_{-\beta}\\
&\quad\leq c_SC_{\alpha,\beta}L_G\|u\|_{\beta,\rho,-\beta}\vertiii{\omega}_{\beta',0,T}(t-s)^{\beta}\int_0^se^{\rho r}(s-r)^{\alpha+\beta'-1}{[(s-r)^{-\alpha'}r^{\alpha'-\alpha}+(s-r)^{-\beta}r^{\beta-\alpha}]}dr\\%r^{-\alpha}dr
&\quad \leq c_SC_{\alpha,\beta}L_G\|u\|_{\beta,\rho,-\beta}e^{\rho s}\vertiii{\omega}_{\beta',0,T}(t-s)^{\beta}s^{\beta'},
\end{align*}
and \begin{align*}
&\left|\int_0^s [S(t-r)-S(s-r)][G(u(r))-G(v(r))]d\omega(r)\right|_{-\beta}\\
&\quad\leq c_SC_{\alpha,\beta}L_G(1+\|u\|_{\beta,-\beta}+\|v\|_{\beta,-\beta})\|u-v\|_{\beta,\rho,-\beta}\vertiii{\omega}_{\beta',0,T}(t-s)^{\beta}\\
&\qquad\times\int_0^se^{\rho r}(s-r)^{\alpha+\beta'-1}{[(s-r)^{-\alpha'}r^{\alpha'-\alpha}+(s-r)^{-\beta}r^{\beta-\alpha}]}dr\\%r^{-\alpha}
&\quad \leq c_SC_{\alpha,\beta}L_G(1+\|u\|_{\beta,-\beta}+\|v\|_{\beta,-\beta})\|u-v\|_{\beta,\rho,-\beta}e^{\rho s}\vertiii{\omega}_{\beta',0,T}(t-s)^{\beta}s^{\beta'}.
\end{align*}
Hence for $u\in C_{-\beta}^{\beta}([0,T];\mathcal{B})$ we compute regarding~\eqref{norm:c}
\begin{align*}
&\left\|\int_0^\cdot S(\cdot-r)G(u(r))d\omega(r)\right\|_{\beta,\rho,-\beta}\\
% &\quad=\sup_{t\in [0,T]}e^{-\rho t}\left|\int_0^t S(t-r)G(u(r))d\omega(r)\right|\\
% &\qquad+ \sup_{s<t\in [0,T]}\frac{e^{-\rho t}\left|\int_0^t S(t-r)G(u(r))d\omega(r)-\int_0^s S(s-r)G(u(r))d\omega(r)\right|_{-\beta}}{(t-s)^{\beta}}
\\ &\quad\leq \sup_{t\in [0,T]}e^{-\rho t}\left|\int_0^t S(t-r)G(u(r))d\omega(r)\right|+
 \sup_{s<t\in [0,T]}\frac{e^{-\rho t}\left|\int_s^t S(t-r)G(u(r))d\omega(r)\right|_{-\beta}}{(t-s)^{\beta}}\\
&\qquad+\sup_{s<t\in [0,T]}\frac{e^{-\rho t}\left|\int_0^s [S(t-r)-S(s-r)]G(u(r))d\omega(r)\right|_{-\beta}}{(t-s)^{\beta}}\\
&\quad\leq \sup_{t\in [0,T]}c_SC_{\alpha,\beta}L_G\|u\|_{\beta,\rho,-\beta}\vertiii{\omega}_{\beta',0,T}\\
&\qquad \times\int_s^t(t-r)^{\alpha+\beta'-1}[(r-s)^{-\alpha}+(t-r)^{-\beta}(r-s)^{\beta-\alpha}]e^{\rho -(t-r)}dr\\
&\qquad+\sup_{s<t\in [0,T]}c_SC_{\alpha,\beta}L_G\|u\|_{\beta,\rho,-\beta}\vertiii{\omega}_{\beta',0,T}\\
&\qquad \times\frac{\int_s^t(t-r)^{\alpha+\beta'-1}[(r-s)^{-\alpha}+(r-s)^{\beta-\alpha}]e^{-\rho(t-r) }dr}{(t-s)^{\beta}}\\
&\qquad+\sup_{s<t\in [0,T]}c_SC_{\alpha,\beta}L_G\|u\|_{\beta,\rho,-\beta}\vertiii{\omega}_{\beta',0,T}(t-s)^{\beta}\\
&\qquad\times\frac{\int_0^se^{-\rho(t-r))}(s-r)^{\alpha+\beta'-1}{[(s-r)^{-\alpha'}r^{\alpha'-\alpha}+(s-r)^{-\beta}r^{\beta-\alpha}]}dr}{(t-s)^{\beta}}\\
&\quad\leq c_SC_{\alpha,\beta}K_{1}(\rho)L_G\|u\|_{\beta,\rho,-\beta}\vertiii{\omega}_{\beta',0,T}.
\end{align*}
For $u,v\in C^{\beta}_{-\beta}([0,T];\mathcal{B})$ we  further have that 
\begin{align*}
&\left\|\int_0^\cdot S(\cdot-r)[G(u(r))-G(v(r))]d\omega(r)\right\|_{\beta,\rho,-\beta}\\
&\quad=\sup_{t\in [0,T]}e^{-\rho t}\left|\int_0^t S(t-r)[G(u(r))-G(v(r))]d\omega(r)\right|\\
&\qquad+ \sup_{s<t\in [0,T]}\frac{e^{-\rho t}\left|\int_0^t S(t-r)[G(u(r))-G(v(r))]d\omega(r)-\int_0^s S(s-r)[G(u(r))-G(v(r))]d\omega(r)\right|_{-\beta}}{(t-s)^{\beta}}\\
&\quad \leq \sup_{t\in [0,T]}e^{-\rho t}\left|\int_0^s S(t-r)[G(u(r))-G(v(r))]d\omega(r)\right|\\
&\qquad+ \sup_{s<t\in [0,T]}\frac{e^{-\rho t}\left|\int_0^s [S(t-r)-S(s-r)][G(u(r))-G(v(r))]d\omega(r)d\omega(r)\right|_{-\beta}}{(t-s)^{\beta}}\\
&\qquad+ \sup_{s<t\in [0,T]}\frac{e^{-\rho t}\left|\int_s^t S(t-r)[G(u(r))-G(v(r))]d\omega(r)\right|_{-\beta}}{(t-s)^{\beta}}\\
&\quad\leq C_{\alpha,\beta}c_SK_{1}(\rho)L_G(1+\|u\|_{\beta,-\beta}+\|v\|_{\beta,-\beta})\|u-v\|_{\beta,\rho,-\beta}\vertiii{\omega}_{\beta',0,T}.
\end{align*}
\end{proof}
Putting all these deliberations together we infer. 
\begin{theorem}\label{th-exits-solu}
Assume that $(\bf A_1$)-$(\bf A_4$) are satisfied. Then the SPDE~\eqref{eq2.5} has a unique solution $u\in C_{-\beta}^{\beta}([0,T];\mathcal{B})$ with $u(0)=u_0\in \mathcal{B}$. Moreover, its solution operator generates  a random dynamical system $\varphi:\mathbb{R}^+\times\Omega\times \mathcal{B}\to \mathcal{B}$. 
\end{theorem}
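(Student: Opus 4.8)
The plan is to recast the mild formulation~\eqref{mild sol} as a fixed-point problem for the map
\begin{equation*}
(\mathcal{T}u)(t)=S(t)u_0+\int_0^t S(t-\tau)F(u(\tau))\,d\tau+\int_0^t S(t-\tau)G(u(\tau))\,d\omega(\tau),\qquad t\in[0,T],
\end{equation*}
on the space $C^{\beta}_{-\beta}([0,T];\mathcal{B})$ endowed with the equivalent weighted norm~\eqref{norm:c}. The stochastic convolution is well defined because the integrability hypothesis preceding~\eqref{eq2.2} is guaranteed by the preliminary lemmas: the integrands produced by $D^{\alpha}_{s+}$ carry only the singularities $(r-s)^{-\alpha}$, $(t-r)^{-\beta}$ and $(t-r)^{\alpha+\beta'-1}$, all integrable under $(\mathbf{A_4})$. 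Since $C^{\beta}_{-\beta}([0,T];\mathcal{B})$ is complete for every $\rho\geq 0$, it suffices to verify that $\mathcal{T}$ is a self-map and a contraction on a suitable ball.

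First I would collect the three contributions in the norm~\eqref{norm:c}. The linear part is estimated by~\eqref{eq2.7}--\eqref{eq2.8} together with the spectral bounds~\eqref{eq.S-posi}--\eqref{eq.S-nega}, giving $\|S(\cdot)u_0\|_{\beta,\rho,-\beta}\leq C_T|u_0|$ with $C_T$ independent of $\rho$. The drift is genuinely globally Lipschitz by $(\mathbf{A_2})$, and a computation parallel to Lemma~\ref{estim of G} (replacing $d\omega$ by $d\tau$ and using~\eqref{eq3.12}) bounds it by $CK(\rho)L_F\|u\|_{\beta,\rho,-\beta}$, respectively $CK(\rho)L_F\|u-v\|_{\beta,\rho,-\beta}$, where $K(\rho)$ is a kernel of the type~\eqref{eq3.11}. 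Finally, Lemma~\ref{estim of G} controls the stochastic convolution: its growth is \emph{linear}, with coefficient $c_SC_{\alpha,\beta}K_1(\rho)L_G\vertiii{\omega}_{\beta',0,T}$, whereas its Lipschitz estimate additionally carries the factor $(1+\|u\|_{\beta,-\beta}+\|v\|_{\beta,-\beta})$. Because $K_1(\rho),K(\rho)\to 0$ as $\rho\to\infty$, the growth coefficients of all three terms can be made arbitrarily small.

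From here I would argue in two stages. For local existence and uniqueness on a short interval $[0,\tau]$ I restrict $\mathcal{T}$ to a closed ball $\{\|u\|_{\beta,-\beta,[0,\tau]}\leq R\}$, on which the obstructive factor is bounded by $1+2R$; for $\tau$ small (so that the $\tau$-powers in~\eqref{eq3.12} and $\vertiii{\omega}_{\beta',0,\tau}$ are small) $\mathcal{T}$ is both a self-map and a contraction there, yielding a unique local solution. The passage to arbitrary $T$ rests on the fact that the growth estimates are only \emph{linear} in $\|u\|$: running them in the weighted norm~\eqref{norm:c} with $\rho$ large, so that $b_\rho:=c_SC_{\alpha,\beta}K_1(\rho)L_G\vertiii{\omega}_{\beta',0,T}+CK(\rho)L_F<1$, any solution on $[0,T]$ satisfies $\|u\|_{\beta,\rho,-\beta}\leq C_T|u_0|/(1-b_\rho)$. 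This a priori bound excludes finite-time blow-up, so the local solution extends to all of $[0,T]$ by concatenating finitely many steps (their number finite since $\vertiii{\omega}_{\beta',0,T}<\infty$); global uniqueness follows from the local contraction applied successively.

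The main obstacle is precisely the tension between the \emph{local} Lipschitz character of $u\mapsto\int_0^\cdot S(\cdot-r)G(u(r))\,d\omega(r)$ --- the H\"older part of the norm forces the fractional derivative to see the second-order information in $(\mathbf{A_3})$, producing the factor $1+\|u\|_{\beta,-\beta}+\|v\|_{\beta,-\beta}$ --- and the desired \emph{global} existence. This is resolved by decoupling the linear growth bound (which rules out blow-up) from the contraction (which is only local), the weighted norm~\eqref{norm:c} serving to drive the constants down via $K_1(\rho)\to 0$. Finally, to obtain the random dynamical system $\varphi(t,\omega,u_0):=u(t;\omega,u_0)$ I would verify the two defining properties: measurability in $(t,\omega,u_0)$ follows from the continuous dependence of the fixed point on the data, the measurability of $\omega\mapsto\vertiii{\omega}_{\beta',0,T}$, and the cited measurable-selection results; the cocycle property $\varphi(t+s,\omega)=\varphi(t,\theta_s\omega)\circ\varphi(s,\omega)$ is a consequence of uniqueness combined with the shift identity~\eqref{eq2.4}, which transfers the mild equation on $[s,t+s]$ with driver $\omega$ into the mild equation on $[0,t]$ with driver $\theta_s\omega$ and initial datum $\varphi(s,\omega,u_0)$.
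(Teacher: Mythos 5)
Your proposal is correct and follows essentially the same route as the paper: a Banach fixed-point argument for $\mathcal{T}$ in $C^{\beta}_{-\beta}([0,T];\mathcal{B})$ under the weighted norm~\eqref{norm:c}, with the linear growth estimates (Lemma~\ref{estim of G} and the analogous drift bound) furnishing an invariant ball and the locally Lipschitz estimate carrying the factor $1+\|u\|_{\beta,-\beta}+\|v\|_{\beta,-\beta}$ then yielding a contraction on that ball, followed by the standard measurability and cocycle verification via~\eqref{eq2.4}. The only organizational difference is that you shrink the time interval and concatenate using an a priori bound, whereas the paper stays on $[0,1]$ and uses two successive weights $\rho_0\leq\rho_1$ (the second chosen after the ball radius $R_1=Re^{\rho_0 T}$ is fixed) to obtain the self-map and the contraction in one shot; the two devices are interchangeable here since $K_1(\rho),K_2(\rho)\to 0$ as $\rho\to\infty$.
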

\begin{proof}
We apply the Banach fixed point theorem in order to obtain the existence {and} uniqueness of a solution to~\eqref{eq2.5}. By Lemma \ref{estim of G}, we need to focus on the estimates of $S(\cdot)u_0$ and the term containing the drift $F$. In fact, given 
%Before that, we complete the remaining calculus for $S(\cdot)u_0$ and the term associated to $F$. Given
$u_0\in \mathcal{B}$ and $\beta\geq 0$, %$\beta\geq \beta$%
it is straightforward to check that 
\begin{align}\label{norm0}
\nonumber\|S(\cdot)u_0\|_{\beta,\rho,-\beta}&\leq \sup_{t\in {0,T}}e^{-\rho t}|S(t)u_0|+\sup_{0\le s<t\le T}\frac{e^{-\rho t}\|S(t)-S(s)\|_{L(\mathcal{B},\mathcal{B}_{-\beta})}|u_0|}{(t-s)^{\beta}}\\
&\leq c_S|u_0|.
\end{align}
As for the drift term, regarding again~\eqref{norm:c} and~\eqref{F2}, we directly obtain that
\begin{align*}
&\left\|\int_0^\cdot S(\cdot-r)F(u(r))dr\right\|_{\beta,\rho,-\beta}\\
&\quad=\sup_{t\in [0,T]}e^{-\rho t}\left|\int_0^t S(t-r)F(u(r))dr\right|\\
&\qquad+ \sup_{s<t\in [0,T]}\frac{e^{-\rho t}\left|\int_0^t S(t-r)F(u(r))dr-\int_0^s S(s-r)F(u(r))dr\right|_{-\beta}}{(t-s)^{\beta}}\\
&\quad\leq \sup_{t\in [0,T]}e^{-\rho t}\left|\int_0^t S(t-r)F(u(r))dr\right|
+ \sup_{s<t\in [0,T]}\frac{e^{-\rho t}\left|\int_s^t S(t-r)F(u(r))dr\right|_{-\beta}}{(t-s)^{\beta}}\\
&\qquad+\sup_{s<t\in [0,T]}\frac{e^{-\rho t}\left|\int_0^s [S(t-r)-S(s-r)]F(u(r))dr\right|_{-\beta}}{(t-s)^{\beta}}\\
&\quad\leq c_SL_F\frac{1-e^{-\rho T}}{\rho}\|u\|_{\beta,\rho,-\beta}(1+T^{1-\beta})\\
&\qquad+\sup_{s<t\in [0,T]}\frac{e^{-\rho t}\int_0^s \|S(t-r)-S(s-r)\|_{L(\mathcal{B};\mathcal{B}_{-\beta})}|F(u(r))|dr}{(t-s)^{\beta}}\\
&\quad\leq c_SL_F\frac{1-e^{-\rho T}}{\rho}\|u\|_{\beta,\rho,-\beta}(1+T^{1-\beta}+T)\\
&\quad=c_SL_FK_2(\rho)\|u\|_{\beta,\rho,-\beta}(1+T^{1-\beta}+T),
\end{align*}
and
\begin{align*}
&\left\|\int_0^\cdot S(\cdot-r)[F(u(r))-F(v(r))]dr\right\|_{\beta,\rho,-\beta}\\
&\quad=\sup_{t\in [0,T]}e^{-\rho t}\left|\int_0^t S(t-r)[F(u(r))-F(v(r))]dr\right|\\
&\qquad+ \sup_{s<t\in [0,T]}\frac{e^{-\rho t}\left|\int_0^t S(t-r)[F(u(r))-F(v(r))]dr-\int_0^s S(s-r)[F(u(r))-F(v(r))]dr\right|_{-\beta}}{(t-s)^{\beta}}\\
&\quad\leq \sup_{t\in [0,T]}e^{-\rho t}\left|\int_0^t S(t-r)[F(u(r))-F(v(r))]dr\right|
+ \sup_{s<t\in [0,T]}\frac{e^{-\rho t}\left|\int_s^t S(t-r)[F(u(r))-F(v(r))]dr\right|_{-\beta}}{(t-s)^{\beta}}\\
&\qquad+\sup_{s<t\in [0,T]}\frac{e^{-\rho t}\left|\int_0^s [S(t-r)-S(s-r)][F(u(r))-F(v(r))]dr\right|_{-\beta}}{(t-s)^{\beta}}\\
&\quad\leq c_SL_F\frac{1-e^{-\rho T}}{\rho}\|u-v\|_{\beta,\rho,-\beta}(1+T^{1-\beta})\\
&\qquad+\sup_{s<t\in [0,T]}\frac{e^{-\rho t}\int_0^s \|S(t-r)-S(s-r)\|_{L(\mathcal{B};\mathcal{B}_{-\beta})}|F(u(r))-F(v(r))|dr}{(t-s)^{\beta}}\\
&\leq c_SL_FK_2(\rho)\|u-v\|_{\beta,\rho,-\beta}(1+T^{1-\beta}+T),
\end{align*}
where $K_2(\rho)=\frac{1-e^{-\rho T}}{\rho}$ which converges  to $0$ as $\rho\to \infty$.

For simplicity, we set $T:=1$. Let $\mathcal{T}$ be the mapping from $C^{\beta}_{-\beta}([0,1];\mathcal{B})$ into itself given by 
\begin{equation*}
\mathcal{T}(u)(t):=S(t)u_0+\int_0^t S(t-\tau)F(u(\tau))d\tau+\int_0^t S(t-\tau)G(u(\tau))d\omega(\tau).
\end{equation*}
Taking Lemma \ref{estim of G} into consideration, we have that 
\begin{align*}
\|\mathcal{T}(u)\|_{\beta,\rho,-\beta}&\leq c_S|u_0|+ c_SL_FK_2(\rho)\|u\|_{\beta,\rho,-\beta}+ c_SC_{\alpha,\beta}L_GK_{1}(\rho)\|u\|_{\beta,\rho,-\beta}\vertiii{\omega}_{\beta',0,1},
\end{align*}
 and 
\begin{align*}
\|\mathcal{T}(u-v)\|_{\beta,\rho,-\beta}&\leq c_S|u_0-v_0|+ c_SL_FK_2(\rho)\|u-v\|_{\beta,\rho,-\beta}\\
&\quad+ c_SC_{\alpha,\beta}L_GK_{1}(\rho)(1+\|u\|_{\beta,-\beta}+\|v\|_{\beta,-\beta})\|u-v\|_{\beta,\rho,-\beta}\vertiii{\omega}_{\beta',0,1}.
\end{align*}
Choosing a large enough $\rho_0$ such that $\max\{c_SL_FK_2(\rho_0),c_SC_{\alpha,\beta}L_GK_{1}(\rho_0)\vertiii{\omega}_{\beta',0,1}\}\leq \frac{1}{4}$, we get
\begin{align*}
\|\mathcal{T}(u)\|_{\beta,\rho_0,-\beta}&\leq c_S|u_0|+ \frac{1}{2}\|u\|_{\beta,\rho_0, -\beta}.
\end{align*}
Let 
\begin{align*}
B:=\{u\in C^{\beta}_{-\beta}([0,1];\mathcal{B}),\|u\|_{\beta,\rho_0, -\beta}\leq R\}, ~~~R:=2c_S|u_0|.
\end{align*} Then $\mathcal{T}$ is a self-mapping on $B$. Note that 
\begin{align*}
\sup_{u\in B}\|u\|_{\beta,-\beta}\leq e^{\rho_0 T}\sup_{u\in B}\|u\|_{\beta,\rho_0,-\beta}\leq Re^{\rho_0 T}=:R_1.
\end{align*}
Therefore, considering $u_0=v_0$, it is possible to find a $\rho_1$ such that for $u,v\in B$, 
\begin{align*}
\|\mathcal{T}(u)-\mathcal{T}(v)\|_{\beta,\rho_1,-\beta}&\leq c_SL_FK_2(\rho_1)\|u-v\|_{\beta,\rho_1,-\beta}\\
&\quad+ c_SC_{\alpha,\beta}L_GK_{1}(\rho_1)(1+2R_1)\|u-v\|_{\beta,\rho_1,-\beta}\vertiii{\omega}_{\beta',0,1}\\
&\leq \frac{1}{2}\|u-v\|_{\beta,\rho_1,-\beta}.
\end{align*}
Hence by the Banach fixed point theorem, there is a unique fixed point in $B$ such that $\mathcal{T}(u)=u$ for given $u_0\in \mathcal{B}$. In conclusion, following a similar argument to  \cite[Theorem 16]{Chen2014}, we conclude that the solution operator of~\eqref{eq2.5} generates a random dynamical system.
\end{proof}

\section{Main results}
\label{se3}

The aim of this section is to prove the existence of a $C^k$-smooth  stable manifold for the dynamical system $\varphi$ generated by the unique global solution of \eqref{mild sol}. 
%In the following, we assume that  $\varphi(t,\omega,0)=0$ for all $t\ge0$ and $\omega\in\Omega$, namely, 0 is a stationary solution  of \eqref{mild sol}.
%For any $t\geq 1$, there is an $m\in\mathbb{Z}^+$ such that $t\in(m,m+1]$. Then alignedting the time $[0,t]$ into various intervals $[i,i+1],i=0,\cdots m-1$ and $[m,t]$, we have
%\begin{equation}\label{eq3.1}
%\begin{aligned}
%u(t)=&S(t)u(0)+\sum_{i=0}^{m-1}\int_{i}^{i+1} S(t-\tau)F(u(\tau))d\tau+\int_m^{t} S(t-\tau)F(u(\tau))d\tau\\
%&+\sum_{i=0}^{m-1}\int_i^{i+1} S(t-\tau)G(u(\tau))d\omega(\tau)+\int_m^{t} S(t-\tau)G(u(\tau))d\omega(\tau)\\
%=&S(t)u(0)+\sum_{i=0}^{m-1}S(t-i-1)\int_{0}^{1} S(1-\tau)F(u(\tau+i))d\tau\\
%&+\int_{0}^{t-m} S(t-m-\tau)F(u(\tau+m))d\tau\\
%&+\sum_{i=0}^{m-1}S(t-i-1)\int_{0}^{1} S(1-\tau)G(u(\tau+i))d\theta_{i}\omega(\tau)\\
%&+\int_{0}^{t-m} S(t-m-\tau)G(u(\tau+m))d\theta_{m}\omega(\tau).
%\end{aligned}
%\end{equation}
%
%For notational simplicity, let us abbreviate $u(\cdot+i)$ by $u_i(\cdot)$, namely,
%\begin{equation*}
%  u(\tau+i)=u_i(\tau),\qquad \tau\in[0,1]
%\end{equation*}
%This also implies that
% \begin{equation*}
%   u(t)=u_i(t-i),\qquad t-i\in[0,1],u_{i-1}(1)=u_{i}(0),
% \end{equation*}
%and thus
% \begin{equation*}
%\|u\|_{\beta,-\beta,i,i
%+1}=\|u_i\|_{\beta,-\beta,0,1}.
%\end{equation*}
Let $\mathcal{H}_{\kappa},\check{\mu}<0<\kappa<\min\{-\check{\mu},\hat{\mu}\}$ be a space with elements $U=\{u_i\}_{i\in\mathbb{Z}^+}$, where $u_{i}\in C^{\beta}_{-\beta}([0,1];\mathcal{B})$, with the norm,
%\begin{align*}
 % \|U\|_{\mathcal{H}_{\kappa}}:=\sup_{i\in\mathbb{Z}^+}e^{\kappa(i-1)}\|u_{i-1}\|_{\beta,-\beta}<\infty,~~~u_{i}(1)=u_{i+1}(0).
%\end{align*}
\begin{align*}
  \|U\|_{\mathcal{H}_{\kappa}}:={\sup_{i\in\mathbb{Z}^+}e^{\kappa i}\|u_{i}\|_{\beta,-\beta}}<\infty,~~~u_{i}(1)=u_{i+1}(0).
\end{align*}
The space $(\mathcal{H}_{\kappa},\|\cdot\|_{\mathcal{H}_{\kappa}})$ is a Banach space. Let also $\mathcal{H}_{\kappa}^{\xi}$ be the  subset of $\mathcal{H}_{\kappa}$ such that $u_{0}^-(0)=\xi\in \mathcal{B}^-$.  Since $\mathcal{H}_{\kappa}^{\xi}$ is a closed subset of $\mathcal{H}_{\kappa}$, it is  complete.
\subsection{Derivation of the Lyapunov-Perron  operator}\label{se.L-P}

Similar to~\cite{GarridoAtienza2010,KN22} we introduce now a discrete Lyapunov-Perron operator associated with \eqref{mild sol}. 
\begin{lemma}
Given $\xi\in\mathcal{B}^-, U=\{u_i\}_{i\in\mathbb{Z}^+}\in \mathcal{H}_{\kappa}^{\xi}$, $t\in[0,1]$, let $u(t+m)=u_m(t),t\in[0,1]$. Then $u(\cdot)$ is the solution of \eqref{mild sol} with $u^-(0)=\xi$ if and only if $U$ satisfies the following equality:
  \begin{equation}\label{eq.L-P}
\begin{aligned}
\relax[J(U)]_m(t)&=S^{-}(t+m)\xi\\
&\quad+\sum_{i=1}^{m}S^{-}(t+m-i)\int_{0}^{1} S^{-}(1-\tau)F(u_{i-1}(\tau))d\tau\\
&\quad+\sum_{i=1}^{m}S^{-}(t+m-i)\int_{0}^{1} S^{-}(1-\tau)G(u_{i-1}(\tau))d\theta_{i-1}\omega(\tau)\\
&\quad+\int_{0}^{t} S^{-}(t-\tau)F(u_{m}(\tau))d\tau+\int_{0}^{t} S^{-}(t-\tau)G(u_{m}(\tau))d\theta_{m}\omega(\tau)\\
&\quad-\int_{t}^{1} S^{+}(t-\tau)F(u_m(\tau))d\tau-\int_{t}^{1} S^{+}(t-\tau)G(u_m(\tau))d\theta_{m}\omega(\tau)\\
&\quad-\sum_{i=m+2}^{\infty}S^{+}(t+m-i)\int_{0}^{1} S^{+}(1-\tau)F(u_{i-1}(\tau))d\tau\\
&\quad-\sum_{i=m+2}^{\infty}
S^{+}
(t+m-i)\int_{0}^{1}S^{+}
(1-\tau)G(u_{i-1}(\tau))d\theta_{i-1}\omega(\tau),
\end{aligned}
\end{equation}
with the convention that $\sum_{i=k}^{n}f_i=0 $ for any $n<k$.
\end{lemma}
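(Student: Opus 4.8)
The plan is to establish both implications by projecting the mild equation~\eqref{mild sol} onto the invariant subspaces $\mathcal{B}^+$ and $\mathcal{B}^-$ and then discretizing the resulting variation-of-constants representations over the unit intervals $[i-1,i]$. Throughout I use that $\pi^\pm$ commute with $S$, so that $\pi^- S(t)=S^-(t)\pi^-$ and $\pi^+ S(t)=S^+(t)\pi^+$, and that the shift identity~\eqref{eq2.4} lets me rewrite any integral over $[i-1,i]$ against $\omega$ as an integral over $[0,1]$ against $\theta_{i-1}\omega$.

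First I would prove the forward implication. Assuming $u$ solves~\eqref{mild sol} with $u^-(0)=\xi$, I apply $\pi^-$ and use the evolution property of the mild solution on the grid: decomposing $[0,m+t]=\bigcup_{i=1}^m[i-1,i]\cup[m,m+t]$, the semigroup law $S^-(t+m-\tau)=S^-(t+m-i)S^-(i-\tau)$ together with~\eqref{eq2.4} telescopes the stable part into the term $S^-(t+m)\xi$, the two sums over $i=1,\dots,m$ and the two integrals over $[0,t]$ in~\eqref{eq.L-P}. For the unstable part I apply $\pi^+$ and run the variation-of-constants formula \emph{backward}: for $s>m+t$ one inverts the finite-dimensional flow $S^+$ to write $u^+(m+t)=S^+(m+t-s)u^+(s)-\int_{m+t}^{s}S^+(m+t-\tau)\pi^+[\cdots]$, and since $\|S^+(m+t-s)\|\le c_S e^{\hat{\mu}(m+t-s)}$ by~\eqref{eq.S-posi} while $|u^+(s)|$ stays bounded because $U\in\mathcal{H}_\kappa$, the boundary term vanishes as $s\to\infty$. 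This yields the representation of $u^+(m+t)$ as a convergent integral from $m+t$ to $+\infty$, which discretizes into the $\int_t^1$ terms and the tail sums $\sum_{i=m+2}^\infty$ of~\eqref{eq.L-P}. Adding the two projections gives $u_m(t)=[J(U)]_m(t)$.

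For the converse I would assume $U=J(U)$ and read off the claimed properties from~\eqref{eq.L-P} directly. Evaluating at $m=0$, $t=0$ and applying $\pi^-$ annihilates every term lying in $\mathcal{B}^+$ and leaves $u_0^-(0)=\xi$, while the compatibility $u_i(1)=u_{i+1}(0)$ is already built into $\mathcal{H}_\kappa^\xi$. It then remains to verify that the reconstructed $u(m+t):=u_m(t)$ satisfies the local relation $u(m+t)=S(t-s)u(m+s)+\int_{m+s}^{m+t}S(m+t-\tau)[F(u)\,d\tau+G(u)\,d\omega]$ for $0\le s\le t\le 1$, which by the semigroup and cocycle identities is equivalent to the global mild form~\eqref{mild sol}; this follows by reversing the telescoping performed above, undoing~\eqref{eq2.4} to return to integrals over $[m+s,m+t]$ and recombining the stable and unstable sums. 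The key point is that the $S^+$-integral over $(m+t,\infty)$ reproduces exactly the unstable part of a genuine solution once one reinserts the (vanishing) boundary term.

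The main obstacle is the unstable component: one must justify both the interchange of the limit $s\to\infty$ with the projection and the absolute convergence of the series $\sum_{i=m+2}^\infty S^+(t+m-i)\int_0^1 S^+(1-\tau)[\cdots]\,d\theta_{i-1}\omega$. Here the $i$-th summand is controlled, via Lemma~\ref{estim of G} and~\eqref{eq.S-posi}, by a constant times $e^{\hat{\mu}(t+m-i)}\|u_{i-1}\|_{\beta,-\beta}\vertiii{\theta_{i-1}\omega}_{\beta',0,1}$, and the definition of $\mathcal{H}_\kappa$ gives $\|u_{i-1}\|_{\beta,-\beta}\lesssim e^{-\kappa(i-1)}\|U\|_{\mathcal{H}_\kappa}$. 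Since the fBm norms $\vertiii{\theta_{i-1}\omega}_{\beta',0,1}$ grow only subexponentially in $i$ by temperedness, the spectral gap $\kappa<\min\{-\check{\mu},\hat{\mu}\}$ forces the summand to decay like $e^{-(\hat{\mu}+\kappa)i}$ up to subexponential factors, so the series converges; the same gap makes the stable sum decay like $e^{-\kappa m}$, so that~\eqref{eq.L-P} is well defined on $\mathcal{H}_\kappa^\xi$. Carrying these estimates uniformly in $t\in[0,1]$ and keeping the bookkeeping of the shift operators consistent is the technically delicate part, whereas the remaining telescoping is routine.
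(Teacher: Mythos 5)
Your proposal is correct and follows essentially the same route as the paper: project the mild formulation onto $\mathcal{B}^\pm$, telescope the stable part over unit intervals via the semigroup law and the shift identity~\eqref{eq2.4}, invert the finite-dimensional group $S^+$ to represent the unstable part backward from $n$, and kill the boundary term $S^+(t+m-n)u_n^+(0)$ as $n\to\infty$ using~\eqref{eq.S-posi} together with $\|u_n\|_{\beta,-\beta}\le e^{-\kappa n}\|U\|_{\mathcal{H}_\kappa}$. Your additional remarks on the absolute convergence of the tail sums are a welcome precision that the paper defers to the fixed-point estimates, but they do not change the argument.
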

\begin{proof}
If $u$ is the solution of \eqref{mild sol} with initial value $u(0)$ with $u^-(0)=\xi$, denoting $u(t+m)=u_m(t),t\in[0,1]$ and $U=(u_i)_{i\in\mathbb{Z}^+}$, then by the variation of constants formula,  we obtain that
  \begin{equation}\label{eq5.3}
\begin{aligned}
u^{-}_m(t)&=\varphi^{-}(t+m,\omega,u(0))\\&~=S^{-}(t+m)\xi
+\sum_{i=1}^{m}\int_{i-1}^{i} S^{-}(t+m-\tau)F(u(\tau))d\tau
+\int_{m}^{t+m} S^{-}(t+m-\tau)F(u(\tau))d\tau\\
&\quad+\sum_{i=1}^{m}\int_{i-1}^{i} S^{-}(t+m-\tau)G(u(\tau))d\omega(\tau)
+\int_{m}^{t+m} S^{-}(t+m-\tau)G(u(\tau))d\omega(\tau)\\
&=S^{-}(t+m)\xi
+\sum_{i=1}^{m}S^{-}(t+m-i)\int_{0}^{1} S^{-}(1-\tau)F(u_{i-1}(\tau))d\tau\\
&\quad+\sum_{i=1}^{m}S^{-}(t+m-i)\int_{0}^{1} S^{-}(1-\tau)G(u_{i-1}(\tau))d\theta_{i-1}\omega(\tau)\\
&\quad+\int_{0}^{t} S^{-}(t-\tau)F(u_{m}(\tau))d\tau+\int_{0}^{t} S^{-}(t-\tau)G(u_{m}(\tau))d\theta_{m}\omega(\tau).
\end{aligned}
\end{equation}
Regarding the part in $\mathcal{B}^+$, by the definition of the solution of \eqref{mild sol}, we have
\begin{align*}
u^+(s)=S^+(s)u^+(0)+\int_{0}^{s}S^+(s-r)F(u(r))dr+\int_{0}^{s}S^+(s-r)G(u(r))d\omega(r)
\end{align*}
and for $n\geq s$, by the cocycle property, it holds that
\begin{align*}
u^+(n)=S^+(n-s)u^+(s)+\int_{s}^{n}S^+(n-r)F(u(r))dr
+\int_{s}^{n}S^+(n-r)G(u(r))d\omega(r).
\end{align*}
Since $\mathcal{B}^+$ is finite-dimensional, the semigroup $S^+(\cdot)$ can be extended to a group using that $S^+(-t)S^+(t)=S^+(t)S^+(-t)=\text{id}$.
Hence, 
\begin{align*}
u^+(s)=S^+(s-n)u^+(n)-\int_{s}^{n}S^+(s-r)F(u(r))dr-\int_{s}^{n}S^+(s-r)G(u(r))d\omega(r).
\end{align*}
This further implies that
  \begin{equation}\label{eq5.4}
  \begin{aligned}
&u^{+}(t+m)\\
&=S^{+}(t+m-n)u^+(n)-\int_{t+m}^nS^+(t+m-\tau)F((u(\tau)))d\tau
-\int_{t+m}^nS^+(t+m-\tau)G((u(\tau)))d\omega(\tau)\\
&=S^{+}(t+m-n)u^+(n)-\int_{t+m}^{m+1} S^{+}(t+m-\tau)F(u(\tau))d\tau
-\int_{t+m}^{m+1} S^{+}(t+m-\tau)G(u(\tau))d\omega(\tau)\\
&\quad-\sum_{i=m+2}^{n}\int_{i-1}^{i} S^{+}(t+m-\tau)F(u(\tau))d\tau
-\sum_{i=m+2}^{n}
\int_{i-1}^{i}S^{+}
(t+m-\tau)G(u(\tau))d\omega(\tau)\\
&=S^{+}(t+m-n)u^+_n(0)-\int_{t}^{1} S^{+}(t-\tau)F(u_m(\tau))d\tau-\int_{t}^{1} S^{+}(t-\tau)G(u_m(\tau))d\theta_{m}\omega(\tau)\\
&\quad-\sum_{i=m+2}^{n}S^{+}(t+m-i)\int_{0}^{1} S^{+}(1-\tau)F(u_{i-1}(\tau))d\tau\\
&\quad-\sum_{i=m+2}^{n}
S^{+}
(t+m-i)\int_{0}^{1}S^{+}
(1-\tau)G(u_{i-1}(\tau))d\theta_{i-1}\omega(\tau),
\end{aligned}
\end{equation}
where $n\geq m+1$. Noting that it follows from \eqref{eq.S-posi} that
\begin{equation*}
  \begin{aligned}
\left\|S^{+}(t+m-n)u_n^+(0)\right\|&
    \le  c_S e^{\hat{\mu}(t+m-n)}
    \left\|u_n\right\|_{\beta,-\beta}\\
    &= c_S e^{\hat{\mu}(t+m)}e^{-(\hat{\mu}+\kappa)n}e^{\kappa n}
    \left\|u_n\right\|_{\beta,-\beta}\\
&\leq  c_S e^{\hat{\mu}(t+m)}e^{-(\hat{\mu}+\kappa)n}
    \left\|U\right\|_{\mathcal{H}_{\kappa}}
  \end{aligned}
\end{equation*}
which vanishes as $n\to+\infty$ since $\kappa+\hat{\mu}>0$. So taking the limit $n\to+\infty$ in \eqref{eq5.4} results in
 \begin{equation}\label{eq5.5}
  \begin{aligned}
u^{+}_{m}(t)&=
-\int_{t}^{1} S^{+}(t-\tau)F(u_m(\tau))d\tau-\int_{t}^{1} S^{+}(t-\tau)G(u_m(\tau))d\theta_{m}\omega(\tau)\\
&\quad-\sum_{i=m+2}^{\infty}S^{+}(t+m-i)\int_{0}^{1} S^{+}(1-\tau)F(u_{i-1}(\tau))d\tau\\
&\quad-\sum_{i=m+2}^{\infty}
S^{+}
(t+m-i)\int_{0}^{1}S^{+}
(1-\tau)G(u_{i-1}(\tau))d\theta_{i-1}\omega(\tau).
\end{aligned}
\end{equation}
Then $U$ satisfies \eqref{eq.L-P} by combining \eqref{eq5.3} with \eqref{eq5.5}. Conversely, if $U\in \mathcal{H}_{\kappa}^{\xi}$ satisfies \eqref{eq.L-P}, then denoting by $u(t)=u_i(s)$ with $i+s=t$ for any $t\geq 0$,  it follows from a straightforward calculation that $u(t)$ is the solution of \eqref{mild sol} with $u^-(0)=\xi$.
\end{proof}
As commonly met in the theory of invariant manifolds~\cite{GarridoAtienza2010,KN22}, we have to truncate the nonlinear terms $F$ and $G$. To this aim let $\chi$ be  a smooth function on $C^{\beta}_{-\beta}([0,1];\mathcal{B})$ given by
\begin{equation*}
\chi(u):=\left\{
\begin{aligned}
&u,\|u\|_{\beta,-\beta}\leq\frac{1}{2}\\
&0,\|u\|_{\beta,-\beta}>1,
\end{aligned}\right.
\end{equation*}
with continuous first and second derivative bounded by the constants $D_{\chi},D^{2}_{\chi}$  .
For $R>0$ (which will be determined in Subsection \ref{sub-exis}), let 
\begin{equation*}
\chi_{R}(u):=R\chi\left(\frac{u}{R}\right)=\left\{
\begin{aligned}
&u,\|u\|_{\beta,-\beta}\leq\frac{R}{2}\\
&0,\|u\|_{\beta,-\beta}>R.
\end{aligned}\right.
\end{equation*}
We further define $F_{R}(u):=F\circ\chi_{R}(u)$ and $G_{R}(u):=G\circ\chi_{R}(u)$ for $u\in C_{-\beta}^{\beta}([0,1];\mathcal{B})$.
% {\color{blue}Herein for simplicity, we assume additionally that $F\in C^1_b$. Such conditions will be used in the smoothness of local stable manifold. For the existence, we still can represent it by }. 
Denote by 
\begin{align*}
L_{F}(R)&=\max\{D_{\chi}\sup_{u,v\in B_\mathcal{B}(0,R),{\eta\in[0,1]}}\|DF(\eta\chi_R(u)+(1-\eta)\chi_{R}(v))\|_{L(\mathcal{B};\mathcal{B})},\\
&D_{\chi}\sup_{u,v\in B_\mathcal{B}(0,R),{\eta\in[0,1]}}\|DF(\eta\chi_R(u)+(1-\eta)\chi_{R}(v))\|_{L(\mathcal{B};\mathcal{B}_{-\beta})}\},
\end{align*}
and 
\begin{align*}
L_{G}(R)&=\max\{D_{\chi}\sup_{w\in B_\mathcal{B}(0,R)}\|DG(w)\|_{L(\mathcal{B};L_2(\mathcal{B};\mathcal{B}))},
 D_{\chi}\sup_{w\in B_\mathcal{B}(0,R)}\|DG(w)\|_{L(\mathcal{B}_{-\beta};L_2(\mathcal{B};\mathcal{B}_{-\beta}))}\}.
\end{align*}
{Note that due to the boundedness of $DF,DG$ and due to the conditions $DF(0)=0,DG(0)=0$, $L_F(R)$ and $L_G(R)$ converge to 0 as $R\to 0$.~Moreover we have that $L_F(R)=C[D_\chi, C_{DF}] R \text{ and } L_G(R)=C[D_\chi, D^2_\chi,C_{DG}, C_{D^2G}] R$, where $C$ is a constant which depends only on $D_\chi$, $D^2_\chi$ and on the bounds of the derivatives of $F$ and $G$ denoted by $C_{DF}$ respectively $C_{DG}$, $C_{D^2G}$. Here we disregard the embedding constants since these do not change the argument}. Due to assumption $(\bf{A_2})$ and $(\bf{A_3})$, it holds that
\begin{align*}
|F_{R}(u)-F_{R}(v)|&=|F(\chi_{R}(u))-F(\chi_{R}(v))|\\
&=\int_{0}^{1}DF(\eta\chi_R(u)+(1-\eta)\chi_{R}(v))(\chi_{R}(u)-\chi_{R}(v))d\eta\\
&\leq D_{\chi}\sup_{u,v\in B_\mathcal{B}(0,R),{\eta\in[0,1]}}\|DF(\eta\chi_R(u)+(1-\eta)\chi_{R}(v))\||u-v|\\
&\leq L_{F}(R)|u-v|
\end{align*} 
and 
\begin{align*}
|F_{R}(u)-F_{R}(v)|_{-\beta}&=|F(\chi_{R}(u))-F(\chi_{R}(v))|_{-\beta}\\
&=\int_{0}^{1}DF(\eta\chi_R(u)+(1-\eta)\chi_{R}(v))(\chi_{R}(u)-\chi_{R}(v))d\eta\\
&\leq D_{\chi}\sup_{u,v\in B_\mathcal{B}(0,R),{\eta\in[0,1]}}\|DF(\eta\chi_R(u)+(1-\eta)\chi_{R}(v))\|_{L(\mathcal{B};\mathcal{B}_{-\beta})}|u-v|\\
%&\leq D_{\chi}\sup_{w\in B_\mathcal{B}(0,R)}\|DF(w)\|_{L(\mathcal{B};\mathcal{B}_{-\beta})}|u-v|\\
&\leq L_{F}(R)|u-v|.
\end{align*} 
Similarly, for $G$, we have 
\begin{align*}
\|G_{R}(u)-G_{R}(v)\|_{L_{2}(\mathcal{B};\mathcal{B})}&=\|G(\chi_{R}(u))-G(\chi_{R}(u))\|_{L_{2}(\mathcal{B};\mathcal{B})}\\
&=\left\|\int_{0}^{1}DG(\eta\chi_R(u)+(1-\eta)\chi_{R}(v))(\chi_{R}(u)-\chi_{R}(v))d\eta\right\|\\
&\leq D_{\chi}\sup_{u,v\in B_\mathcal{B}(0,R)}\|DG(\eta\chi_R(u)+(1-\eta)\chi_{R}(v))\||u-v|\\
&\leq D_{\chi}\sup_{w\in B_\mathcal{B}(0,R)}\|DG(w)(u-v)\|_{L_2(\mathcal{B};\mathcal{B})}\\
&\leq  L_{G}(R)|u-v|,
\end{align*}
and
\begin{align*}
\|G_{R}(u)-G_{R}(v)\|_{L_{2}(\mathcal{B};\mathcal{B}_{-\beta})}&=\|G(\chi_{R}(u))-G(\chi_{R}(u))\|_{L_{2}(\mathcal{B};\mathcal{B}_{-\beta})}\\
&=\left\|\int_{0}^{1}DG(\eta\chi_R(u)+(1-\eta)\chi_{R}(v))(\chi_{R}(u)-\chi_{R}(v))d\eta\right\|_{L_{2}(\mathcal{B};\mathcal{B}_{-\beta})}\\
&\leq D_{\chi}\sup_{u,v\in B_\mathcal{B}(0,R)}\|DG(\eta\chi_R(u)+(1-\eta)\chi_{R}(v))(u-v)\|_{L_{2}(\mathcal{B};\mathcal{B}_{-\beta})}\\
&\leq D_{\chi}\sup_{w\in B_V(0,R)}\|DG(w)(u-v)\|_{L_2(\mathcal{B};\mathcal{B}_{-\beta})}\\
&\leq  L_{G}(R)|u-v|_{-\beta},
\end{align*}
 Since $\mathcal{B}$ is densely embedded into $\mathcal{B}_{-\beta}$, we have, dropping the embedding constant
\begin{align*}
\|G_{R}(u)-G_{R}(v)\|_{L_{2}(\mathcal{B};\mathcal{B}_{-\beta})}\leq  L_{G}(R)|u-v|.
\end{align*}
Lastly, 
\begin{align*}
&\|DG_{R}(u)h-DG_{R}(v)h\|_{L_2(\mathcal{B};\mathcal{B}_{-\beta}))}\\
&\quad
=\left\|\int_{0}^1D^2G(\eta \chi_R(u)+(1-\eta)\chi_{R}(v))d\eta(\chi_R(u)-\chi_{R}(v))h\right\|_{L_{2}(\mathcal{B};\mathcal{B}_{-\beta})}\\
&\quad\leq D_{\chi}\sup_{w\in B_\mathcal{B}(0,R)}\|D^2G(w)(u-v)h\|_{L_2(\mathcal{B};\mathcal{B}_{-\beta}))}\\
&\leq D_{\chi}L_G|u-v|_{-\beta}|h|_{-\beta}.
\end{align*}
Hence for $u_1,v_1,u_2,v_2\in \cB$, we have 
\begin{align*}
&\|G_{R}(u_1)-G_{R}(v_1)-G_{R}(u_2)+G_{R}(v_2)\|_{L_2(\mathcal{B};\mathcal{B}_{-\beta})}\\
&\leq\bigg\|\int_{0}^1\int_0^1D^2G(h(\xi \chi_{R}(u_1)+(1-\xi)\chi_{R}(v_1))+(1-h)(\xi \chi_{R}(u_2)+(1-\xi)\chi_{R}(v_2)))dh\\
&\qquad\times[\xi \chi_{R}( u_1)+(1-\xi)\chi_{R}(v_1)-\xi \chi_{R}(u_2)-(1-\xi)\chi_{R}(v_2)]d\xi (\chi_{R}(u_1)-\chi_{R}(v_1))\bigg\|_{L_2(\mathcal{B};\mathcal{B}_{-\beta})}\\
&\qquad+\left\|\int_{0}^1DG(\xi \chi_{R}(u_2)+(1-\xi)\chi_{R}(v_2))d\xi (\chi_{R}(u_1)-\chi_{R}(v_1)-\chi_{R}(u_2)+\chi_{R}(v_2))\right\|_{L_2(\mathcal{B};\mathcal{B}_{-\beta})}\\
&\leq L_G(|\chi_{R}(u_1)-\chi_{R}(u_2)|_{-\beta}+|\chi_{R}(v_1)-\chi_{R}(v_2)|_{-\beta})|\chi_{R}(u_1)-\chi_{R}(v_1)|_{-\beta}\\
&\quad+L_G(R)|\chi_{R}(u_1)-\chi_{R}(v_1)-\chi_{R}(u_2)-\chi_{R}(v_2)|_{-\beta}\\
&\leq  L_G(D_\chi)^2(|u_1-u_2|_{-\beta}+|v_1-v_2|_{-\beta})|u_1-v_1|_{-\beta}
+L_G(R)D^2_\chi|u_1-v_1-u_2-v_2|_{-\beta}.
\end{align*}
\begin{remark}\label{r}
Due to the previous estimates, one can show that the modified equation~\eqref{eq2.5} (i.e.~replacing $F$ and $G$ by $F_R$ and $G_R$ which are now path-dependent) has a unique global-in-time solution in $C^\beta_{-\beta}([0,T],\cB)$. This follows via a fixed point method as in Section~\ref{assumptions}, see~\cite[Theorem 4.3]{KN22} for a similar argument.
\end{remark}

For  $K>0$, let $R(\theta_i\omega)$ be the solution of 
\begin{align}\label{R}
c_SL_{F}(R(\theta_i\omega))+c_SC_{\alpha,\beta}L_G(R(\theta_i\omega))
  \vertiii{\theta_{i}\omega}_{\beta',0,1}=K.
\end{align}
{The existence of such a random variable is ensured due to the structure of $L_F(R)$ and $L_G(R)$ specified above.}
The random variable $R(\omega)>0$ is tempered from below since $ \vertiii{\theta_{i}\omega}_{\beta',0,1}$ is tempered from above.  Next, we consider the following Lyapunov-Perron-type operator:
\begin{equation}\label{eq-cut}
\begin{aligned}
\relax[\mathcal{J}_{R}(U)]_m(t)&=S^{-}(t+m)\xi
+\sum_{i=1}^{m}S^{-}(t+m-i)\int_{0}^{1} S^{-}(1-\tau)F_{R(\theta_{i-1}\omega)}(u_{i-1}(\tau))d\tau\\
&\quad+\sum_{i=1}^{m}S^{-}(t+m-i)\int_{0}^{1} S^{-}(1-\tau)G_{R(\theta_{i-1}\omega)}(u_{i-1}(\tau))d\theta_{i-1}\omega(\tau)\\
&\quad+\int_{0}^{t} S^{-}(t-\tau)F_{R(\theta_m\omega)}(u_{m}(\tau))d\tau+\int_{0}^{t} S^{-}(t-\tau)G_{R(\theta_m\omega)}(u_{m}(\tau))d\theta_{m}\omega(\tau)\\
&\quad-\sum_{i=m+2}^{\infty}S^{+}(t+m-i)\int_{0}^{1} S^{+}(1-\tau)F_{R(\theta_{i-1}\omega)}(u_{i-1}(\tau))d\tau\\
&\quad-\sum_{i=m+2}^{\infty}
S^{+}
(t+m-i)\int_{0}^{1}S^{+}
(1-\tau)G_{R(\theta_{i-1}\omega)}(u_{i-1}(\tau))d\theta_{i-1}\omega(\tau)\\
&\quad-\int_{t}^{1} S^{+}(t-\tau)F_{R(\theta_{m}\omega)}(u_m(\tau))d\tau-\int_{t}^{1} S^{+}(t-\tau)G_{R(\theta_{m}\omega)}(u_m(\tau))d\theta_{m}\omega(\tau).
\end{aligned}
\end{equation}
This coincides with the Lyapunov-Perron operator of the original equation when $\|u_{i}\|_{\beta,-\beta}\leq \frac{R(\theta_{i}\omega)}{2},$ for all $i=0,1,\cdots$.
\subsection{Existence of local stable manifolds}
\label{sub-exis}
\begin{theorem}\label{theo-exis}
Assume that $(\bf A_1$)-$(\bf A_4$) hold	and that $K$ satisfies the gap condition:
\begin{align*}
K\left(\frac{-1}{1-e^{-(\check{\mu}+\kappa)}}+\frac{1}{1-e^{-(\hat{\mu}+\kappa)}}\right)\leq \frac{1}{2}.
\end{align*}Then for given $\xi\in \mathcal{B}^-$, there exists a unique element $\Gamma(\xi,\omega)=\{\Gamma(\xi,\omega)(i,\cdot)\}_{i\in \mathbb{Z}^+}\in\mathcal{H}^\xi_{\kappa}$ such that 
\begin{align*}
[\mathcal{J}_{R}(\Gamma(\xi,\omega))](i,\cdot)=\Gamma(\xi,\omega)(i,\cdot),~~~~\Gamma^-(\xi,\omega)(0,0)=\xi.
\end{align*} Moreover,  $\Gamma(\xi,\omega)(i,\cdot)$ is Lipschitz with respect to $\xi\in \mathcal{B}^-$.
\end{theorem}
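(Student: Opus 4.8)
The plan is to apply the Banach fixed point theorem to $\mathcal{J}_R$ on the complete metric space $\mathcal{H}^{\xi}_{\kappa}$. First I would check that $\mathcal{J}_R$ maps $\mathcal{H}^{\xi}_{\kappa}$ into itself. The convergence of the infinite unstable series $\sum_{i=m+2}^{\infty}$ in $C^{\beta}_{-\beta}([0,1];\cB)$ follows from the decay $\|S^{+}(t+m-i)\,\cdot\|\le c_S e^{\hat{\mu}(t+m-i)}$ combined with the weight $e^{\kappa i}$, using $\hat{\mu}+\kappa>0$; the single-block integrals are controlled by Lemma~\ref{estim of G} and by the estimates obtained in the proof of Theorem~\ref{th-exits-solu}, now with the truncated Lipschitz constants $L_F(R),L_G(R)$. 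The matching condition $u_i(1)=u_{i+1}(0)$ is built into the structure of the operator, being inherited from the gluing used to derive~\eqref{eq.L-P}, and evaluating~\eqref{eq-cut} at $m=0$, $t=0$ shows that the $\cB^{-}$-component equals $\xi$, so $\mathcal{J}_R(U)\in\mathcal{H}^{\xi}_{\kappa}$.

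For the contraction estimate I would fix $\xi\in\cB^{-}$ and take $U=\{u_i\}$, $V=\{v_i\}$ in $\mathcal{H}^{\xi}_{\kappa}$; since both share the boundary value $\xi$, the leading term $S^{-}(t+m)\xi$ cancels in the difference $[\mathcal{J}_R(U)]_m-[\mathcal{J}_R(V)]_m$. Each remaining block is estimated in $\|\cdot\|_{\beta,-\beta}$ by combining the dichotomy bounds~\eqref{eq.S-posi}--\eqref{eq.S-nega} with the single-step Lipschitz estimates for $F_{R(\theta_i\omega)}$ and $G_{R(\theta_i\omega)}$. The crucial point is that the defining relation~\eqref{R} for $R(\theta_i\omega)$ is used to absorb both the drift and the diffusion contributions of the $i$-th block, including the random factor $\vertiii{\theta_i\omega}_{\beta',0,1}$, into the single deterministic constant $K$, so that the $i$-th block contributes at most $K\,\|u_i-v_i\|_{\beta,-\beta}$ uniformly in $i$.

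Inserting $\|u_j-v_j\|_{\beta,-\beta}\le e^{-\kappa j}\|U-V\|_{\mathcal{H}_{\kappa}}$ and multiplying by the weight $e^{\kappa m}$, the stable sum over $i=1,\dots,m$ produces a geometric series in $e^{-(\check{\mu}+\kappa)}$ and the unstable sum over $i\ge m+2$ a geometric series in $e^{-(\hat{\mu}+\kappa)}$, with the diagonal block at level $m$ absorbed into these sums. Summing them, using $\check{\mu}+\kappa<0<\hat{\mu}+\kappa$, yields
\[
\|\mathcal{J}_R(U)-\mathcal{J}_R(V)\|_{\mathcal{H}_{\kappa}}\le K\left(\frac{-1}{1-e^{-(\check{\mu}+\kappa)}}+\frac{1}{1-e^{-(\hat{\mu}+\kappa)}}\right)\|U-V\|_{\mathcal{H}_{\kappa}},
\]
so the gap condition forces the Lipschitz constant to be at most $\tfrac12<1$. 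The Banach fixed point theorem then provides a unique $\Gamma(\xi,\omega)\in\mathcal{H}^{\xi}_{\kappa}$ with $\mathcal{J}_R(\Gamma(\xi,\omega))=\Gamma(\xi,\omega)$ and $\Gamma^{-}(\xi,\omega)(0,0)=\xi$.

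For the Lipschitz dependence on $\xi$, I would compare the fixed points $\Gamma_1,\Gamma_2$ associated with $\xi_1,\xi_2$. Writing $\Gamma_1-\Gamma_2=\mathcal{J}_R(\Gamma_1)-\mathcal{J}_R(\Gamma_2)$ and isolating the only $\xi$-dependent term $S^{-}(t+m)(\xi_1-\xi_2)$, whose weighted norm satisfies $e^{\kappa m}\|S^{-}(\cdot+m)(\xi_1-\xi_2)\|_{\beta,-\beta}\le c_S e^{(\check{\mu}+\kappa)m}|\xi_1-\xi_2|\le c_S|\xi_1-\xi_2|$, while the remaining nonlinear terms are bounded by the contraction factor $\tfrac12\|\Gamma_1-\Gamma_2\|_{\mathcal{H}_{\kappa}}$, absorbing the latter yields $\|\Gamma_1-\Gamma_2\|_{\mathcal{H}_{\kappa}}\le 2c_S|\xi_1-\xi_2|$, i.e.\ the desired Lipschitz continuity. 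I expect the main obstacle to be the block-by-block control of the truncated stochastic convolution in the $C^{\beta}_{-\beta}$-norm and, in particular, verifying that~\eqref{R} genuinely uniformizes the noise-induced growth across the infinitely many blocks; once this uniform per-block bound $K$ is secured, the geometric summation and the gap condition close the argument routinely.
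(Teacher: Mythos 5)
Your proposal is correct and follows essentially the same route as the paper: blockwise estimation of each term of $\mathcal{J}_R$ in the $\|\cdot\|_{\beta,-\beta}$-norm using the dichotomy bounds \eqref{eq.S-posi}--\eqref{eq.S-nega}, absorption of the per-block drift and noise contributions into the constant $K$ via the defining relation \eqref{R}, geometric summation combined with the gap condition to obtain the contraction factor $\tfrac12$, and the standard perturbation argument yielding the Lipschitz constant $L_\Gamma=2c_S$. No substantive differences from the paper's proof.
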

\begin{proof}
We denote each part of $\mathcal{J}_{R}(U,\xi)$, given as \eqref{eq-cut}, as follows,
\begin{align*}
  [\mathcal{J}_{R}(U,\xi)]_{m}(t)&:=S^-(t+m)\xi+I_{1}(U)+I_{2}(U)+I_{3}(U)+I_{4}(U)-I_{5}(U)-I_{6}(U)\\
&\quad-I_{7}(U)-I_{8}(U),
\end{align*}
and estimate all of them individually under the norm $\|\cdot\|_{\beta,-\beta}$. Based on the previous computations, we just need to take into account the exponential contraction and expansion of $S^-$ and $S^+$ given by \eqref{eq.S-nega} and \eqref{eq.S-posi}. First, it follows from Lemma \ref{estim of G} and Theorem \ref{th-exits-solu} that
\begin{align*}
&\|I_3(U)\|_{\beta,-\beta}\leq c_SL_F(R(\theta_{m}\omega))\|u_m\|_{\beta,-\beta},~~
\|I_4(U)\|_{\beta,-\beta}\leq c_SC_{\alpha,\beta}L_G(R(\theta_{m}\omega))\|u_m\|_{\beta,-\beta}\vertiii{\theta_{m}\omega}_{\beta'},\\
&\|I_7(U)\|_{\beta,-\beta}\leq c_SL_F(R(\theta_{m}\omega))\|u_m\|_{\beta,-\beta},~~
\|I_8(U)\|_{\beta,-\beta}\leq c_SC_{\alpha,\beta}L_G(R(\theta_{m}\omega))\|u_m\|_{\beta,-\beta}\vertiii{\theta_{m}\omega}_{\beta'}.
\end{align*}
Afterwards, we need to treat the terms $S^-(\cdot)\xi$ and   $I_1,I_2,I_5,I_6$.  Following the steps of  \eqref{norm0}, we can directly obtain that
\begin{align*}
  \|S^-(\cdot+m)\xi\|_{\beta,-\beta}
  &\leq c_Se^{\check{\mu} m}|\xi|+\sup_{0\leq s<t<1}
  \frac{\|S^-(s+m)\|_{L(\mathcal{B};\mathcal{B})}\|S^-(t-s)-id\|_{L(\mathcal{B};\mathcal{B}_{-\beta})}|\xi|}{(t-s)^{\beta}}\\
  &\leq c_Se^{\check{\mu}m}|\xi|,
\end{align*}
and 
\begin{align*}
  &\|S^-(\cdot+m)(\xi-\xi_0)\|_{\beta,-\beta}\\
  &\quad\leq c_Se^{\check{\mu} m}|\xi-\xi_0|+\sup_{0\leq s<t<1}
  \frac{\|S^-(s+m)\|_{L(\mathcal{B};\mathcal{B})}\|S^-(t-s)-id\|_{L(\mathcal{B};\mathcal{B}_{-\beta})}|\xi-\xi_0|}{(t-s)^{\beta}}\\
  &\quad\leq c_Se^{\check{\mu}m}|\xi-\xi_0|
\end{align*}
for $\xi,\xi_0\in \mathcal{B}^-$. Since for any $i=1,\cdots,m$, $0\leq t<1$, 
\begin{align*}
\max\left\{\|S^-(t+m-i)\|_{L(\mathcal{B};\mathcal{B})},\frac{1}{(t-s)^{\beta}}\|S^-(t+m-i)-S^{-}(s+m-i)\|_{L(\mathcal{B};\mathcal{B}_{-\beta})}\right\}\leq c_Se^{\check{\mu}(m-i)},
\end{align*}
% and 
% \begin{align*}
%  \sup_{0\leq s< t\leq 1}\frac{\|S^-(t+m-i)-S^{-}(s+m-i)\|_{L(\mathcal{B};\mathcal{B}_{-\beta})}}{(t-s)^{\beta}}\leq c_Se^{\check{\mu}(m-i)},
% \end{align*}
we have 
\begin{align*}
\|I_1(U)\|_{\beta,-\beta}&= \left\|\sum_{i=1}^mS^-(\cdot+m-i)\int_{0}^1 S^{-}(1-\tau)F_{R(\theta_{i-1}\omega)}(u_{i-1}(\tau))d\tau\right\|_{\beta,-\beta}\\
%&\leq \sum_{i=1}^m\left\|S^-(\cdot+m-i)\int_{0}^1 S^{-}(1-\tau)F_{R(\theta_{i-1}\omega)}(u_{i-1}(\tau))d\tau\right\|_{\beta,-\beta}\\
&\leq c_S\sum_{i=1}^m e^{\check{\mu}(m-i)}\left|\int_{0}^1 S^{-}(1-\tau)F_{R(\theta_{i-1}\omega)}(u_{i-1}(\tau))d\tau\right|\\
&\leq c_S\sum_{i=1}^m L_F(R(\theta_{i-1}\omega))e^{\check{\mu}(m-i)}\|u_{i-1}\|_{\beta,-\beta},
\end{align*}
\begin{align*}
\|I_2(U)\|_{\beta,-\beta}&= \left\|\sum_{i=1}^mS^-(\cdot+m-i)\int_{0}^1 S^{-}(1-\tau)G_{R(\theta_{i-1}\omega)}(u_{i-1}(\tau))d\theta_{i-1}\omega(\tau)\right\|_{\beta,-\beta}\\
%&\leq \sum_{i=1}^m\left\|S^-(\cdot+m-i)\int_{0}^1 S^{-}(1-\tau)G_{R(\theta_{i-1}\omega)}(u_{i-1}(\tau))d\theta_{i-1}\omega(\tau)\right\|_{\beta,-\beta}\\
&\leq c_S\sum_{i=1}^m e^{\check{\mu}(m-i)}\left|\int_{0}^1 S^{-}(1-\tau)G_{R(\theta_{i-1}\omega)}(u_{i-1}(\tau))d\theta_{i-1}\omega(\tau)\right|\\
&\leq c_SC_{\alpha,\beta}\sum_{i=1}^m L_G(R(\theta_{i-1}\omega))e^{\check{\mu}(m-i)}\|u_{i-1}\|_{\beta,-\beta}\vertiii{\theta_{i-1}\omega}_{\beta'}.
\end{align*}  
Similarly, for any $i\geq m+2,0\leq t<1$, it holds
\begin{align*}
     \max\left\{\|S^+(t+m-i)\|_{L(\mathcal{B};\mathcal{B})},\frac{1}{(t-s)^{\beta}}\|S^+(t+m-i)-S^{+}(s+m-i)\|_{L(\mathcal{B};\mathcal{B}_{-\beta})}\right\}\leq c_Se^{\hat{\mu}(m-i+1)}.
\end{align*}
Then we have 
\begin{align*}
\|I_5(U)\|_{\beta,-\beta}&=\left\|\sum_{i=m+2}^\infty S^+(\cdot+m-i)\int_{0}^1 S^{+}(1-\tau)F_{R(\theta_{i-1}\omega)}(u_{i-1}(\tau))d\tau\right\|_{\beta,-\beta}\\
&\leq c_S\sum_{i=m+2}^\infty L_F(R(\theta_{i-1}\omega)){e^{\hat{\mu}(m-i+1)}}\|u_{i-1}\|_{\beta,-\beta},
\end{align*}
and 
\begin{align*}
\|I_6(U)\|_{\beta,-\beta}&=\left\|\sum_{i=m+2}^\infty S^+(\cdot+m-i)\int_{0}^1 S^{+}(1-\tau)G_{R(\theta_{i-1}\omega)}(u_{i-1}(\tau))d\theta_{i-1}\omega(\tau)\right\|_{\beta,-\beta}\\
&\leq c_SC_{\alpha,\beta}\sum_{i=m+2}^\infty L_G(R(\theta_{i-1}\omega)){e^{\hat{\mu}(m-i+1)}}\|u_{i-1}\|_{\beta,-\beta}\vertiii{\theta_{i-1}\omega}_{\beta'}.
\end{align*}
To sum up, we have
%\begin{align*}
%  &\|[J(U,\xi)]_{m}\|_{\beta,-\beta}\\
%&\leq c_Se^{\check{\mu}m}|\xi|+c_SL_F(\theta_{m}\omega)\|u_m\|_{\beta,-\beta}+C_{S,\alpha,\beta}L_G(R(\theta_{m}\omega))\|u_m\|_{\beta,-\beta}
%\vertiii{\theta_{m}\omega}_{\beta'}\\
%&+c_S\sum_{i=1}^m L_F(R(\theta_{i-1}\omega))e^{\check{\mu}(m-i)}\|u_{i-1}\|_{\beta,-\beta}
%+C_{S,\alpha,\beta}\sum_{i=1}^m L_G(R(\theta_{i-1}\omega))e^{\check{\mu}(m-i)}\|u_{i-1}\|_{\beta,-\beta}\vertiii{\theta_{i-1}\omega}_{\beta'}\\
%&+c_S\sum_{i=m+2}^\infty L_F(\theta_{i-1}\omega)e^{\hat{\mu}(m-i)}\|u_{i-1}\|_{\beta,-\beta}
%+C_{S,\alpha,\beta}\sum_{i=m+2}^\infty L_G(\theta_{i-1}\omega)e^{\hat{\mu}(m-i)}\|u_{i-1}\|_{\beta,-\beta}\vertiii{\theta_{i-1}\omega}_{\beta'}.
%\end{align*}
\begin{align*}
  &\|[\mathcal{J}_{R}(U,\xi)]_{m}\|_{\beta,-\beta}\\
&\quad\leq c_Se^{\check{\mu}m}|\xi|
+c_S\sum_{i=0}^m e^{\check{\mu}(m-i)}\|u_{i}\|_{\beta,-\beta}\left[L_F(R(\theta_{i}\omega))
+C_{\alpha,\beta} L_G(R(\theta_{i}\omega))\vertiii{\theta_{i}\omega}_{\beta'}\right]\\
&\qquad+c_S\sum_{i=m}^\infty e^{\hat{\mu}(m-i)}\|u_{i}\|_{\beta,-\beta}
\left[L_F(R(\theta_{i}\omega))
+C_{\alpha,\beta} L_G(R(\theta_{i}\omega))\vertiii{\theta_{i}\omega}_{\beta'}\right].
\end{align*}
Referring to  the result of \eqref{U-V:G}, we can similarly obtain that for  $U,V\in \mathcal{H}_{\kappa}$ with $u^-(0)=\xi,v^-(0)=\xi_0$,
\begin{align*}
 & \|[\mathcal{J}_R(U,\xi)-\mathcal{J}_{R}((V,\xi_0))]_{m}\|_{\beta,-\beta}\\
&\leq c_Se^{\check{\mu}m}|\xi-\xi_0|+c_S\sum_{i=0}^m e^{\check{\mu}(m-i)}\|u_{i}-v_{i}\|_{\beta,-\beta}\left[L_F(R(\theta_{i}\omega))
+C_{\alpha,\beta} L_G(R(\theta_{i}\omega))\vertiii{\theta_{i}\omega}_{\beta'}\right]\\
&\quad+c_S\sum_{i=m}^\infty e^{\hat{\mu}(m-i)}\|u_{i}-v_{i}\|_{\beta,-\beta}
\left[L_F(R(\theta_{i}\omega))
+C_{\alpha,\beta} L_G(R(\theta_{i}\omega))\vertiii{\theta_{i}\omega}_{\beta'}\right].
\end{align*}

Lastly, by \eqref{R}, we can conclude that
\begin{align*}
  e^{\kappa m}\|[\mathcal{J}_{R}(U,\xi)]_{m}\|_{\beta,-\beta}
&\leq c_Se^{(\check{\mu}+\kappa)m}|\xi|
+K\sum_{i=0}^m e^{\check{\mu}(m-i)+\kappa (m-i)}e^{\kappa i}\|u_{i}\|_{\beta,-\beta}\\
&\quad+K\sum_{i=m}^\infty e^{\hat{\mu}(m-i)+\kappa (m-i)}e^{\kappa i}\|u_{i}\|_{\beta,-\beta},
\end{align*}
and further 
\begin{align*}
  e^{\kappa m}\|[\mathcal{J}_R(U,\xi)-\mathcal{J}_{R}((V,\xi_0))]_{m}\|_{\beta,-\beta}
&\leq c_Se^{(\check{\mu}+\kappa)m}|\xi-\xi_0|
+K\sum_{i=0}^m e^{\check{\mu}(m-i)+\kappa (m-i)}e^{\kappa i}\|u_{i}-v_i\|_{\beta,-\beta}\\
&\quad+K\sum_{i=m}^\infty e^{\hat{\mu}(m-i)+\kappa (m-i)}e^{\kappa i}\|u_{i}-v_i\|_{\beta,-\beta}.
\end{align*}
Moreover, since
\begin{align*}
\sup_{m\in \mathbb{Z}^+}\sum_{i=0}^m e^{(\check{\mu}+\kappa)(m-i)}
%=\sup_{m\in \mathbb{Z}^+}\frac{e^{-\check{\mu}}(e^{(\check{\mu}+\kappa)m}-1)}{1-e^{-(\check{\mu}+\kappa)}}
=\frac{-1}{1-e^{-(\check{\mu}+\kappa)}},~~
\sum_{i=m}^\infty e^{(\hat{\mu}+\kappa) (m-i)}=\frac{1}{1-e^{-(\hat{\mu}+\kappa)}},~~~0<\kappa<-\check{\mu},
\end{align*}
we have
\begin{align*}
 \|\mathcal{J}_{R}(U,\xi)\|_{\mathcal{H}_{\kappa}}
&\leq c_S|\xi|+K\left(\frac{-1}{1-e^{-(\check{\mu}+\kappa)}}+\frac{1}{1-e^{-(\hat{\mu}+\kappa)}}\right)\|U\|_{\mathcal{H}_{\kappa}}\\
&\leq c_S|\xi|+\frac{1}{2}\|U\|_{\mathcal{H}_{\kappa}},
\end{align*}
and 
\begin{align*}
 \|\mathcal{J}_{R}(U,\xi)-\mathcal{J}_{R}(V,\xi)\|_{\mathcal{H}_{\kappa}}
&\leq c_S|\xi-\xi_0|+K\left(\frac{-1}{1-e^{-(\check{\mu}+\kappa)}}+\frac{1}{1-e^{-(\hat{\mu}+\kappa)}}\right)\|U-V\|_{\mathcal{H}_{\kappa}}\\
&\leq c_S|\xi-\xi_0|+\frac{1}{2}\|U-V\|_{\mathcal{H}_{\kappa}}.
\end{align*}
Hence by the Banach fixed point theorem,  there is a fixed point $\Gamma(\xi,\omega)\in\mathcal{H}^\xi_{\kappa}$ of $\mathcal{J}_{R}$ for given $\xi\in \mathcal{B}^-$. Since $\vertiii{\omega}_{\beta',0,1}$ is tempered from above, it is obvious that $R(\omega)$ is tempered from below. 

Now, we consider the regularity of $\Gamma(\cdot,\omega)$.  Let  $\Gamma(\xi,\omega)$ and $\Gamma(\xi_0,\omega)$ be the fixed points for  $\xi,\xi_0\in \mathcal{B}^-$. Then we have 
\begin{align*}
\|\Gamma(\xi,\omega)-\Gamma(\xi_0,\omega)\|_{\mathcal{H}_{\kappa}}&\leq \|\mathcal{J}_{R}(\Gamma(\xi,\omega),\xi)-\mathcal{J}_{R}(\Gamma(\xi_0,\omega),\xi_0)\|_{\mathcal{H}_{\kappa}}\\&\leq
\|\mathcal{J}_{R}(\Gamma(\xi,\omega),\xi)-\mathcal{J}_{R}(\Gamma(\xi,\omega),\xi_0)\|_{\mathcal{H}_{\kappa}}\\
&\quad+
\|\mathcal{J}_{R}(\Gamma(\xi,\omega),\xi_0)-\mathcal{J}_{R}(\Gamma(\xi_0,\omega),\xi_0)\|_{\mathcal{H}_{\kappa}}\\&\leq
\sup_{m\in\mathbb{Z}^+}c_Se^{(\check{\mu}+\kappa)m}\|\xi_0-\xi_0\|+\frac{1}{2}\|\Gamma(\xi,\omega)-\Gamma(\xi_0,\omega)\|_{\mathcal{H}_{\kappa}}\\
&\leq c_S|\xi-\xi_0|+\frac{1}{2}\|\Gamma(\xi,\omega)-\Gamma(\xi_0,\omega)\|_{\mathcal{H}_{\kappa}},
\end{align*} 
Hence 
\begin{align*}
\|\Gamma(\xi,\omega)-\Gamma(\xi_0,\omega)\|_{\mathcal{H}_{\kappa}}&\leq 2c_S|\xi-\xi_0|,
\end{align*}
which implies that $\Gamma(\xi,\omega)$ is Lipschitz continuous with respect to $\xi$ with Lipschitz constant $L_{\Gamma}=2c_S$.
\end{proof}

In order to construct the stable manifold, we need the following auxiliary Lemma, compare~\cite[Lemma 5.1]{GarridoAtienza2010}. 
\begin{lemma}\label{le-m+1,m}
For $\xi\in \cB^{-}$, let $u(1)=\Gamma(\xi,\omega)(1,0)$. Then 
\begin{align}\label{m+1,m}
\Gamma(\xi,\omega)(m+1,\cdot)=\Gamma(u^-(1),\theta_{1}\omega)(m,\cdot),~~~m=0,1,\cdots.
\end{align}
\end{lemma}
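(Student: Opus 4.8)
The plan is to exploit the uniqueness of the fixed point established in Theorem~\ref{theo-exis}, now applied to the shifted noise $\theta_1\omega$ and to the initial datum $u^-(1)\in\mathcal{B}^-$. Concretely, I would introduce the shifted sequence $\widetilde{U}=\{\widetilde{u}_m\}_{m\in\mathbb{Z}^+}$ defined by $\widetilde{u}_m(\cdot):=\Gamma(\xi,\omega)(m+1,\cdot)$ and show that $\widetilde{U}$ is exactly the fixed point of the Lyapunov--Perron operator $\mathcal{J}_R$ built over $\theta_1\omega$ subject to $\widetilde{u}_0^-(0)=u^-(1)$. Since that fixed point is by definition $\Gamma(u^-(1),\theta_1\omega)$, the identity $\Gamma(\xi,\omega)(m+1,\cdot)=\Gamma(u^-(1),\theta_1\omega)(m,\cdot)$ then follows immediately.

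First I would verify that $\widetilde{U}\in\mathcal{H}_\kappa^{u^-(1)}$. The matching condition $\widetilde{u}_m(1)=\widetilde{u}_{m+1}(0)$ is inherited from the matching condition for $\Gamma(\xi,\omega)$, and the weighted bound is immediate, since $e^{\kappa m}\|\widetilde{u}_m\|_{\beta,-\beta}=e^{-\kappa}e^{\kappa(m+1)}\|\Gamma(\xi,\omega)(m+1,\cdot)\|_{\beta,-\beta}\le e^{-\kappa}\|\Gamma(\xi,\omega)\|_{\mathcal{H}_\kappa}$, so that $\widetilde{U}$ lies in $\mathcal{H}_\kappa$. Finally $\widetilde{u}_0(0)=\Gamma(\xi,\omega)(1,0)=u(1)$, hence $\widetilde{u}_0^-(0)=u^-(1)$, which fixes the stable initial component and places $\widetilde U$ in the correct fibre.

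The core of the argument is the verification that $[\mathcal{J}_R(\widetilde{U},u^-(1))]_m$, with the operator understood over $\theta_1\omega$, coincides with $\widetilde{u}_m=\Gamma(\xi,\omega)(m+1,\cdot)$. For this I would write out~\eqref{eq-cut} for $\Gamma(\xi,\omega)$ at the block index $m+1$ and compare it term by term with the definition of $[\mathcal{J}_R(\widetilde U,u^-(1))]_m$. Three facts drive the bookkeeping: the flow property $\theta_{j}\theta_1\omega=\theta_{j+1}\omega$; the induced identity $R(\theta_{j}\theta_1\omega)=R(\theta_{j+1}\omega)$ coming from the definition~\eqref{R}; and the consistency of the block integrals, whose integrator satisfies $d\theta_{j-1}(\theta_1\omega)=d\theta_{j}\omega$ by~\eqref{eq2.4}. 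Upon splitting off the $i=1$ summand and substituting $j=i-1$, the finite stable sum $\sum_{i=1}^{m+1}$ of the $(m+1)$-st equation re-indexes into $\sum_{j=1}^{m}$, and the unstable sum $\sum_{i=m+3}^\infty$ into $\sum_{j=m+2}^\infty$, matching precisely the sums appearing at block index $m$ over $\theta_1\omega$; the local integrals $\int_0^t$ and $\int_t^1$ carrying the data $u_{m+1}$ and the radius $R(\theta_{m+1}\omega)$ match their shifted counterparts verbatim, since $R(\theta_{m}\theta_1\omega)=R(\theta_{m+1}\omega)$.

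The one term that requires genuine rewriting is the leading $S^-(t+m+1)\xi$, and this is the step I expect to be the main obstacle, since it is where the base point is shifted from $\xi$ to $u^-(1)$ and where one must check that no spurious $\mathcal{B}^+$-contributions appear. Here I would evaluate~\eqref{eq-cut} at $m=0$, $t=1$ and apply $\pi^-$, which annihilates every $S^+$-term and yields
\[
u^-(1)=S^-(1)\xi+\int_0^1 S^-(1-\tau)F_{R(\omega)}(u_0(\tau))\,d\tau+\int_0^1 S^-(1-\tau)G_{R(\omega)}(u_0(\tau))\,d\omega(\tau).
\]
Multiplying by $S^-(t+m)$ and using the semigroup property $S^-(t+m)S^-(1)=S^-(t+m+1)$ reproduces both the term $S^-(t+m+1)\xi$ and exactly the $i=1$ summands (those carrying $u_0$ and $R(\theta_0\omega)=R(\omega)$) of the stable drift and diffusion sums in~\eqref{eq-cut} that were split off above. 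With this identification in place, $\widetilde{U}$ satisfies the shifted fixed point equation, and the uniqueness granted by Theorem~\ref{theo-exis} for the datum $u^-(1)$ over $\theta_1\omega$ forces $\widetilde U=\Gamma(u^-(1),\theta_1\omega)$, which is precisely~\eqref{m+1,m}.
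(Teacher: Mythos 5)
Your proposal is correct and follows essentially the same route as the paper: both arguments rest on expressing $u^-(1)$ through the mild-solution identity $u^-(1)=S^-(1)\xi+\int_0^1 S^-(1-\tau)F_{R(\omega)}(u_0(\tau))\,d\tau+\int_0^1 S^-(1-\tau)G_{R(\omega)}(u_0(\tau))\,d\omega(\tau)$, re-indexing the Lyapunov--Perron sums via the shift properties of $\theta$, $R$ and the integral \eqref{eq2.4}, and concluding by uniqueness of the fixed point. The only cosmetic difference is that the paper phrases the final step as the contraction inequality $\|D\|_{\mathcal{H}_\kappa}\le\tfrac{1}{2}\|D\|_{\mathcal{H}_\kappa}$ for the difference sequence, whereas you verify that the shifted sequence satisfies the fixed-point equation exactly and then invoke the uniqueness statement of Theorem~\ref{theo-exis}; these are the same argument.
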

\begin{proof}
By definition, we have for $t\in[0,1]$
\begin{equation*}
\begin{aligned}
&\Gamma(\xi,\omega)(m+1,t)\\
&=S^{-}(t+m+1)\xi
+\sum_{i=1}^{m+1}S^{-}(t+m+1-i)\int_{0}^{1} S^{-}(1-\tau)F_{R(\theta_{i-1}\omega)}(\Gamma(\xi,\omega)(i-1,\tau))d\tau\\
&\quad+\sum_{i=1}^{m+1}S^{-}(t+m+1-i)\int_{0}^{1} S^{-}(1-\tau)G_{R(\theta_{i-1}\omega)}(\Gamma(\xi,\omega)(i-1,\tau))d\theta_{i-1}\omega(\tau)\\
&\quad~+\int_{0}^{t} S^{-}(t-\tau)F_{R(\theta_{m+1}\omega)}(\Gamma(\xi,\omega)(m+1,\tau))d\tau\\
&\quad+\int_{0}^{t} S^{-}(t-\tau)G_{R(\theta_{m+1}\omega)}(\Gamma(\xi,\omega)(m+1,\tau))d\theta_{m+1}\omega(\tau)\\
&\quad-\sum_{i=m+3}^{\infty}S^{+}(t+m+1-i)\int_{0}^{1} S^{+}(1-\tau)F_{R(\theta_{i-1}\omega)}(\Gamma(\xi,\omega)(i-1,\tau))d\tau\\
&\quad-\sum_{i=m+3}^{\infty}
S^{+}
(t+m+1-i)\int_{0}^{1}S^{+}
(1-\tau)G_{R(\theta_{i-1}\omega)}(\Gamma(\xi,\omega)(i-1,\tau))d\theta_{i-1}\omega(\tau)\\
&\quad-\int_{t}^{1} S^{+}(t-\tau)F_{R(\theta_{m+1}\omega)}(\Gamma(\xi,\omega)(m+1,\tau))d\tau\\
&\quad-\int_{t}^{1} S^{+}(t-\tau)G_{R(\theta_{m+1}\omega)}(\Gamma(\xi,\omega)(m+1,\tau))d\theta_{m+1}\omega(\tau),
\end{aligned}
\end{equation*}
and 
\begin{equation*}
\begin{aligned}
&\Gamma(u^-(1),\theta_{1}\omega)(m,t)\\
&=S^{-}(t+m)u^-(1)
+\sum_{i=1}^{m}S^{-}(t+m-i)\int_{0}^{1} S^{-}(1-\tau)F_{R(\theta_{i}\omega)}(\Gamma(u^-(1),\theta_{1}\omega)(i-1,\tau))d\tau\\
&\quad+\sum_{i=1}^{m}S^{-}(t+m-i)\int_{0}^{1} S^{-}(1-\tau)G_{R(\theta_{i}\omega)}(\Gamma(u^-(1),\theta_{1}\omega)(i-1,\tau))d\theta_{i}\omega(\tau)\\
&\quad+\int_{0}^{t} S^{-}(t-\tau)F_{R(\theta_{m+1}\omega)}(\Gamma(u^-(1),\theta_{1}\omega)(m,\tau))d\tau\\
&\quad+\int_{0}^{t} S^{-}(t-\tau)G_{R(\theta_{m+1}\omega)}(\Gamma(u^-(1),\theta_{1}\omega)(m,\tau))d\theta_{m+1}\omega(\tau)\\
&\quad-\sum_{i=m+2}^{\infty}S^{+}(t+m-i)\int_{0}^{1} S^{+}(1-\tau)F_{R(\theta_{i}\omega)}(\Gamma(u^-(1),\theta_{1}\omega)(i-1,\tau))d\tau\\
&\quad-\sum_{i=m+2}^{\infty}
S^{+}
(t+m-i)\int_{0}^{1}S^{+}
(1-\tau)G_{R(\theta_{i}\omega)}(\Gamma(u^-(1),\theta_{1}\omega)(i-1,\tau))d\theta_{i}\omega(\tau)\\
&\quad-\int_{t}^{1} S^{+}(t-\tau)F_{R(\theta_{m+1}\omega)}(\Gamma(u^-(1),\theta_{1}\omega)(m,\tau))d\tau\\
&\quad-\int_{t}^{1} S^{+}(t-\tau)G_{R(\theta_{m+1}\omega)}(\Gamma(u^-(1),\theta_{1}\omega)(m,\tau))d{\theta_{m+1}\omega(\tau)}.%\theta_{m}\omega(\tau)
\end{aligned}
\end{equation*}
For the part containing $S^-(\cdot)$, we have by the definition of mild solution
\begin{align*}
u^-(1)&=S^-(1)\xi+\int_{0}^1S^-(1-\tau)F_{R(\omega)}(\Gamma(\xi,\omega)(0,\tau))d\tau\\
&\quad+\int_{0}^1S^-(1-\tau)G_{R(\omega)}(\Gamma(\xi,\omega)(0,\tau))d\omega(\tau).
\end{align*}
This further leads to 
\begin{equation}\label{eq-m}
\begin{aligned}
&\Gamma^-(u^-(1),\theta_{1}\omega)(m,t)\\
% &=S^{-}(t+m)u^-(1)
% +\sum_{i=1}^{m}S^{-}(t+m-i)\int_{0}^{1} S^{-}(1-\tau)F_{R(\theta_{i}\omega)}(\Gamma(u^-(1),\theta_{1}\omega)(i-1,\tau))d\tau\\
% &\quad+\sum_{i=1}^{m}S^{-}(t+m-i)\int_{0}^{1} S^{-}(1-\tau)G_{R(\theta_{i}\omega)}(\Gamma(u^-(1),\theta_{1}\omega)(i-1,\tau))d\theta_{i}\omega(\tau)\\
% &\quad+\int_{0}^{t} S^{-}(t-\tau)F_{R(\theta_{m+1}\omega)}(\Gamma(u^-(1),\theta_{1}\omega)(m,\tau))d\tau\\
% &\quad+\int_{0}^{t} S^{-}(t-\tau)G_{R(\theta_{m+1}\omega)}(\Gamma(u^-(1),\theta_{1}\omega)(m,\tau))d\theta_{m+1}\omega(\tau)\\
&=S^-(t+m+1)\xi+S^-(t+m)\int_{0}^1S^-(1-\tau)F_{R(\omega)}(\Gamma(\xi,\omega)(0,\tau))d\tau\\
&\quad+S^-(t+m)\int_{0}^1S^-(1-\tau)G_{R(\omega)}(\Gamma(\xi,\omega)(0,\tau))d\omega(\tau)\tau\\
&\quad +\sum_{i=1}^{m}S^{-}(t+m-i)\int_{0}^{1} S^{-}(1-\tau)F_{R(\theta_{i}\omega)}(\Gamma(u^-(1),\theta_{1}\omega)(i-1,\tau))d\tau\\
&\quad+\sum_{i=1}^{m}S^{-}(t+m-i)\int_{0}^{1} S^{-}(1-\tau)G_{R(\theta_{i}\omega)}(\Gamma(u^-(1),\theta_{1}\omega)(i-1,\tau))d\theta_{i}\omega(\tau)\\
&\quad~+\int_{0}^{t} S^{-}(t-\tau)F_{R(\theta_{m+1}\omega)}(\Gamma(u^-(1),\theta_{1}\omega)(m,\tau))d\tau\\
&\quad+\int_{0}^{t} S^{-}(t-\tau)G_{R(\theta_{m+1}\omega)}(\Gamma(u^-(1),\theta_{1}\omega)(m,\tau))d\theta_{m+1}\omega(\tau),
\end{aligned}
\end{equation}
and 
\begin{equation}\label{eq-m+1}
\begin{aligned}
\Gamma^-(\xi,\omega)(m+1,t)
% &=S^{-}(t+m+1)\xi
% +\sum_{i=1}^{m+1}S^{-}(t+m+1-i)\int_{0}^{1} S^{-}(1-\tau)F_{R(\theta_{i-1}\omega)}(\Gamma(\xi,\omega)(i-1,\tau))d\tau\\
% &\quad+\sum_{i=1}^{m+1}S^{-}(t+m+1-i)\int_{0}^{1} S^{-}(1-\tau)G_{R(\theta_{i-1}\omega)}(\Gamma(\xi,\omega)(i-1,\tau))d\theta_{i-1}\omega(\tau)\\
% &\quad+\int_{0}^{t} S^{-}(t-\tau)F_{R(\theta_{m+1}\omega)}(\Gamma(\xi,\omega)(m+1,\tau))d\tau\\
% &\quad+\int_{0}^{t} S^{-}(t-\tau)G_{R(\theta_{m+1}\omega)}(\Gamma(\xi,\omega)(m+1,\tau))d\theta_{m+1}\omega(\tau)\\
&=S^{-}(t+m+1)\xi+\sum_{i=0}^{m}S^{-}(t+m-i)\int_{0}^{1} S^{-}(1-\tau)F_{R(\theta_{i}\omega)}(\Gamma(\xi,\omega)(i,\tau))d\tau\\
&\quad+\sum_{i=0}^{m}S^{-}(t+m-i)\int_{0}^{1} S^{-}(1-\tau)G_{R(\theta_{i}\omega)}(\Gamma(\xi,\omega)(i,\tau))d\theta_{i}\omega(\tau)\\
&\quad+\int_{0}^{t} S^{-}(t-\tau)F_{R(\theta_{m+1}\omega)}(\Gamma(\xi,\omega)(m+1,\tau))d\tau\\
&\quad+\int_{0}^{t} S^{-}(t-\tau)G_{R(\theta_{m+1}\omega)}(\Gamma(\xi,\omega)(m+1,\tau))d\theta_{m+1}\omega(\tau).
\end{aligned}
\end{equation}
 Then \eqref{eq-m+1}-\eqref{eq-m} gives that
 \begin{align*}
\|\Gamma^-(\xi,\omega)(\cdot+1,\cdot)-\Gamma^-(u^-(1),\theta_{1}\omega)(\cdot,\cdot)\|_{\mathcal{H}_{\kappa}}\leq 
\frac{1}{2}\|\Gamma(\xi,\omega)(\cdot+1,\cdot)-\Gamma(u^-(1),\theta_{1}\omega)(\cdot,\cdot)\|_{\mathcal{H}_{\kappa}},
\end{align*}
% \begin{align*}
% \|\Gamma^-(\xi,\omega)(m+1,\cdot)-\Gamma^-(u^-(1),\theta_{1}\omega)(m,\cdot)\|_{\mathcal{H}_{\kappa}}\leq 
% \frac{1}{2}\|\Gamma(\xi,\omega)(m+1,\cdot)-\Gamma(u^-(1),\theta_{1}\omega)(m,\cdot)\|_{\mathcal{H}_{\kappa}},
% \end{align*}
which is only satisfied when $\Gamma^-(\xi,\omega)(m+1,\cdot)=\Gamma^-(u^-(1),\theta_{1}\omega)(m,\cdot)$.
Similar produce can be conducted about the positive part and leads to $\Gamma^+(\xi,\omega)(m+1,\cdot)=\Gamma^+(u^-(1),\theta_{1}\omega)(m,\cdot)$. Hence 
we can  conclude that  $\Gamma(\xi,\omega)(m+1,\cdot)=\Gamma(u^-(1),\theta_{1}\omega)(m,\cdot)$ with $u(1)=\Gamma(\xi,\omega)(1,0)$.
\end{proof}
\begin{lemma}\label{le-about U}
Let $\hat{r}$ be a random variable, tempered from below. Then there exists a random positive variable $\hat{\rho}$ which is tempered from below, such that 
\begin{align*}
L_{\Gamma}e^{-i\kappa}\hat{\rho}(\omega)\leq \hat{r}(\theta_{i}\omega), ~~i\in \mathbb{Z}^+.
\end{align*}
\end{lemma}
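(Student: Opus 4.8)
The plan is to construct $\hat\rho$ explicitly as the sharpest possible minorant and then show that it inherits temperedness from below from $\hat r$. Concretely, I would set
\[
\hat\rho(\omega):=\inf_{i\in\mathbb{Z}^+}\frac{e^{i\kappa}}{L_\Gamma}\,\hat r(\theta_i\omega),
\]
so that the required inequality $L_\Gamma e^{-i\kappa}\hat\rho(\omega)\le \hat r(\theta_i\omega)$ holds for every $i\in\mathbb{Z}^+$ immediately from the definition of the infimum. The entire content of the lemma is then to verify that this $\hat\rho$ is a well-defined, strictly positive random variable that is tempered from below. Measurability is automatic since $\hat\rho$ is a countable infimum of measurable maps.

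For positivity I would invoke the standard reformulation of temperedness: because $\hat r$ is tempered from below, for every $\epsilon>0$ there is a random constant $C_\epsilon(\omega)>0$ with $\hat r(\theta_i\omega)\ge C_\epsilon(\omega)e^{-\epsilon|i|}$ for all $i\in\mathbb{Z}$. Taking $\epsilon=\kappa/2$ and $i\ge 0$ gives $\tfrac{e^{i\kappa}}{L_\Gamma}\hat r(\theta_i\omega)\ge \tfrac{C_{\kappa/2}(\omega)}{L_\Gamma}e^{\kappa i/2}\to\infty$, so the sequence $i\mapsto \tfrac{e^{i\kappa}}{L_\Gamma}\hat r(\theta_i\omega)$ is strictly positive and tends to $+\infty$; hence its infimum is attained on a finite index set and is strictly positive, proving $\hat\rho(\omega)>0$ almost surely.

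The key step is temperedness from below, for which I would first record the shift identity. Using $\theta_i\theta_n=\theta_{i+n}$ and substituting $j=i+n$,
\[
\hat\rho(\theta_n\omega)=\inf_{i\in\mathbb{Z}^+}\frac{e^{i\kappa}}{L_\Gamma}\hat r(\theta_{i+n}\omega)=e^{-n\kappa}\inf_{j\ge n}\frac{e^{j\kappa}}{L_\Gamma}\hat r(\theta_j\omega),\qquad n\in\mathbb{Z}.
\]
For $n\to+\infty$ this is harmless: the bound $\hat r(\theta_j\omega)\ge C_\epsilon(\omega) e^{-\epsilon j}$ for $j\ge 0$ yields $\inf_{j\ge n}\tfrac{e^{j\kappa}}{L_\Gamma}\hat r(\theta_j\omega)\ge \tfrac{C_\epsilon(\omega)}{L_\Gamma}e^{(\kappa-\epsilon)n}$ once $\epsilon<\kappa$, whence $\hat\rho(\theta_n\omega)\ge \tfrac{C_\epsilon(\omega)}{L_\Gamma}e^{-\epsilon n}$; as $\epsilon>0$ is arbitrary this is exactly the lower decay bound needed in the direction $n\to+\infty$.

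The main obstacle is the opposite direction $n\to-\infty$, where the prefactor $e^{-n\kappa}=e^{|n|\kappa}$ explodes and one must check it is not overwhelmed by the shifted infimum. Here I would substitute the temperedness bound directly and estimate, for $n<0$,
\[
\hat\rho(\theta_n\omega)=\inf_{j\ge n}\frac{e^{(j-n)\kappa}}{L_\Gamma}\hat r(\theta_j\omega)\ge \frac{C_\epsilon(\omega)}{L_\Gamma}\inf_{j\ge n}e^{(j-n)\kappa-\epsilon|j|}.
\]
Splitting the infimum into the range $n\le j<0$, where the exponent equals $(\kappa+\epsilon)j-n\kappa$, is increasing in $j$, and is therefore minimal at $j=n$ with value $e^{-\epsilon|n|}$, and the range $j\ge 0$, where the exponent is $\ge e^{-n\kappa}\ge 1$ provided $\kappa>\epsilon$, shows $\hat\rho(\theta_n\omega)\ge \tfrac{C_\epsilon(\omega)}{L_\Gamma}e^{-\epsilon|n|}$. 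Combining the two directions gives $\hat\rho(\theta_n\omega)\ge \tilde C_\epsilon(\omega)e^{-\epsilon|n|}$ for every $\epsilon>0$, which is precisely the assertion that $\hat\rho$ is tempered from below, completing the proof.
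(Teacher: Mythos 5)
Your proposal is correct and follows essentially the same route as the paper: the same explicit construction $\hat\rho(\omega)=\frac{1}{L_\Gamma}\inf_{i\in\mathbb{Z}^+}e^{i\kappa}\hat r(\theta_i\omega)$, the required inequality read off from the infimum, and temperedness checked via the shift identity $\hat\rho(\theta_n\omega)e^{\kappa n}=\frac{1}{L_\Gamma}\inf_{k\geq n}e^{\kappa k}\hat r(\theta_k\omega)$. You are in fact slightly more thorough than the paper, which only treats the direction $i\to+\infty$ and is terser about strict positivity of the infimum.
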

\begin{proof}
Let $\hat{\rho}(\omega)=\frac{1}{L_{\Gamma}}\inf_{i\in\mathbb{Z}^+}e^{i\kappa}\hat{r}(\theta_{i}\omega)>0$. Then 
\begin{align*}
L_{\Gamma}e^{-i\kappa}\hat{\rho}(\omega)&=e^{-i\kappa}\inf_{j\in\mathbb{Z}^+}e^{j\kappa}\hat{r}(\theta_{j}\omega)
\leq e^{-i\kappa}e^{i\kappa}\hat{r}(\theta_{i}\omega)=\hat{r}(\theta_{i}\omega),
\end{align*}
and 
\begin{align*}
\hat{\rho}(\theta_{i}\omega)e^{\kappa i}&=\frac{1}{L_{\Gamma}}e^{i\kappa}\inf_{j\in\mathbb{Z}^+}e^{j\kappa}\hat{r}(\theta_{j+i}\omega)=\frac{1}{L_{\Gamma}}
\inf_{j\in\mathbb{Z}^+}e^{(j+i)\kappa}\hat{r}(\theta_{j+i}\omega)\\
&=\frac{1}{L_{\Gamma}}\inf_{k\geq i}e^{k\kappa}\hat{r}(\theta_{k}\omega).
\end{align*}
Since $\hat{r}(\omega)$ is tempered from below, for any $\epsilon>0$, there exists an $N>0$ such that for any $k\geq N$, we have 
\begin{align*}
1\geq \frac{1}{L_{\Gamma}}\hat{r}(\theta_{k}\omega)\geq e^{-\epsilon k}.
\end{align*}
Then for such $\epsilon$ and $i\geq N$, we have 
\begin{align*}
e^{i\kappa}\geq \hat{\rho}(\theta_{i}\omega)e^{\kappa i}\geq e^{i(\kappa-\epsilon)},
\end{align*}
that is 
\begin{align*}
1\geq \hat{\rho}(\theta_{i}\omega)\geq e^{-i\epsilon},
\end{align*}
which implies $ \hat{\rho}$ is tempered from below. This completes the proof.
\end{proof}
\begin{lemma}\label{est about W}
Let $\bar{r}=\inf_{s\in[0,1]}\hat{r}(\theta_{s}\omega)$, and $\bar{\rho}(\omega)=\frac{1}{L_\Gamma}\inf_{i\in\mathbb{Z}^+}e^{\kappa i}\bar{r}(\theta_{i}\omega)$. Then for any $\xi\in B_{\mathcal{B}^-}(0,\bar{\rho}(\omega))$, it holds that
\begin{align}
|\Gamma^-(\xi,\omega)(0,s)|\leq L_\Gamma\hat{\rho}(\theta_{s}\omega).
\end{align}
\end{lemma}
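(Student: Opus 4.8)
The plan is to leverage the Lipschitz dependence of the fixed point $\Gamma(\cdot,\omega)$ on the initial datum $\xi$, which was established in Theorem~\ref{theo-exis}, together with the fact that $\Gamma(0,\omega)=0$, to control $\Gamma^-(\xi,\omega)(0,\cdot)$ pointwise in terms of $|\xi|$. The final step is then a purely measure-theoretic comparison between the two tempered random variables $\bar\rho$ and $\hat\rho$ built from $\hat r$.

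First I would check the normalization $\Gamma(0,\omega)=0$. Since $\chi_R(0)=0$, we have $F_R(0)=F(0)=0$ and $G_R(0)=G(0)=0$, so the null sequence is a fixed point of $\mathcal{J}_R(\cdot,0)$; the uniqueness assertion of Theorem~\ref{theo-exis} then forces $\Gamma(0,\omega)=0$. Combining this with the Lipschitz bound from Theorem~\ref{theo-exis} applied with $\xi_0=0$ gives
\[
\|\Gamma(\xi,\omega)\|_{\mathcal{H}_\kappa}=\|\Gamma(\xi,\omega)-\Gamma(0,\omega)\|_{\mathcal{H}_\kappa}\leq L_\Gamma|\xi|.
\]
Reading off the $i=0$ term in the definition of $\|\cdot\|_{\mathcal{H}_\kappa}$ yields $\|\Gamma(\xi,\omega)(0,\cdot)\|_{\beta,-\beta}\leq L_\Gamma|\xi|$, and since the supremum over $[0,1]$ of $|\Gamma(\xi,\omega)(0,\cdot)|$ is dominated by $\|\cdot\|_{\beta,-\beta}$ while $\pi^-$ is an orthogonal projection (hence a contraction), one obtains $|\Gamma^-(\xi,\omega)(0,s)|\leq L_\Gamma|\xi|$ for all $s\in[0,1]$. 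For $\xi\in B_{\mathcal{B}^-}(0,\bar\rho(\omega))$ this already gives $|\Gamma^-(\xi,\omega)(0,s)|\leq L_\Gamma\bar\rho(\omega)$.

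It therefore remains to verify $\bar\rho(\omega)\leq\hat\rho(\theta_s\omega)$ for every $s\in[0,1]$, which chains the estimates to the desired conclusion. Using $\theta_i\theta_s=\theta_{i+s}$ and the definition $\bar r(\theta_i\omega)=\inf_{s'\in[0,1]}\hat r(\theta_{i+s'}\omega)$, one has $\bar r(\theta_i\omega)\leq\hat r(\theta_{i+s}\omega)$ for the fixed $s\in[0,1]$, so $e^{\kappa i}\bar r(\theta_i\omega)\leq e^{\kappa i}\hat r(\theta_{i+s}\omega)$ for each $i\in\mathbb{Z}^+$; passing to the infimum over $i$ on both sides and dividing by $L_\Gamma$ gives $\bar\rho(\omega)\leq\hat\rho(\theta_s\omega)$. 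The argument is elementary throughout; the only place demanding a little care is this last comparison of the two nested infima defining $\bar\rho$ and $\hat\rho$ (in particular the order of the $\inf_i$ and $\inf_{s'\in[0,1]}$), which I regard as the main, albeit minor, technical point.
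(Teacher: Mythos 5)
Your proposal is correct and follows essentially the same route as the paper: bound $|\Gamma^-(\xi,\omega)(0,s)|$ by $L_\Gamma|\xi|\le L_\Gamma\bar\rho(\omega)$ and then compare the nested infima to get $\bar\rho(\omega)\le\hat\rho(\theta_s\omega)$. The only cosmetic difference is that you justify $\|\Gamma(\xi,\omega)\|_{\mathcal{H}_\kappa}\le L_\Gamma|\xi|$ via $\Gamma(0,\omega)=0$ and Lipschitz continuity, while the paper reads it off directly from the contraction estimate in Theorem~\ref{theo-exis}; both are valid.
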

\begin{proof}
Note that \begin{align*}
\hat{\rho}(\theta_{s}\omega)&=\frac{1}{L_\Gamma}\inf_{i\in\mathbb{Z}^+}e^{\kappa i}\hat{r}(\theta_{i+s}\omega)\geq \frac{1}{L_\Gamma}\inf_{i\in\mathbb{Z}^+}e^{\kappa i}\inf_{s\in [0,1]}\hat{r}(\theta_{i+s}\omega)\\
&= \frac{1}{L_\Gamma}\inf_{i\in\mathbb{Z}^+}e^{\kappa i}\bar{r}(\theta_{i}\omega)=\bar{\rho}(\omega).
\end{align*}
Herein, we keep in mind that $\bar{\rho}(\omega)\leq \hat{\rho}(\omega)$. Then 
\begin{align*}
|\Gamma^-(\xi,\omega)(0,s)|\leq \|\Gamma(\xi,\omega)\|_{\mathcal{H}_{\kappa}}\le L_\Gamma|\xi|\leq L_\Gamma\bar{\rho}(\omega)\leq L_\Gamma\hat{\rho}(\theta_{s}\omega).
\end{align*}
\end{proof}
% Hence we define
% \begin{align*}
% M(\omega)=\{\xi+\Gamma^+(\xi,\omega)(0,0):\xi\in \mathcal{B}^-\}.
% \end{align*}
% Next, we shall check that $M$ actually defines a local stable manifold of \eqref{mild sol}. To do that, we need to find a neighborhood $U$ of $0$ in $\mathcal{B}^-$  and a $W$ in $\mathcal{B}$ such that $m(\cdot,\omega)$ is Lipschitz on $U$ and for any $u_0\in M\cap W$, we have 
% \begin{align*}
% \lim_{t\to \infty}\varphi(t,\omega,u_0)=0~~~~~\text{exponentially.}
% \end{align*}
% Firstly, we need to find a proper ball of $\mathcal{B}^-$  such that for $\xi$ in such ball,
% \begin{align*}
% \|\Gamma(\xi,\omega)(i,\cdot)\|_{\beta,-\beta}\le\frac{R(\theta_{i}\omega)}{2}.
% \end{align*}
For given $R$ in \eqref{R}, let  $\hat{r}(\omega)=\frac{R(\omega)}{2L_\Gamma}$ and $\hat{\rho}(\omega)=\frac{1}{L_{\Gamma}}\inf_{i\in\mathbb{Z}^+}e^{i\kappa}\hat{r}(\theta_{i}\omega)$. Then by the Lipschitz continuity  of $\Gamma(\cdot,\omega)$, we have 
\begin{align}
&\|\Gamma(\xi,\omega)(0,\cdot)\|_{\beta,-\beta}\le L_{\Gamma}|\xi|\leq L_{\Gamma}\hat{r}(\omega)= \frac{R(\omega)}{2},~~\xi\in \overline{B_{\mathcal{B}^-}(0,\hat{r}(\omega))},\label{M-W-1}\\
&\|\Gamma^-(\xi,\omega)(i,\cdot)\|_{\beta,-\beta}\leq L_{\Gamma}e^{-i\kappa}|\xi|\leq L_{\Gamma}e^{-i\kappa}\hat{\rho}(\omega)\leq \hat{r}(\theta_{i}\omega),~~\xi\in\overline{ B_{\mathcal{B}^-}(0,\hat{\rho}(\omega))}.\label{M-W-2}
\end{align}
% and further 
% \begin{align*}
% \|\Gamma(\xi,\omega)(i,\cdot)\|_{\beta,-\beta}\le\frac{R(\theta_{i}\omega)}{2}.
% \end{align*}
%Herein, keep mind that $L_{\Gamma}\hat{\rho}(\omega)\leq \hat{r}(\omega)$.
%Hence define $U(\omega)=B_{\mathcal{B}^-}(0,\hat{\rho}(\omega))$ and $W(\omega)=B_{V}(0,\frac{R(\omega)}{2})$.
%  For $\xi\in B_{\mathcal{B}^-}(0,\hat{\rho}(\omega))$,  the Lipschitz continuity of $\Gamma$ implies that
% \begin{align*}
% \|\Gamma^-(\xi,\omega)(i,\cdot)\|_{\beta,-\beta}\leq L_{\Gamma}e^{-i\kappa}|\xi|\leq L_{\Gamma}e^{-i\kappa}\hat{\rho}(\omega)\leq \hat{r}(\theta_{i}\omega)%<\frac{R(\theta_{i}\omega)}{2}
% \end{align*}
% for any $i=0,1,\cdots$.
% \begin{remark}[A little summary]\label{little summary}
%  By the choice  of $K$, one obtains a random variable $R(\omega)\leq 1$ tempered from below since $\vertiii{\omega}_{\beta'}$ is tempered from above. Let $\hat{r}(\omega)=\frac{R(\theta_{i-1}\omega)}{2L_{\Gamma}}$, by which Lemma \ref{le-about U} gives a $\hat{\rho}(\omega)$, tempered from below, such that the fixed point $\Gamma(\xi,\omega)$ with $\xi\in B_{\mathcal{B}^-}(0,\hat{\rho}(\omega))$ satisfies 
% \begin{align*}
% \|\Gamma^-(\xi,\omega)(i,\cdot)\|_{\beta,-\beta} <\hat{r}(\theta_{i}\omega).
% \end{align*}
% %Furthermore, $\mathcal{J}[\Gamma(\xi,\omega)]=J[\Gamma(\xi,\omega)]$, the Lyapunov-Perron operator of \eqref{mild sol}.
% \end{remark}
By Theorem \ref{theo-exis} we obtain a Lipschitz mapping $m(\omega,\cdot)=\Gamma^+(\cdot,\omega)(0,0)$ from $\mathcal{B}^-$ to $\mathcal{B}^+$. 
\begin{theorem}\label{thm:manifold}
The random set $M$ defined as 
\begin{align*}
M(\omega):=\{\xi+\Gamma^+(\xi,\omega)(0,0):\xi\in B_{\mathcal{B}^-}(0,\hat{r}(\omega))\},~~~\omega\in\Omega
\end{align*}
is a local stable manifold for the random dynamical system associated to the SPDE~\eqref{eq2.5}.
\end{theorem}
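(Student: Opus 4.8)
The plan is to verify that $M(\omega)$ meets the two requirements of Definition~\ref{def3.4}, relying on the fixed point $\Gamma(\xi,\omega)$ from Theorem~\ref{theo-exis}, the shift identity of Lemma~\ref{le-m+1,m}, and the bounds \eqref{M-W-1}--\eqref{M-W-2}. First I would observe that $M(\omega)$ is a random Lipschitz manifold in the sense of Definition~\ref{def3.1}: it is the graph over $B_{\mathcal{B}^-}(0,\hat r(\omega))$ of the map $m(\omega,\cdot)=\Gamma^+(\cdot,\omega)(0,0):\mathcal{B}^-\to\mathcal{B}^+$, which is Lipschitz with constant $L_\Gamma=2c_S$ by Theorem~\ref{theo-exis} and satisfies $m(\omega,0)=0$, since $F(0)=G(0)=0$ forces the zero sequence to be the fixed point for $\xi=0$, i.e.\ $\Gamma(0,\omega)=0$. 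Closedness of $M(\omega)$ follows from continuity of $m$ and measurability of $\omega\mapsto M(\omega)$ from the measurable dependence of the fixed point on $\omega$.

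The central step is to identify the orbit of the genuine (untruncated) system with the fixed point when $x$ is small. For $x=\xi+m(\omega,\xi)\in M(\omega)$, set $u(t+i)=\Gamma(\xi,\omega)(i,t)$ for $t\in[0,1]$; then $\{\Gamma(\xi,\omega)(i,\cdot)\}$ solves the truncated Lyapunov--Perron equation, and iterating Lemma~\ref{le-m+1,m} yields $\Gamma(\xi,\omega)(i,\cdot)=\Gamma(u^-(i),\theta_i\omega)(0,\cdot)$. Combining this with \eqref{M-W-1}--\eqref{M-W-2}, I would show that for $\xi\in\overline{B_{\mathcal{B}^-}(0,\hat\rho(\omega))}$ one has $\|\Gamma(\xi,\omega)(i,\cdot)\|_{\beta,-\beta}\le R(\theta_i\omega)/2$ for every $i$, so the cut-off $\chi_{R}$ is inactive along the whole orbit and $u$ is a genuine solution of~\eqref{eq2.5}, giving $\varphi(t,\omega,x)=u(t)$.

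For property (1) I would take the neighbourhood $W(\omega)=B_{\mathcal{B}}(0,\hat\rho(\omega))$. Since the projections are orthogonal, $\xi=\pi^- x$ obeys $|\xi|\le|x|$, so every $x\in M(\omega)\cap W(\omega)$ has $\xi\in B_{\mathcal{B}^-}(0,\hat\rho(\omega))$ and the previous identification applies. The exponential decay is then immediate from the definition of the $\mathcal{H}_\kappa$-norm and the Lipschitz bound $\|\Gamma(\xi,\omega)\|_{\mathcal{H}_\kappa}=\sup_i e^{\kappa i}\|\Gamma(\xi,\omega)(i,\cdot)\|_{\beta,-\beta}\le L_\Gamma|\xi|$: for $t\in[i,i+1]$ one gets $|\varphi(t,\omega,x)|\le\|\Gamma(\xi,\omega)(i,\cdot)\|_{\beta,-\beta}\le L_\Gamma e^{\kappa}|\xi|e^{-\kappa t}$, which is the required exponential convergence to $0$. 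For property (2), the invariance mechanism is key: by $\Gamma(\xi,\omega)(i,\cdot)=\Gamma(u^-(i),\theta_i\omega)(0,\cdot)$ the point $\varphi(i,\omega,x)=u(i)$ equals $u^-(i)+m(\theta_i\omega,u^-(i))$, which lies in $M(\theta_i\omega)$ exactly when $u^-(i)\in B_{\mathcal{B}^-}(0,\hat r(\theta_i\omega))$; extending to general $t$ through the cocycle property of $\varphi$, the orbit stays on the shifted manifold while its $\mathcal{B}^-$-component remains inside the random radius. By \eqref{M-W-2}, once $\xi\in\overline{B_{\mathcal{B}^-}(0,\hat\rho(\omega))}$ this holds for all $i$, so $t_0(\omega,x)=+\infty$ for such $x$; since $|\xi|\le|x|\le(1+L_\Gamma)|\xi|$ forces $\xi\to0$ as $|x|\to0$, it follows that $t_0(\omega,x)\to\infty$.

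The step I expect to be the main obstacle is the invariance underpinning property (2): one must upgrade the integer-time shift identity of Lemma~\ref{le-m+1,m} to the statement that the full orbit (including non-integer times, via the cocycle property of $\varphi$ and the self-similar structure of the fixed point within each unit interval) remains on the time-shifted manifold, while simultaneously ensuring that the random truncation radius $R(\theta_t\omega)$ stays large enough that the cut-off never reactivates. The temperedness from below of $R$, and hence of $\hat r$ and $\hat\rho$ via Lemmas~\ref{le-about U} and~\ref{est about W}, is precisely what reconciles the exponential decay $e^{-\kappa t}|\xi|$ of the $\mathcal{B}^-$-component with the subexponential lower bound on $\hat r(\theta_t\omega)$, so that the escape times diverge as $|x|\to0$.
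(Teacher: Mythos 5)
Your proposal follows essentially the same route as the paper: identify the fixed point with the genuine (untruncated) orbit by showing the cut-off stays inactive along the whole trajectory (via \eqref{M-W-1}--\eqref{M-W-2}, \eqref{est-s} and the iteration of Lemma~\ref{le-m+1,m}), then read off exponential decay from the $\mathcal{H}_{\kappa}$-norm and invariance from $\Gamma(\xi,\omega)(m,\cdot)=\Gamma(u^-(m),\theta_m\omega)(0,\cdot)$. The only adjustment needed is your choice $W(\omega)=B_{\mathcal{B}}(0,\hat{\rho}(\omega))$: to restart the fixed point at non-integer times $s\in[0,1]$ one needs $|\Gamma^-(\xi,\omega)(0,s)|\leq \hat{\rho}(\theta_s\omega)$, which is exactly what Lemma~\ref{est about W} supplies but only for $\xi$ in the smaller ball of radius $\bar{\rho}(\omega)$ --- the paper accordingly takes $W$ with radius $L_{\Gamma}^{-1}\bar{\rho}(\omega)$ (resp.\ $\bar{\rho}(\omega)$ when $L_\Gamma\leq 1$), precisely the point you flagged as the main obstacle.
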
 
\begin{proof} We first consider the case when $L_{\Gamma}>1$.  For  $\bar{\rho}$ as determined in Lemma \ref{est about W}, we define 
 \begin{align*}
    W(\omega)=\{x\in\mathcal{B}:\xi\in B_{\mathcal{B}^-}(0,L_{\Gamma}^{-1}\bar{\rho}(\omega)),\xi=x^-\}.
\end{align*}
Note that $\bar{\rho}(\omega)\leq \hat{\rho}(\omega)$ and then $|\xi|\leq \hat{\rho}(\omega)$ for $x\in W(\omega)$. Hence for any $x\in M(\omega)\cap W(\omega)$, we have \eqref{M-W-1} and \eqref{M-W-2}. Let $u^-(k)=\Gamma^-(u^-(k-1),\theta_{k-1}\omega)(1,0),k=2,3,\cdots$. Then by Lemma \ref{le-m+1,m}, we directly obtain that 
\begin{align*}
&\Gamma^-(\xi,\omega)(2,0)=\Gamma^-(u^-(1),\theta_{1}\omega)(1,0)=u^-(2),
\end{align*}
and
\begin{align*}
&\Gamma^-(\xi,\omega)(3,0)=\Gamma^-(u^-(1),\theta_{1}\omega)(2,0)=\Gamma^-(u^-(1),\theta_{1}\omega)(1,1),
\end{align*}
where the last equality comes from the property $u_{i}(1)=u_{i+1}(0)$ for $\{u_i\}_{i\in\mathbb{Z}^+}\in \mathcal{H}_{\kappa}$. Applying Lemma \ref{le-m+1,m} to $\Gamma^-(u^-(1),\theta_{1}\omega)(1,1)$, we have 
\begin{align*}
\Gamma^-(u^-(1),\theta_{1}\omega)(1,1)=\Gamma^-(\Gamma^-(u^-(1),\theta_{1}\omega)(1,0),\theta_{2}\omega)(0,1).
\end{align*}
Hence 
\begin{align*}
&\Gamma^-(\xi,\omega)(3,0)=\Gamma^-(u^-(2),\theta_{2}\omega)(0,1)
=\Gamma^-(u^-(2),\theta_{2}\omega)(1,0)=u^-(3).
\end{align*}
By induction, one has $\Gamma^-(\xi,\omega)(m,0)=\Gamma^-(u^-(m-1),\theta_{m-1}\omega)(1,0)=u^-(m)$ for all $m\in\mathbb{Z}^+$. Moreover,  iterating \eqref{m+1,m}  indicates 
\begin{align}\label{iteration}
\Gamma(\xi,\omega)(m,\cdot)=\Gamma(u^-(1),\theta_1\omega)(m-1,\cdot)=\cdots=\Gamma(u^-(m),\theta_m\omega)(0,\cdot).
\end{align}
Since  $u^-(m)=\Gamma^-(\xi,\omega)(m,0)$, by $|\xi|\leq \hat{\rho}(\omega)$, we further have
\begin{align}\label{est-s}
\nonumber|u^-(m)|&=|\Gamma^-(\xi,\omega)(m,0)|\leq \|\Gamma^-(\xi,\omega)(m,\cdot)\|_{\beta,-\beta}\leq L_\Gamma e^{-m\kappa}|\xi|\\
&\leq L_\Gamma e^{-m\kappa}\hat{\rho}(\omega)\leq \hat{r}(\theta_{m}\omega),
\end{align}
 implying that
\begin{align}\label{fixed point and solution}
\nonumber\|\Gamma(\xi,\omega)(m,\cdot)\|_{\beta,-\beta}&=\|\Gamma(u^-(m),\theta_m\omega)(0,\cdot)\|_{\beta,-\beta}\leq L_{\Gamma}|u^-(m)|\\
&\leq L_{\Gamma}\hat{r}(\theta_{m}\omega)=\frac{R(\theta_{m}\omega)}{2}.
\end{align}

As discussed in Subsection \ref{se.L-P} and due to \eqref{fixed point and solution}, for $x\in M(\omega)\cap W(\omega)$, $m=0,1,\cdots,$ $s\in[0,1]$, we conclude that $\Gamma(\xi,\omega)(m,s)$ is the solution of \eqref{mild sol} at $m+s$ with $x=\Gamma(\xi,\omega)(0,0)$ as initial {datum,} %data 
 i.e.
\begin{align*}
\Gamma(\xi,\omega)(m,s)=\varphi(m+s,\omega,x)=\varphi(m,\theta_{s}\omega,\varphi(s,\omega,x)).
\end{align*}
In particular we have 
\begin{align*}
\varphi(s,\omega,x)=\Gamma(\xi,\omega)(0,s)=:u(s).
\end{align*}
By Lemma \ref{est about W}, we have  $|\Gamma^-(\xi,\omega)(0,s)|\leq \hat{\rho}(\theta_{s}\omega)$ and further
\begin{align}
%\nonumber\Gamma(\xi,\omega)(m,s)&=\varphi(m+s,\omega,\xi)=
\varphi(m,\theta_{s}\omega,\Gamma(\xi,\omega)(0,s))
=\Gamma(\Gamma^-(\xi,\omega)(0,s),\theta_{s}\omega)(m,0).
\end{align}
% According to Lemma~\ref{le-m+1,m}, denoting by $u(s)=\Gamma(\xi,\omega)(0,s)$ with $x\in M(\omega)\cap W(\omega)$. Then similar to \eqref{est-s}, we have that 
% \begin{align*}
% |u^-(s)|=|\Gamma^-(\xi,\omega)(0,s)|\leq L_\Gamma\hat{\rho}(\theta_{s}\omega)\leq \hat{r}(\theta_{s}\omega).
% \end{align*}
Now, we can conclude that 
\begin{align*}
\Gamma(\xi,\omega)(m,s)&=\varphi(m+s,\omega,x)=\varphi(m,\theta_s\omega,u(s))=
\Gamma(u^-(s),\theta_{s}\omega)(m,0),
\end{align*}
for $x\in M(\omega)\cap W(\omega)$, where the first equality implies 
\begin{align*}
    \lim_{t\to \infty}\varphi(t,\omega,x)=0,~~~~\text{exponentially}.
\end{align*}
Furthermore, together with \eqref{iteration}, the second equality entails
\begin{align*}
\varphi(m+s,\omega,x)&=\Gamma(u^-(m+s),\theta_{m+s}\omega)(0,0)
%&{\color{blue}=\Gamma^-(u^-(m+s),\theta_{m+s}\omega)(0,0)+\Gamma^+(u^-(m+s),\theta_{m+s}\omega)(0,0)}.
=u^-(m+s)+\Gamma^+(u^-(m+s),\theta_{m+s}\omega)(0,0).
\end{align*}
In which $u^-(k+s):=\Gamma^-(u^-(k-1+s),\theta_{k-1}\omega)(1,0)$ satisfying $|u^-(k+s)|\leq \hat{r}(\theta_{k+s}\omega)$, $k=1,2\cdots, m$, due to \eqref{est-s}. Hence  $\varphi(t,\omega,M(\omega)\cap W(\omega))\subset M(\theta_{t}\omega)$ for any $t\geq 0$, which naturally implies that 
\begin{align*}
    \lim_{|x|\to0}t_{0}(\omega,x)=\infty,~\text{for } x\in M(\omega).
\end{align*}
Hence by Definition \ref{def3.4}, we conclude that $M$ is a local stable manifold for $\varphi$. The same procedure can be conducted for $L_{\Gamma}\leq 1$ using Lemma~\ref{est about W} and replacing $W$ by 
\begin{align*}
    \tilde{W}(\omega):=\{x\in\mathcal{B}:\xi\in B_{\mathcal{B}^-}(0,\bar{\rho}(\omega)),~\xi=x^-\}.
\end{align*} 
%Therefore we omit the details. 
% The only difference from the case of $L_{\Gamma}>1$ is that for $s\in[0,1]$,  $|\Gamma^-(\xi,\omega)(0,s)|\leq L_{\gamma} \hat{\rho}(\theta_{s}\omega)$ by Lemma \ref{est about W},
\end{proof}
\subsection{Smoothness of local stable manifolds}\label{smoothness}
 Fixing $\xi_0\in \mathcal{B}^-$, 
 denoting  $U=\{u_i\}_{i\in\mathbb{Z^+}}$ with $u_i(t)=\Gamma(\xi_0,\omega)(i,t)$, we now prove that $U$ belongs to $C^k(\mathcal{B}^-;\mathcal{H}_{\kappa})$ (with respect to $\xi_0$) for $k\geq 1$ under suitable assumptions.
%there exists cofficients $C_{j,\gamma_0},1\leq j\leq k-1$, so that for $\gamma< \gamma_0$,
%\begin{align*}
%\|D_{\xi}^j U\|_{\mathcal{H}_{j\kappa+\gamma}}\leq C_{j,\gamma_0},
%\end{align*} 
%and 
\begin{itemize}
 \item [$(\bf A^k_2$)] The mapping $F\in C^k_b(\mathcal{B};\mathcal{B})$  with $F(0)=0,DF(0)=0,\cdots,D^kF(0)=0$ and satisfies the following Lipschitz condition
\begin{align*}
&\|D^lF(u)-D^lF(v)\|_{L^l(\mathcal{B};\mathcal{B})}\leq L_F|u-v|,
\end{align*}
for some constant $L_F>0,l=0,1,\cdots,k-1$.
  \item [$(\bf A^k_3$)] The mapping $G\in C_b^{k+1} (\mathcal{B};L_2(\mathcal{B};\mathcal{B}))$ and $G(0)=DG(0)=\cdots=D^{k}G(0)=0$ as well as
\begin{align*}
&\|D^{l}G(u)-D^{l}G(v)\|_{L^{l}(\mathcal{B};L_2(\mathcal{B};\mathcal{B}))}\leq L_G|u-v|_{-\beta},\\
&\|D^{l+1}G(u)h-D^{l+1}G(v)h\|_{L^{l}(\mathcal{B};L_2(\mathcal{B};\mathcal{B}_{-\beta}))}\leq L_G|u-v|_{-\beta}|h|_{-\beta},
\end{align*}
for some constant $L_G>0,l=0,1,\cdots,k-1$ and $u,v,h\in \mathcal{B}$.
\end{itemize}
Consequently,  the inductive  gap condition becomes
\begin{align}\label{gap k}
K\left(\frac{-1}{1-e^{-(\check{\mu}+k\kappa+\gamma)}}+\frac{1}{1-e^{-(\hat{\mu}+k\kappa+\gamma)}}\right)\leq \frac{1}{2},~~\text{for all}~0\leq \gamma\leq 2\gamma_0.
\end{align}
\begin{remark}
Here we still use the Lipschitz constants $L_F,L_G$. These should strictly depend  on $l$. However, for simplicity, we drop the dependence on $l$ and choose  the constants large enough.
\end{remark}
The idea of the proof relies on a similar to argument to~\cite{DLS2004, LL2010}. However, due to the multiplicative structure of the noise, the computations are technically more involved.
\subsubsection{$C^1$-smoothness}
 We first consider the case of $k=1$ when the gap condition is
\begin{align}\label{gap2}
K\left(\frac{-1}{1-e^{-(\check{\mu}+k\kappa+\gamma)}}+\frac{1}{1-e^{-(\hat{\mu}+k\kappa+\gamma)}}\right)\leq \frac{1}{2},~~\text{for all}~0\leq \gamma\leq 2\gamma_0,~~\text{for  a small}~ \gamma_0>0,
\end{align}
and $(\mathbf{A_2^k})$ and $(\mathbf{A_3^k})$ coincide with $(\mathbf{A_2})$ respectively $(\mathbf{A_3})$ .
We know from Subsection \ref{sub-exis} that $\mathcal{J}_{R}(\cdot,\xi,\omega)$ is contraction on $\mathcal{H}_{\kappa+\gamma}\subset\mathcal{H}_{\kappa}$ for any $0\leq \gamma\leq 2\gamma_0$, which implies $\Gamma(\xi,\omega)\subset \mathcal{H}_{\kappa+\gamma}$. Fixing $\xi_0\in \mathcal{B}^-$, 
and denoting  $U=\{u_i\}_{i\in\mathbb{Z^+}}$ with $u_i(t)=\Gamma(\xi_0,\omega)(i,t)$, we define another operator $\mathcal{L}$ on $\mathcal{H}_{\kappa+\gamma}$ as follows: for $V=\{v_i\}_{i\in\mathbb{Z^+}}\in \mathcal{H}_{\kappa+\gamma}$,
\begin{equation}\label{eq-first D}
\begin{aligned}
&[\mathcal{L}(V)]_m(t)\\
&:=\sum_{i=1}^{m}S^{-}(t+m-i)\int_{0}^{1} S^{-}(1-\tau)DF_{R(\theta_{i-1}\omega)}(u_{i-1}(\tau))v_{i-1}(\tau)d\tau\\
&\quad+\sum_{i=1}^{m}S^{-}(t+m-i)\int_{0}^{1} S^{-}(1-\tau)DG_{R(\theta_{i-1}\omega)}(u_{i-1}(\tau))v_{i-1}(\tau)d\theta_{i-1}\omega(\tau)\\
&\quad+\int_{0}^{t} S^{-}(t-\tau)DF_{R(\theta_m\omega)}(u_{m}(\tau))v_{m}(\tau)d\tau+\int_{0}^{t} S^{-}(t-\tau)DG_{R(\theta_m\omega)}(u_{m}(\tau))v_{m}(\tau)d\theta_{m}\omega(\tau)\\
&\quad-\sum_{i=m+2}^{\infty}S^{+}(t+m-i)\int_{0}^{1} S^{+}(1-\tau)DF_{R(\theta_{i-1}\omega)}(u_{i-1}(\tau))v_{i-1}(\tau)d\tau\\
&\quad-\sum_{i=m+2}^{\infty}
S^{+}
(t+m-i)\int_{0}^{1}S^{+}
(1-\tau)DG_{R(\theta_{i-1}\omega)}(u_{i-1}(\tau))v_{i-1}(\tau)d\theta_{i-1}\omega(\tau)\\
&\quad-\int_{t}^{1} S^{+}(t-\tau)DF_{R(\theta_{m}\omega)}(u_m(\tau))v_{m}(\tau)d\tau-\int_{t}^{1} S^{+}(t-\tau)DG_{R(\theta_{m}\omega)}(u_m(\tau))v_{m}(\tau)d\theta_{m}\omega(\tau)\\
&=:\mathcal{L}_1(V)+\mathcal{L}_2(V)+\mathcal{L}_3(V)+\mathcal{L}_4(V)-\mathcal{L}_5(V)-\mathcal{L}_6(V)-\mathcal{L}_7(V)-\mathcal{L}_8(V).
\end{aligned}
\end{equation}
Before we go to the further step, we make some elementary considerations  on $DG_{R}(u)v$. Similar estimates were conducted for the truncated coefficients $F_R$ and $G_R$.  First, since $G\in C^2_b(\mathcal{B};L_2(\mathcal{B};\mathcal{B}))$ with $DG(0)=0$, for $|u|\leq R$, we have as seen before that
\begin{equation}
\begin{aligned}
\|DG_R(u)v\|_{L_2(\mathcal{B};\mathcal{B}_{-\beta})}&=\|DG_R(u)v-DG_R(0)v\|_{L_2(\mathcal{B};\mathcal{B}_{-\beta})}\\
&\leq L_G|u||v|\leq L_G(R)|v|.
\end{aligned}
\end{equation}
We know that $L_G(R)\to 0$ as $R\to 0$.
%Since then, $L_G(R)$ is additionally chosen to larger than $L_G R$ and keeps decreasing to 0 as $R$ to 0.  
Similarly by \eqref{G4}\eqref{G2}, $DG(0)=0$ and the continuity of $DG$, for $\|u\|_{\beta,-\beta}\leq R$, we have
\begin{equation}
\begin{aligned}
&\|DG_R(u(r))v(r)-DG_R(u(q))v(q)\|_{L_2(\mathcal{B};\mathcal{B}_{-\beta})}\\
&\leq 
\|DG_R(u(r))v(r)-DG_R(u(r))v(q)\|_{L_2(\mathcal{B};\mathcal{B}_{-\beta})}+\|DG_R(u(r))v(q)-DG_R(u(q))v(q)\|_{L_2(\mathcal{B};\mathcal{B}_{-\beta})}\\
&\leq \sup_{\|w\|_{\beta,-\beta}\leq R}\|DG(w)\|_{L(\mathcal{B}_{-\beta};L_2(\mathcal{B};\mathcal{B}_{-\beta}))}|v(r)-v(q)|_{-\beta}+L_G|u(r)-u(q)|_{-\beta}|v(q)|\\
&\leq L_G(R)\|v\|_{\beta,-\beta}(r-q)^{\beta}.
\end{aligned}\end{equation}
Based on the last two results and (\ref{G1}), (\ref{G4}), we infer
\begin{equation}\label{DG-1}
\begin{aligned}
&\|D_{s+}^{\alpha}S(t-\cdot)DG_{R}(u(\cdot))v(\cdot)[r]\|_{L_2(\mathcal{B};\mathcal{B})}\\
&\quad\leq \frac{1}{\Gamma(1-\alpha)}\bigg(\frac{\|S(t-r)DG_R(u(r))v(r)\|_{L_2(\mathcal{B};\mathcal{B})}}{(r-s)^{\alpha}}\\
&\qquad+\alpha\int_{s}^{r}\frac{\|S(t-r)DG_R(u(r))v(r)-S(t-q)DG_R(u(q))v(q)\|_{L_2(\mathcal{B};\mathcal{B})}}{(r-q)^{\alpha+1}}dq\bigg)\\
&\quad\leq c_SC_{\alpha}{L_G(R)(r-s)^{-\alpha}|v(r)|}+C_{\alpha}\int_{s}^r\frac{\|S(t-r)-S(t-q)\|_{L(\mathcal{B};\cB)}\|DG_R(u(r))v(r)\|_{L_2(\mathcal{B};\mathcal{B})}}{(r-q)^{1+\alpha}}dq\\
&\qquad+C_{\alpha}\int_{s}^r\frac{\|S(t-q)\|_{L({\mathcal{B}_{-\beta}};\mathcal{B})}\|DG_R(u(r))v(r)-DG_R(u(q))v(q)\|_{L_2(\mathcal{B};{\mathcal{B}_{-\beta}})}}{(r-q)^{1+\alpha}}dq\\
&\quad\leq c_SC_{\alpha}L_G(R)(r-s)^{-\alpha}\|v\|_{\beta,-\beta}+c_SC_{\alpha}L_G(R)\|v\|_{\beta,-\beta}(t-r)^{-\beta}\int_{s}^r(r-q)^{\beta-\alpha-1}dq\\
&\qquad+c_SC_{\alpha}L_G(R)\|v\|_{\beta,-\beta}\int_{s}^r{(t-q)^{-\beta}}(r-q)^{\beta-\alpha-1}dq\\
&\quad\leq c_SC_{\alpha,\beta}L_G(R)\|v\|_{\beta,-\beta}[(r-s)^{-\alpha}+(t-r)^{-\beta}(r-s)^{\beta-\alpha}],%+(r-s)^{\beta-\alpha}],
\end{aligned}
\end{equation}
and
\begin{equation}\label{DG-2}
\begin{aligned}
&\|D_{s+}^{\alpha}S(t-\cdot)DG_{R}(u(\cdot))v(\cdot)[r]\|_{L_2(\mathcal{B};\mathcal{B}_{-\beta})}\\
&\quad\leq \frac{1}{\Gamma(1-\alpha)}\bigg(\frac{\|S(t-r)DG_R(u(r))v(r)\|_{L_2(\mathcal{B};\mathcal{B}_{-\beta})}}{(r-s)^{\alpha}}\\
&\qquad+\alpha\int_{s}^{r}\frac{\|S(t-r)DG_R(u(r))v(r)-S(t-q)DG_R(u(q))v(q)\|_{L_2(\mathcal{B};\mathcal{B}_{-\beta})}}{(r-q)^{\alpha+1}}dq\bigg)\\
&\quad\leq c_SC_{\alpha}{L_G(R)(r\!-\!s)^{-\alpha}|v(r)|}\!+\!C_{\alpha}\int_{s}^r\frac{\|S(t-r)\!-\!S(t-q)\|_{L(\mathcal{B};\mathcal{B}_{-\beta})}\|DG_R(u(r))v(r)\|_{L_2(\mathcal{B};\mathcal{B})}}{(r-q)^{1+\alpha}}dq\\
&\qquad+C_{\alpha}\int_{s}^r\frac{\|S(t-q)\|_{L(\mathcal{B}_{-\beta};\mathcal{B}_{-\beta})}\|DG_R(u(r))v(r)-DG_R(u(q))v(q)\|_{L_2(\mathcal{B};\mathcal{B}_{-\beta})}}{(r-q)^{1+\alpha}}dq\\
&\quad\leq c_SC_{\alpha}L_G(R)(r-s)^{-\alpha}\|v\|_{\beta,-\beta}+c_SC_{\alpha}L_G(R)\|v\|_{\beta,-\beta}\int_{s}^r(r-q)^{\beta-\alpha-1}dq\\
&\qquad+c_SC_{\alpha}L_G(R)\|v\|_{\beta,-\beta}\int_{s}^r(r-q)^{\beta-\alpha-1}dq\\
&\quad\leq c_SC_{\alpha,\beta}L_G(R)\|v\|_{\beta,-\beta}[(r-s)^{-\alpha}+(r-s)^{\beta-\alpha}+(r-s)^{\beta-\alpha}].
\end{aligned}
\end{equation}
 Furthermore, for $\alpha'-\alpha>0$, we infer that
\begin{equation}\label{DG-3}
\begin{aligned}
&\|D_{0+}^{\alpha}[S(t-\cdot)-S(s-\cdot)]DG_R(u(\cdot))v(\cdot)[r]\|_{L_2(\mathcal{B};\mathcal{B}_{-\beta})}\\
&\quad\leq \frac{C_{\alpha}\|[S(t-r)-S(s-r)]DG_R(u(r))v(r)\|_{L_2(\mathcal{B};\mathcal{B}_{-\beta})}}{r^{\alpha}}\\
&\qquad+C_{\alpha}\int_{0}^{r}\frac{1}{(r-q)^{\alpha+1}}\|[S(t-r)-S(s-r)]DG_R(u(r))v(r)\\
&\qquad-[S(t-q)-S(s-q)]DG_R(u(r))v(q)\|_{L_2(\mathcal{B};\mathcal{B}_{-\beta})}dq\\
&\quad\leq \frac{C_{\alpha}\|S(t-s)-\text{id}\|_{L(\mathcal{B};\mathcal{B}_{-\beta})}\|S(s-r)\|_{L(\cB;\cB)}\|DG_R(u(r))v(r)\|_{L_2(\mathcal{B};\mathcal{B})}}{r^{\alpha}}\\
&\qquad+C_{\alpha}\int_{0}^{r}\frac{\|[S(t-r)-S(s-r)-S(t-q)+S(s-q)]DG_R(u(r))v(r)\|_{L_2(\mathcal{B};\mathcal{B}_{-\beta})}}{(r-q)^{\alpha+1}}dq\\
&\qquad+C_{\alpha}\int_{0}^{r}\frac{\|[S(t-q)-S(s-q)][DG_R(u(r))v(r)-DG_R(u(r))v(q)]\|_{L_2(\mathcal{B};\mathcal{B}_{-\beta})}}{(r-q)^{\alpha+1}}dq\\
&\quad\leq \frac{C_{\alpha}\|S(t-s)-\text{id}\|_{L(\mathcal{B};\mathcal{B}_{-\beta})}\|S(s-r)\|_{L(\cB;\cB)}\|DG_R(u(r))v(r)\|_{L_2(\mathcal{B};\mathcal{B})}}{r^{\alpha}}\\
&\qquad+C_{\alpha}\int_{0}^{r}\frac{1}{(r-q)^{\alpha+1}}\|S(t-s)-\text{id}\|_{L(\mathcal{B};\mathcal{B}_{-\beta})}\|S(s-r)\|_{L(\mathcal{B}_{-\alpha'};\mathcal{B})}\\
&\qquad\times\|\text{id}-S(r-q)\|_{L(\cB;\cB_{-\alpha'})}\|DG_R(u(r))v(r)\|_{L_2(\mathcal{B};\mathcal{B})}dq\\
&\qquad+C_{\alpha}\int_{0}^{r}\frac{1}{(r-q)^{\alpha+1}}\|S(t-s)-\text{id}\|_{L(\mathcal{B};\mathcal{B}_{-\beta})}\|S(s-q)\|_{L(\mathcal{B}_{-\beta};\mathcal{B})}\\
&\qquad\times\|DG_R(u(r))v(r)-DG_R(u(r))v(q)\|_{L_2(\mathcal{B};\mathcal{B}_{-\beta})}dq\\
&\quad\leq c_{S}C_{\alpha,\beta}L_G(R)\|v\|_{\beta,-\beta}(t-s)^{\beta}\bigg[r^{-\alpha}+\int_0^r(s-r)^{-\alpha'}(r-q)^{\alpha'-\alpha-1}dq\\
&\qquad+\int_{0}^r(s-q)^{-\beta}(r-q)^{\beta-\alpha-1}\bigg]\\
&\quad\leq c_{S}C_{\alpha,\beta}L_G(R)\|u\|_{\beta,-\beta}(t-s)^{\beta}{[(s-r)^{-\alpha'}r^{\alpha'-\alpha}+(s-r)^{-\beta}r^{\beta-\alpha}]}.%r^{-\alpha}.
\end{aligned}
\end{equation}
\begin{lemma}
The operator $\mathcal{L}:\mathcal{H}_{\kappa+\gamma}\to \mathcal{H}_{\kappa+\gamma}$ is  bounded for any $\gamma\in[0,2\gamma_0]$.
\end{lemma}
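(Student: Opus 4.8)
The plan is to observe that $\mathcal{L}$ in \eqref{eq-first D} is obtained from the Lyapunov--Perron operator $\mathcal{J}_R$ by replacing the truncated nonlinearities $F_{R}(u_{i-1})$, $G_{R}(u_{i-1})$ (evaluated along the fixed point $U=\{u_i\}=\Gamma(\xi_0,\omega)$) with the $v$-linear integrands $DF_{R}(u_{i-1})v_{i-1}$, $DG_{R}(u_{i-1})v_{i-1}$, while the eight-block structure—two $S^{-}$-sums $\mathcal{L}_1,\mathcal{L}_2$ over $1\le i\le m$, the local integrals $\mathcal{L}_3,\mathcal{L}_4$, and the mirror $S^{+}$-tails $\mathcal{L}_5,\mathcal{L}_6$ over $i\ge m+2$ together with $\mathcal{L}_7,\mathcal{L}_8$—is identical to that of $I_1,\dots,I_8$ in Theorem~\ref{theo-exis}. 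Accordingly, the first step is to record the linear analogue of Lemma~\ref{estim of G}: feeding the fractional-derivative bounds \eqref{DG-1}--\eqref{DG-3} (which are already available) together with the estimate $|D_{t-}^{1-\alpha}\omega[r]|\le\vertiii{\omega}_{\beta',0,1}(t-r)^{\alpha+\beta'-1}$ and the Beta-function identity \eqref{eq3.12} into the integral representation \eqref{eq2.2}--\eqref{eq2.3} shows that on $[0,1]$
\begin{align*}
\left\|\int_0^\cdot S(\cdot-r)DG_R(u(r))v(r)\,d\omega(r)\right\|_{\beta,-\beta}\le c_SC_{\alpha,\beta}L_G(R)\|v\|_{\beta,-\beta}\vertiii{\omega}_{\beta',0,1},
\end{align*}
and analogously the drift convolution is bounded by $c_SL_F(R)\|v\|_{\beta,-\beta}$. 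The structural point is that $DG(0)=0$ forces $\|DG_R(u)v\|\lesssim L_G(R)\|v\|$, so that the role of $\|u\|_{\beta,-\beta}$ in Lemma~\ref{estim of G} is here played by $\|v\|_{\beta,-\beta}$ and $L_G$ by $L_G(R)$; this is exactly what \eqref{DG-1}--\eqref{DG-3} encode.

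Next I would estimate each block exactly as its counterpart in Theorem~\ref{theo-exis}, inserting the exponential dichotomy bounds \eqref{eq.S-nega} and \eqref{eq.S-posi} for the outer propagators $S^{-}(t+m-i)$ and $S^{+}(t+m-i)$ in $\mathcal{L}_1,\mathcal{L}_2$ and $\mathcal{L}_5,\mathcal{L}_6$, and the above convolution estimate for the local terms $\mathcal{L}_3,\mathcal{L}_4,\mathcal{L}_7,\mathcal{L}_8$. Since $\mathcal{L}$ carries no free term, this yields the $\xi$-free linear counterpart of the $\mathcal{J}_R$ bound,
\begin{align*}
\|[\mathcal{L}(V)]_m\|_{\beta,-\beta}&\le c_S\sum_{i=0}^{m}e^{\check\mu(m-i)}\|v_i\|_{\beta,-\beta}\big[L_F(R(\theta_i\omega))+C_{\alpha,\beta}L_G(R(\theta_i\omega))\vertiii{\theta_i\omega}_{\beta'}\big]\\
&\quad+c_S\sum_{i=m}^{\infty}e^{\hat\mu(m-i)}\|v_i\|_{\beta,-\beta}\big[L_F(R(\theta_i\omega))+C_{\alpha,\beta}L_G(R(\theta_i\omega))\vertiii{\theta_i\omega}_{\beta'}\big],
\end{align*}
the convergence of the $\mathcal{L}_5,\mathcal{L}_6$ tails being automatic from $\hat\mu+\kappa+\gamma>0$ once the weighted norm of $V$ is inserted.

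The concluding step is purely algebraic. By the defining relation \eqref{R} of $R(\theta_i\omega)$, the bracketed coefficient equals $K$ for every $i$; multiplying the displayed bound by $e^{(\kappa+\gamma)m}$, using $e^{(\kappa+\gamma)i}\|v_i\|_{\beta,-\beta}\le\|V\|_{\mathcal{H}_{\kappa+\gamma}}$, and summing the two geometric series then gives
\begin{align*}
\|\mathcal{L}(V)\|_{\mathcal{H}_{\kappa+\gamma}}\le K\left(\frac{-1}{1-e^{-(\check\mu+\kappa+\gamma)}}+\frac{1}{1-e^{-(\hat\mu+\kappa+\gamma)}}\right)\|V\|_{\mathcal{H}_{\kappa+\gamma}}\le\tfrac12\|V\|_{\mathcal{H}_{\kappa+\gamma}},
\end{align*}
the last inequality being the $k=1$ gap condition \eqref{gap2} with exponent $\kappa+\gamma$. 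In particular $\mathcal{L}$ is bounded, with operator norm at most $\tfrac12$, on $\mathcal{H}_{\kappa+\gamma}$ for every $\gamma\in[0,2\gamma_0]$.

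Given that \eqref{DG-1}--\eqref{DG-3} are already in hand, the argument is essentially a bookkeeping repetition of Theorem~\ref{theo-exis}, and I do not expect a genuine obstacle. The two points demanding care are: first, uniformity in $\gamma$—the contraction factor must remain below $\tfrac12$ for the entire range $\gamma\in[0,2\gamma_0]$ (the two summands in the bracket have opposite monotonicity in $\gamma$, so no single endpoint is decisive), which is precisely why \eqref{gap2} is postulated uniformly over this interval rather than at one value; and second, the tempered random radii $R(\theta_i\omega)$ must be tracked so that they enter only through the combination fixed by \eqref{R}, ensuring that no residual $\omega$-dependent constant survives in the final contraction factor.
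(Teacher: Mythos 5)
Your proposal is correct and follows essentially the same route as the paper: the same block-by-block reduction of $\mathcal{L}_1,\dots,\mathcal{L}_8$ to the estimates of Theorem~\ref{theo-exis}, the same use of \eqref{DG-1}--\eqref{DG-3} to obtain the convolution bound $c_SC_{\alpha,\beta}L_G(R)\|v\|_{\beta,-\beta}\vertiii{\omega}_{\beta',0,1}$, and the same conclusion via the defining relation \eqref{R} and the gap condition \eqref{gap2} to get the contraction factor $\tfrac12$ uniformly in $\gamma\in[0,2\gamma_0]$. No discrepancies worth noting.
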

\begin{proof}
 The fact that $\mathcal{L}$ is well-defined on $\mathcal{H}_{\kappa+\gamma}$ can be obtained by a similar approach as for $\mathcal{J}$ using ($\mathbf{A_2}$), ($\mathbf{A_3}$) and (\ref{gap2}). Therefore we omit the details. We mainly focus on its  boundedness.
Note that  $F\in C^1_b(\mathcal{B};\mathcal{B})$ with $DF(0)=0$, thus
\begin{align*}
|DF_{R(\theta_{i-1}\omega)}(u_{i-1})v_{i-1}|\leq L_F(R(\theta_{i-1}\omega))|v_{i-1}|.
\end{align*}
Then by the similar arguments as for $I_1$, $I_5$, $I_3$ and $I_7$ in  Theorem \ref{theo-exis}, we have 
\begin{align*}
\|\mathcal{L}_1(V)\|_{\beta,-\beta}&\leq \sum_{i=1}^m\left\|S^{-}(\cdot+m-i)\int_{0}^{1} S^{-}(1-\tau)DF_{R(\theta_{i-1}\omega)}(u_{i-1}(\tau))v_{i-1}(\tau)d\tau\right\|_{\beta,-\beta}\\
&\leq c_{S}\sum_{i=1}^{m}L_F(R(\theta_{i-1}\omega))e^{\check{\mu}(m-i)}\|v_{i-1}\|_{\beta,-\beta},\\
\|\mathcal{L}_3(V)\|_{\beta,-\beta}&\leq \left\|\int_{0}^{\cdot} S^{-}(\cdot-\tau)DF_{R(\theta_{m}\omega)}(u_{m}(\tau))v_{m}(\tau)d\tau\right\|_{\beta,-\beta}
\leq c_{S}L_F(R(\theta_{m}\omega))\|v_{m}\|_{\beta,-\beta},\\
\|\mathcal{L}_5(V)\|_{\beta,-\beta}&\leq \sum_{i=m+2}^\infty\left\|S^{+}(\cdot+m-i)\int_{0}^{1} S^{+}(1-\tau)DF_{R(\theta_{i-1}\omega)}(u_{i-1}(\tau))v_{i-1}(\tau)d\tau\right\|_{\beta,-\beta}\\
&\leq c_{S}\sum_{i=m+2}^{\infty}L_F(R(\theta_{i-1}\omega))e^{\hat{\mu}(m-i)}\|v_{i-1}\|_{\beta,-\beta},\\
\|\mathcal{L}_7(V)\|_{\beta,-\beta}&\leq \left\|\int_{\cdot}^0 S^{+}(\cdot-\tau)DF_{R(\theta_{m}\omega)}(u_{m}(\tau))v_{m}(\tau)d\tau\right\|_{\beta,-\beta}
\leq c_{S}L_F(R(\theta_{m}\omega))\|v_{m}\|_{\beta,-\beta}.
\end{align*}

%\begin{align*}
%&\left\|\int_0^\cdot S(\cdot-r)G(u(r))d\omega(r)\right\|_{\beta,\rho,-\beta}\leq c_SC_{\alpha,\beta}K_{1}(\rho)L_G\|u\|_{\beta,\rho,-\beta}\vertiii{\omega}_{\beta',0,T},
%\end{align*}
%and 
%\begin{align*}
%&\left\|\int_0^\cdot S(\cdot-r)[G(u(r))-G(v(r))]d\omega(r)\right\|_{\beta,\rho,-\beta}\\
%&\quad\leq C_{\alpha,\beta}c_SK_{1}(\rho)L_G(1+\|u\|_{\beta,-\beta}+\|v\|_{\beta,-\beta})\|u-v\|_{\beta,\rho,-\beta}\vertiii{\omega}_{\beta',0,T}
%\end{align*}
On the other hand, \eqref{DG-1},\eqref{DG-2},\eqref{DG-3} entail that
\begin{align*}
&\left|\int_{s}^tS(t-r)DG_R(u(r))v(r)d\omega(r)\right|\\
&\quad\leq c_SC_{\alpha,\beta}L_G(R)\|v\|_{\beta,-\beta}\vertiii{\omega}_{\beta',0,1}\int_s^t(t-r)^{\alpha+\beta'-1}(r-s)^{-\alpha}dr\\
&\qquad+c_SC_{\alpha,\beta}L_G(R)\|v\|_{\beta,-\beta}\vertiii{\omega}_{\beta',0,1}\int_s^t(t-r)^{\alpha+\beta'-1}{(t-r)^{-\beta}(r-s)^{\beta-\alpha}dr}\\%[(t-r)^{-\beta}(r-s)^{\beta-\alpha}+(r-s)^{\beta-\alpha}]dr\\
&\quad\leq c_SC_{\alpha,\beta}L_G(R)\|v\|_{\beta,-\beta}\vertiii{\omega}_{\beta',0,1}{(t-s)^{\beta'},}%\left[(t-s)^{\beta'}+(t-s)^{\beta+\beta'}\right],
\end{align*}
and 
\begin{align*}
&\left|\int_{s}^tS(t-r)DG_R(u(r))v(r)d\omega(r)\right|_{-\beta}\\
&\quad\leq c_SC_{\alpha,\beta}L_G(R)\|v\|_{\beta,-\beta}\vertiii{\omega}_{\beta',0,1}\int_s^t(t-r)^{\alpha+\beta'-1}[(r-s)^{-\alpha}+(r-s)^{\beta-\alpha}]dr\\
&\quad\leq c_SC_{\alpha,\beta}L_G(R)\|v\|_{\beta,-\beta}\vertiii{\omega}_{\beta',0,1}\left[(t-s)^{\beta'}+(t-s)^{\beta+\beta'}\right],
\end{align*}
and 
\begin{align*}
&\left|\int_0^s [S(t-r)-S(s-r)]DG_R(u(r))v(r)d\omega(r)\right|_{-\beta}\\
&\quad\leq C_{\alpha,\beta}c_{S}L_G(R)\|u\|_{\beta,-\beta}(t-s)^{\beta}\vertiii{\omega}_{\beta',0,1}\int_0^s(s-r)^{\alpha+\beta'-1}{[(s-r)^{-\alpha'}r^{\alpha'-\alpha}+(s-r)^{-\beta}r^{\beta-\alpha}]}dr\\%r^{-\alpha}\\
&\quad \leq C_{\alpha,\beta}c_{S}L_G(R)\|u\|_{\beta,-\beta}(t-s)^{\beta}\vertiii{\omega}_{\beta',0,1}(t-s)^{\beta}s^{\beta'}.
\end{align*}
 Hence, by the definition of the norm on $C^\beta_{-\beta}$, we have
\begin{align*}
&\left\|\int_0^\cdot S(\cdot-r)DG_R(u(r))v(r)d\omega(r)\right\|_{\beta,-\beta}\\
&\quad=\sup_{t\in [0,1]}\left|\int_0^t S(t-r)DG_R(u(r))v(r)d\omega(r)\right|\\
&\qquad+ \sup_{s<t\in [0,1]}\frac{\left|\int_0^t S(t-r)DG_R(u(r))v(r)d\omega(r)-\int_0^s S(s-r)DG_R(u(r))v(r)d\omega(r)\right|_{-\beta}}{(t-s)^{\beta}}\\
&\quad\leq \sup_{t\in [0,1]}\left|\int_0^t S(t-r)DG_R(u(r))v(r)d\omega(r)\right|+
 \sup_{s<t\in [0,1]}\frac{\left|\int_s^t S(t-r)DG_R(u(r))v(r)d\omega(r)\right|_{-\beta}}{(t-s)^{\beta}}\\
&\qquad+\sup_{s<t\in [0,1]}\frac{\left|\int_0^s [S(t-r)-S(s-r)]DG_R(u(r))v(r)d\omega(r)\right|_{-\beta}}{(t-s)^{\beta}}\\
&\quad\leq c_SC_{\alpha,\beta}L_G(R)\|v\|_{\beta,-\beta}\vertiii{\omega}_{\beta',0,1}.
\end{align*}
Then we further obtain 
\begin{align*}
\|\mathcal{L}_2(V)\|_{\beta,-\beta}&\leq \sum_{i=1}^m\left\|S^{-}(\cdot+m-i)\int_{0}^{1} S^{-}(1-\tau)DG_{R(\theta_{i-1}\omega)}(u_{i-1}(\tau))v_{i-1}(\tau)d\theta_{i-1}\omega\tau\right\|_{\beta,-\beta}\\
&\leq c_{S}C_{\alpha,\beta}\sum_{i=1}^{m}L_G(R(\theta_{i-1}\omega))\vertiii{\theta_{i-1}\omega}_{\beta',0,1}e^{\check{\mu}(m-i)}\|v_{i-1}\|_{\beta,-\beta},\\
\|\mathcal{L}_4(V)\|_{\beta,-\beta}&\leq \left\|\int_{0}^{\cdot} S^{-}(\cdot-\tau)DG_{R(\theta_{m}\omega)}(u_{m}(\tau))v_{m}(\tau)d\theta_{m}\omega(\tau)\right\|_{\beta,-\beta}\\
&\leq c_{S}C_{\alpha,\beta}L_G(R(\theta_{m}\omega))\vertiii{\theta_{m}\omega}_{\beta',0,1}\|v_{m}\|_{\beta,-\beta},\\
\|\mathcal{L}_6(V)\|_{\beta,-\beta}&\leq \sum_{i=m+2}^\infty\left\|S^{+}(\cdot+m-i)\int_{0}^{1} S^{+}(1-\tau)DG_{R(\theta_{i-1}\omega)}(u_{i-1}(\tau))v_{i-1}(\tau)d\theta_{i-1}\omega(\tau)\right\|_{\beta,-\beta}\\
&\leq c_{S}C_{\alpha,\beta}\sum_{i=m+2}^{\infty}L_G(R(\theta_{i-1}\omega))e^{\hat{\mu}(m-i)}\|v_{i-1}\|_{\beta,-\beta}\vertiii{\theta_{i-1}\omega}_{\beta',0,1},\\
\|\mathcal{L}_8(V)\|_{\beta,-\beta}&\leq \left\|\int_{\cdot}^0 S^{+}(\cdot-\tau)DG_{R(\theta_{m}\omega)}(u_{m}(\tau))v_{m}(\tau)d\theta_{m}\omega(\tau)\right\|_{\beta,-\beta}\\
&\leq c_{S}C_{\alpha,\beta}L_G(R(\theta_{m}\omega))\vertiii{\theta_{m}\omega}_{\beta',0,1}\|v_{m}\|_{\beta,-\beta}.
\end{align*}
In conclusion, we have
\begin{align*}
  \|[\mathcal{L}(V)]_{m}\|_{\beta,-\beta}
&\leq c_S\sum_{i=0}^m e^{\check{\mu}(m-i)}\|v_{i}\|_{\beta,-\beta}\left[L_F(R(\theta_{i}\omega))
+C_{S,\alpha,\beta} L_G(R(\theta_{i}\omega))\vertiii{\theta_{i}\omega}_{\beta'}\right]\\
&\quad+c_S\sum_{i=m}^\infty e^{\hat{\mu}(m-i)}\|v_{i}\|_{\beta,-\beta}
\left[L_F(R(\theta_{i}\omega))
+C_{\alpha,\beta} L_G(R(\theta_{i}\omega))\vertiii{\theta_{i}\omega}_{\beta'}\right],
\end{align*}
At last, combining the gap condition \eqref{gap2}, we conclude that
\begin{equation}\label{eq-esti-L}
\begin{aligned}
\|[\mathcal{L}(V)]\|_{\mathcal{H}_{\kappa+\gamma}}&\leq K\sup_{m\in\mathbb{Z}^+}\left[ \sum_{i=0}^m e^{(\check{\mu}+\kappa+\gamma)(m-i)}+\sum_{i=m}^\infty e^{(\hat{\mu}+\kappa+\gamma)(m-i)}\right]\|V\|_{\mathcal{H}_{\kappa+\gamma}}\\
&\leq \frac{1}{2} \|V\|_{\mathcal{H}_{\kappa+\gamma}}.
\end{aligned}\end{equation}
 Hence $\mathcal{L}$ is a bounded linear operator and also a uniform contraction on $\mathcal{H}_{\kappa+\gamma}$. 
\end{proof}
\begin{remark}
Naturally the operator $id-\mathcal{L}$ is {injective} on $\mathcal{H}_{\kappa+\gamma}$. It remains to show that this is also {surjective}, meaning that for any $V\in \mathcal{H}_{\kappa+\gamma}$, one can find a $V'\in\mathcal{H}_{\kappa+\gamma}$ such that 
\begin{align*}
(\mathcal{L}-id)V'=V.
\end{align*} To this aim, we set $\mathcal{L}_{V}x:=\mathcal{L}x-V$ for $x\in \mathcal{H}_{\kappa+\gamma}$. This is  a contraction and has therefore a fixed point $V'$ such that 
\begin{align*}
\mathcal{L}V'-V=V',
\end{align*}
that is 
\begin{align*}
\mathcal{L}V'-V'=V.
\end{align*}
In conclusion $id-\mathcal{L}$ is {surjective}. Therefore it has an inverse which is a bounded linear operator on $\mathcal{H}_{\kappa+\gamma}$.
\end{remark}
With $\mathcal{L}$ at hand, we define another operator $\mathcal{I}$ on $\mathcal{H}_{\kappa+\gamma}$ with $\gamma\in[0,\gamma_0]$, which is given by 
\begin{equation}\label{defin-I}
\begin{aligned}
&[\mathcal{I}(V-U)]_{m}(t)=[V-U]_{m}(t)-[\mathcal{L}(V-U)]_{m}(t)-S^-(t+m)(\xi-\xi_0).
\end{aligned}
\end{equation}
We claim that $\|\mathcal{I}(V-U)\|_{\mathcal{H}_{\kappa+\gamma}}=o(|\xi-\xi_{0}|)$ as $\xi\to\xi_{0}$. Then it yields
\begin{align*}
  v_{m}(t)-u_{m}(t)-[\mathcal{L}(V-U)]_{m}(t)
  =S(t+m)(\xi-\xi_{0})+o(|\xi-\xi_{0}|)
\end{align*}
as $\xi\to\xi_{0}$ for any $t\in [0,1],m\in\mathbb{Z}^+$, which implies 
\begin{align*}
  v_{m}(t)-u_{m}(t)&=(\text{id}-\mathcal{L})^{-1}S(t+m)(\xi-\xi_{0})+o(|\xi_{0}-\xi|).
\end{align*}
Hence $u(\cdot,\xi,\omega)$ is differentiable with respect to $\xi\in\cB^-$ and its derivative satisfies $D_{\xi}u(\cdot,\xi,\omega)\in L(\mathcal{B}^-;\mathcal{H}_{\kappa+\gamma})$.

\begin{lemma}
The operator defined in \eqref{defin-I} satisfies  $\|\mathcal{I}(V-U)\|_{\mathcal{H}_{\kappa+\gamma}}=o(|\xi-\xi_{0}|)$ as $\xi\to\xi_{0}$.
\end{lemma}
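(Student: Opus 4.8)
The plan is to recognise $\mathcal{I}(V-U)$ as the first-order Taylor remainder of the nonlinear part of the Lyapunov--Perron operator at $U$, and then to control it by the very same convolution estimates used in Theorem~\ref{theo-exis}, now exploiting the extra exponential decay available in the finer space $\mathcal{H}_{\kappa+2\gamma}$. First I would insert the fixed-point identities $U=\mathcal{J}_R(U,\xi_0)$ and $V=\mathcal{J}_R(V,\xi)$ into \eqref{defin-I}. The leading terms $S^-(t+m)\xi$ and $S^-(t+m)\xi_0$ coming from $\mathcal{J}_R$ cancel against $S^-(t+m)(\xi-\xi_0)$, and since $\mathcal{L}$ is exactly the linearisation at $U$ of the eight nonlinear convolution terms of $\mathcal{J}_R$, what remains is $[\mathcal{I}(V-U)]_m$ written as the \emph{same} discrete sums and $S^\pm$-convolutions as in \eqref{eq-cut}, but with $F_{R}(u_i)$ and $G_{R}(u_i)$ replaced by the Nemytskii remainders
\[
N^F_i:=F_{R(\theta_i\omega)}(v_i)-F_{R(\theta_i\omega)}(u_i)-DF_{R(\theta_i\omega)}(u_i)w_i,\qquad w_i:=v_i-u_i,
\]
and analogously $N^G_i$. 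Writing $N^F_i=\int_0^1[DF_{R(\theta_i\omega)}(u_i+\eta w_i)-DF_{R(\theta_i\omega)}(u_i)]w_i\,d\eta$ and likewise for $G$, I would bound them: with $\delta_{F,i}:=\sup_{\eta\in[0,1]}\|DF_{R(\theta_i\omega)}(u_i+\eta w_i)-DF_{R(\theta_i\omega)}(u_i)\|$ one gets $|N^F_i(\tau)|\leq\delta_{F,i}\|w_i\|_{\beta,-\beta}$, while the genuinely quadratic Lipschitz estimate \eqref{G4}, via the same computation as in Lemma~\ref{estim of G}, yields $\|N^G_i\|_{\beta,-\beta}\lesssim L_G\|w_i\|_{\beta,-\beta}^2$.

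Next I would rerun the estimates of Theorem~\ref{theo-exis} verbatim on these convolutions. This produces, for each $m$,
\[
e^{(\kappa+\gamma)m}\|[\mathcal{I}(V-U)]_m\|_{\beta,-\beta}\le c_S\Big(\sup_{i}b_i\,e^{(\kappa+\gamma)i}\|w_i\|_{\beta,-\beta}\Big)\Big(\sum_{i=0}^m e^{(\check{\mu}+\kappa+\gamma)(m-i)}+\sum_{i=m}^\infty e^{(\hat{\mu}+\kappa+\gamma)(m-i)}\Big),
\]
with $b_i:=\delta_{F,i}+C_{\alpha,\beta}L_G\|w_i\|_{\beta,-\beta}\vertiii{\theta_i\omega}_{\beta'}$, the two geometric series being convergent and bounded by the gap condition \eqref{gap2}. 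Since that condition holds on the whole range $[0,2\gamma_0]$, $\mathcal{J}_R$ also contracts on $\mathcal{H}_{\kappa+2\gamma}$, so $\Gamma(\cdot,\omega)$ is Lipschitz there and $e^{(\kappa+2\gamma)i}\|w_i\|_{\beta,-\beta}\le\|V-U\|_{\mathcal{H}_{\kappa+2\gamma}}\le L_\Gamma|\xi-\xi_0|$. Hence $e^{(\kappa+\gamma)i}\|w_i\|_{\beta,-\beta}\le L_\Gamma e^{-\gamma i}|\xi-\xi_0|$, and it remains to prove that $\sup_i b_i\,e^{-\gamma i}\to0$ as $\xi\to\xi_0$.

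To finish I would use that $b_i$ is uniformly bounded in $i$: by the defining relation \eqref{R} both $L_F(R(\theta_i\omega))$ and $L_G(R(\theta_i\omega))\vertiii{\theta_i\omega}_{\beta'}$ are bounded by a fixed multiple of $K$, so $\delta_{F,i}\le 2L_F(R(\theta_i\omega))$ and the $G$-contribution in $b_i$ are bounded uniformly. The $G$-term is itself $O(|\xi-\xi_0|)$ (through the extra factor $\|w_i\|_{\beta,-\beta}$), hence contributes an overall $O(|\xi-\xi_0|^2)=o(|\xi-\xi_0|)$. For the $F$-term I would split the supremum at a level $N$: for $i\ge N$ one has $\delta_{F,i}e^{-\gamma i}\le(\sup_i\delta_{F,i})e^{-\gamma N}$, which is small, whereas for each fixed $i<N$, uniform continuity of $DF_{R(\theta_i\omega)}$ together with $\|w_i\|_{\beta,-\beta}\to0$ forces $\delta_{F,i}\to0$ as $\xi\to\xi_0$; letting $\xi\to\xi_0$ first and then $N\to\infty$ gives $\sup_i\delta_{F,i}e^{-\gamma i}\to0$. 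Combining the two contributions yields $\|\mathcal{I}(V-U)\|_{\mathcal{H}_{\kappa+\gamma}}=o(|\xi-\xi_0|)$.

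The main obstacle is precisely this last point: the truncation radius $R(\theta_i\omega)$ is random and $i$-dependent, so the modulus of continuity of $DF$ entering $\delta_{F,i}$ is not uniform in $i$ and cannot be made small simultaneously for all indices. The remedy, and the reason for carrying two scales $\gamma$ and $2\gamma$, is to trade one power of the exponential weight (passing from the Lipschitz bound in $\mathcal{H}_{\kappa+2\gamma}$ to the estimate in $\mathcal{H}_{\kappa+\gamma}$) for the decaying factor $e^{-\gamma i}$, which confines the non-uniform behaviour to finitely many indices and lets the splitting argument close.
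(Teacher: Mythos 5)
Your overall strategy is the paper's: you identify $\mathcal{I}(V-U)$ as the first-order Taylor remainder of the nonlinear convolution terms of $\mathcal{J}_R$, trade the stronger Lipschitz estimate $\|V-U\|_{\mathcal{H}_{\kappa+2\gamma_0}}\le L_\Gamma|\xi-\xi_0|$ against the weaker target norm $\mathcal{H}_{\kappa+\gamma}$ to gain the decaying factor $e^{-\gamma i}$, use that decay to make the tail indices small, and invoke continuity of the derivatives on the remaining finitely many indices. The paper organizes this as an explicit four-fold splitting $\mathcal{I}_1^{\pm}+\mathcal{I}_2^{\pm}$ at a cutoff $m_1$ (with the smallness of the head quantified by the random constant $\mathcal{E}_1$), whereas you split a single supremum at a level $N$; that difference is cosmetic, and your diagnosis of the ``main obstacle'' and its remedy is exactly the mechanism the paper exploits.

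There is, however, one step that fails as written: the claim that the $G$-remainder satisfies $\|N^G_i\|_{\beta,-\beta}\lesssim L_G\|w_i\|_{\beta,-\beta}^2$ ``via the same computation as in Lemma \ref{estim of G}''. The quadratic bound is correct for the supremum norm, but the estimate \eqref{eq2.3} for the stochastic convolution also requires control of the H\"older increment $\|N^G_i(r)-N^G_i(q)\|_{L_2(\mathcal{B};\mathcal{B}_{-\beta})}$, and from \eqref{G4} alone the piece
$\int_0^1\{[DG_R(u_i(r)+\eta w_i(r))-DG_R(u_i(q)+\eta w_i(q))]-[DG_R(u_i(r))-DG_R(u_i(q))]\}w_i(q)\,d\eta$
is only bounded by $L_G(r-q)^{\beta}\bigl(2\|u_i\|_{\beta,-\beta}+\|w_i\|_{\beta,-\beta}\bigr)|w_i(q)|_{-\beta}$, which is \emph{linear} in $\|w_i\|$ with a coefficient involving $\|u_i\|_{\beta,-\beta}$ that does not vanish as $\xi\to\xi_0$; obtaining a quadratic bound together with the factor $(r-q)^{\beta}$ would require Lipschitz continuity of $D^2G$, which is not among the standing assumptions. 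Consequently the $G$-contribution to your $b_i$ is not $O(|\xi-\xi_0|)$ and the conclusion $o(|\xi-\xi_0|)$ does not come for free from the quadratic structure. The repair is simply to treat $G$ exactly as you treat $F$: include in $b_i$ the modulus of continuity of $DG_R$ along the segment from $u_i$ to $v_i$ (in $L(\mathcal{B};L_2(\mathcal{B};\mathcal{B}))$ and $L(\mathcal{B};L_2(\mathcal{B};\mathcal{B}_{-\beta}))$) and run the same head/tail splitting; this is precisely what the paper's quantity $\mathcal{E}_1$ accomplishes. (An interpolation between the linear H\"older bound and the quadratic sup bound would also close the gap at the cost of a slightly reduced H\"older exponent, but it is unnecessary.)
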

\begin{proof}
For an $m_1>0$ to be determined, we decompose $\mathcal{I}$ into a sum as follows:
\begin{align*}
[\mathcal{I}(V-U)]_{m}(t):=\mathcal{I}^-_1(t)+\mathcal{I}^-_2(t)+\mathcal{I}^+_1(t)+\mathcal{I}^+_2(t),
\end{align*}
where
\begin{equation*}
 {\mathcal{I}^-_1(t)=
	      \footnotesize \begin{cases}
\sum_{i=1}^{m}S^{-}(t+m-i)\int_{0}^{1} S^{-}(1-\tau)[F_{R(\theta_{i-1}\omega)}(v_{i-1}(\tau))-F_{R(\theta_{i-1}\omega)}(u_{i-1}(\tau))\\
\quad- DF_{R(\theta_{i-1}\omega)}(u_{i-1}(\tau))(v_{i-1}(\tau)-u_{i-1}(\tau))]d\tau\\
\quad+\sum_{i=1}^{m}S^{-}(t+m-i)\int_{0}^{1} S^{-}(1-\tau)[G_{R(\theta_{i-1}\omega)}(G_{i-1}(\tau))-G_{R(\theta_{i-1}\omega)}(u_{i-1}(\tau))\\
\quad- DG_{R(\theta_{i-1}\omega)}(u_{i-1}(\tau))(v_{i-1}(\tau)-u_{i-1}(\tau))]d\theta_{i-1}\omega(\tau)\\
\quad+\int_{0}^{t} S^{-}(t-\tau)[F_{R(\theta_{m}\omega)}(v_{m}(\tau))-F_{R(\theta_{m}\omega)}(u_{m}(\tau))\\
\quad- DF_{R(\theta_{m}\omega)}(u_{m}(\tau))(v_{m}(\tau)-u_{m}(\tau))]d\tau\\
\quad+\int_{0}^{t} S^{-}(t-\tau)[G_{R(\theta_{m}\omega)}(v_{m}(\tau))-G_{R(\theta_{m}\omega)}(u_{m}(\tau))\\
\quad- DG_{R(\theta_{m}\omega)}(u_{m}(\tau))(v_{m}(\tau)-u_{m}(\tau))]d\theta_{m}\omega(\tau),
% -\sum_{i=m+1}^{m_1}S^{-}(t+m-i)\int_{0}^{1} S^{-}(1-\tau)[F_{R(\theta_{i-1}\omega)}(v_{i-1}(\tau))-F_{R(\theta_{i-1}\omega)}(u_{i-1}(\tau))\\
% \quad+ DF_{R(\theta_{i-1}\omega)}(u_{i-1}(\tau))(v_{i-1}(\tau)-u_{i-1}(\tau))]d\tau\\
% \quad-\sum_{i=m+1}^{m_1}S^{-}(t+m-i)\int_{0}^{1} S^{-}(1-\tau)[G_{R(\theta_{i-1}\omega)}(G_{i-1}(\tau))-G_{R(\theta_{i-1}\omega)}(u_{i-1}(\tau))\\
% \quad+ DG_{R(\theta_{i-1}\omega)}(u_{i-1}(\tau))(v_{i-1}(\tau)-u_{i-1}(\tau))]\omega(\tau)\\
% \quad-\int_{0}^{t} S^{-}(t-\tau)[F_{R(\theta_{m_1}\omega)}(v_{m_1}(\tau))-F_{R(\theta_{m_1}\omega)}(u_{m_1}(\tau))\\
% \quad+ DF_{R(\theta_{m_1}\omega)}(u_{m_1}(\tau))(v_{m_1}(\tau)-u_{m_1}(\tau))]d\tau\\
% \quad-\int_{0}^{t} S^{-}(t-\tau)[G_{R(\theta_{m_1}\omega)}(v_{m_1}(\tau))-G_{R(\theta_{m_1}\omega)}(u_{m_1}(\tau))\\
% \quad+ DG_{R(\theta_{m_1}\omega)}(u_{m_1}(\tau))(v_{m_1}(\tau)-u_{m_1}(\tau))]d\theta_{m_1}\omega(\tau)\\
% \quad+\int_{0}^{t} S^{-}(t-\tau)[F_{R(\theta_{m}\omega)}(v_{m}(\tau))-F_{R(\theta_{m}\omega)}(u_{m}(\tau))\\
% \quad- DF_{R(\theta_{m}\omega)}(u_{m}(\tau))(v_{m}(\tau)-u_{m}(\tau))]d\tau\\
% \quad+\int_{0}^{t} S^{-}(t-\tau)[G_{R(\theta_{m}\omega)}(v_{m}(\tau))-G_{R(\theta_{m}\omega)}(u_{m}(\tau))\\
% \quad- DG_{R(\theta_{m}\omega)}(u_{m}(\tau))(v_{m}(\tau)-u_{m}(\tau))]d\theta_{m}\omega(\tau),
 & 1\leq m<m_1, \\
	      0, & m\geq m_1,
	       \end{cases}}
\end{equation*}
% \small\begin{equation*}
%  \mathcal{I}^-_1(t)=
% 	       \begin{cases}
% \begin{array}{ll}
% &-\sum_{i=m+1}^{m_1}S^{-}(t+m-i)\int_{0}^{1} S^{-}(1-\tau)[F_{R(\theta_{i-1}\omega)}(v_{i-1}(\tau))-F_{R(\theta_{i-1}\omega)}(u_{i-1}(\tau))\\
% &\quad+ DF_{R(\theta_{i-1}\omega)}(u_{i-1}(\tau))(v_{i-1}(\tau)-u_{i-1}(\tau))]d\tau\\
% &\quad-\sum_{i=m}^{m_1}S^{-}(t+m-i)\int_{0}^{1} S^{-}(1-\tau)[G_{R(\theta_{i-1}\omega)}(G_{i-1}(\tau))-G_{R(\theta_{i-1}\omega)}(u_{i-1}(\tau))\\
% &\quad+ DG_{R(\theta_{i-1}\omega)}(u_{i-1}(\tau))(v_{i-1}(\tau)-u_{i-1}(\tau))]\omega(\tau)\\
% &\quad-\int_{0}^{t} S^{-}(t-\tau)[F_{R(\theta_{m_1}\omega)}(v_{m_1}(\tau))-F_{R(\theta_{m_1}\omega)}(u_{m_1}(\tau))\\
% &\quad+ DF_{R(\theta_{m_1}\omega)}(u_{m_1}(\tau))(v_{m_1}(\tau)-u_{m_1}(\tau))]d\tau\\
% &\quad-\int_{0}^{t} S^{-}(t-\tau)[G_{R(\theta_{m_1}\omega)}(v_{m_1}(\tau))-G_{R(\theta_{m_1}\omega)}(u_{m_1}(\tau))\\
% &\quad+ DG_{R(\theta_{m_1}\omega)}(u_{m_1}(\tau))(v_{m_1}(\tau)-u_{m_1}(\tau))]d\theta_{m_1}\omega(\tau)\\
% &\quad+\int_{0}^{t} S^{-}(t-\tau)[F_{R(\theta_{m}\omega)}(v_{m}(\tau))-F_{R(\theta_{m}\omega)}(u_{m}(\tau))\\
% &\quad- DF_{R(\theta_{m}\omega)}(u_{m}(\tau))(v_{m}(\tau)-u_{m}(\tau))]d\tau\\
% &\quad+\int_{0}^{t} S^{-}(t-\tau)[G_{R(\theta_{m}\omega)}(v_{m}(\tau))-G_{R(\theta_{m}\omega)}(u_{m}(\tau))\\
% &\quad- DG_{R(\theta_{m}\omega)}(u_{m}(\tau))(v_{m}(\tau)-u_{m}(\tau))]d\theta_{m}\omega(\tau),
% \end{array}
%  & 1\leq m<m_1, \\
% 	      0, & m\geq m_1,
% 	       \end{cases}
% \end{equation*}
\begin{equation*}
 \mathcal{I}^-_2(t)=
	      \footnotesize   \begin{cases}
% \sum_{i=1}^{m_1}S^{-}(t+m-i)\int_{0}^{1} S^{-}(1-\tau)[F_{R(\theta_{i-1}\omega)}(v_{i-1}(\tau))-F_{R(\theta_{i-1}\omega)}(u_{i-1}(\tau))\\
% \quad- DF_{R(\theta_{i-1}\omega)}(u_{i-1}(\tau))(v_{i-1}(\tau)-u_{i-1}(\tau))]d\tau\\
% \quad+\sum_{i=1}^{m_1}S^{-}(t+m-i)\int_{0}^{1} S^{-}(1-\tau)[G_{R(\theta_{i-1}\omega)}(G_{i-1}(\tau))-G_{R(\theta_{i-1}\omega)}(u_{i-1}(\tau))\\
% \quad- DG_{R(\theta_{i-1}\omega)}(u_{i-1}(\tau))(v_{i-1}(\tau)-u_{i-1}(\tau))]d\theta_{i-1}\omega(\tau)\\
% \quad+\int_{0}^{t} S^{-}(t-\tau)[F_{R(\theta_{m_1}\omega)}(v_{m_1}(\tau))-F_{R(\theta_{m_1}\omega)}(u_{m_1}(\tau))\\
% \quad- DF_{R(\theta_{m_1}\omega)}(u_{m_1}(\tau))(v_{m_1}(\tau)-u_{m_1}(\tau))]d\tau\\
% \quad+\int_{0}^{t} S^{-}(t-\tau)[G_{R(\theta_{m_1}\omega)}(v_{m_1}(\tau))-G_{R(\theta_{m_1}\omega)}(u_{m_1}(\tau))\\
% \quad- DG_{R(\theta_{m_1}\omega)}(u_{m_1}(\tau))(v_{m_1}(\tau)-u_{m_1}(\tau))]d\theta_{m_1}\omega(\tau),
{0,}& 1\leq m<m_1 \\
\sum_{i=1}^{m}S^{-}(t+m-i)\int_{0}^{1} S^{-}(1-\tau)[F_{R(\theta_{i-1}\omega)}(v_{i-1}(\tau))-F_{R(\theta_{i-1}\omega)}(u_{i-1}(\tau))\\
\quad- DF_{R(\theta_{i-1}\omega)}(u_{i-1}(\tau))(v_{i-1}(\tau)-u_{i-1}(\tau))]d\tau\\
\quad+\sum_{i=1}^{m}S^{-}(t+m-i)\int_{0}^{1} S^{-}(1-\tau)[G_{R(\theta_{i-1}\omega)}(v_{i-1}(\tau))-G_{R(\theta_{i-1}\omega)}(u_{i-1}(\tau))\\
\quad- DG_{R(\theta_{i-1}\omega)}(u_{i-1}(\tau))(v_{i-1}(\tau)-u_{i-1}(\tau))]d\theta_{i-1}\omega(\tau)\\
\quad+\int_{0}^{t} S^{-}(t-\tau)[F_{R(\theta_{m}\omega)}(v_{m}(\tau))-F_{R(\theta_{m}\omega)}(u_{m}(\tau))\\
\quad- DF_{R(\theta_{m}\omega)}(u_{m}(\tau))(v_{m}(\tau)-u_{m}(\tau))]d\tau\\
\quad+\int_{0}^{t} S^{-}(t-\tau)[G_{R(\theta_{m}\omega)}(v_{m}(\tau))-G_{R(\theta_{m}\omega)}(u_{m}(\tau))\\
\quad- DG_{R(\theta_{m}\omega)}(u_{m}(\tau))(v_{m}(\tau)-u_{m}(\tau))]d\theta_{m}\omega(\tau),
 & m\geq m_1,
	       \end{cases}
\end{equation*}
\begin{equation*}
 \mathcal{I}^+_1(t)=
	      \footnotesize   \begin{cases}
-\sum_{i=m+2}^{m_1}S^{+}(t+m-i)\int_{0}^{1} S^{+}(1-\tau)[F_{R(\theta_{i-1}\omega)}(v_{i-1}(\tau))-F_{R(\theta_{i-1}\omega)}(u_{i-1}(\tau))\\
\quad- DF_{R(\theta_{i-1}\omega)}(u_{i-1}(\tau))(v_{i-1}(\tau)-u_{i-1}(\tau))]d\tau\\
\quad-\sum_{i=m+2}^{m_1}
S^{+}
(t+m-i)\int_{0}^{1}S^{+}
(1-\tau)[G_{R(\theta_{i-1}\omega)}(v_{i-1}(\tau))-G_{R(\theta_{i-1}\omega)}(u_{i-1}(\tau))\\
\quad- DG_{R(\theta_{i-1}\omega)}(u_{m}(\tau))(v_{i-1}(\tau)-u_{i-1}(\tau))]d\theta_{i-1}\omega(\tau)\\
\quad-\int_{t}^{1} S^{+}(t-\tau)[F_{R(\theta_{m}\omega)}(v_{m}(\tau))-F_{R(\theta_{m}\omega)}(u_{m}(\tau))\\
\quad-DF_{R(\theta_{m}\omega)}(u_{m}(\tau))(v_{m}(\tau)-u_{m}(\tau))]d\tau\\
\quad-\int_{t}^{1} S^{+}(t-\tau)[G_{R(\theta_{m}\omega)}(v_{m}(\tau))
-G_{R(\theta_{m}\omega)}(u_{m}(\tau))\\
\quad-DG_{R(\theta_{m}\omega)}(u_{m}(\tau))(v_{m}(\tau)-u_{m}(\tau))]d\theta_{m}\omega(\tau),
 & 1\leq m<m_1 \\
	         0, & m\geq m_1,
	       \end{cases}
\end{equation*}
\begin{equation*}
 \mathcal{I}^+_2(t)=
	       \footnotesize  \begin{cases}
-\sum_{i=m_1+2}^{\infty}S^{+}(t+m-i)\int_{0}^{1} S^{+}(1-\tau)[F_{R(\theta_{i-1}\omega)}(v_{i-1}(\tau))-F_{R(\theta_{i-1}\omega)}(u_{i-1}(\tau))\\
\quad- DF_{R(\theta_{i-1}\omega)}(u_{i-1}(\tau))(v_{i-1}(\tau)-u_{i-1}(\tau))]d\tau\\
\quad-\sum_{i=m_1+2}^{\infty}
S^{+}
(t+m-i)\int_{0}^{1}S^{+}
(1-\tau)[G_{R(\theta_{i-1}\omega)}(v_{i-1}(\tau))-G_{R(\theta_{i-1}\omega)}(u_{i-1}(\tau))\\
\quad- DG_{R(\theta_{i-1}\omega)}(u_{i-1}(\tau))(v_{i-1}(\tau)-u_{i-1}(\tau))]d\theta_{i-1}\omega(\tau)\\
\quad-\int_{0}^{1} S^{+}(t+m-m_1-1)[F_{R(\theta_{m_1}\omega)}(v_{m_1}(\tau))-F_{R(\theta_{m_1}\omega)}(u_{m_1}(\tau))\\
\quad-DF_{R(\theta_{m_1}\omega)}(u_{m_1}(\tau))(v_{m_1}(\tau)-u_{m_1}(\tau))]d\tau\\
\quad-\int_{0}^{1} S^{+}(t+m-m_1-1)[G_{R(\theta_{m_1}\omega)}(v_{m_1}(\tau))-G_{R(\theta_{m_1}\omega)}(u_{m_1}(\tau))\\
\quad-DG_{R(\theta_{m_1}\omega)}(u_{m_1}(\tau))(v_{m_1}(\tau)-u_{m_1}(\tau))]d\theta_{m_1}\omega(\tau),
& 1\leq m<m_1, \\
-\sum_{i=m+2}^{\infty}S^{+}(t+m-i)\int_{0}^{1} S^{+}(1-\tau)[F_{R(\theta_{i-1}\omega)}(v_{i-1}(\tau))-F_{R(\theta_{i-1}\omega)}(u_{i-1}(\tau))\\
\quad- DF_{R(\theta_{i-1}\omega)}(u_{i-1}(\tau))(v_{i-1}(\tau)-u_{i-1}(\tau))]d\tau\\
\quad-\sum_{i=m+2}^{\infty}
S^{+}
(t+m-i)\int_{0}^{1}S^{+}
(1-\tau)[G_{R(\theta_{i-1}\omega)}(v_{i-1}(\tau))-G_{R(\theta_{i-1}\omega)}(u_{i-1}(\tau))\\
\quad- DG_{R(\theta_{i-1}\omega)}(u_{m}(\tau))(v_{i-1}(\tau)-u_{i-1}(\tau))]d\theta_{i-1}\omega(\tau)\\
\quad-\int_{t}^{1} S^{+}(t-\tau)[F_{R(\theta_{m}\omega)}(v_{m}(\tau))-F_{R(\theta_{m}\omega)}(u_{m}(\tau))\\
\quad-DF_{R(\theta_{m}\omega)}(u_{m}(\tau))(v_{m}(\tau)-u_{m}(\tau))]d\tau\\
\quad-\int_{t}^{1} S^{+}(t-\tau)[G_{R(\theta_{m}\omega)}(v_{m}(\tau))-G_{R(\theta_{m}\omega)}(u_{m}(\tau))\\
\quad-DG_{R(\theta_{m}\omega)}(u_{m}(\tau))(v_{m}(\tau)-u_{m}(\tau))]d\theta_{m}\omega(\tau),
 & m\geq m_1.
	       \end{cases}
\end{equation*}
Taking $\gamma\leq \gamma_0$, we estimate all of the above terms. For $m\geq  m_1$, by assumption $(\mathbf{A}_2^k),(\mathbf{A}_3^k)$ and the definitions of  $L_F(R)$ and $L_G(R)$, we have the following inequalities:
\begin{subequations}
\begin{align}
\label{eq-a}&|F_{R(\theta_{i-1}\omega)}(v_{i-1})-F_{R(\theta_{i-1}\omega)}(u_{i-1})
- DF_{R(\theta_{i-1}\omega)}(u_{i-1})(v_{i-1}-u_{i-1})|\\ 
%&\leq \left\|\int_{0}^1DF_{R(\theta_{i-1}\omega)}(rv_{i-1}+(1-r)u_{i-1})dr-DF_{R(\theta_{i-1}\omega)}(u_{i-1})\right\|_{L(\mathcal{B};\mathcal{B})}|v_{i-1}-u_{i-1}|\\
&\nonumber\quad\leq L_F(R(\theta_{i-1}\omega))|v_{i-1}-u_{i-1}|,\\
\label{eq-b}&|F_{R(\theta_{i-1}\omega)}(v_{i-1})-F_{R(\theta_{i-1}\omega)}(u_{i-1})
- DF_{R(\theta_{i-1}\omega)}(u_{i-1})(v_{i-1}-u_{i-1})|_{-\beta}\\  
%&\leq \left\|\int_{0}^1DF_{R(\theta_{i-1}\omega)}(rv_{i-1}+(1-r)u_{i-1})dr-DF_{R(\theta_{i-1}\omega)}(u_{i-1})\right\|_{L(\mathcal{B};\mathcal{B}_{-\beta})}|v_{i-1}-u_{i-1}|\\
&\nonumber\quad\leq L_F(R(\theta_{i-1}\omega))|v_{i-1}-u_{i-1}|,\\
 \label{eq-c}&\|G_{R(\theta_{i-1}\omega)}(v_{i-1})-G_{R(\theta_{i-1}\omega)}(u_{i-1})
- DG_{R(\theta_{i-1}\omega)}(u_{i-1})(v_{i-1}-u_{i-1})\|_{L_2(\mathcal{B};\mathcal{B})}\\ 
%&\leq \left\|\int_{0}^1DG_{R(\theta_{i-1}\omega)}(rv_{i-1}+(1-r)u_{i-1})dr-DG_{R(\theta_{i-1}\omega)}(u_{i-1})\right\|_{L(\mathcal{B};L_2(\mathcal{B};\mathcal{B}))}|v_{i-1}-u_{i-1}|\\
&\nonumber\quad\leq L_G(R(\theta_{i-1}\omega))|v_{i-1}-u_{i-1}|,\\
\label{eq-d}&\|G_{R(\theta_{i-1}\omega)}(v_{i-1})-G_{R(\theta_{i-1}\omega)}(u_{i-1})
- DG_{R(\theta_{i-1}\omega)}(u_{i-1})(v_{i-1}-u_{i-1})\|_{L_2(\mathcal{B};\mathcal{B}_{-\beta})}\\  
%&\leq \left\|\int_{0}^1DG_{R(\theta_{i-1}\omega)}(rv_{i-1}+(1-r)u_{i-1})dr-DG_{R(\theta_{i-1}\omega)}(u_{i-1})\right\|_{L(\mathcal{B};L_2(\mathcal{B};\mathcal{B}_{-\beta}))}|v_{i-1}-u_{i-1}|\\
&\nonumber\quad\leq L_G(R(\theta_{i-1}\omega))|v_{i-1}-u_{i-1}|,\\
\label{eq-e}&\|G_{R(\theta_{i-1}\omega)}(v_{i-1}(r))-G_{R(\theta_{i-1}\omega)}(u_{i-1}(r))
- DG_{R(\theta_{i-1}\omega)}(u_{i-1}(r))[v_{i-1}(r)-u_{i-1}(r)]\\   
&\nonumber\quad-G_{R(\theta_{i-1}\omega)}(v_{i-1}(q))+G_{R(\theta_{i-1}\omega)}(u_{i-1}(q))
\!+\! DG_{R(\theta_{i-1}\omega)}(u_{i-1}(q))[v_{i-1}(q)-u_{i-1}(q)]\|_{L_2(\mathcal{B};\mathcal{B}_{-\beta})}\\
% &\leq \|G_{R(\theta_{i-1}\omega)}(v_{i-1}(r))\!-\!G_{R(\theta_{i-1}\omega)}(u_{i-1}(r))\!-\!G_{R(\theta_{i-1}\omega)}(v_{i-1}(q))+G_{R(\theta_{i-1}\omega)}(u_{i-1}(q))\|_{L_2(\mathcal{B};\mathcal{B}_{-\beta})}\\
% &\quad\!+\!\|DG_{R(\theta_{i-1}\omega)}(u_{i-1}(r))[v_{i-1}(r)\!-\!u_{i-1}(r)]\!-\!DG_{R(\theta_{i-1}\omega)}(u_{i-1}(q))[v_{i-1}(q)-u_{i-1}(q)]\|_{L_2(\mathcal{B};\mathcal{B}_{-\beta})}\\
&\nonumber\quad\leq L_G(R)(r-q)^{\beta}\|v_{i-1}-u_{i-1}\|_{\beta,-\beta}.
\end{align}
\end{subequations}
Replacing $F(u),G(u)$ as in Lemma \ref{estim of G} and Theorem \ref{th-exits-solu} by $F_{R(\theta_{i-1}\omega)}(v_{i-1}(\tau))-F_{R(\theta_{i-1}\omega)}(u_{i-1}(\tau))
- DF_{R(\theta_{i-1}\omega)}(u_{i-1}(\tau))(v_{i-1}(\tau)-u_{i-1}(\tau))$ and $G_{R(\theta_{i-1}\omega)}(v_{i-1}(\tau))-G_{R(\theta_{i-1}\omega)}(u_{i-1}(\tau))
- DG_{R(\theta_{i-1}\omega)}(u_{i-1}(\tau))(v_{i-1}(\tau)-u_{i-1}(\tau))$ respectively, we obtain the following inequalities from \eqref{eq-a}, \eqref{eq-b}, \eqref{eq-c}, \eqref{eq-d} and \eqref{eq-e}:
\begin{align*}
&\bigg|\int_{0}^{1} S(1-\tau)[F_{R(\theta_{i-1}\omega)}(v_{i-1}(\tau))
-F_{R(\theta_{i-1}\omega)}(u_{i-1}(\tau))\\
&\quad~~~~~~~~~~~~~~~~~~~~~~~~~~~~~~~~~~~~~~~- DF_{R(\theta_{i-1}\omega)}(u_{i-1}(\tau))(v_{i-1}(\tau)-u_{i-1}(\tau))]d\tau\bigg|\\
&\quad\leq L_F(R(\theta_{i-1}\omega))\|v_{i-1}-u_{i-1}\|_{\beta,-\beta},
\end{align*}
\begin{align*}
&\bigg\|\int_{0}^{\cdot} S(\cdot-\tau)[F_{R(\theta_{i-1}\omega)}(v_{i-1}(\tau))-F_{R(\theta_{i-1}\omega)}(u_{i-1}(\tau))\\
&\quad~~~~~~~~~~~~~~~~~~~~~~~~~~~~~~~~~~~~~~~- DF_{R(\theta_{i-1}\omega)}(u_{i-1}(\tau))(v_{i-1}(\tau)-u_{i-1}(\tau))]d\tau\bigg\|_{\beta,-\beta}\\
&\quad\leq L_F(R(\theta_{i-1}\omega))\|v_{i-1}-u_{i-1}\|_{\beta,-\beta},
\end{align*}
\begin{align*}
&\bigg\|\int_{0}^{1} S(1-\tau)[G_{R(\theta_{i-1}\omega)}(v_{i-1}(\tau))-G_{R(\theta_{i-1}\omega)}(u_{i-1}(\tau))\\
&\quad~~~~~~~~~~~~~~~~~~~~~~~~~~~~~~~~~~~~~~~- DG_{R(\theta_{i-1}\omega)}(u_{i-1}(\tau))(v_{i-1}(\tau)-u_{i-1}(\tau))]d\tau\bigg\|_{L_2(\mathcal{B};\mathcal{B})}\\
&\quad\leq L_G(R(\theta_{i-1}\omega))\|v_{i-1}-u_{i-1}\|_{\beta,-\beta},
\end{align*}
and 
\begin{align*}
&\bigg\|\int_{0}^{\cdot} S(\cdot-\tau)[G_{R(\theta_{i-1}\omega)}(v_{i-1}(\tau))-G_{R(\theta_{i-1}\omega)}(u_{i-1}(\tau))\\
&\quad~~~~~~~~~~~~~~~~~~~~~~~~~~~~~~~~~~~~~~~- DG_{R(\theta_{i-1}\omega)}(u_{i-1}(\tau))(v_{i-1}(\tau)-u_{i-1}(\tau))]d\tau\bigg\|_{\beta,-\beta}\\
&\quad\leq L_G(R(\theta_{i-1}\omega))\|v_{i-1}-u_{i-1}\|_{\beta,-\beta}.
\end{align*}
Hence, for any $\epsilon>0$ and  large enough $m_1$, $\mathcal{I}_2^-$ and $\mathcal{I}_{2}^+$ can be estimated by 
\begin{align*}
\sup_{m\geq m_1}e^{m(\kappa+\gamma)}\|\mathcal{I}_2^-\|_{\beta,-\beta}&\leq K\|U-V\|_{\mathcal{H}_{\kappa+2\gamma_0}}\sup_{m\geq m_1}\sum_{i=0}^{m-1} e^{\check{\mu}(m-i-1)+(\kappa+\gamma)m-(\kappa+2\gamma_0)i}\\
&\leq 2c_SK e^{(\gamma-2\gamma_0)m_1}\sup_{m\geq m_1}\sum_{i=0}^{m-1} e^{\check{\mu}(m-i-1)+(\kappa+2\gamma_0)(m-i)}|\xi-\xi_0|\\
&\leq \frac{\epsilon}{4}|\xi-\xi_0|,
\end{align*}
and 
\begin{align*}
\sup_{m\geq m_1}e^{m(\kappa+\gamma)}\|\mathcal{I}_2^+\|_{\beta,-\beta}&\leq K\|U-V\|_{\mathcal{H}_{\kappa+2\gamma_0}}\sup_{m\geq m_1}\sum_{i=m}^\infty e^{\hat{\mu}(m-i)+(\kappa+\gamma)m-(\kappa+2\gamma_0)i}\\
&\leq 2c_SKe^{(\gamma-2\gamma_0)m_1}\sup_{m\geq m_1}\sum_{i=m}^\infty e^{\hat{\mu}(m-i)+(\kappa+2\gamma_0)(m-i)}|\xi-\xi_0|\\
&\leq \frac{\epsilon}{4}|\xi-\xi_0|.
\end{align*}
For the case of $1\leq m\leq m_1$, one can directly apply the result of the case of $m>m_1$ for $\mathcal{I}_2^+$, that is for large enough $m_1$ it holds that
\begin{align*}
\sup_{1\leq m< m_1}e^{m(\kappa+\gamma)}\|\mathcal{I}_2^+\|_{\beta,-\beta}%\leq K\sup_{m\leq m_1}\sum_{i=m_1}^{\infty}e^{\check{\mu}(m-i)+(\kappa+\gamma)m-(\kappa+2\gamma_0)i}\|V-U\|_{\mathcal{H}_{\kappa+2\gamma_0}}\\
%&\leq2c_SK|\xi-\xi_0|\sup_{m\leq m_1}\sum_{i=m_1}^{\infty}e^{(\check{\mu}+\kappa+\gamma)(m-i)+(\gamma-2\gamma_0)i}\\
&\leq2c_SKe^{(\gamma-2\gamma_0)m_1}|\xi-\xi_0|\sup_{m\leq m_1}\sum_{i=m_1}^{\infty}e^{(\check{\mu}+\kappa+\gamma)(m-i)}\\
%&\leq2c_SKe^{(\gamma-2\gamma)m_1}|\xi-\xi_0|\sup_{m\in \mathbb{Z}^+}\sum_{i=m+1}^{\infty}e^{(\check{\mu}+\kappa+\gamma)(m-i)}\\
&\leq \frac{\epsilon}{4}|\xi-\xi_0|.
\end{align*}
However, the method applied above cannot be used for $\mathcal{I}_1^-$ {and $\mathcal{I}_1^+ $} % \mathcal{I}_1^+$ and $\mathcal{I}_2^-$ 
when $1\leq m\leq m_1$. Fortunately, we can use the continuity of $F,G$ and $\Gamma$ to obtain a sufficiently small quantity. More precisely, for any $\epsilon>0$, we denote by 
\begin{align*}
\mathcal{E}_{1}&:=\max\bigg\{\frac{\epsilon}{{4}c^2_S\sup_{m\leq m_1}\sum_{i=m}^{m_1-1}e^{\hat{\mu}(m-i)+(\kappa+\gamma)m-(\kappa+2\gamma_0)i}(1+C_{\alpha,\beta}\vertiii{\theta_{i}\omega}_{\beta',0,1})},\\
% &\frac{\epsilon}{8c^2_S\sup_{m\leq m_1}\sum_{i=0}^{m}e^{\check{\mu}(m-i-1)+(\kappa+\gamma)m-(\kappa+2\gamma_0)i}(1+C_{\alpha,\beta}\vertiii{\theta_{i}\omega}_{\beta',0,1})},\\
&
\frac{\epsilon}{{4}c^2_S\sup_{m\leq m_1}{\sum_{i=1}^{m}e^{\check{\mu}(m-i)+(\kappa+\gamma)m-(\kappa+2\gamma_0)i}
 (1+C_{\alpha,\beta}\vertiii{\theta_{i}\omega}_{\beta',0,1})}}
\bigg\}.\end{align*}
By the continuity of $DF,DG$ and  $\Gamma(\omega,\xi)$ with respect to $\xi$, for given $\epsilon>0$, there exists a constant $\epsilon_1>0$ such that when $|\xi-\xi_{0}|\leq \epsilon_1$, the following inequalities are valid:
\begin{equation*}
\begin{aligned}\sup_{i\leq m\leq m_1,\tau\in [0,1]}\max\bigg\{
&\|DF_{R(\theta_{i-1}\omega)}(\tau v_{i-1}+(1-\tau)u_{i-1})-DF_{R(\theta_{i-1}\omega)}(u_{i-1})\|_{L(\mathcal{B};\mathcal{B})}
\\
&\|DF_{R(\theta_{i-1}\omega)}(\tau v_{i-1}+(1-\tau)u_{i-1})-DF_{R(\theta_{i-1}\omega)}(u_{i-1})\|_{L(\mathcal{B};\mathcal{B}_{-\beta})}
\\
&\left\|DG_{R(\theta_{i-1}\omega)}(\tau v_{i-1}+(1-\tau )u_{i-1})-DG_{R(\theta_{i-1}\omega)}(u_{i-1})\right\|_{L(\mathcal{B};L_2(\mathcal{B};\mathcal{B}_{-\beta}))}
\\
&\left\|DG_{R(\theta_{i-1}\omega)}(\tau v_{i-1}+(1-\tau)u_{i-1})-DG_{R(\theta_{i-1}\omega)}(u_{i-1})\right\|_{L(\mathcal{B};L_2(\mathcal{B};\mathcal{B}))}
\bigg\}
\leq \mathcal{E}_{1}.
\end{aligned}
\end{equation*}
Hence we further have 
\begin{equation*}\begin{aligned}
\max\bigg\{
&|F_{R(\theta_{i-1}\omega)}(v_{i-1})-F_{R(\theta_{i-1}\omega)}(u_{i-1})
- DF_{R(\theta_{i-1}\omega)}(u_{i-1})(v_{i-1}-u_{i-1})|,\\
%&\leq \sup_{i\leq m\leq m_1,\tau\in [0,1]}\|DF_{R(\theta_{i-1}\omega)}(\tau v_{i-1}+(1-\tau)u_{i-1})-DF_{R(\theta_{i-1}\omega)}(u_{i-1})\|_{L(\mathcal{B};\mathcal{B})}|v_{i-1}-u_{i-1}|\\
&|F_{R(\theta_{i-1}\omega)}(v_{i-1})-F_{R(\theta_{i-1}\omega)}(u_{i-1})
- DF_{R(\theta_{i-1}\omega)}(u_{i-1})(v_{i-1}-u_{i-1})|_{-\beta},\\
%&\leq \sup_{i\leq m\leq m_1,\tau\in [0,1]}\|DF_{R(\theta_{i-1}\omega)}(\tau v_{i-1}+(1-\tau)u_{i-1})-DF_{R(\theta_{i-1}\omega)}(u_{i-1})\|_{L(\mathcal{B};\mathcal{B}_{-\beta})}|v_{i-1}-u_{i-1}|\\
&\|G_{R(\theta_{i-1}\omega)}(v_{i-1})-G_{R(\theta_{i-1}\omega)}(u_{i-1})
- DG_{R(\theta_{i-1}\omega)}(u_{i-1})(v_{i-1}-u_{i-1})\|_{L_2(\mathcal{B};\mathcal{B}_{-\beta})},\\
%&\leq \left\|\int_{0}^1DG_{R(\theta_{i-1}\omega)}(rv_{i-1}+(1-r)u_{i-1})dr-DG_{R(\theta_{i-1}\omega)}(u_{i-1})\right\|_{L(\mathcal{B};L_2(\mathcal{B};\mathcal{B}_{-\beta}))}|v_{i-1}-u_{i-1}|\\
&\|G_{R(\theta_{i-1}\omega)}(v_{i-1})-G_{R(\theta_{i-1}\omega)}(u_{i-1})
- DG_{R(\theta_{i-1}\omega)}(u_{i-1})(v_{i-1}-u_{i-1})\|_{L_2(\mathcal{B};\mathcal{B})}
%&\leq \left\|\int_{0}^1DG_{R(\theta_{i-1}\omega)}(rv_{i-1}+(1-r)u_{i-1})dr-DG_{R(\theta_{i-1}\omega)}(u_{i-1})\right\|_{L(\mathcal{B};L_2(\mathcal{B};\mathcal{B}))}|v_{i-1}-u_{i-1}|\\
\bigg\}\\
&\leq \mathcal{E}_{1}|v_{i-1}-u_{i-1}|,
\end{aligned}
\end{equation*}
% \begin{align*}
% &|F_{R(\theta_{i-1}\omega)}(v_{i-1})-F_{R(\theta_{i-1}\omega)}(u_{i-1})
% - DF_{R(\theta_{i-1}\omega)}(u_{i-1})(v_{i-1}-u_{i-1})|\\
% &\leq \sup_{i\leq m\leq m_1,\tau\in [0,1]}\|DF_{R(\theta_{i-1}\omega)}(\tau v_{i-1}+(1-\tau)u_{i-1})-DF_{R(\theta_{i-1}\omega)}(u_{i-1})\|_{L(\mathcal{B};\mathcal{B})}|v_{i-1}-u_{i-1}|\\
% &\leq \mathcal{E}_{1}|v_{i-1}-u_{i-1}|,\\
% &|F_{R(\theta_{i-1}\omega)}(v_{i-1})-F_{R(\theta_{i-1}\omega)}(u_{i-1})
% - DF_{R(\theta_{i-1}\omega)}(u_{i-1})(v_{i-1}-u_{i-1})|_{-\beta}\\
% &\leq \sup_{i\leq m\leq m_1,\tau\in [0,1]}\|DF_{R(\theta_{i-1}\omega)}(\tau v_{i-1}+(1-\tau)u_{i-1})-DF_{R(\theta_{i-1}\omega)}(u_{i-1})\|_{L(\mathcal{B};\mathcal{B}_{-\beta})}|v_{i-1}-u_{i-1}|\\
% &\leq \mathcal{E}_{1}|v_{i-1}-u_{i-1}|,\\
% &\|G_{R(\theta_{i-1}\omega)}(v_{i-1})-G_{R(\theta_{i-1}\omega)}(u_{i-1})
% - DG_{R(\theta_{i-1}\omega)}(u_{i-1})(v_{i-1}-u_{i-1})\|_{L_2(\mathcal{B};\mathcal{B}_{-\beta})}\\
% &\leq \left\|\int_{0}^1DG_{R(\theta_{i-1}\omega)}(rv_{i-1}+(1-r)u_{i-1})dr-DG_{R(\theta_{i-1}\omega)}(u_{i-1})\right\|_{L(\mathcal{B};L_2(\mathcal{B};\mathcal{B}_{-\beta}))}|v_{i-1}-u_{i-1}|\\
% &\leq \mathcal{E}_{1}|v_{i-1}-u_{i-1}|,\\
% &\|G_{R(\theta_{i-1}\omega)}(v_{i-1})-G_{R(\theta_{i-1}\omega)}(u_{i-1})
% - DG_{R(\theta_{i-1}\omega)}(u_{i-1})(v_{i-1}-u_{i-1})\|_{L_2(\mathcal{B};\mathcal{B})}\\
% &\leq \left\|\int_{0}^1DG_{R(\theta_{i-1}\omega)}(rv_{i-1}+(1-r)u_{i-1})dr-DG_{R(\theta_{i-1}\omega)}(u_{i-1})\right\|_{L(\mathcal{B};L_2(\mathcal{B};\mathcal{B}))}|v_{i-1}-u_{i-1}|\\
% &\leq \mathcal{E}_{1}|v_{i-1}-u_{i-1}|,
% \end{align*}
and additionally,
\begin{align*}
&\|G_{R(\theta_{i-1}\omega)}(v_{i-1}(r))-G_{R(\theta_{i-1}\omega)}(u_{i-1}(r))
- DG_{R(\theta_{i-1}\omega)}(u_{i-1}(r))[v_{i-1}(r)-u_{i-1}(r)]\\
&\quad-G_{R(\theta_{i-1}\omega)}(v_{i-1}(q))+G_{R(\theta_{i-1}\omega)}(u_{i-1}(q))
+ DG_{R(\theta_{i-1}\omega)}(u_{i-1}(q))[v_{i-1}(q)-u_{i-1}(q)]\|_{L_2(\mathcal{B};\mathcal{B}_{-\beta})}\\
%&\leq \|G_{R(\theta_{i-1}\omega)}(v_{i-1}(r))-G_{R(\theta_{i-1}\omega)}(u_{i-1}(r))-G_{R(\theta_{i-1}\omega)}(v_{i-1}(q))+G_{R(\theta_{i-1}\omega)}(u_{i-1}(q))\|_{L_2(\mathcal{B};\mathcal{B}_{-\beta})}\\
%&\quad+\|DG_{R(\theta_{i-1}\omega)}(u_{i-1}(r))[v_{i-1}(r)-u_{i-1}(r)]-DG_{R(\theta_{i-1}\omega)}(u_{i-1}(q))[v_{i-1}(q)-u_{i-1}(q)]\|_{L_2(\mathcal{B};\mathcal{B}_{-\beta})}\\
&\leq \mathcal{E}_{1}(r-q)^{\beta}\|v_{i-1}-u_{i-1}\|_{\beta,-\beta}.
\end{align*}
Hence we further obtain 
\begin{align*}
&\bigg|\int_{0}^{1} S(1-\tau)[F_{R(\theta_{i-1}\omega)}(v_{i-1}(\tau))-F_{R(\theta_{i-1}\omega)}(u_{i-1}(\tau))\\
&~~~~~~~~~~~~~~~~~~~~~~~~~~~~~~~~~~~~~~~~~~- DF_{R(\theta_{i-1}\omega)}(u_{i-1}(\tau))(v_{i-1}(\tau)-u_{i-1}(\tau))]d\tau\bigg|\\
&\quad\leq \mathcal{E}_{1}c_S\|v_{i-1}-u_{i-1}\|_{\beta,-\beta},\\
&\bigg\|\int_{0}^{\cdot} S(\cdot-\tau)[F_{R(\theta_{i-1}\omega)}(v_{i-1}(\tau))-F_{R(\theta_{i-1}\omega)}(u_{i-1}(\tau))
\\
&~~~~~~~~~~~~~~~~~~~~~~~~~~~~~~~~~~~~~~~~~~- DF_{R(\theta_{i-1}\omega)}(u_{i-1}(\tau))(v_{i-1}(\tau)-u_{i-1}(\tau))]d\tau\bigg\|_{\beta,-\beta}\\
&\quad\leq \mathcal{E}_{1}c_S\|v_{i-1}-u_{i-1}\|_{\beta,-\beta},\\
&\bigg\|\int_{0}^{1} S(1-\tau)[G_{R(\theta_{i-1}\omega)}(v_{i-1}(\tau))-G_{R(\theta_{i-1}\omega)}(u_{i-1}(\tau))
\\
&~~~~~~~~~~~~~~~~~~~~~~~~~~~~~~~~~~~~~~~~~~- DG_{R(\theta_{i-1}\omega)}(u_{i-1}(\tau))(v_{i-1}(\tau)-u_{i-1}(\tau))]d\theta_{i-1}\omega(\tau)\bigg\|_{L_2(\mathcal{B};\mathcal{B})}\\
&\quad\leq \mathcal{E}_{1}c_SC_{\alpha,\beta}\vertiii{\theta_{i-1}\omega}_{\beta,0,1}\|v_{i-1}-u_{i-1}\|_{\beta,-\beta},\\
&\bigg\|\int_{0}^{\cdot} S(\cdot-\tau)[G_{R(\theta_{i-1}\omega)}(v_{i-1}(\tau))-G_{R(\theta_{i-1}\omega)}(u_{i-1}(\tau))
\\
&~~~~~~~~~~~~~~~~~~~~~~~~~~~~~~~~~~~~~~~~~~-DG_{R(\theta_{i-1}\omega)}(u_{i-1}(\tau))(v_{i-1}(\tau)-u_{i-1}(\tau))]d\theta_{i-1}\omega(\tau)\bigg\|_{\beta,-\beta}\\
&\quad\leq \mathcal{E}_{1}c_SC_{\alpha,\beta}\vertiii{\theta_{i-1}\omega}_{\beta,0,1}\|v_{i-1}-u_{i-1}\|_{\beta,-\beta}.
\end{align*}
Finally, {$\mathcal{I}_1^-$ %, $\mathcal{I}_2^-$
and $\mathcal{I}_1^+$} can be estimated by
\begin{subequations}\begin{align}
&\sup_{1\leq m< m_1}e^{m(\kappa+\gamma)}\|\mathcal{I}_1^-\|_{\beta,-\beta}\\
&\nonumber\quad\leq 
{\mathcal{E}_{1}c_S\|V-U\|_{\mathcal{H}_{\kappa+2\gamma_0}}\sup_{m\leq m_1}\sum_{i=0}^{m}e^{\check{\mu}(m-i)+(\kappa+\gamma)m-(\kappa+2\gamma_0)i}
 (1+C_{\alpha,\beta}\vertiii{\theta_{i}\omega}_{\beta',0,1})}\\
% \mathcal{E}_{1}c_S\|V-U\|_{\mathcal{H}_{\kappa+2\gamma_0}}\sup_{m\leq m_1}\sum_{i=m}^{m_1}e^{{\color{blue}\check{\mu}(m-i)}+(\kappa+\gamma)m-(\kappa+2\gamma_0)i}%\hat{\mu}
% (1+C_{\alpha,\beta}\vertiii{\theta_{i}\omega}_{\beta',0,1}),\\
%&\quad\leq 2\mathcal{E}_{1}c^2_S|\xi-\xi_0|\sup_{m\leq m_1}\sum_{i=m}^{m_1}e^{\hat{\mu}(m-i)+(\kappa+\gamma)m-(\kappa+2\gamma_0)i}(C_{\beta}+C_{\alpha,\beta}\vertiii{\theta_{i}\omega}_{\beta',0,1})\\
%&\quad\leq \frac{\epsilon}{4}|\xi-\xi_0|,\\
% &\sup_{1\leq m< m_1}e^{m(\kappa+\gamma)}\|\mathcal{I}_2^-\|_{\beta,-\beta}\\
% &\nonumber\quad\leq \mathcal{E}_{1}c_S\|V-U\|_{\mathcal{H}_{\kappa+2\gamma_0}}\sup_{m\leq m_1}\sum_{i=0}^{m}e^{\check{\mu}(m-i-1)+(\kappa+\gamma)m-(\kappa+2\gamma_0)i}(1+C_{\alpha,\beta}\vertiii{\theta_{i}\omega}_{\beta',0,1}),\\
%&\quad\leq 2\mathcal{E}_{1}c^2_S|\xi-\xi_0|\sup_{m\leq m_1}\sum_{i=0}^{m}e^{\check{\mu}(m-i-1)+(\kappa+\gamma)m-(\kappa+2\gamma_0)i}(C_{\beta}+C_{\alpha,\beta}\vertiii{\theta_{i}\omega}_{\beta',0,1})\\
%&\quad\leq \frac{\epsilon}{4}|\xi-\xi_0|,\\
&\sup_{1\leq m< m_1}e^{m(\kappa+\gamma)}\|\mathcal{I}_1^+\|_{\beta,-\beta}\\
&\nonumber\quad\leq \mathcal{E}_{1}c_S\|V-U\|_{\mathcal{H}_{\kappa+2\gamma_0}}\sup_{m\leq m_1}\sum_{i=m}^{m_1-1}e^{\hat{\mu}(m-i)+(\kappa+\gamma)m-(\kappa+2\gamma_0)i}(1+C_{\alpha,\beta}\vertiii{\theta_{i}\omega}_{\beta',0,1}).
%&\quad\leq 2\mathcal{E}_{1}c^2_S|\xi-\xi_0|\sup_{m\leq m_1}\sum_{i=m}^{m_1-1}e^{\hat{\mu}(m-i)+(\kappa+\gamma)m-(\kappa+2\gamma_0)i}(1+\vertiii{\theta_{i}\omega}_{\beta',0,1})\\
%&\quad\leq \frac{\epsilon}{4}|\xi-\xi_0|.
\end{align}
\end{subequations}
All of the above quantities are bounded by ${\frac{\epsilon}{2}}|\xi-\xi_0|$. %\frac{\epsilon}{4}
In conclusion, there exist $m_1>0$ large enough and $\epsilon_1>0$ such that for $|\xi-\xi_0|\leq\epsilon_1$, we have 
\begin{align*}
\|\mathcal{I}(V-U)\|_{\mathcal{H}_{\kappa+\gamma}}\leq \epsilon|\xi-\xi_0|,
\end{align*}
which proves the claim. 
\end{proof}

According to the definition of the solution to \eqref{eq-cut}, we have $D_{\xi}U=\{D_{\xi}u_i\}_{i\in\mathbb{Z}^+}$ with
\begin{equation}
\begin{aligned}
D_{\xi}u_m(t)
&=S^{-}(t)+\sum_{i=1}^{m}S^{-}(t+m-i)\int_{0}^{1} S^{-}(1-\tau)DF_{R(\theta_{i-1}\omega)}(u_{i-1}(\tau))D_{\xi}u_{i-1}(\tau)d\tau\\
&\quad+\sum_{i=1}^{m}S^{-}(t+m-i)\int_{0}^{1} S^{-}(1-\tau)DG_{R(\theta_{i-1}\omega)}(u_{i-1}(\tau))D_{\xi}u_{i-1}(\tau)d\theta_{i-1}\omega(\tau)\\
&\quad+\int_{0}^{t} S^{-}(t-\tau)DF_{R(\theta_m\omega)}(u_{m}(\tau))D_{\xi}u_{m}(\tau)d\tau\\
&\quad+\int_{0}^{t} S^{-}(t-\tau)DG_{R(\theta_m\omega)}(u_{m}(\tau))D_{\xi}u_{m}(\tau)d\theta_{m}\omega(\tau)\\
&\quad-\sum_{i=m+2}^{\infty}S^{+}(t+m-i)\int_{0}^{1} S^{+}(1-\tau)DF_{R(\theta_{i-1}\omega)}(u_{i-1}(\tau))D_{\xi}u_{i-1}(\tau)d\tau\\
&\quad-\sum_{i=m+2}^{\infty}
S^{+}
(t+m-i)\int_{0}^{1}S^{+}
(1-\tau)DG_{R(\theta_{i-1}\omega)}(u_{i-1}(\tau))D_{\xi}u_{i-1}(\tau)d\theta_{i-1}\omega(\tau)\\
&\quad-\int_{t}^{1} S^{+}(t-\tau)DF_{R(\theta_{m}\omega)}(u_m(\tau))D_{\xi}u_{m}(\tau)d\tau \\
&\quad-\int_{t}^{1} S^{+}(t-\tau)DG_{R(\theta_{m}\omega)}(u_m(\tau))D_{\xi}u_{m}(\tau)d\theta_{m}\omega(\tau).
\end{aligned}
\end{equation}
Combining this with the estimate on $\mathcal{L}$, one can directly obtain that 
\begin{align*}
\sup_{i\in\mathbb{Z}^+}e^{(\kappa+\gamma)i}\|D_{\xi}u_{i}\|_{\beta,-\beta}&\leq c_S+\frac{1}{2}\sup_{i\in\mathbb{Z}^+}e^{(\kappa+\gamma)i}\|D_{\xi}u_{i}\|_{\beta,-\beta}
\leq 2c_S,
\end{align*}
for any $\gamma\in[0,\gamma_0]$. Hence $D_\xi u$ is a bounded linear operator on $\cB^-$. Lastly we have to show the continuity with respect to $\xi$. By the definition of $\mathcal{L}$, we can express the difference of $D_\xi u$ and $D_\xi v$ as
\begin{align*}
D_{\xi}v_m(t)-D_{\xi}u_m(t)=[\mathcal{L}(D_{\xi}V-D_{\xi}U)]_{m}(t)+[\mathcal{T}]_{m}(t).
\end{align*}
By \eqref{eq-esti-L}, $D_{\xi}u_m(t)$ is continuous with respect to $\xi$ if  $\|\mathcal{T}\|_{\kappa+\gamma}\to 0$ as $\xi\to\xi_0$.
To this aim,  for an $m_1'>0$ to be determined later, we rewrite $\mathcal{T}$ as follows:
\begin{align*}
[\mathcal{T}]_{m}(t)=\mathcal{T}^-_1(t)+\mathcal{T}^-_2(t)+\mathcal{T}^+_1(t)+\mathcal{T}^+_2(t),
\end{align*}
in which 
\begin{equation*}
 {\mathcal{T}^-_1(t)=
	      \footnotesize   \begin{cases}
\sum_{i=1}^{m}S^{-}(t+m-i)\int_{0}^{1} S^{-}(1-\tau)[DF_{R(\theta_{i-1}\omega)}(v_{i-1}(\tau))\\
\quad-DF_{R(\theta_{i-1}\omega)}(u_{i-1}(\tau))]D_{\xi}v_{i-1}(\tau)d\tau\\
\quad+\sum_{i=1}^{m}S^{-}(t+m-i)\int_{0}^{1} S^{-}(1-\tau)[DG_{R(\theta_{i-1}\omega)}(v_{i-1}(\tau))\\
\quad-DG_{R(\theta_{i-1}\omega)}(u_{i-1}(\tau))]D_{\xi}v_{i-1}(\tau)d\theta_{i-1}\omega(\tau)\\
\quad+\int_{0}^{t} S^{-}(t-\tau)[DF_{R(\theta_{m}\omega)}(v_{m}(\tau))-DF_{R(\theta_{m}\omega)}(u_{m}(\tau))]D_{\xi}v_{m}(\tau)d\tau\\
\quad+\int_{0}^{t} S^{-}(t-\tau)[DG_{R(\theta_{m}\omega)}(v_{m}(\tau))-DG_{R(\theta_{m}\omega)}(u_{m}(\tau))]D_{\xi}v_{m}(\tau)d\theta_{m}\omega(\tau),
% -\sum_{i=m+1}^{m_1'}S^{-}(t+m-i)\int_{0}^{1} S^{-}(1-\tau)[DF_{R(\theta_{i-1}\omega)}(v_{i-1}(\tau))\\
% \quad-DF_{R(\theta_{i-1}\omega)}(u_{i-1}(\tau))]D_{\xi}v_{i-1}(\tau)d\tau\\
% \quad-\sum_{i=m+1}^{m_1'}S^{-}(t+m-i)\int_{0}^{1} S^{-}(1-\tau)[DG_{R(\theta_{i-1}\omega)}(v_{i-1}(\tau))\\
% \quad-DG_{R(\theta_{i-1}\omega)}(u_{i-1}(\tau))]D_{\xi}v_{i-1}(\tau)\omega(\tau)\\
% \quad-\int_{0}^{t} S^{-}(t-\tau)[DF_{R(\theta_{m_1'}\omega)}(v_{m_1'}(\tau))-DF_{R(\theta_{m}\omega)}(u_{m_1'}(\tau))]D_{\xi}v_{m_1'}(\tau)d\tau\\
% \quad-\int_{0}^{t} S^{-}(t-\tau)[DG_{R(\theta_{m_1'}\omega)}(v_{m_1'}(\tau))-DG_{R(\theta_{m_1'}\omega)}(u_{m_1'}(\tau))]D_{\xi}v_{m_1'}(\tau)d\theta_{m_1'}\omega(\tau)\\
% \quad+\int_{0}^{t} S^{-}(t-\tau)[DF_{R(\theta_{m}\omega)}(v_{m}(\tau))-DF_{R(\theta_{m}\omega)}(u_{m}(\tau))]D_{\xi}v_{m}(\tau)d\tau\\
% \quad+\int_{0}^{t} S^{-}(t-\tau)[DG_{R(\theta_{m}\omega)}(v_{m}(\tau))-DG_{R(\theta_{m}\omega)}(u_{m}(\tau))]D_{\xi}v_{m}(\tau)d\theta_{m}\omega(\tau),
 & 1\leq m<m_1', \\
	         0, & m\geq m_1',
	       \end{cases}}
\end{equation*}
\begin{equation*}
 \mathcal{T}^-_2(t)=
	      \footnotesize   \begin{cases}
    {0,}
% \sum_{i=1}^{m_1'}S^{-}(t+m-i)\int_{0}^{1} S^{-}(1-\tau)[DF_{R(\theta_{i-1}\omega)}(v_{i-1}(\tau))\\
% \quad-DF_{R(\theta_{i-1}\omega)}(u_{i-1}(\tau))]D_{\xi}v_{i-1}(\tau)d\tau\\
% \quad+\sum_{i=1}^{m_1'}S^{-}(t+m-i)\int_{0}^{1} S^{-}(1-\tau)[DG_{R(\theta_{i-1}\omega)}(v_{i-1}(\tau))\\
% \quad-DG_{R(\theta_{i-1}\omega)}(u_{i-1}(\tau))]D_{\xi}v_{i-1}(\tau)d\theta_{i-1}\omega(\tau)\\
% \quad+\int_{0}^{t} S^{-}(t-\tau)[DF_{R(\theta_{m_1'}\omega)}(v_{m_1'}(\tau))-DF_{R(\theta_{m_1'}\omega)}(u_{m_1'}(\tau))]D_{\xi}v_{m_1'}(\tau)d\tau\\
% \quad+\int_{0}^{t} S^{-}(t-\tau)[DG_{R(\theta_{m_1'}\omega)}(v_{m_1'}(\tau))-DG_{R(\theta_{m_1'}\omega)}(u_{m_1'}(\tau))]D_{\xi}v_{m_1'}(\tau)d\theta_{m_1'}\omega(\tau),
& 1\leq m<m_1', \\
\sum_{i=1}^{m}S^{-}(t+m-i)\int_{0}^{1} S^{-}(1-\tau)[DF_{R(\theta_{i-1}\omega)}(v_{i-1}(\tau))\\
\quad-DF_{R(\theta_{i-1}\omega)}(u_{i-1}(\tau))]D_{\xi}v_{i-1}(\tau)d\tau\\
\quad+\sum_{i=1}^{m}S^{-}(t+m-i)\int_{0}^{1} S^{-}(1-\tau)[DG_{R(\theta_{i-1}\omega)}(v_{i-1}(\tau))\\
\quad-DG_{R(\theta_{i-1}\omega)}(u_{i-1}(\tau))]D_{\xi}v_{i-1}(\tau)d\theta_{i-1}\omega(\tau)\\
\quad+\int_{0}^{t} S^{-}(t-\tau)[DF_{R(\theta_{m}\omega)}(v_{m}(\tau))-DF_{R(\theta_{m}\omega)}(u_{m}(\tau))]D_{\xi}v_{m}(\tau)d\tau\\
\quad+\int_{0}^{t} S^{-}(t-\tau)[DG_{R(\theta_{m}\omega)}(v_{m}(\tau))-DG_{R(\theta_{m}\omega)}(u_{m}(\tau))]D_{\xi}v_{m}(\tau)d\theta_{m}\omega(\tau),
 & m\geq m_1',
	       \end{cases}
\end{equation*}
\begin{equation*}
 \mathcal{T}^+_1(t)=
	      \footnotesize   \begin{cases}
-\sum_{i=m+2}^{m_1'}S^{+}(t+m-i)\int_{0}^{1} S^{+}(1-\tau)[DF_{R(\theta_{i-1}\omega)}(v_{i-1}(\tau))\\
\quad-DF_{R(\theta_{i-1}\omega)}(u_{i-1}(\tau))]D_{\xi}v_{i-1}(\tau)d\tau\\
\quad-\sum_{i=m+2}^{m_1'}
S^{+}
(t+m-i)\int_{0}^{1}S^{+}
(1-\tau)[DG_{R(\theta_{i-1}\omega)}(v_{i-1}(\tau))\\
\quad-DG_{R(\theta_{i-1}\omega)}(u_{i-1}(\tau))]D_{\xi}v_{i-1}(\tau)d\theta_{i-1}\omega(\tau)\\
\quad-\int_{t}^{1} S^{+}(t-\tau)[DF_{R(\theta_{m}\omega)}(v_{m}(\tau))-DF_{R(\theta_{m}\omega)}(u_{m}(\tau))]D_{\xi}v_{m}(\tau)d\tau\\
\quad -\int_{t}^{1} S^{+}(t-\tau)[DG_{R(\theta_{m}\omega)}(v_{m}(\tau))-DG_{R(\theta_{m}\omega)}(u_{m}(\tau))]D_{\xi}v_{m}(\tau)d\theta_{m}\omega(\tau),
 & 1\leq m<m_1', \\
	         0, & m\geq m_1',
	       \end{cases}
\end{equation*}
\begin{equation*}
 \mathcal{T}^+_2(t)=
	      \footnotesize   \begin{cases}
-\sum_{i=m_1'+2}^{\infty}S^{+}(t+m-i)\int_{0}^{1} S^{+}(1-\tau)[DF_{R(\theta_{i-1}\omega)}(v_{i-1}(\tau))\\
\quad-DF_{R(\theta_{i-1}\omega)}(u_{i-1}(\tau))]D_{\xi}v_{i-1}(\tau)d\tau\\
\quad-\sum_{i=m_1'+2}^{\infty}
S^{+}
(t+m-i)\int_{0}^{1}S^{+}
(1-\tau)[DG_{R(\theta_{i-1}\omega)}(v_{i-1}(\tau))\\
\quad-DG_{R(\theta_{i-1}\omega)}(u_{i-1}(\tau))]D_{\xi}v_{i-1}(\tau)d\theta_{i-1}\omega(\tau)\\
\quad-\int_{0}^{1} S^{+}(t+m-m_1'-\tau)[DF_{R(\theta_{m_1'}\omega)}(u_{m_1'}(\tau))-DF_{R(\theta_{m_1'}\omega)}(v_{m_1'}(\tau))]D_{\xi}v_{m_1'}(\tau)d\tau\\
\quad -\int_{0}^{1} S^{+}(t+m-m_1'-\tau)[DG_{R(\theta_{m_1'}\omega)}(v_{m_1'}(\tau))\\
\quad-DG_{R(\theta_{m_1'}\omega)}(u_{m_1'}(\tau))]D_{\xi}v_{m_1'}d\theta_{m_1'}\omega(\tau),
& 1\leq m<m_1' ,\\
-\sum_{i=m+2}^{\infty}S^{+}(t+m-i)\int_{0}^{1} S^{+}(1-\tau)[DF_{R(\theta_{i-1}\omega)}(v_{i-1}(\tau))\\
\quad-DF_{R(\theta_{i-1}\omega)}(u_{i-1}(\tau))]D_{\xi}v_{i-1}(\tau)d\tau\\
\quad-\sum_{i=m+2}^{\infty}
S^{+}
(t+m-i)\int_{0}^{1}S^{+}
(1-\tau)[DG_{R(\theta_{i-1}\omega)}(v_{i-1}(\tau))\\
\quad-DG_{R(\theta_{i-1}\omega)}(u_{i-1}(\tau))]D_{\xi}v_{i-1}(\tau)d\theta_{i-1}\omega(\tau)\\
\quad-\int_{t}^{1} S^{+}(t-\tau)[DF_{R(\theta_{m}\omega)}(v_{m}(\tau))-DF_{R(\theta_{m}\omega)}(u_{m}(\tau))]D_{\xi}v_{m}(\tau)d\tau\\
\quad -\int_{t}^{1} S^{+}(t-\tau)[DG_{R(\theta_{m}\omega)}(v_{m}(\tau))-DG_{R(\theta_{m}\omega)}(u_{m}(\tau))]D_{\xi}v_{m}(\tau)d\theta_{m}\omega(\tau),
 & m\geq m_1'.
	       \end{cases}
\end{equation*}
\begin{lemma}
Assume that $(\mathbf{A_2^k})$, $(\mathbf{A_2^k})$ and \eqref{gap k} are satisfied. Then for $\gamma\in[0,\gamma_0)$ it holds
\begin{align*}
\lim_{\xi\to\xi_0}\|\mathcal{T}\|_{\mathcal{H}_{\kappa+\gamma}}=0.
\end{align*}
\end{lemma}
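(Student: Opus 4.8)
The plan is to mimic the head--tail splitting used for $\mathcal{I}$ in the preceding lemma, now applied to $\mathcal{T}$. The starting observation is that, after the contribution of $\mathcal{L}(D_{\xi}V-D_{\xi}U)$ has cancelled, every building block of $\mathcal{T}$ factors as $[DF_{R(\theta_{i-1}\omega)}(v_{i-1})-DF_{R(\theta_{i-1}\omega)}(u_{i-1})]D_{\xi}v_{i-1}$, or the same expression with $DG$ in place of $DF$. The two structural facts I would rely on are the uniform bound $\|D_{\xi}v\|_{\mathcal{H}_{\kappa+\gamma_0}}\leq 2c_S$ established above, and the Lipschitz continuity of $\Gamma(\cdot,\omega)$ from Theorem~\ref{theo-exis}, which yields $\|v_i-u_i\|_{\beta,-\beta}\leq L_{\Gamma}e^{-(\kappa+\gamma)i}|\xi-\xi_0|\to 0$ as $\xi\to\xi_0$ for each fixed $i$.

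For the tails $\mathcal{T}^-_2,\mathcal{T}^+_2$ (indices $m\geq m_1'$, $i\geq m_1'$) I would estimate $\|DF_R(v)-DF_R(u)\|$ and $\|DG_R(v)-DG_R(u)\|$ crudely by constant multiples of the Lipschitz constants $L_F(R(\theta_i\omega))$ and $L_G(R(\theta_i\omega))$ entering $\mathcal{J}_R$, and multiply by the bounded factor $D_{\xi}v$ measured in $\mathcal{H}_{\kappa+\gamma_0}$. Summing over $i$ with the geometric weights and invoking~\eqref{R} together with the gap condition~\eqref{gap k} reproduces the convergent series from Theorem~\ref{theo-exis}, now multiplied by an additional factor $e^{(\gamma-\gamma_0)m_1'}$ arising from the mismatch between the weight $\kappa+\gamma$ of $\mathcal{H}_{\kappa+\gamma}$ and the stronger weight $\kappa+\gamma_0$ carried by $D_{\xi}v$. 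Since $\gamma<\gamma_0$, this factor tends to $0$ as $m_1'\to\infty$, so one fixes $m_1'$ large enough that the tail contribution is at most $\tfrac{\epsilon}{2}$, uniformly in $\xi$.

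With $m_1'$ now frozen, the head pieces $\mathcal{T}^-_1,\mathcal{T}^+_1$ ($1\leq m<m_1'$) involve only the finitely many indices $i\leq m_1'$, each term $[DF_R(v_{i-1})-DF_R(u_{i-1})]D_{\xi}v_{i-1}$ (and its $DG$ analogue) carrying the uniformly bounded factor $\|D_{\xi}v_{i-1}\|_{\beta,-\beta}\leq 2c_S$. Exploiting the continuity of $DF,DG$ and the convergence $v_{i-1}\to u_{i-1}$ in $C^\beta_{-\beta}$, I would produce $\epsilon_1>0$ such that $|\xi-\xi_0|\leq\epsilon_1$ forces all operator-norm differences $\|DF_R(v_{i-1})-DF_R(u_{i-1})\|$ and $\|DG_R(v_{i-1})-DG_R(u_{i-1})\|$ below a threshold $\mathcal{E}_1$, chosen exactly as in the proof of the preceding lemma, so small that the finite head sums are bounded by $\tfrac{\epsilon}{2}$. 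Adding head and tail yields $\|\mathcal{T}\|_{\mathcal{H}_{\kappa+\gamma}}\leq\epsilon$ for $|\xi-\xi_0|\leq\epsilon_1$, which gives $\lim_{\xi\to\xi_0}\|\mathcal{T}\|_{\mathcal{H}_{\kappa+\gamma}}=0$.

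The main obstacle lies in the $DG$-contributions, i.e.\ in controlling the stochastic convolutions $\int S(\cdot-\tau)[DG_R(v)-DG_R(u)]D_{\xi}v\,d\theta_m\omega$ through fractional calculus. As in~\eqref{DG-1}--\eqref{DG-3}, the fractional-derivative kernel estimate requires not only the pointwise smallness of $DG_R(v(r))-DG_R(u(r))$, but also a H\"older-in-time bound on the difference $[DG_R(v(r))-DG_R(u(r))]-[DG_R(v(q))-DG_R(u(q))]$ of the form $(\text{small})\,(r-q)^{\beta}$; establishing this uniformly over the finitely many time slices demands the boundedness of $D^2G$ from $(\mathbf{A_3^k})$ together with the uniform continuity of $DG$ on bounded sets. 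Once these adapted kernel bounds are in hand, the fractional integral estimate~\eqref{eq2.3} and the Beta-function identity~\eqref{eq3.12} convert them into the required $C^\beta_{-\beta}$ estimates, and both the tail and the head arguments close as above.
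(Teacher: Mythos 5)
Your proposal is correct and follows essentially the same route as the paper: the same head--tail splitting of $\mathcal{T}$ at a cutoff $m_1'$, the same use of the weight mismatch $e^{(\gamma-\gamma_0)m_1'}$ (from measuring $D_\xi v$ in $\mathcal{H}_{\kappa+\gamma_0}$ while targeting $\mathcal{H}_{\kappa+\gamma}$ with $\gamma<\gamma_0$) to kill the tail uniformly in $\xi$, and the same continuity-of-$DF$, $DG$, $\Gamma(\cdot,\omega)$ argument with a threshold $\mathcal{E}_1'$ for the finitely many head terms. The H\"older-in-time bound on $[DG_R(v(r))-DG_R(u(r))]-[DG_R(v(q))-DG_R(u(q))]$ that you flag as the main obstacle is indeed exactly the estimate the paper supplies via $(\mathbf{A_3^k})$ before closing the head estimate.
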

\begin{proof}
We now take $\gamma< \gamma_0$, for $m\geq  m_1'$. Due to $(\mathbf{A_2^k}),(\mathbf{A_3^k})$ and by the definition of $L_F(R),L_G(R)$, we have 
\begin{subequations}
\begin{align}
&|[DF_{R(\theta_{i-1}\omega)}(v_{i-1})-DF_{R(\theta_{i-1}\omega)}]D_{\xi}u_{i-1}|
\leq L_F(R(\theta_{i-1}\omega))|D_{\xi}u_{i-1}|,\\
&|[DF_{R(\theta_{i-1}\omega)}(v_{i-1})-DF_{R(\theta_{i-1}\omega)}(u_{i-1})]D_{\xi}u_{i-1}|_{-\beta}
\leq L_F(R(\theta_{i-1}\omega))|D_{\xi}u_{i-1}|,\\
&\|[DG_{R(\theta_{i-1}\omega)}(v_{i-1})-DG_{R(\theta_{i-1}\omega)}(u_{i-1})]D_{\xi}u_{i-1}\|_{L_2(\mathcal{B};\mathcal{B}_{-\beta})}
\leq L_G(R(\theta_{i-1}\omega))|D_{\xi}u_{i-1}|,\\
&\|[DG_{R(\theta_{i-1}\omega)}(v_{i-1})-DG_{R(\theta_{i-1}\omega)}(u_{i-1})]D_{\xi}u_{i-1}\|_{L_2(\mathcal{B};\mathcal{B})}
\leq L_G(R(\theta_{i-1}\omega))|D_{\xi}u_{i-1}|,\\
&\|[DG_{R(\theta_{i-1}\omega)}(v_{i-1}(r))-DG_{R(\theta_{i-1}\omega)}(u_{i-1}(r))]D_{\xi}u_{i-1}(r)\\
\nonumber&\quad-[DG_{R(\theta_{i-1}\omega)}(v_{i-1}(q))-DG_{R(\theta_{i-1}\omega)}(u_{i-1}(q))]D_{\xi}u_{i-1}(q)\|_{L_2(\mathcal{B};\mathcal{B}_{-\beta})}\\
% &\leq \|[DG_{R(\theta_{i-1}\omega)}(v_{i-1}(r))-DG_{R(\theta_{i-1}\omega)}(u_{i-1}(r))-DG_{R(\theta_{i-1}\omega)}(v_{i-1}(q))\\
% &\quad+DG_{R(\theta_{i-1}\omega)}(u_{i-1}(q))]D_{\xi}u_{i-1}(r)\|_{L_2(\mathcal{B};\mathcal{B}_{-\beta})}\\
% &\quad+\|[DG_{R(\theta_{i-1}\omega)}(v_{i-1}(q))-DG_{R(\theta_{i-1}\omega)}(u_{i-1}(q))][D_{\xi}u_{i-1}(r)-D_{\xi}u_{i-1}(q)]\|_{L_2(\mathcal{B};\mathcal{B}_{-\beta})}\\
%&\leq L_G[|v_{i-1}(r)-v_{i-1}(q)|_{-\beta}+|u_{i-1}(r)-u_{i-1}(q)|_{-\beta}]\\
%&\quad+L_G(R(\theta_{i-1}\omega))(r-q)^{\beta}\|D_{\xi}u_{i-1}\|_{\beta,-\beta}\\
\nonumber&\leq L_G(R(\theta_{i-1}\omega))(r-q)^{\beta}\|D_{\xi}u_{i-1}\|_{\beta,-\beta}.
\end{align}
\end{subequations}
Hence we further obtain that 
\begin{subequations}
\begin{align}
\label{F-a}&\left|\int_{0}^{1} S(1-\tau)[DF_{R(\theta_{i-1}\omega)}(v_{i-1}(\tau))-DF_{R(\theta_{i-1}\omega)}(u_{i-1}(\tau))]D_{\xi}u_{i-1}(\tau)d\tau\right|\\
\nonumber&\quad\leq L_F(R(\theta_{i-1}\omega))\|D_{\xi}u_{i-1}(\tau)\|_{\beta,-\beta},\\
\label{F-b}&\left\|\int_{0}^{\cdot} S(\cdot-\tau)[DF_{R(\theta_{i-1}\omega)}(v_{i-1}(\tau))-DF_{R(\theta_{i-1}\omega)}(u_{i-1}(\tau))]D_{\xi}u_{i-1}(\tau)d\tau\right\|_{\beta,-\beta}\\
\nonumber&\quad\leq L_F(R(\theta_{i-1}\omega))\|D_{\xi}u_{i-1}\|_{\beta,-\beta},\\
\label{G-c}&\left\|\int_{0}^{1} S(1-\tau)[DG_{R(\theta_{i-1}\omega)}(v_{i-1}(\tau))-DG_{R(\theta_{i-1}\omega)}(u_{i-1}(\tau))]D_{\xi}u_{i-1}(\tau)d\theta_{i-1}\omega(\tau)\right\|_{L_2(\mathcal{B};\mathcal{B})}\\
\nonumber&\quad\leq C_{\alpha,\beta}L_G(R(\theta_{i-1}\omega))\vertiii{\theta_{i-1}\omega}_{\beta',0,1}\|D_{\xi}u_{i-1}\|_{\beta,-\beta},\\
\label{G-d}&\left\|\int_{0}^{\cdot} S(\cdot-\tau)[DG_{R(\theta_{i-1}\omega)}(v_{i-1}(\tau))-DG_{R(\theta_{i-1}\omega)}(u_{i-1}(\tau))]D_{\xi}u_{i-1}(\tau)d\theta_{i-1}\omega(\tau)\right\|_{\beta,-\beta}\\
\nonumber&\quad\leq C_{\alpha,\beta}L_G(R(\theta_{i-1}\omega))\vertiii{\theta_{i-1}\omega}_{\beta',0,1}\|D_{\xi}u_{i-1}\|_{\beta,-\beta}.
\end{align}
\end{subequations}
By similar computation as for $\mathcal{J}_{R}$ and using \eqref{F-a},\eqref{F-b},\eqref{G-c} and \eqref{G-d}, we derive
\begin{align*}
\|\mathcal{T}_2^-\|_{\mathcal{H}_{\kappa+\gamma}}&\leq K\|D_{\xi}U\|_{\mathcal{H}_{\kappa+\gamma_0}}\sup_{m\geq m_1'}\sum_{i=0}^m e^{\check{\mu}(m-i)+(\kappa+\gamma)m-(\kappa+\gamma_0)i},
\end{align*}
\begin{align*}
\|\mathcal{T}_2^+\|_{\mathcal{H}_{\kappa+\gamma}}&\leq K\|D_{\xi}U\|_{\mathcal{H}_{\kappa+\gamma_0}}\sup_{m\geq m_1'}\sum_{i=m}^\infty e^{\hat{\mu}(m-i)+(\kappa+\gamma)m-(\kappa+\gamma_0)i},
\end{align*}
and 
\begin{align*}
\|\mathcal{T}_2^+\|_{\mathcal{H}_{\kappa+\gamma}}&\leq K\sup_{m\leq m_1'}\sum_{i=m_1'}^{\infty}e^{{\color{blue}\hat{\mu}(m-i)}+(\kappa+\gamma)m-(\kappa+\gamma_0)i}\|D_{\xi}U\|_{\mathcal{H}_{\kappa+\gamma_0}}.
%&\leq K\|D_{\xi}U\|_{\mathcal{H}_{\kappa+\gamma_0}}\sup_{m\leq m_1'}\sum_{i=m_1'+1}^{\infty}e^{(\check{\mu}+\kappa+\gamma)(m-i)}\\
%&\leq \frac{\epsilon}{4}\|D_{\xi}U\|_{\mathcal{H}_{\kappa+\gamma_0}}
\end{align*}
Then for any $\epsilon>0$, there exists a large enough $m_1'$ such that both of them are bounded by $\frac{\epsilon}{4}\|D_{\xi}U\|_{\mathcal{H}_{\kappa+\gamma_0}}$.

For other terms when $1\leq m\leq m_1'$, we use the continuity of $DF,DG$ and $\Gamma(\omega,\xi)$ with respect to $\xi$. For any $\epsilon>0$ we denote by 
\begin{align*}
\mathcal{E}_{1}'&:=\max\bigg\{\frac{\epsilon}{4c_S\sup_{m\leq m_1'}{\color{blue}\sum_{i=0}^{m}e^{\check{\mu}(m-i)+(\kappa+\gamma)m-(\kappa+\gamma_0)i}}(1+C_{\alpha,\beta}\vertiii{\theta_{i}\omega}_{\beta',0,1})},\\
% &\frac{\epsilon}{4c_S\sup_{m\leq m_1'}\sum_{i=1}^{m_1'}e^{\check{\mu}(m-i)+(\kappa+\gamma)m-(\kappa+\gamma_0)i}(1+C_{\alpha,\beta}\vertiii{\theta_{i}\omega}_{\beta',0,1})},\\
&
\frac{\epsilon}{4c_S\sup_{m\leq m_1'}\sum_{i=m}^{m_1'-1}e^{\hat{\mu}(m-i)+(\kappa+\gamma)m-(\kappa+\gamma_0)i}(1+C_{\alpha,\beta}\vertiii{\theta_{i}\omega}_{\beta',0,1})}
\bigg\}.\end{align*}
Due to the continuity of the derivatives of $F$ and $G$, there exists a constant $\epsilon_1'>0$ such that when $|\xi-\xi_{0}|\leq \epsilon_1'$, 
it holds 
\begin{align*}
\sup_{i\leq m\leq m_1'}\max\bigg\{
&\|DF_{R(\theta_{i-1}\omega)}( v_{i-1})-DF_{R(\theta_{i-1}\omega)}(u_{i-1})\|_{L(\mathcal{B};\mathcal{B})},\\
&\|DF_{R(\theta_{i-1}\omega)}( v_{i-1})-DF_{R(\theta_{i-1}\omega)}(u_{i-1})\|_{L(\mathcal{B};\mathcal{B}_{-\beta})}\\
&\left\|DG_{R(\theta_{i-1}\omega)}(v_{i-1})-DG_{R(\theta_{i-1}\omega)}(u_{i-1})\right\|_{L(\mathcal{B};L_2(\mathcal{B};\mathcal{B}_{-\beta}))}\\
&\left\|DG_{R(\theta_{i-1}\omega)}(v_{i-1})-DG_{R(\theta_{i-1}\omega)}(u_{i-1})\right\|_{L(\mathcal{B};L_2(\mathcal{B};\mathcal{B}))}
\bigg\}
\leq \mathcal{E}_{1}',
\end{align*}
and
\begin{align*}
&\|[DG_{R(\theta_{i-1}\omega)}(v_{i-1}(r))-DG_{R(\theta_{i-1}\omega)}(u_{i-1}(r))]D_{\xi}u_{i-1}(r)\\
&\quad-[DG_{R(\theta_{i-1}\omega)}(v_{i-1}(q))-DG_{R(\theta_{i-1}\omega)}(u_{i-1}(q))]D_{\xi}u_{i-1}(q)\|_{L_2(\mathcal{B};\mathcal{B}_{-\beta})}\\
%&\leq \|[DG_{R(\theta_{i-1}\omega)}(v_{i-1}(r))-DG_{R(\theta_{i-1}\omega)}(u_{i-1}(r))-DG_{R(\theta_{i-1}\omega)}(v_{i-1}(q))\\
% &\quad+DG_{R(\theta_{i-1}\omega)}(u_{i-1}(q))]D_{\xi}u_{i-1}(r)\|_{L_2(\mathcal{B};\mathcal{B}_{-\beta})}\\
% &\quad+\|[DG_{R(\theta_{i-1}\omega)}(v_{i-1}(q))-DG_{R(\theta_{i-1}\omega)}(u_{i-1}(q))][D_{\xi}u_{i-1}(r)-D_{\xi}u_{i-1}(q)]\|_{L_2(\mathcal{B};\mathcal{B}_{-\beta})}\\
&\leq \mathcal{E}_{1}'(r-q)^{\beta}\|D_{\xi}u_{i-1}\|_{\beta,-\beta}.
\end{align*}
Hence we  obtain 
\begin{align*}
&\left|\int_{0}^{1} S(1-\tau)[DF_{R(\theta_{i-1}\omega)}(v_{i-1}(\tau))-DF_{R(\theta_{i-1}\omega)}(u_{i-1}(\tau))]D_{\xi}u_{i-1}(\tau)d\tau\right|,
\end{align*}
and 
\begin{align*}
&\left\|\int_{0}^{\cdot} S(\cdot-\tau)[DF_{R(\theta_{i-1}\omega)}(v_{i-1}(\tau))-DF_{R(\theta_{i-1}\omega)}(u_{i-1}(\tau))]D_{\xi}u_{i-1}(\tau)d\tau\right\|_{\beta,-\beta}
\end{align*}
are bounded by $\mathcal{E}_{1}'\|D_{\xi}u_{i-1}\|_{\beta,-\beta}$, and 
\begin{align*}
&\left\|\int_{0}^{1} S(1-\tau)[DG_{R(\theta_{i-1}\omega)}(v_{i-1}(\tau))-DG_{R(\theta_{i-1}\omega)}(u_{i-1}(\tau))]D_{\xi}u_{i-1}(\tau)d\theta_{i-1}\omega(\tau)\right\|_{L_2(\mathcal{B};\mathcal{B})},
\end{align*}
and 
\begin{align*}
&\left\|\int_{0}^{\cdot} S(\cdot-\tau)[DG_{R(\theta_{i-1}\omega)}(v_{i-1}(\tau))-DG_{R(\theta_{i-1}\omega)}(u_{i-1}(\tau))]D_{\xi}u_{i-1}(\tau)d\theta_{i-1}\omega(\tau)\right\|_{\beta,-\beta}
\end{align*}
are bounded by $\mathcal{E}_{1}'\vertiii{\theta_{i-1}\omega}_{\beta',0,1}\|D_{\xi}u_{i-1}\|_{\beta,-\beta}$. Hence 
\begin{align*}
	\|\mathcal{T}_1^-\|_{\mathcal{H}_{\kappa+\gamma}}&\leq \mathcal{E}_{1}'c_S\|D_{\xi}U\|_{\mathcal{H}_{\kappa+\gamma_0}}\sup_{m\leq m_1'}{\sum_{i=0}^{m}e^{\check{\mu}(m-i)+(\kappa+\gamma)m-(\kappa+\gamma_0)i}}(1+C_{\alpha,\beta}\vertiii{\theta_{i}\omega}_{\beta',0,1}),\\%\sum_{i=m}^{m_1'}e^{\hat{\mu}(m-i)+(\kappa+\gamma)m-(\kappa+\gamma_0)i}(1+C_{\alpha,\beta}\vertiii{\theta_{i}\omega}_{\beta',0,1}),\\
	% \|\mathcal{T}_2^-\|_{\mathcal{H}_{\kappa+\gamma}}&\leq\mathcal{E}_{1}'c_S\|D_{\xi}U\|_{\mathcal{H}_{\kappa+\gamma_0}}\sup_{m\leq m_1'}\sum_{i=1}^{m_1'}e^{\hat{\mu}(m-i)+(\kappa+\gamma)m-(\kappa+\gamma_0)i}(1+C_{\alpha,\beta}\vertiii{\theta_{i}\omega}_{\beta',0,1}),\\
\|\mathcal{T}_1^+\|_{\mathcal{H}_{\kappa+\gamma}}&\leq \mathcal{E}_{1}'c_S\|D_{\xi}U\|_{\mathcal{H}_{\kappa+\gamma_0}}\sup_{m\leq m_1'}\sum_{i=m}^{m_1'-1}e^{\hat{\mu}(m-i)+(\kappa+\gamma)m-(\kappa+\gamma_0)(i-1)}(1+C_{\alpha,\beta}\vertiii{\theta_{i}\omega}_{\beta',0,1})
\end{align*}
and all of them are bounded by ${\frac{\epsilon}{2}}\|D_{\xi}U\|_{\mathcal{H}_{\kappa+\gamma_0}}$.
In conclusion, there exists a constant $m_1'>0$ large enough and $\epsilon_1'$ (as above) such that for $|\xi-\xi_0|\leq\epsilon_1'$, we have 
\begin{align*}
\|\mathcal{T}\|_{\mathcal{H}_{\kappa+\gamma}}\leq \epsilon \|D_{\xi}U\|_{\mathcal{H}_{\kappa+\gamma_0}}.
\end{align*}
For $\gamma\in[0,\gamma_0)$ this implies that $\mathcal{T}\to 0$ as $\xi\to\xi_0$.
\end{proof}
\subsubsection{$C^k$-smoothness}
Now we prove that $U$ is in  $C^k(\mathcal{B}^-;\mathcal{H}_{\kappa})$ for $k\geq 2$ by induction. Suppose that $U$ is of $C^j$-class as an operator from $\mathcal{B}^-$ to $\mathcal{H}_{j\kappa+2\gamma*}$ for all $1\leq j\leq k-1$, $\gamma*\in\left[0,\frac{\gamma_0}{2}\right)$. 
%and 
%\begin{itemize}
% \item [$(\bf A^k_2$)] The mapping $F\in C^k_b(\mathcal{B};\mathcal{B})$  with $F(0)=0,DF(0)=0,\cdots,D^kF(0)=0$ and the following properties:
%\begin{align*}
%&\|D^lF(u)-D^lF(v)\|_{L^l(\mathcal{B};\mathcal{B})}\leq L_F^l|u-v|,
%\end{align*}
%for some constant $L^l_F>0,l=0,1,\cdots,k-1$.
%  \item [$(\bf A^k_3$)] The mapping $G\in C^{k+1} (\mathcal{B}_{-\beta};L_2(\mathcal{B};\mathcal{B}_{-\beta}))$ satisfying $G(0)=DG(0)=\cdots=D^{k}G(0)=0$
%\begin{align*}
%&\|D^{l}G(u)-D^{l}G(v)\|_{L^{l}(\mathcal{B};L_2(\mathcal{B};\mathcal{B}))}\leq L_G^l|u-v|_{-\beta},\\
%&\|D^{l+1}G(u)h-D^{l+1}G(v)h\|_{L^{l+1}(\mathcal{B};L_2(\mathcal{B};\mathcal{B}_{-\beta}))}\leq L_G^{l+1}|u-v|_{-\beta}|h|_{-\beta},
%\end{align*}
%for some constant $L_G^l>0,l=0,1,\cdots,k-1$ and $u,v,h\in \mathcal{B}$. 
%\end{itemize}
By a direct computation, we know that  $D^{k-1}_\xi u_{m}$ satisfies 
\begin{equation}
\begin{aligned}
D^{k-1}_{\xi}u_m(t)
&=\sum_{i=1}^{m}S^{-}(t+m-i)\int_{0}^{1} S^{-}(1-\tau)DF_{R(\theta_{i-1}\omega)}(u_{i-1}(\tau))D^{k-1}_{\xi}u_{i-1}(\tau)d\tau\\
&\quad+\sum_{i=1}^{m}S^{-}(t+m-i)\int_{0}^{1} S^{-}(1-\tau)DG_{R(\theta_{i-1}\omega)}(u_{i-1}(\tau))D^{k-1}_{\xi}u_{i-1}(\tau)d\theta_{i-1}\omega(\tau)\\
&\quad+\int_{0}^{t} S^{-}(t-\tau)DF_{R(\theta_m\omega)}(u_{m}(\tau))D^{k-1}_{\xi}u_{m}(\tau)d\tau\\
&\quad+\int_{0}^{t} S^{-}(t-\tau)DG_{R(\theta_m\omega)}(u_{m}(\tau))D^{k-1}_{\xi}u_{m}(\tau)d\theta_{m}\omega(\tau)\\
&\quad-\sum_{i=m+2}^{\infty}S^{+}(t+m-i)\int_{0}^{1} S^{+}(1-\tau)DF_{R(\theta_{i-1}\omega)}(u_{i-1}(\tau))D^{k-1}_{\xi}u_{i-1}(\tau)d\tau\\
&\quad-\sum_{i=m+2}^{\infty}
S^{+}
(t+m-i)\int_{0}^{1}S^{+}
(1-\tau)DG_{R(\theta_{i-1}\omega)}(u_{i-1}(\tau))D^{k-1}_{\xi}u_{i-1}(\tau)d\theta_{i-1}\omega(\tau)\\
&\quad-\int_{t}^{1} S^{+}(t-\tau)DF_{R(\theta_{m}\omega)}(u_m(\tau))D^{k-1}_{\xi}u_{m}(\tau)d\tau\\
&\quad-\int_{t}^{1} S^{+}(t-\tau)DG_{R(\theta_{m}\omega)}(u_m(\tau))D^{k-1}_{\xi}u_{m}(\tau)d\theta_{m}\omega(\tau)+[\mathcal{S}^{k-1}(\omega,\xi_0)]_m(t),
\end{aligned}
\end{equation}
where \begin{align*}
[\mathcal{S}^{k-1}(\omega,\xi_0)]_m(t)&=\sum_{i=1}^{m}S^{-}(t+m-i)\int_{0}^{1} S^{-}(1-\tau)\mathcal{F}^{k-1}_{R(\theta_{i-1}\omega)}(u_{i-1})(\tau)d\tau\\
&\quad+\sum_{i=1}^{m}S^{-}(t+m-i)\int_{0}^{1} S^{-}(1-\tau)\mathcal{G}^{k-1}_{R(\theta_{i-1}\omega)}(u_{i-1})(\tau)d\theta_{i-1}\omega(\tau)\\
&\quad+\int_{0}^{t} S^{-}(t-\tau)\mathcal{F}^{k-1}_{R(\theta_{m}\omega)}(u_{m})(\tau)d\tau+\int_{0}^{t} S^{-}(t-\tau)\mathcal{G}^{k-1}_{R(\theta_{m}\omega)}(u_{m})(\tau)d\theta_{m}\omega(\tau)\\
&\quad-\sum_{i=m+2}^{\infty}S^{+}(t+m-i)\int_{0}^{1} S^{+}(1-\tau)\mathcal{F}^{k-1}_{R(\theta_{i-1}\omega)}(u_{i-1})(\tau)d\tau\\
&\quad-\sum_{i=m+2}^{\infty}
S^{+}
(t+m-i)\int_{0}^{1}S^{+}
(1-\tau)\mathcal{G}^{k-1}_{R(\theta_{i-1}\omega)}(u_{i-1})(\tau)d\theta_{i-1}\omega(\tau)\\
&\quad-\int_{t}^{1} S^{+}(t-\tau)\mathcal{F}^{k-1}_{R(\theta_{m}\omega)}(u_{m})(\tau)d\tau-\int_{t}^{1} S^{+}(t-\tau)\mathcal{G}^{k-1}_{R(\theta_{m}\omega)}(u_{m})(\tau)d\theta_{m}\omega(\tau),
\end{align*}
with
\begin{align*}
\mathcal{F}^{k-1}_{R(\theta_{i-1}\omega)}(u_{i-1})(\tau)&=\sum_{l=0}^{k-3}
\begin{pmatrix}
k-2\\
l
 \end{pmatrix}D_{\xi}^{k-2-l}(DF_{R(\theta_{i-1}\omega)}(u_{i-1}(\tau)))D^{l+1}_{\xi}u_{i-1}(\tau),\\
\mathcal{G}^{k-1}_{R(\theta_{i-1}\omega)}(u_{i-1})(\tau)&=\sum_{l=0}^{k-3}\begin{pmatrix}
k-2\\
l
 \end{pmatrix}D_{\xi}^{k-2-l}(DG_{R(\theta_{i-1}\omega)}(u_{i-1}(\tau)))D^{l+1}_{\xi}u_{i-1}(\tau).
\end{align*}
Then
\begin{align*}
D_{\xi}\mathcal{F}^{k-1}_{R(\theta_{i-1}\omega)}(u_{i-1})(\tau)&=\sum_{l=0}^{k-3}\begin{pmatrix}
k-2\\
l
 \end{pmatrix}\bigg[D_{\xi}^{k-1-l}(DF_{R(\theta_{i-1}\omega)}(u_{i-1}(\tau)))D^{l+1}_{\xi}u_{i-1}(\tau)\\
&\quad+D_{\xi}^{k-2-l}(DF_{R(\theta_{i-1}\omega)}(u_{i-1}(\tau)))D^{l+2}_{\xi}u_{i}(\tau)\bigg],
\end{align*}
and 
\begin{align*}
D_{\xi}\mathcal{G}^{k-1}_{R(\theta_{i-1}\omega)}(u_{i-1})(\tau)&=\sum_{l=0}^{k-3}\begin{pmatrix}
k-2\\
l
 \end{pmatrix}\bigg[D_{\xi}^{k-1-l}(DG_{R(\theta_{i-1}\omega)}(u_{i-1}(\tau)))D^{l+1}_{\xi}u_{i-1}(\tau)\\
&\quad+D_{\xi}^{k-2-l}(DG_{R(\theta_{i-1}\omega)}(u_{i-1}(\tau)))D^{l+2}_{\xi}u_{i-1}(\tau)\bigg].
\end{align*}
Consequently, for $\mathcal{F}^{k-1}_{R}$, we have 
\begin{align*}
&\mathcal{F}^{k-1}_{R(\theta_{i-1}\omega)}(v_{i-1})-\mathcal{F}^{k-1}_{R(\theta_{i-1}\omega)}(u_{i-1})-D_{\xi}(\mathcal{F}^{k-1}_{R(\theta_{i-1}\omega)}(u_{i-1}))(\xi-\xi_0)\\
% &=\sum_{l=0}^{k-3}\begin{pmatrix}
% k-2\\
% l
%  \end{pmatrix}\bigg\{D_{\xi}^{k-2-l}(DF_{R(\theta_{i-1}\omega)}(v_{i-1}))D^{l+1}_{\xi}v_{i-1}-D_{\xi}^{k-2-l}(DF_{R(\theta_{i-1}\omega)}(u_{i-1}))D^{l+1}_{\xi}u_{i-1}\\
% &\quad-\left[D_{\xi}^{k-1-l}(DF_{R(\theta_{i-1}\omega)}(u_{i-1}))D^{l+1}_{\xi}u_{i-1}+D_{\xi}^{k-2-l}(DF_{R(\theta_{i-1}\omega)}(u_{i-1}))D^{l+2}_{\xi}u_{i-1}\right](\xi-\xi_0)\bigg\}\\
&=\sum_{l=0}^{k-3}\begin{pmatrix}
k-2\\
l
 \end{pmatrix}\bigg\{D_{\xi}^{k-2-l}\left(DF_{R(\theta_{i-1}\omega)}(v_{i-1})-DF_{R(\theta_{i-1}\omega)}(u_{i-1})\right)D^{l+1}_{\xi}v_{i-1}\\
&\quad+D_{\xi}^{k-2-l}(DF_{R(\theta_{i-1}\omega)}(u_{i-1}))\left[D^{l+1}_{\xi}v_{i-1}-D^{l+1}_{\xi}u_{i-1}\right]\\
&\quad-\left[D_{\xi}^{k-1-l}(DF_{R(\theta_{i-1}\omega)}(u_{i-1}))D^{l+1}_{\xi}u_{i-1}+D_{\xi}^{k-2-l}(DF_{R(\theta_{i-1}\omega)}(u_{i-1}))D^{l+2}_{\xi}u_{i-1}\right](\xi-\xi_0)\bigg\}\\
&=\sum_{l=0}^{k-3}\begin{pmatrix}
k-2\\
l
 \end{pmatrix}\bigg\{D_{\xi}^{k-2-l}\left(D^2F_{R(\theta_{i-1}\omega)}(w_{i-1})D_{\xi}w'_{i-1}(\xi-\xi_0)\right)D^{l+1}_{\xi}v_{i-1}\\
&\quad+D_{\xi}^{k-2-l}(DF_{R(\theta_{i-1}\omega)}(u_{i-1}))\left[D^{l+2}_{\xi}w''_{i-1}(\xi-\xi_0)\right]\\
&\quad-\left[D_{\xi}^{k-1-l}(DF_{R(\theta_{i-1}\omega)}(u_{i-1}))D^{l+1}_{\xi}u_{i-1}+D_{\xi}^{k-2-l}(DF_{R(\theta_{i-1}\omega)}(u_{i-1}))D^{l+2}_{\xi}u_{i-1}\right](\xi-\xi_0)\bigg\}\\
&=\sum_{l=0}^{k-3}\begin{pmatrix}
k-2\\
l
 \end{pmatrix}\bigg\{D_{\xi}^{k-2-l}\left(D^2F_{R(\theta_{i-1}\omega)}(w_{i-1})D_{\xi}w'_{i-1}-D^2F_{R(\theta_{i-1}\omega)}(u_{i-1})D_{\xi}u_{i-1}\right)\\
&\quad\times(\xi-\xi_0)D^{l+1}_{\xi}v_{i-1}\\
&\quad+D_{\xi}^{k-2-l}(DF_{R(\theta_{i-1}\omega)}(u_{i-1}))\left[D^{l+2}_{\xi}w''_{i-1}-D^{l+2}_{\xi}u_{i-1}\right](\xi-\xi_0)\\
&\quad-D_{\xi}^{k-1-l}(DF_{R(\theta_{i-1}\omega)}(u_{i-1}))\left[D^{l+1}_{\xi}v_{i-1}-D^{l+1}_{\xi}u_{i-1}\right](\xi-\xi_0)\bigg\},
\end{align*}
where $w_{i-1}=rv_{i-1}+(1-r)u_{i-1}$, $w'_{i-1}=r'v_{i-1}+(1-r')u_{i-1}$, and $w''_{i-1}=r''v_{i-1}+(1-r'')u_{i-1}$ for some $r,r',r''\in[0,1]$. The case of $G$ is similar, so we omit the details. We do not intend to write down the expressions for $D_{\xi}^{k-2-l}$ but analyse its construction which is enough to estimate it. For example, applying the chain rule to $D_{\xi}^{k-2-l}(DF_{R(\theta_{i-1}\omega)}(u_{i-1}(\tau)))$  (respectively, $D_{\xi}^{k-2-l}(DG_{R(\theta_{i-1}\omega)}(u_{i-1}(\tau)))$)
we find that each term in $\mathcal{F}^{k-1}_{R(\theta_{i-1}\omega)}(u_{i-1})$   ({respectively, }$\mathcal{G}^{k-1}_{R(\theta_{i-1}\omega)}(u_{i-1})$) contains factors $D^{l_1}F_{R(\theta_{i-1}\omega)}(u_{i-1})$  ({respectively, }$D^{l_1}G_{R(\theta_{i-1}\omega)}(u_{i-1})$) for some $l_1=1,2,\cdots,k-2$, 
and at least two derivatives $D^{l_2}_{\xi}u_{i-1}(\tau)$ and $D^{l_3}_{\xi}u_{i-1}(\tau)$ for some $l_2,l_3\in \{1,2,\cdots,k-2\}$ such that the sum of the order number of all derivatives % and the power index of each derivative 
is $k-1$. 
Hence
\begin{align}\label{DF-k}
\|D_{\xi}\mathcal{F}^{k-1}_{R(\theta_{i-1}\omega)}(u_{i-1})(\tau)\|_{L^{k}(\mathcal{B}^-;\mathcal{B})}\leq L_{F}(R(\theta_{i-1}\omega))C_{k,\gamma*}e^{-(k\kappa+2\gamma*)(i-1)},
\end{align} 
and
\begin{align}\label{DG-k}
\|D_{\xi}\mathcal{G}^{k-1}_{R(\theta_{i-1}\omega)}(u_{i-1})(\tau)\|_{L^{k}(\mathcal{B}^-;L_2(\mathcal{B};\mathcal{B}))}\leq L_{G}(R(\theta_{i-1}\omega))C_{k,\gamma*}e^{-(k\kappa+2\gamma*)(i-1)}.
\end{align}
 Hence  by assumption,  we have 
\begin{align}\label{F-k}
\nonumber&\|\mathcal{F}^{k-1}_{R(\theta_{i-1}\omega)}(v_{i-1})(\tau)-\mathcal{F}^{k-1}_{R(\theta_{i-1}\omega)}(u_{i-1})(\tau)-D_{\xi}\mathcal{F}^{k-1}_{R(\theta_{i-1}\omega)}(v_{i-1})(\tau)(\xi-\xi_0)\|_{L^{k-1}(\mathcal{B}^-;\mathcal{B})}\\
\nonumber&\leq C_{k,\gamma*}L_{F}(R(\theta_{i-1}\omega))|\xi-\xi_0|\bigg(e^{-(k\kappa+2\gamma*)(i-1)}|v_{i-1}(\tau)-u_{i-1}(\tau)|\\
&\quad\vee e^{-[(k-1)\kappa+2\gamma*](i-1)}|D_{\xi}v_{i-1}(\tau)-D_{\xi}u_{i-1}(\tau)|\vee\\
\nonumber&\quad\cdots\vee e^{-(\kappa+2\gamma*)(i-1)}|D^{k-1}_{\xi}v_{i-1}(\tau)-D^{k-1}_{\xi}u_{i-1}(\tau)|\bigg),
\end{align}
and
\begin{align}\label{G-k-1}
\nonumber&\|\mathcal{G}^{k-1}_{R(\theta_{i-1}\omega)}(v_{i-1})(\tau)-\mathcal{G}^{k-1}_{R(\theta_{i-1}\omega)}(u_{i-1}(\tau))-D_{\xi}\mathcal{G}^{k-1}_{R(\theta_{i-1}\omega)}(v_{i-1})(\tau)\|_{L^{k-1}(\mathcal{B}^-;L_2(\mathcal{B};\mathcal{B}_{-\beta}))}\\
\nonumber&\quad\leq C_{k,\gamma*}L_{G}(R(\theta_{i-1}\omega))|\xi-\xi_0|\bigg(e^{-[k\kappa+2\gamma*](i-1)}|v_{i-1}(\tau)-u_{i-1}(\tau)|_{-\beta}\\
&\quad\vee e^{-[(k-1)\kappa+2\gamma*](i-1)}|D_{\xi}v_{i-1}(\tau)-D_{\xi}u_{i-1}(\tau)|_{-\beta}\vee\\
\nonumber&\quad\cdots\vee e^{-(\kappa+2\gamma*)(i-1)}|D^{k-1}_{\xi}v_{i-1}(\tau)-D^{k-1}_{\xi}u_{i-1}(\tau)|_{-\beta}\bigg),
\end{align}
and
\begin{align}\label{G-k-2}
\nonumber&\|\mathcal{G}^{k-1}_{R(\theta_{i-1}\omega)}(v_{i-1})(r)-\mathcal{G}^{k-1}_{R(\theta_{i-1}\omega)}(u_{i-1}(r))-D_{\xi}\mathcal{G}^{k-1}_{R(\theta_{i-1}\omega)}(v_{i-1})(r)\\\nonumber&\quad-\mathcal{G}^{k-1}_{R(\theta_{i-1}\omega)}(v_{i-1})(q)
+\mathcal{G}^{k-1}_{R(\theta_{i-1}\omega)}(u_{i-1}(q))+D_{\xi}\mathcal{G}^{k-1}_{R(\theta_{i-1}\omega)}(v_{i-1})(q)\|_{L^{k-1}(\mathcal{B}^-;L_2(\mathcal{B};\mathcal{B}_{-\beta}))}\\
\nonumber&\quad\leq C_{k,\gamma*}L_{G}(R(\theta_{i-1}\omega))(r-q)^{\beta}|\xi-\xi_0|\bigg(e^{-[k\kappa+2\gamma*](i-1)}\|v_{i-1}-u_{i-1}\|_{\beta,-\beta}\\
&\quad\vee e^{-[(k-1)\kappa+2\gamma*](i-1)}\|D_{\xi}v_{i-1}-D_{\xi}u_{i-1}\|_{\beta,-\beta}\vee\\
\nonumber&\quad\cdots\vee e^{-(\kappa+2\gamma*)(i-1)}\|D^{k-1}_{\xi}v_{i-1}-D^{k-1}_{\xi}u_{i-1}\|_{\beta,-\beta}\bigg),
\end{align}
where $C_{k,\gamma*}$ only depends on $\|D_{\xi}^j U\|_{j\kappa+\gamma}$ for $i=0,1,2,\cdots,k-1$ and $\gamma\in[0,2\gamma*]$.

Recalling the definition of $\mathcal{L}$, we denote by 
\begin{equation}
\begin{aligned}
&[\mathcal{I}(D^{k-1}_{\xi}V-D^{k-1}_{\xi}U)]_{m}(t)\\
&:=[D^{k-1}_{\xi}V-D^{k-1}_{\xi}U]_{m}(t)-[\mathcal{L}(D^{k-1}_{\xi}V-D^{k-1}_{\xi}U)]_{m}(t)\\
&=\sum_{i=1}^{m}S^{-}(t+m-i)\int_{0}^{1} S^{-}(1-\tau)\bigg[DF_{R(\theta_{i-1}\omega)}(v_{i-1}(\tau))\\
&~~~~~~~~~~~~~~~~~~~~~~~~~~~~~~~~~~~~~~~~~~~~~~~-DF_{R(\theta_{i-1}\omega)}(u_{i-1}(\tau))\bigg]D^{k-1}_{\xi}v_{i-1}(\tau)d\tau\\
&\quad+\sum_{i=1}^{m}S^{-}(t+m-i)\int_{0}^{1} S^{-}(1-\tau)\bigg[DG_{R(\theta_{i-1}\omega)}(v_{i-1}(\tau))\\
&~~~~~~~~~~~~~~~~~~~~~~~~~~~~~~~~~~~~~~~~~~~~~~~-DG_{R(\theta_{i-1}\omega)}(u_{i-1}(\tau))\bigg]D^{k-1}_{\xi}v_{i-1}(\tau)\omega(\tau)\\
&\quad+\int_{0}^{t} S^{-}(t-\tau)[DF_{R(\theta_{m}\omega)}(v_{m}(\tau))-DF_{R(\theta_{m}\omega)}(u_{m}(\tau))]D^{k-1}_{\xi}v_{m}(\tau)d\tau\\
&\quad+\int_{0}^{t} S^{-}(t-\tau)[DG_{R(\theta_{m}\omega)}(v_{m}(\tau))-DG_{R(\theta_{m}\omega)}(u_{m}(\tau))]D^{k-1}_{\xi}v_{m}(\tau)d\theta_{m}\omega(\tau)\\
&\quad-\sum_{i=m+2}^{\infty}S^{+}(t+m-i)\int_{0}^{1} S^{+}(1-\tau)\bigg[DF_{R(\theta_{i-1}\omega)}(v_{i-1}(\tau))\\
&~~~~~~~~~~~~~~~~~~~~~~~~~~~~~~~~~~~~~~~~~~~~~~~~~-DF_{R(\theta_{i-1}\omega)}(u_{i-1}(\tau))\bigg]D^{k-1}_{\xi}v_{i-1}(\tau)d\tau\\
&\quad-\sum_{i=m+2}^{\infty}
S^{+}
(t+m-i)\int_{0}^{1}S^{+}
(1-\tau)\bigg[DG_{R(\theta_{i-1}\omega)}(v_{i-1}(\tau))\\
&~~~~~~~~~~~~~~~~~~~~~~~~~~~~~~~~~~~~~~~~~~~~~~~~~-DG_{R(\theta_{i-1}\omega)}(u_{i-1}(\tau))\bigg]D^{k-1}_{\xi}v_{i-1}(\tau)d\theta_{i-1}\omega(\tau)\\
&\quad-\int_{t}^{1} S^{+}(t-\tau)[DF_{R(\theta_{m}\omega)}(v_{i-1}(\tau))-DF_{R(\theta_{m}\omega)}(u_{m}(\tau))]D^{k-1}_{\xi}v_{m}(\tau)d\tau\\
&\quad -\int_{t}^{1} S^{+}(t-\tau)[DG_{R(\theta_{m}\omega)}(v_{m}(\tau))-DG_{R(\theta_{m}\omega)}(u_{m}(\tau))]D^{k-1}_{\xi}v_{m}(\tau)d\theta_{m}\omega(\tau)\\
&\quad+[\mathcal{S}^{k-1}(\omega,\xi)]_m(t)-[\mathcal{S}^{k-1}(\omega,\xi_0)]_m(t).
\end{aligned}
\end{equation}
To obtain the continuous differentiability of $D_{\xi}^{k-1}u$, we decompose $\mathcal{I}$ as follows:
\begin{align*}
&[\mathcal{I}(D^{k-1}_{\xi}V-D^{k-1}_{\xi}U)]_{m}(t)\\
&\quad=[\mathcal{S}^{k-1}(\omega,\xi)]_m(t)-[\mathcal{S}^{k-1}(\omega,\xi_0)]_m(t)-D_{\xi}[\mathcal{S}^{k-1}(\omega,\xi_0)]_m(t)(\xi-\xi_0)\\
&\qquad+D_{\xi}[\mathcal{S}^{k-1}(\omega,\xi_0)]_m(t)(\xi-\xi_0)+\mathcal{I}^{1}(t)+\mathcal{I}^{2}(t)+\mathcal{I}^{3}(t)+\mathcal{I}^{4}(t)(\xi-\xi_0),
\end{align*}
and  rewrite it as 
\begin{equation}\label{difference-k-1}
\begin{aligned}
&D^{k-1}_{\xi}v_{m}(t)-D^{k-1}_{\xi}u_{m}(t)-(id-\mathcal{L})^{-1}(D_{\xi}[\mathcal{S}^{k-1}(\omega,\xi_0)]_{m}(t){(\xi-\xi_0)}+ \mathcal{I}^4(\xi-\xi_0))\\
&\quad=(id-\mathcal{L})^{-1}\{[\mathcal{S}^{k-1}(\omega,\xi)]_m(t)-[\mathcal{S}^{k-1}(\omega,\xi_0)]_m(t)-D_{\xi}[\mathcal{S}^{k-1}(\omega,\xi_0)]_m(t)(\xi-\xi_0)\\
&\qquad+\mathcal{I}^{1}(t)+\mathcal{I}^{2}(t)+\mathcal{I}^{3}(t)\},
\end{aligned}
\end{equation}
where
\begin{align*}
\mathcal{I}^1(t)
&=\sum_{i=1}^{m}S^{-}(t+m-i)\int_{0}^{1} S^{-}(1-\tau)[DF_{R(\theta_{i-1}\omega)}(v_{i-1}(\tau))-DF_{R(\theta_{i-1}\omega)}(u_{i-1}(\tau))\\
&\quad-D^2F_{R(\theta_{i-1}\omega)}(u_{i-1}(\tau))(v_{i-1}(\tau)-u_{i-1}(\tau))]D^{k-1}_{\xi}u_{i-1}(\tau)d\tau\\
&\quad+\sum_{i=1}^{m}S^{-}(t+m-i)\int_{0}^{1} S^{-}(1-\tau)[DG_{R(\theta_{i-1}\omega)}(v_{i-1}(\tau))-DG_{R(\theta_{i-1}\omega)}(u_{i-1}(\tau))\\
&\quad-D^2G_{R(\theta_{i-1}\omega)}(u_{i-1}(\tau))(v_{i-1}(\tau)-u_{i-1}(\tau))]D^{k-1}_{\xi}u_{i-1}(\tau)\omega(\tau)\\
&\quad+\int_{0}^{t} S^{-}(t-\tau)[DF_{R(\theta_{m}\omega)}(v_{m}(\tau))-DF_{R(\theta_{m}\omega)}(u_{m}(\tau))\\
&\quad-D^2F_{R(\theta_{m}\omega)}(u_{m}(\tau))(v_{m}(\tau)-u_{m}(\tau))]D^{k-1}_{\xi}u_{m}(\tau)d\tau\\
&\quad+\int_{0}^{t} S^{-}(t-\tau)[DG_{R(\theta_{m}\omega)}(v_{m}(\tau))-DG_{R(\theta_{m}\omega)}(u_{m}(\tau))\\
&\quad-D^2G_{R(\theta_{m}\omega)}(u_{m}(\tau))(v_{m}(\tau)-u_{m}(\tau))]D^{k-1}_{\xi}u_{m}(\tau)d\theta_{m}\omega(\tau)\\
&\quad-\sum_{i=m+2}^{\infty}S^{+}(t+m-i)\int_{0}^{1} S^{+}(1-\tau)[DF_{R(\theta_{i-1}\omega)}(v_{i-1}(\tau))-DF_{R(\theta_{i-1}\omega)}(u_{i-1}(\tau))\\
&\quad-D^2F_{R(\theta_{i-1}\omega)}(u_{i-1}(\tau))(v_{i-1}(\tau)-u_{i-1}(\tau))]D^{k-1}_{\xi}u_{i-1}(\tau)d\tau\\
&\quad-\sum_{i=m+2}^{\infty}
S^{+}
(t+m-i)\int_{0}^{1}S^{+}
(1-\tau)[DG_{R(\theta_{i-1}\omega)}(v_{i-1}(\tau))-DG_{R(\theta_{i-1}\omega)}(u_{i-1}(\tau))\\
&\quad-D^2G_{R(\theta_{i-1}\omega)}(u_{i-1}(\tau))(v_{i-1}(\tau)-u_{i-1}(\tau))]D^{k-1}_{\xi}u_{i-1}(\tau)d\theta_{i-1}\omega(\tau)\\
&\quad-\int_{t}^{1} S^{+}(t-\tau)[DF_{R(\theta_{m}\omega)}(v_{i-1}(\tau))-DF_{R(\theta_{m}\omega)}(u_{m}(\tau))\\
&\quad-D^2F_{R(\theta_{m}\omega)}(u_{m}(\tau))(v_{m}(\tau)-u_{m}(\tau))]D^{k-1}_{\xi}u_{m}(\tau)d\tau\\
&\quad -\int_{t}^{1} S^{+}(t-\tau)[DG_{R(\theta_{m}\omega)}(v_{m}(\tau))-DG_{R(\theta_{m}\omega)}(u_{m}(\tau))\\
&\quad-D^2G_{R(\theta_{m}\omega)}(u_{m}(\tau))(v_{m}(\tau)-u_{m}(\tau))]D^{k-1}_{\xi}u_{m}(\tau)d\theta_{m}\omega(\tau),
\end{align*}
\begin{align*}
\mathcal{I}^2(t)
&=\sum_{i=1}^{m}S^{-}(t+m-i)\int_{0}^{1} S^{-}(1-\tau)[D^2F_{R(\theta_{i-1}\omega)}(u_{i-1}(\tau))(v_{i-1}(\tau)-u_{i-1}(\tau))\\
&\quad -D^2F_{R(\theta_{i-1}\omega)}(u_{i-1}(\tau))D_{\xi}u_{i-1}(\tau)(\xi-\xi_0)]D^{k-1}_{\xi}u_{i-1}(\tau)d\tau\\
&\quad+\sum_{i=1}^{m}S^{-}(t+m-i)\int_{0}^{1} S^{-}(1-\tau)[D^2G_{R(\theta_{i-1}\omega)}(u_{i-1}(\tau))(v_{i-1}(\tau)-u_{i-1}(\tau))\\
&\quad-D^2G_{R(\theta_{i-1}\omega)}(u_{i-1}(\tau))D_{\xi}u_{i-1}(\tau)(\xi-\xi_0)]D^{k-1}_{\xi}u_{i-1}(\tau)\omega(\tau)\\
&\quad+\int_{0}^{t} S^{-}(t-\tau)[D^2F_{R(\theta_{m}\omega)}(u_{m}(\tau))(v_{m}(\tau)-u_{m}(\tau))\\
&\quad-D^2F_{R(\theta_{m}\omega)}(u_{m}(\tau))D_{\xi}u_{m}(\tau)(\xi-\xi_0)]D^{k-1}_{\xi}u_{m}(\tau)d\tau\\
&\quad+\int_{0}^{t} S^{-}(t-\tau)[D^2G_{R(\theta_{m}\omega)}(u_{m}(\tau))(v_{m}(\tau)-u_{m}(\tau))\\
&\quad-D^2G_{R(\theta_{m}\omega)}(u_{m}(\tau))D_{\xi}u_{m}(\tau)(\xi-\xi_0)]D^{k-1}_{\xi}u_{m}(\tau)d\theta_{m}\omega(\tau)\\
&\quad-\sum_{i=m+2}^{\infty}S^{+}(t+m-i)\int_{0}^{1} S^{+}(1-\tau)[D^2F_{R(\theta_{i-1}\omega)}(u_{i-1}(\tau))(v_{i-1}(\tau)-u_{i-1}(\tau))\\
&\quad-D^2F_{R(\theta_{i-1}\omega)}(u_{i-1}(\tau))D_{\xi}u_{i-1}(\tau)(\xi-\xi_0)]D^{k-1}_{\xi}u_{i-1}(\tau)d\tau\\
&\quad-\sum_{i=m+2}^{\infty}
S^{+}
(t+m-i)\int_{0}^{1}S^{+}
(1-\tau)[D^2G_{R(\theta_{i-1}\omega)}(u_{i-1}(\tau))(v_{i-1}(\tau)-u_{i-1}(\tau))\\
&\quad-D^2G_{R(\theta_{i-1}\omega)}(u_{i-1}(\tau))D_{\xi}u_{i-1}(\tau)(\xi-\xi_0)]D^{k-1}_{\xi}u_{i-1}(\tau)d\theta_{i-1}\omega(\tau)\\
&\quad-\int_{t}^{1} S^{+}(t-\tau)[D^2F_{R(\theta_{m}\omega)}(u_{m}(\tau))(v_{m}(\tau)-u_{m}(\tau))\\
&\quad-D^2F_{R(\theta_{m}\omega)}(u_{m}(\tau))D_{\xi}u_{m}(\tau)(\xi-\xi_0)]D^{k-1}_{\xi}u_{m}(\tau)d\tau\\
&\quad -\int_{t}^{1} S^{+}(t-\tau)[D^2G_{R(\theta_{m}\omega)}(u_{m}(\tau))(v_{m}(\tau)-u_{m}(\tau))\\
&\quad-D^2G_{R(\theta_{m}\omega)}(u_{m}(\tau))D_{\xi}u_{m}(\tau)(\xi-\xi_0)]D^{k-1}_{\xi}u_{m}(\tau)d\theta_{m}\omega(\tau),
\end{align*}
\begin{align*}
&\mathcal{I}^3(t)\\
&~=\sum_{i=1}^{m}S^{-}(t+m-i)\int_{0}^{1} S^{-}(1-\tau)[DF_{R(\theta_{i-1}\omega)}(v_{i-1}(\tau))-DF_{R(\theta_{i-1}\omega)}(u_{i-1}(\tau))]\\
&\quad\times[D^{k-1}_{\xi}v_{i-1}(\tau)-D^{k-1}_{\xi}u_{i-1}(\tau)]d\tau\\
&\quad+\sum_{i=1}^{m}S^{-}(t+m-i)\int_{0}^{1} S^{-}(1-\tau)[DG_{R(\theta_{i-1}\omega)}(v_{i-1}(\tau))-DG_{R(\theta_{i-1}\omega)}(u_{i-1}(\tau))]\\
&\quad\times[D^{k-1}_{\xi}v_{i-1}(\tau)-D^{k-1}_{\xi}u_{i-1}(\tau)]d\theta_{i-1}\omega(\tau)\\
&\quad+\int_{0}^{t} S^{-}(t-\tau)[DF_{R(\theta_{m}\omega)}(v_{m}(\tau))-DF_{R(\theta_{m}\omega)}(u_{m}(\tau))][D^{k-1}_{\xi}v_{m}(\tau)-D^{k-1}_{\xi}u_{m}(\tau)]d\tau\\
&\quad+\int_{0}^{t} S^{-}(t-\tau)[DG_{R(\theta_{m}\omega)}(v_{m}(\tau))-DG_{R(\theta_{m}\omega)}(u_{m}(\tau))][D^{k-1}_{\xi}v_{m}(\tau)-D^{k-1}_{\xi}u_{m}(\tau)]d\theta_{m}\omega(\tau)\\
&\quad-\sum_{i=m+2}^{\infty}S^{+}(t+m-i)\int_{0}^{1} S^{+}(1-\tau)[DF_{R(\theta_{i-1}\omega)}(v_{i-1}(\tau))-DF_{R(\theta_{i-1}\omega)}(u_{i-1}(\tau))]\\
&\quad\times[D^{k-1}_{\xi}v_{i-1}(\tau)-D^{k-1}_{\xi}u_{i-1}(\tau)]d\tau\\
&\quad-\sum_{i=m+2}^{\infty}
S^{+}
(t+m-i)\int_{0}^{1}S^{+}
(1-\tau)[DG_{R(\theta_{i-1}\omega)}(v_{i-1}(\tau))-DG_{R(\theta_{i-1}\omega)}(u_{i-1}(\tau))]\\
&\quad\times[D^{k-1}_{\xi}v_{i-1}(\tau)-D^{k-1}_{\xi}u_{i-1}(\tau)]d\theta_{i-1}\omega(\tau)\\
&\quad-\int_{t}^{1} S^{+}(t-\tau)[DF_{R(\theta_{m}\omega)}(v_{m}(\tau))-DF_{R(\theta_{m}\omega)}(u_{m}(\tau))][D^{k-1}_{\xi}v_{m}(\tau)-D^{k-1}_{\xi}u_{m}(\tau)]d\tau\\
&\quad -\int_{t}^{1} S^{+}(t-\tau)[DG_{R(\theta_{m}\omega)}(v_{m}(\tau))-DG_{R(\theta_{m}\omega)}(u_{m}(\tau))][D^{k-1}_{\xi}v_{m}(\tau)-D^{k-1}_{\xi}u_{m}(\tau)]d\theta_{m}\omega(\tau),
\end{align*}
and 
\begin{align*}
&\mathcal{I}^4(t)(\xi-\xi_0)\\
&~=\sum_{i=1}^{m}S^{-}(t+m-i)\int_{0}^{1} S^{-}(1-\tau)D^2F_{R(\theta_{i-1}\omega)}(u_{i-1}(\tau))D_{\xi}u_{i-1}(\tau)(\xi-\xi_0)D^{k-1}_{\xi}u_{i-1}(\tau)d\tau\\
&\quad+\sum_{i=1}^{m}S^{-}(t+m-i)\int_{0}^{1} S^{-}(1-\tau)D^2G_{R(\theta_{i-1}\omega)}(u_{i-1}(\tau))D_{\xi}u_{i-1}(\tau)\\
&\quad\times(\xi-\xi_0)D^{k-1}_{\xi}u_{i-1}(\tau)d\theta_{i-1}\omega(\tau)\\
&\quad+\int_{0}^{t} S^{-}(t-\tau)D^2F_{R(\theta_{m}\omega)}(u_{m}(\tau))D_{\xi}u_{m}(\tau)(\xi-\xi_0)D^{k-1}_{\xi}u_{m}(\tau)d\tau\\
&\quad+\int_{0}^{t} S^{-}(t-\tau)D^2G_{R(\theta_{m}\omega)}(u_{m}(\tau))D_{\xi}u_{m}(\tau)(\xi-\xi_0)D^{k-1}_{\xi}u_{m}(\tau)d\theta_{m}\omega(\tau)\\
&\quad-\sum_{i=m+2}^{\infty}S^{+}(t+m-i)\int_{0}^{1} S^{+}(1-\tau)D^2F_{R(\theta_{i-1}\omega)}(u_{i-1}(\tau))D_{\xi}u_{i-1}(\tau)(\xi-\xi_0)D^{k-1}_{\xi}u_{i-1}(\tau)d\tau\\
&\quad-\sum_{i=m+2}^{\infty}
S^{+}
(t+m-i)\int_{0}^{1}S^{+}
(1-\tau)D^2G_{R(\theta_{i-1}\omega)}(u_{i-1}(\tau))D_{\xi}u_{i-1}(\tau)\\
&\quad\times(\xi-\xi_0)D^{k-1}_{\xi}u_{i-1}(\tau)d\theta_{i-1}\omega(\tau)\\
&\quad-\int_{t}^{1} S^{+}(t-\tau)D^2F_{R(\theta_{m}\omega)}(u_{m}(\tau))D_{\xi}u_{m}(\tau)(\xi-\xi_0)D^{k-1}_{\xi}u_{m}(\tau)d\tau\\
&\quad -\int_{t}^{1} S^{+}(t-\tau)D^2G_{R(\theta_{m}\omega)}(u_{m}(\tau))D_{\xi}u_{m}(\tau)(\xi-\xi_0)D^{k-1}_{\xi}u_{m}(\tau)d\theta_{m}\omega(\tau).
\end{align*}
\begin{lemma}
Under the assumptions $(\mathbf{A_2^k}),(\mathbf{A_3^k})$ and \eqref{gap k}, the right hand side of (\ref{difference-k-1}) is of order $o(|\xi-\xi_0|)$ as $\xi\to \xi_0$.
\end{lemma}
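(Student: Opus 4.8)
The plan is to exploit that $(\mathrm{id}-\mathcal{L})^{-1}$ is a bounded linear operator on $\mathcal{H}_{\kappa+\gamma}$ (as established in the preceding remark), so that it suffices to prove that the quantity inside the braces on the right-hand side of~\eqref{difference-k-1}, namely
\[
[\mathcal{S}^{k-1}(\omega,\xi)]_m-[\mathcal{S}^{k-1}(\omega,\xi_0)]_m-D_{\xi}[\mathcal{S}^{k-1}(\omega,\xi_0)]_m(\xi-\xi_0)+\mathcal{I}^1+\mathcal{I}^2+\mathcal{I}^3,
\]
is of order $o(|\xi-\xi_0|)$ in the $\mathcal{H}_{\kappa+\gamma}$-norm as $\xi\to\xi_0$. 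I would treat the four contributions separately, and for each of them reuse the head/tail decomposition from the $C^1$ case: fix a large cut-off index $m_1$, control the tail $m\ge m_1$ by the gap condition~\eqref{gap k} (which guarantees geometric summability of the weighted $S^{\pm}$-factors), and control the head $1\le m<m_1$ by the uniform continuity of the relevant derivatives of $F$, $G$ and of $\xi\mapsto\Gamma(\xi,\omega)$.

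For the $\mathcal{S}^{k-1}$-difference I would invoke the induction hypothesis, according to which $U$ is of class $C^{k-1}$ as a map $\mathcal{B}^-\to\mathcal{H}_{(k-1)\kappa+2\gamma*}$, so that $\xi\mapsto\mathcal{S}^{k-1}(\omega,\xi)$ is differentiable with derivative $D_\xi\mathcal{S}^{k-1}(\omega,\xi_0)$. The quantitative input is given by estimates~\eqref{F-k},~\eqref{G-k-1} and~\eqref{G-k-2}, which bound the corresponding Taylor remainders of $\mathcal{F}^{k-1}_{R}$ and $\mathcal{G}^{k-1}_{R}$ by $C_{k,\gamma*}L_{F/G}(R(\theta_{i-1}\omega))|\xi-\xi_0|$ times a maximum of the increments $\|D^{j}_\xi v_{i-1}-D^{j}_\xi u_{i-1}\|_{\beta,-\beta}$, $j=0,\dots,k-1$, each carrying the geometric weight $e^{-(j\kappa+2\gamma*)(i-1)}$. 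Since every such increment tends to $0$ as $\xi\to\xi_0$ by continuity of the lower-order derivatives, and the weights render the series summable under~\eqref{gap k}, the tail is uniformly small while the head vanishes by continuity, yielding the desired $o(|\xi-\xi_0|)$ bound.

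The three remainder operators are then dispatched by elementary Taylor expansions mirroring the $C^1$ argument. In $\mathcal{I}^1$ the bracket $DF_{R}(v)-DF_{R}(u)-D^2F_{R}(u)(v-u)$ (and its $G$-analogue) is the first-order Taylor remainder of $DF_{R}$, hence $o(|v-u|)=o(|\xi-\xi_0|)$ uniformly, using the $C^k_b$-regularity of $F$, $G$ from $(\mathbf{A_2^k})$,~$(\mathbf{A_3^k})$ and the Lipschitz bound $|v-u|\lesssim|\xi-\xi_0|$. For $\mathcal{I}^2$ I would factor the bracket as $D^2F_{R}(u)\big[(v-u)-D_\xi u(\xi-\xi_0)\big]$ and invoke the already established $C^1$-differentiability, which gives $(v-u)-D_\xi u(\xi-\xi_0)=o(|\xi-\xi_0|)$ in $C^\beta_{-\beta}$; boundedness of $D^2F_{R},D^2G_{R}$ together with~\eqref{gap k} then closes the estimate. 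For $\mathcal{I}^3$ the Lipschitz estimate yields $\|DF_{R}(v)-DF_{R}(u)\|\lesssim|\xi-\xi_0|$ while $\|D^{k-1}_\xi v-D^{k-1}_\xi u\|\to0$ by the continuity of $D^{k-1}_\xi u$ (the induction hypothesis), so the product is again $o(|\xi-\xi_0|)$.

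I expect the main obstacle to be organizational rather than conceptual: one must perform the head/tail splitting \emph{simultaneously} for all eight $S^{\pm}$-blocks inside each of $\mathcal{S}^{k-1}$, $\mathcal{I}^1$, $\mathcal{I}^2$ and $\mathcal{I}^3$, keeping precise track of the exponential weights $e^{(\kappa+\gamma)m-(j\kappa+2\gamma*)i}$ so that~\eqref{gap k} applies with the correctly shifted summation index. The delicate point is to verify that the constant $C_{k,\gamma*}$ produced by the Leibniz/Fa\`a di Bruno expansion of $D^{k-2-l}_\xi(DF_{R}(u_{i-1}))$ depends only on the previously controlled norms $\|D^{j}_\xi U\|_{\mathcal{H}_{j\kappa+\gamma}}$ for $j\le k-1$, so that no circularity enters the induction; this is exactly what~\eqref{DF-k} and~\eqref{DG-k} provide.
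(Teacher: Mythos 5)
Your proposal is correct and follows essentially the same route as the paper: reduce to the bracketed terms via the boundedness of $(\mathrm{id}-\mathcal{L})^{-1}$, control $\Delta\mathcal{S}$ through the Taylor-remainder bounds \eqref{F-k}--\eqref{G-k-2} together with the continuity of $D_\xi^l U$ for $l\le k-1$ and the summability granted by \eqref{gap k}, and dispatch $\mathcal{I}^1,\mathcal{I}^2,\mathcal{I}^3$ by the corresponding first-order Taylor expansions. The only cosmetic difference is that the paper absorbs the uniform-in-$i$ smallness directly into the constant $\mathcal{E}_k$ rather than performing an explicit head/tail splitting at a cutoff $m_1$, but the estimates are the same.
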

\begin{proof}
We first consider $$[\mathcal{S}^{k-1}(\omega,\xi)]_m(t)-[\mathcal{S}^{k-1}(\omega,\xi_0)]_m(t)-D_{\xi}[\mathcal{S}^{k-1}(\omega,\xi_0)]_m(t)(\xi-\xi_0).$$ For simplicity, denoting it by $\Delta [\mathcal{S}(\xi,\xi_0)]_m(t)$, we have
\begin{align*}
&\Delta [\mathcal{S}(\xi,\xi_0)]_m(t)\\
&~=\sum_{i=1}^{m}S^{-}(t+m-i)\int_{0}^{1} S^{-}(1-\tau)[\mathcal{F}^{k-1}_{R(\theta_{i-1}\omega)}(v_{i-1})(\tau)-\mathcal{F}^{k-1}_{R(\theta_{i-1}\omega)}(u_{i-1})(\tau)\\
&\quad-D_{\xi}(\mathcal{F}^{k-1}_{R(\theta_{i-1}\omega)}(u_{i-1}))(\tau)(\xi-\xi_0)]d\tau\\
&\quad+\sum_{i=1}^{m}S^{-}(t+m-i)\int_{0}^{1} S^{-}(1-\tau)[\mathcal{G}^{k-1}_{R(\theta_{i-1}\omega)}(v_{i-1})(\tau)-\mathcal{G}^{k-1}_{R(\theta_{i-1}\omega)}(u_{i-1})(\tau)\\
&\quad-D_{\xi}(\mathcal{G}^{k-1}_{R(\theta_{i-1}\omega)}(u_{i-1}))(\tau)(\xi-\xi_0)]d\theta_{i-1}\omega(\tau)\\
&\quad+\int_{0}^{t} S^{-}(t-\tau)[\mathcal{F}^{k-1}_{R(\theta_{m}\omega)}(v_{m})(\tau)-\mathcal{F}^{k-1}_{R(\theta_{m}\omega)}(u_{m})(\tau)-D_{\xi}(\mathcal{F}^{k-1}_{R(\theta_{m}\omega)}(u_{m}))(\tau)(\xi-\xi_0)]d\tau\\
&\quad+\int_{0}^{t} S^{-}(t-\tau)[\mathcal{G}^{k-1}_{R(\theta_{m}\omega)}(v_{m})(\tau)-\mathcal{G}^{k-1}_{R(\theta_{m}\omega)}(u_{m})(\tau)-D_{\xi}(\mathcal{G}^{k-1}_{R(\theta_{m}\omega)}(u_{m}))(\tau)(\xi-\xi_0)]d\theta_{m}\omega(\tau)\\
&\quad-\sum_{i=m+2}^{\infty}S^{+}(t+m-i)\int_{0}^{1} S^{+}(1-\tau)[\mathcal{F}^{k-1}_{R(\theta_{i-1}\omega)}(v_{i-1})(\tau)-\mathcal{F}^{k-1}_{R(\theta_{i-1}\omega)}(u_{i-1})(\tau)\\
&\quad-D_{\xi}(\mathcal{F}^{k-1}_{R(\theta_{i-1}\omega)}(u_{i-1}))(\tau)(\xi-\xi_0)]d\tau\\
&\quad-\sum_{i=m+2}^{\infty}
S^{+}
(t+m-i)\int_{0}^{1}S^{+}
(1-\tau)[\mathcal{G}^{k-1}_{R(\theta_{i-1}\omega)}(v_{i-1})(\tau)-\mathcal{G}^{k-1}_{R(\theta_{i-1}\omega)}(u_{i-1})(\tau)\\
&\quad-D_{\xi}(\mathcal{G}^{k-1}_{R(\theta_{i-1}\omega)}(u_{i-1}))(\tau)(\xi-\xi_0)]d\theta_{i-1}\omega(\tau)\\
&\quad-\int_{t}^{1} S^{+}(t-\tau)[\mathcal{F}^{k-1}_{R(\theta_{m}\omega)}(v_{m})(\tau)-\mathcal{F}^{k-1}_{R(\theta_{m}\omega)}(u_{m})(\tau)-D_{\xi}(\mathcal{F}^{k-1}_{R(\theta_{m}\omega)}(u_{m}))(\tau)(\xi-\xi_0)]d\tau\\
&\quad-\int_{t}^{1} S^{+}(t-\tau)[\mathcal{G}^{k-1}_{R(\theta_{m}\omega)}(v_{m})(\tau)-\mathcal{G}^{k-1}_{R(\theta_{m}\omega)}(u_{m})(\tau)-D_{\xi}(\mathcal{G}^{k-1}_{R(\theta_{m}\omega)}(u_{m}))(\tau)(\xi-\xi_0)]d\theta_{m}\omega(\tau).
\end{align*}
Based on the estimates of \eqref{F-k},\eqref{G-k-1}, \eqref{G-k-2} and the continuous dependence on $\xi$ of $D_{\xi}^lU$, for $l=0,1,\cdots,k-1$, we know that for any $\epsilon>0$,  there exists an $\epsilon_k>0$ such that when $|\xi-\xi_0|\leq \epsilon_k$,   we have 
\begin{align*}
\max_{0\leq l \leq k-1}\sup_{i\in\mathbb{Z}^+}\|D_{\xi}^lv_{i}-D_{\xi}^lu_{i}\|_{\beta,-\beta}\leq \mathcal{E}_{k},
\end{align*}
where \begin{align*}
\frac{\epsilon}{\mathcal{E}_{k}}=KC_{k,\gamma*}\sup_{m\in\mathbb{Z}^+}\bigg[\sum_{i=0}^m e^{\check{\mu}(m-i)+(k\kappa+\gamma)m-(k\kappa+2\gamma*)i}+\sum_{i=m}^\infty e^{\hat{\mu}(m-i)+(k\kappa+\gamma)m-(k\kappa+2\gamma*)i}\bigg].
\end{align*}
Then 

\begin{align*}
\sup_{m\in \mathbb{Z}^+}e^{m(k\kappa+\gamma)}\|\Delta\mathcal{S}(\xi,\xi_0)\|_{\beta,-\beta}&\leq KC_{k,\gamma*}\mathcal{E}_{k}|\xi-\xi_0|\sup_{m\in\mathbb{Z}^+}\bigg[\sum_{i=0}^m e^{\check{\mu}(m-i)+(k\kappa+\gamma)m-(k\kappa+2\gamma*)i}\\&\quad+\sum_{i=m}^\infty e^{\hat{\mu}(m-i)+(k\kappa+\gamma)m-(k\kappa+2\gamma*)i}\bigg]\\
&\leq \epsilon|\xi-\xi_0|.
\end{align*}
Applying  similar procedures to $\mathcal{I}^1,\mathcal{I}^2,\mathcal{I}^3$, we additionally obtain that 
\begin{align*}
 \mathcal{I}^1= o(|\xi-\xi_0|),
~~\mathcal{I}^2= o(|\xi-\xi_0|),
~~\mathcal{I}^3= o(|\xi-\xi_0|),
\end{align*}
as $\xi\to\xi_0$, which completes the proof.
\end{proof}
As a result, we have the following equality:
\begin{align*}
&D^{k-1}_{\xi}v_{m}(t)-D^{k-1}_{\xi}u_{m}(t)-(id-\mathcal{L})^{-1}(D_{\xi}[\mathcal{S}^{k-1}(\omega,\xi_0)]_{m}(t)(\xi-\xi_0)+ \mathcal{I}^4(\xi-\xi_0))=o(|\xi-\xi_0|).
\end{align*}
Besides, one can also obtain 
\begin{align*}
&\| D_{\xi}[\mathcal{S}^{k-1}(\omega,\xi_0)]_{m}(\xi-\xi_0)\|_{\mathcal{H}_{k\kappa+\gamma}}\leq C|\xi-\xi_0|,~~
\| \mathcal{I}^4(\xi-\xi_0)\|_{\mathcal{H}_{k\kappa+\gamma}}\leq C|\xi-\xi_0|
\end{align*}
for some constant $C$ independent of $V$. Hence $D_{\xi}[\mathcal{S}^{k-1}(\omega,\xi_0)]_{m}(t)$ and $\mathcal{I}^4$ are bounded  linear  operators on $\mathcal{B}^-$,
% As a result 
% \begin{align*}
% &D^{k-1}_{\xi}v_{m}(t)-D^{k-1}_{\xi}u_{m}(t)-(id-\mathcal{L})^{-1}(D_{\xi}[\mathcal{S}^{k-1}(\omega,\xi_0)]_{m}(t)(\xi-\xi_0)+ \mathcal{I}^4(\xi-\xi_0))=o(|\xi-\xi_0|),
% \end{align*}
which implies the existence of   $D^{k}_{\xi}u$  in $\mathcal{H}_{k\kappa+\gamma}$ for $\gamma\in[0,2\gamma*]$. By a direct computation, we have
\begin{equation}
\begin{aligned}
&D^{k}_{\xi}u_m(t)\\
&=\sum_{i=1}^{m}S^{-}(t+m-i)\int_{0}^{1} S^{-}(1-\tau)DF_{R(\theta_{i-1}\omega)}(u_{i-1}(\tau))D^{k}_{\xi}u_{i-1}(\tau)d\tau\\
&\quad+\sum_{i=1}^{m}S^{-}(t+m-i)\int_{0}^{1} S^{-}(1-\tau)DG_{R(\theta_{i-1}\omega)}(u_{i-1}(\tau))D^{k}_{\xi}u_{i-1}(\tau)d\theta_{i-1}\omega(\tau)\\
&\quad+\int_{0}^{t} S^{-}(t-\tau)DF_{R(\theta_m\omega)}(u_{m}(\tau))D^{k}_{\xi}u_{m}(\tau)d\tau\\
&\quad+\int_{0}^{t} S^{-}(t-\tau)DG_{R(\theta_m\omega)}(u_{m}(\tau))D^{k}_{\xi}u_{m}(\tau)d\theta_{m}\omega(\tau)\\
&\quad-\sum_{i=m+2}^{\infty}S^{+}(t+m-i)\int_{0}^{1} S^{+}(1-\tau)DF_{R(\theta_{i-1}\omega)}(u_{i-1}(\tau))D^{k}_{\xi}u_{i-1}(\tau)d\tau\\
&\quad-\sum_{i=m+2}^{\infty}
S^{+}
(t+m-i)\int_{0}^{1}S^{+}
(1-\tau)DG_{R(\theta_{i-1}\omega)}(u_{i-1}(\tau))D^{k}_{\xi}u_{i-1}(\tau)d\theta_{i-1}\omega(\tau)\\
&\quad-\int_{t}^{1} S^{+}(t-\tau)DF_{R(\theta_{m}\omega)}(u_m(\tau))D^{k}_{\xi}u_{m}(\tau)d\tau\\
&\quad-\int_{t}^{1} S^{+}(t-\tau)DG_{R(\theta_{m}\omega)}(u_m(\tau))D^{k}_{\xi}u_{m}(\tau)d\theta_{m}\omega(\tau)+[\mathcal{S}^k(\omega,\xi_0)]_{m}(t),
\end{aligned}
\end{equation}
By $(\mathbf{A_2^k})$ and $(\mathbf{A_3^k})$ and \eqref{gap k}, it could be checked that there exists a constant $M>0$ such that 
\begin{align*}
\|\mathcal{S}^k(\omega,\xi_0)\|_{\mathcal{H}_{k\kappa+\kappa}}\leq M.
\end{align*}
Hence regarding the estimate of $\mathcal{L}$, we have 
\begin{align*}
\|D_{\xi}^k U\|_{\mathcal{H}_{k\kappa+\kappa}}&\leq M+\frac{1}{2}\|D_{\xi}^{k} U\|_{\mathcal{H}_{k\kappa+\kappa}}
\leq 2M
\end{align*}
for any $\gamma\in[0,2\gamma*]$. For the continuity of  $D_{\xi}^k U$  with respect with $\xi$, recalling the definition of $\mathcal{L}$, we express the difference of $D_\xi^k u$ and $D_\xi^k v$  as
\begin{align*}
D_\xi^k v_{m}(t)-D_\xi^k u_{m}(t)&=[\mathcal{L}(D_\xi^k V(t)-D_\xi^k U)]_{m}(t)+[\mathcal{T}^{k}]_{m}(t)
+[\mathcal{S}^k(\omega,\xi)]_{m}(t)-[\mathcal{S}^k(\omega,\xi_0)]_{m}(t),
\end{align*}
where
\begin{equation}
\begin{aligned}
&[\mathcal{T}^k]_{m}(t)\\
&=\sum_{i=1}^{m}S^{-}(t+m-i)\int_{0}^{1} S^{-}(1-\tau)[DF_{R(\theta_{i-1}\omega)}(v_{i-1}(\tau))\\
&\quad-DF_{R(\theta_{i-1}\omega)}(u_{i-1}(\tau))]D^k_{\xi}v_{i-1}(\tau)d\tau\\
&\quad+\sum_{i=1}^{m}S^{-}(t+m-i)\int_{0}^{1} S^{-}(1-\tau)[DG_{R(\theta_{i-1}\omega)}(v_{i-1}(\tau))\\
&\quad-DG_{R(\theta_{i-1}\omega)}(u_{i-1}(\tau))]D^k_{\xi}v_{i-1}(\tau)d\theta_{i-1}\omega(\tau)\\
&\quad+\int_{0}^{t} S^{-}(t-\tau)[DF_{R(\theta_m\omega)}(v_{m}(\tau))-DF_{R(\theta_m\omega)}(u_{m}(\tau))]D^k_{\xi}v_{m}(\tau)d\tau
\\
&\quad+\int_{0}^{t} S^{-}(t-\tau)[DG_{R(\theta_m\omega)}(v_{m}(\tau))-DG_{R(\theta_m\omega)}(u_{m}(\tau))]D^k_{\xi}v_{m}(\tau)d\theta_{m}\omega(\tau)\\
&\quad-\sum_{i=m+2}^{\infty}S^{+}(t+m-i)\int_{0}^{1} S^{+}(1-\tau)[DF_{R(\theta_{i-1}\omega)}(v_{i-1}(\tau))\\
&\quad-DF_{R(\theta_{i-1}\omega)}(u_{i-1}(\tau))]D^k_{\xi}v_{i-1}(\tau)d\tau\\
&\quad-\sum_{i=m+2}^{\infty}
S^{+}
(t+m-i)\int_{0}^{1}S^{+}
(1-\tau)[DG_{R(\theta_{i-1}\omega)}(v_{i-1}(\tau))
\\&\quad-DG_{R(\theta_{i-1}\omega)}(u_{i-1}(\tau))]D^k_{\xi}v_{i-1}(\tau)d\theta_{i-1}\omega(\tau)\\
&\quad-\int_{t}^{1} S^{+}(t-\tau)[DF_{R(\theta_{m}\omega)}(v_m(\tau))-DF_{R(\theta_{m}\omega)}(u_m(\tau))]D^k_{\xi}v_{m}(\tau)d\tau\\
&\quad-\int_{t}^{1} S^{+}(t-\tau)[DG_{R(\theta_{m}\omega)}(v_m(\tau))-DG_{R(\theta_{m}\omega)}(u_m(\tau))]D^k_{\xi}v_{m}(\tau)d\theta_{m}\omega(\tau).
\end{aligned}
\end{equation}
Then similar to the estimate of $\mathcal{T}$, one can derive by a straightforward computation that  $\|\mathcal{T}^k\|_{\mathcal{H}_{k\kappa+\gamma}}\to0$ as $\xi\to\xi_0$ for $\gamma\in[0,\gamma*)$. Consequently, regarding the previous deliberations, the proof of the $C^k$-smoothness of the local stable manifold is completed.

\section*{Data availability}
No data was used for the research described in the article.

\section*{Acknowledgments} 
We thank the referee for carefully reading the manuscript and for the valuable
suggestions.

%\bibliographystyle{plain}

%\bibliography{Reference}

\begin{thebibliography}{10}

\bibitem{Arnold1998} L.~Arnold.
\newblock {\em Random Dynamical Systems}. Springer, 1998.


\bibitem{BLZ1998}P.~Bates, K.~Lu and C.~Zeng.
\newblock Existence and Persistence of Invariant Manifolds for Semiflows in Banach Space. {\em Mem. Amer. Math. Soc}. \textbf{645}, 1998.

\bibitem{BGHS2017} H.~Bessaih, M.J.~Garrido-Atienza, X. Han and B.~Schmalfuss. 
\newblock Stochastic lattice dynamical systems with fractional noise. {\em SIAM J. Math. Anal}. \textbf{49}, 1495-1518, 2017.

\bibitem{Caraballo2010} T.~Caraballo, J.~Duan, K. Lu and B.~Schmalfuss. Invariant Manifolds for Random and Stochastic Partial Differential Equations. {\em Adv. Nonlinear Stud.}. \textbf{10}, 23-52, 2010.

\bibitem{Castaing1977}C.~Castaing and M.~Valadier. {\em Convex Analysis and Measurable Multifunctions}. Springer, 1977.

\bibitem{Chen2014}Y.~Chen, H.~Gao, M.J.~Garrido-Atienza and B.~Schmalfuss. Pathwise solutions of SPDEs driven by H\"older-continuous integrators with exponent larger than 1/2 and random dynamical systems. {\em Discrete Contin. Dyn. Syst.B}. \textbf{34} 79-98, 2014.

\bibitem{chow_li_wang_1994}S.-N.~Chow, C.~Li and D.~Wang. 
\newblock {\em Normal Forms and Bifurcation of Planar Vector Fields}. Cambridge University Press, 1994.


\bibitem{Deng1990}B.~Deng
\newblock Homoclinic bifurcations with nonhyperbolic equilibria. {\em SIAM J. Math. Anal}. \textbf{21}, 693-720, 1990.

\bibitem{DLS2004}J.~Duan, K.~Lu and B.~Schmalfu{\ss}. \newblock Smooth stable and unstable manifolds for stochastic evolutionary equations. {\em J. Dynam. Differential Equations}. \textbf{16} pp. 949-972, 2004.
\bibitem{DLS2003} J.~Duan, K.~Lu and B.~Schmalfu{\ss}. Invariant manifolds for stochastic partial differential equations. {\em Ann. Probab}. \textbf{31} 2109-2135, 2003.


\bibitem{DGNS2018}L.H. Duc Exponential stability of stochastic evolution equations driven by small fractional Brownian motion with Hurst parameter in (1/2,1). {\em J. Differential Equations}. \textbf{264}, 1119-1145, 2018.

\bibitem{G22}B.~Fehrman, B.~Gess and R.~Gvalani. Ergodicity and random dynamical systems for conservative SPDEs. {\em arXiv:2206.14789}, 2022.


\bibitem{FH2020}P.K.~Friz and M.~Hairer.
\newblock {\em A Course on Rough Paths}. Springer, 2020.

\bibitem{Gao2014} H.~Gao, M.J.~Garrido-Atienza and B.~ Schmalfu{\ss}.
\newblock Random attractors for stochastic evolution equations driven by fractional Brownian motion. {\em SIAM J. Math. Anal}. \textbf{46}, 2281-2309, 2014.

\bibitem{GarridoAtienza2010}M.J.~Garrido-Atienza, K.~Lu and B.~Schmalfu{\ss}. Unstable invariant manifolds for stochastic PDEs driven by a fractional Brownian motion. {\em J. Differential Equations}. \textbf{248}, 1637-1667, 2010.


\bibitem{GLS2010}M.J.~Garrido-Atienza, K.~Lu and B.~Schmalfu{\ss}.
\newblock Random dynamical systems for stochastic partial differential equations driven by a fractional Brownian motion. {\em Discrete Contin. Dyn. Syst.B.}. \textbf{14}, 473-493, 2010.


\bibitem{GSV2019}M.J.~Garrido-Atienza, B.~Schmalfu{\ss} and J.~Valero. Setvalued dynamical systems for stochastic evolution equations driven by fractional noise. {\em J. Dynam. Differential Equations}. \textbf{34}, 79-105, 2019.


\bibitem{GS2011}M.J.~Garrido-Atienza and B.~Schmalfuss. \newblock Ergodicity of the infinite dimensional fractional Brownian motion. {\em J. Dynam. Differential Equations}. \textbf{23}, 671-681, 2011.

%\bibitem{Garrido2010}M.J.~Garrido-Atienza, K.~Lu and B.~Schmalfuss.
%\newblock Random dynamical %systems for stochastic partial differential equations driven by a fractional Brownian motion. {\em Discrete Contin. Dyn. Syst. Ser. B }. \textbf{14}, 473-493 (2010)


\bibitem{GH2019}A.~Gerasimovics and M.~Hairer. H\"ormander's theorem for semilinear SPDEs. {\em Electron. J. Probab.}. \textbf{24}  1-56, 2019.


\bibitem{GHN21}A.~Gerasimovics, A.~Hocquet and T.~Nilssen.
\newblock Non-autonomous rough semilinear PDEs and the multiplicative Sewing lemma. {\em J. Funct. Anal.}. \textbf{218}, 109200, 2021.


\bibitem{GVR21}M.~Ghani Varzaneh and S.~Riedel.
\newblock A dynamical theory for singular stochastic delay differential equations II: nonlinear equations and invariant manifolds. {\em Discrete Contin. Dyn. Syst.B}. \textbf{28}, 4587-4612, 2021.

\bibitem{Gu2004}M.~Gubinelli.
\newblock Controlling rough paths. {\em J. Funct. Anal.}. \textbf{216} 86-140, 2004.

\bibitem{Hadamard1901}J.~Hadamard.
\newblock Sur l'iteration et les solutions asymptotiques des equations differentielles. {\em Bull. Soc. Math. France}. \textbf{29} 224-228, 1901.

\bibitem{HN20}R.~Hesse and A.~Neam\c tu.
\newblock Global solutions and random dynamical systems for rough evolution equations. {\em Discrete Contin.~Dyn.~Syst.B}. \textbf{25}, 2723-2748, 2020.

\bibitem{Hirsch1977}M.~Hirsch, C.~Pugh and M.~Shub.
\newblock {\em Invariant Manifolds}. Lecture Notes in Mathematics. Springer, Berlin, Heidelberg, 1977. 

\bibitem{Kloeden20121422}P.~Kloeden and T.~Lorenz.
\newblock Mean-square random dynamical systems. {\em J. Differential Equations}. \textbf{253} 1422-1438, 2012.

\bibitem{KN22}C.~Kuehn and A.~Neam\c tu. Center manifolds for rough partial differential equations. {\em Electron.~J.~Probab}. \textbf{28} 1-31, 2023.


\bibitem{KN21}C.~Kuehn and A.~Neam\c tu. Rough center manifolds. {\em SIAM J. Math. Anal.}. \textbf{53} 3912-3957, 2021.


\bibitem{LZ2021} Z.~Li and C.~Zeng.
\newblock C. Center manifolds for ill-posed stochastic evolution equations. {\em Discrete Contin. Dyn. Syst.B}. \textbf{27} 2483-2499, 2021.

\bibitem{LI2022}
Z. Li, C.~Zeng and J.~Huang. 
\newblock Mean-square invariant manifolds for ill-posed stochastic evolution equations driven by nonlinear noise.
{\em J. Differential Equat.}, ({\bf 313}) 382--419, 2022.

\bibitem{LL2010}Z.~Lian and  K.~Lu.
\newblock Lyapunov Exponents and Invariant Manifolds for Random Dynamical Systems in a Banach Space. {\em Mem. Amer. Math. Soc.} \textbf{206}, 2010.


\bibitem{Lu2011}K.~Lu and B.~Schmalfu{\ss}. Invariant manifolds for infinite dimensional random dynamical systems. {\em New Trends In Stochastic Analysis And Related Topics}, 301-328, 2011.


\bibitem{LZZ2020}K.~Lu, W.~Zhang and W.~Zhang.
\newblock $C^1$-Hartman theorem for random dynamical systems. {\em Adv. Math.} \textbf{375} 107375, 2020.




\bibitem{Liapounof}A.~Lyapunov.
\newblock {\em Probleme General de la Stabilite du Mouvement}. Princeton University Press, 1947.

\bibitem{MV1968}B.~Mandelbrot and J.~van Ness. Fractional Brownian motions, fractional noises and applications. {\em SIAM Rev.} \textbf{10} 422-437, 1968.

\bibitem{mohammed2008}S-E.~Mohammed, T. Zhang and H.~ Zhao.
\newblock The Stable Manifold Theorem for Semilinear Stochastic Evolution Equations and Stochastic Partial Differential equations. {\em Mem. Amer. Math. Soc.} \textbf{196}, 2008.

\bibitem{NR2002}D.~Nualart and A.~R$\breve{a}$\c scanu.
\newblock Differential equations driven by fractional Brownian motion. {\em Collect. Math.} \textbf{53} 55-81, 2002.


\bibitem{Pazy1983}A.~Pazy. \newblock {\em Semigroups of Linear Operators and Applications to Partial Differential Equations}. Springer, 1983.



\bibitem{Perron1928}O.~Perron.
\newblock \"Uber Stabilit\"at und asymptotisches Verhalten der Integrale von Differentialgleichungssystemen. {\em Math. Zeitschrift}~\textbf{29}~129-160, 1928.


\bibitem{SKM1993}S.~Samko, A.~Kilbas and O.~Marichev.
\newblock {\em Fractional Integrals and Derivatives: Theory and Applications}. CRC Press, 1993.

\bibitem{Triebel}H.~Triebel. \newblock {\em Interpolation Theory, Function Spaces and Differential Operators}. North Holland, Amsterdam, 1978.
\bibitem{Zahle1998}M.~Z\"ahle.
\newblock Integration with respect to fractal functions and stochastic calculus. I. {\em Probab. Theory Related Fields}. \textbf{111} 333-374, 1998.





\bibitem{Wang2021}B.~Wang. \newblock Mean-square random invariant manifolds for stochastic differential equations. {\em Discrete Contin. Dynam. Systems}. \textbf{41} 1449-1468, 2021.


\bibitem{Wanner1995}T.~Wanner.
\newblock Linearization of Random Dynamical Systems. {\em Dynamics Reported: Expositions In Dynamical Systems}. \textbf{4} 203-268, 1995.








\end{thebibliography}

% 	\bibitem{Arnold}
% 	L.~Arnold.
% 	\newblock{\em Random Dynamical Systems}. Springer, Berlin Heidelberg, Germany, 2003.
	
% 		\bibitem{GarridoLuSchmalfuss}
% 	M.J.~Garrido-Atienza, K. Lu and B. Schmalfu\ss{}.
% 	\newblock Unstable invariant manifolds for stochastic PDEs driven by a fractional Brownian motion.
% 	\newblock {\em J. Differential~Equat.}, 248(7):1637--1667, 2010.
	
%\end{thebibliography}

\end{document}